\documentclass[twoside,11pt,reqno]{amsart}
\usepackage{amsmath,amssymb,amscd,mathrsfs,epic,wasysym,latexsym,tikz,mathrsfs,cite,hyperref}
\usepackage{stmaryrd}
\usepackage{pb-diagram}
\usepackage[matrix,arrow]{xy}

\makeatletter

\hfuzz 3pt
\vfuzz 2pt

\textheight 218mm
\textwidth 140mm

\raggedbottom

\synctex=1
\numberwithin{equation}{section}

\newtheorem{Proposition}[equation]{Proposition}
\newtheorem{Lemma}[equation]{Lemma}
\newtheorem{Theorem}[equation]{Theorem}
\newtheorem{Corollary}[equation]{Corollary}
\theoremstyle{definition}  
\newtheorem{Definition}[equation]{Definition}
\newtheorem{Remark}[equation]{Remark}

\newtheorem{Conjecture}[equation]{Conjecture}
\newtheorem{MainTheorem}{Theorem}

\let\<\langle
\let\>\rangle

\newcommand\Comment[2][\relax]{\space\par\medskip\noindent%
   \fbox{\begin{minipage}{\textwidth}\textbf{Comment\ifx\relax#1\else---#1\fi}\newline%
        #2\end{minipage}}\medskip
}


\def\bl{\text{\boldmath$l$}}
\def\bp{\text{\boldmath$p$}}
\def\bq{\text{\boldmath$q$}}
\def\bs{\text{\boldmath$s$}}

\def\bt{\text{\boldmath$t$}}
\def\bc{\text{\boldmath$c$}}
\def\br{\text{\boldmath$r$}}
\def\b1{\text{\boldmath$1$}}

\def\ba{\text{\boldmath$a$}}
\def\bb{\text{\boldmath$b$}}
\def\bw{\text{\boldmath$w$}}
\def\bu{\text{\boldmath$u$}}
\def\bv{\text{\boldmath$v$}}
\def\bx{\text{\boldmath$x$}}
\def\by{\text{\boldmath$y$}}
\def\bz{\text{\boldmath$z$}}
\def\bbe{\text{\boldmath$e$}}
\def\balpha{\text{\boldmath$\alpha$}}
\def\bbeta{\text{\boldmath$\beta$}}
\def\bkappa{\text{\boldmath$\kappa$}}
\def\btau{\text{\boldmath$\tau$}}

\def\fa{\mathfrak{a}}

\def\pmod#1{\text{ }(\text{\rm mod } #1)\,}

\newcommand{\cha}{\operatorname{char}}

\newcommand{\Stab}{\operatorname{Stab}}

\newcommand{\Z}{\mathbb{Z}}
\newcommand{\K}{\mathbb{K}}
\newcommand{\F}{\mathbb{F}}

\newcommand{\0}{{\bar 0}}
\renewcommand{\1}{{\bar 1}}
\def\eps{{\varepsilon}}
\def\phi{{\varphi}}

\newcommand{\funF}{{\mathcal F}}
\newcommand{\funG}{{\mathcal G}}

\newcommand{\Triple}{{\mathcal T}}

\newcommand{\ga}{\gamma}

\newcommand{\la}{\lambda}
\newcommand{\La}{\Lambda}
\newcommand{\al}{\alpha}
\newcommand{\be}{\beta}

\def\Si{\mathfrak{S}}
\newcommand{\si}{\sigma}
\newcommand{\om}{\omega}
\newcommand{\Om}{\Omega}

\newcommand{\de}{\delta}
\newcommand{\De}{\Delta}
\newcommand{\ka}{\kappa}







\newcommand{\rad}{{\mathrm {rad}\,}}

\newcommand{\ZZ}{{\mathbb Z}}

\newcommand{\D}{{\mathscr D}}

\renewcommand{\mod}{\bmod \,}

\def\K{\mathbb K}

\newcommand{\Zig}{{\bar{Z}}}
\newcommand{\EZig}{Z}
\renewcommand{\Alph}{{\mathscr A}}

\def\L{\text{\boldmath$L$}}

\def\col{{\operatorname{col}}}
\def\letter{{\operatorname{let}}}
\newcommand{\Nodes}{\mathsf N}
\newcommand{\Std}{\operatorname{Std}}
\newcommand{\Rst}{\operatorname{Rst}}
\newcommand{\Tab}{\operatorname{Tab}}
\newcommand{\Cst}{\operatorname{Cst}}

\def\Brackets#1{[ #1 ]}

\def\umu{{\underline{\mu}}}
\def\unu{{\underline{\nu}}}

\def\ueps{{\underline{\varepsilon}}}
\def\uom{{\underline{\omega}}}

\def\ud{{\underline{d}}}

\def\b{\mathfrak{b}}
\def\k{\Bbbk}

\def\T{\text{\boldmath$T$}}

\def\Stab{\text{\boldmath$S$}}

\def\spa{\operatorname{span}}

\def\op{{\mathrm{op}}}

\def\mod#1{#1\!\operatorname{-mod}}

\def\iso{\stackrel{\sim}{\longrightarrow}}
\def\B{{\mathcal B}}
\def\X{{\mathcal X}}
\def\Y{{\mathcal Y}}

\def\ch{\operatorname{ch}}
\def\lan{\langle}
\def\ran{\rangle}

\def\c{\mathfrak{c}}


\def\Seq{\operatorname{Tri}}

\def\cod{\mathcal B}

\def\tts{{\tt s}}

\def\a{\mathfrak{a}}
\def\z{\mathfrak{z}}

\def\m{\mathfrak{m}}

\def\brho{\text{\boldmath$\rho$}}
\def\bla{\text{\boldmath$\lambda$}}
\def\bmu{\text{\boldmath$\mu$}}
\def\bnu{\text{\boldmath$\nu$}}

\def\bkappa{\text{\boldmath$\kappa$}}
\def\bta{\text{\boldmath$\eta$}}
\def\bchi{\text{\boldmath$\chi$}}
\def\bga{\text{\boldmath$\gamma$}}

\def\bal{\text{\boldmath$\alpha$}}
\def\bbe{\text{\boldmath$\be$}}

{\catcode`\|=\active
  \gdef\set#1{\mathinner{\lbrace\,{\mathcode`\|"8000%
  \let|\midvert #1}\,\rbrace}}
}
\def\midvert{\egroup\mid\bgroup}

\colorlet{darkgreen}{green!50!black}
\tikzset{dots/.style={very thick,loosely dotted},
         greendot/.style={fill,circle,color=darkgreen,inner sep=1.5pt,outer sep=0},
         blackdot/.style={fill,circle,color=black,inner sep=1.5pt,outer sep=0},
         graydot/.style={fill,circle,color=gray,inner sep=1.1pt,outer sep=0}
}
\def\greendot(#1,#2){\node[greendot] at(#1,#2){}}
\def\blackdot(#1,#2){\node[blackdot] at(#1,#2){}}
\def\graydot(#1,#2){\node[graydot] at(#1,#2){}}

\newenvironment{braid}{
  \begin{tikzpicture}[baseline=6mm,black,line width=1pt, scale=0.32,
                      draw/.append style={rounded corners},
                      every node/.append style={font=\fontsize{5}{5}\selectfont}]%
  }{\end{tikzpicture}
}

\def\Grid(#1,#2){
  \draw[very thin,gray,step=2mm] (0,0)grid(#1,#2);
  \draw[very thin,darkgreen,step=10mm] (0,0)grid(#1,#2);
}

\newcommand\Tableau[2][\relax]{
  \begin{tikzpicture}[scale=0.5,draw/.append style={thick,black}]
    \ifx\relax#1\relax%
    \else 
      \foreach\box in {#1} { \filldraw[blue!30]\box+(-.5,-.5)rectangle++(.5,.5); }
    \fi
    \newcount\row\newcount\col
    \row=0
    \foreach \Row in {#2} {
       \col=1
       \foreach\k in \Row {
          \draw(\the\col,\the\row)+(-.5,-.5)rectangle++(.5,.5);
          \draw(\the\col,\the\row)node{\k};
          \global\advance\col by 1
       }
       \global\advance\row by -1
    }
  \end{tikzpicture}
}

\newcommand\YoungDiagram[2][\relax]{
  \begin{tikzpicture}[scale=0.5,draw/.append style={thick,black}]
    \ifx\relax#1\relax%
    \else 
    \foreach\box in {#1} {
      \filldraw[blue!30]\box rectangle ++(1,1);
    }
    \fi
    \newcount\row
    \row=0
    \foreach \col in {#2} {
       \draw(1,\the\row)grid ++(\col,1);
       \global\advance\row by -1
    }
  \end{tikzpicture}
}

\begin{document}

\title[Schurifying quasi-hereditary algebras]{{\bf Schurifying quasi-hereditary algebras}}

\author{\sc Alexander Kleshchev}
\address{Department of Mathematics\\ University of Oregon\\
Eugene\\ OR 97403, USA}
\email{klesh@uoregon.edu}

\author{\sc Robert Muth}
\address{Department of Mathematics\\ Washington \& Jefferson College
\\
Washington\\ PA 15301, USA}
\email{rmuth@washjeff.edu}

\subjclass[2010]{16G30, 20C20}

\thanks{The first author was supported by the NSF grant No. DMS-1700905 and the DFG Mercator program through the University of Stuttgart. This work was also supported by the NSF under grant No. DMS-1440140 while both authors were in residence at the MSRI during the Spring 2018 semester.}

\begin{abstract}
We define and study new classes of quasi-hereditary and cellular algebras which generalize Turner's double algebras. Turner's algebras provide a local description of blocks of symmetric groups up to derived equivalence. Our general construction allows one to `schurify' any quasi-hereditary algebra $A$ to obtain a generalized Schur algebra $S^A(n,d)$ which we prove is again quasi-hereditary if $d\leq n$.  We describe decomposition numbers of $S^A(n,d)$ in terms of those of $A$ and the classical Schur algebra $S(n,d)$. In fact, it is essential to work with quasi-hereditary {\em superalgebras} $A$, in which case the construction of the schurification involves a non-trivial 
full rank sub-lattice $T^A_\a(n,d)\subseteq S^A(n,d)$. 
\end{abstract}

\maketitle

\section{Introduction}

The goal of this paper is to define new quasi-hereditary algebras, and hence new highest weight categories, from old. Starting with a quasi-hereditary algebra $A$, we obtain a generalized Schur algebra $S^A(n,d)$ which we prove is again quasi-hereditary if $d\leq n$. The procedure of passing from $A$ to $S^A(n,d)$ is sometimes referred to as `schurification' of $A$.  We describe decomposition numbers of $S^A(n,d)$ in terms of those of $A$ and the classical Schur algebra $S(n,d)$. In fact, it is essential to study schurifications of  {\em superalgebras}, in which case  the construction involves a non-trivial choice of a sub-lattice $T^A_\a(n,d)\subseteq S^A(n,d)$. In the purely even case we have $T^A_\a(n,d)= S^A(n,d)$.

To describe our results more precisely, let $\k$ be a commutative domain of characteristic $0$, $A$ be a $\k$-superalgebra and $\a \subseteq A_\0$ be a subalgebra. The associated  generalized Schur algebra $T^A_\a(n,d)$
was defined in \cite{greenTwo} as a certain full sublattice in the algebra of (super)invariants: 
$$
T^A_\a(n,d)\subseteq S^A(n,d):=(M_n(A)^{\otimes d})^{\Si_d}.
$$
Thus extending scalars to a field $\K$ of characteristic $0$ produces the same algebras: $T^A_\a(n,d)_\K= S^A(n,d)_\K$. However, importantly, extending scalars to a field $\F$ of positive characteristic will in general yield {\em non-isomorphic} algebras $T^A_\a(n,d)_\F$ and $S^A(n,d)_\F$ of the same dimension. It turns out that in many situations it is the more subtly defined  algebra $T^A_\a(n,d)_\F$ that plays an important role. 

In  \cite{greenOne}, we defined the notion of a {\em based quasi-hereditary algebra}. If \(\k\) is a complete local Noetherian ring, then a $\k$-algebra is based quasi-hereditary if and only if it is {\em split quasi-hereditary} in the sense of Cline, Parshall and Scott \cite{CPS}.

The first main result of this paper is that under some reasonable assumptions, the algebra  $T^A_\a(n,d)$ is based quasi-hereditary if $A$ is based quasi-hereditary and $d\leq n$. This is proved by generalizing Green's work in \cite{GreenCod} for the classical Schur algebra \(S^\k(n,d)\). Green constructs a basis of {\em codeterminants}, and then proves (in effect) that this gives \(S^\k(n,d)\) the structure of a based quasi-hereditary algebra.
Similarly, we define a set of generalized codeterminants  and prove that these form a basis for  \(T^A_\a(n,d)\). This gives \(T^A_\a(n,d)\) the structure of a based quasi-hereditary algebra. 

Given a ring homomorphism \(\k \to \F\), where \(\F\) is a field of arbitrary characteristic, we define a quasi-hereditary $\F$-algebra by extending scalars: 
$$T^A_\a(n,d)_\F:=T^A_\a(n,d)\otimes_\k\F.
$$ 
Our second main result  describes (under some constraints) the decomposition numbers of standard \(T^A_\a(n,d)_\F\)-modules in terms of those of \(A_\F\), the classical Schur algebra \(S(n,d)_\F\), and Littlewood-Richardson coefficients. 
Our third main result describes conditions on \(A\) under which \(T^A_\fa(n,d)\) is known to be indecomposable, allowing for a classification of the blocks of \(T^A_\fa(n,d)\) in most cases.

Our motivation comes from Turner's double algebras \cite{T,T2,T3,EK1}, which arise as shurifications $T^{\bar Z}_{\bar\z}(n,d)$ of {\em zigzag superalgebras} $\bar Z$. As conjectured in \cite{T} and proved in \cite{EK2}, Turner's algebras $T^{\bar Z}_{\bar \z}(n,d)$ can be considered as a `local'  object replacing wreath products of Brauer tree algebras in the context of the Brou{\'e} abelian defect group conjecture for blocks of symmetric groups with {\em non-abelian} defect groups. 
We expect that various versions of generalized Schur algebras will be appearing in local descriptions of blocks of group algebras and other algebras arising in classical  representation theory.

As an application of the general techniques developed in this paper, we construct an explicit cellular basis of $T^{\bar Z}_{\bar\z}(n,d)$. To achieve this goal, we first construct quasi-hereditary algebras $T^Z_\z(n,d)$, 
where 
 \(\EZig\)  is a quasi-hereditary cover of $\bar Z$ known as the {\em extended zigzag superalgebra}. 
Special cases of the main results of this paper describe the quasi-hereditary structure and decomposition numbers of \(T^{\EZig}_{\z}(n,d)\). We then use an idempotent truncation technique to describe a cellular structure and the corresponding decompositon numbers of \(T^{\bar \EZig}_{\bar \z}(n,d)\). We formulate an explicit conjecture for RoCK blocks of classical Schur algebras in terms of the generalized Schur algebras $T^Z_\z(n,d)$, see Conjecture~\ref{Conj}.

We now describe the contents of the paper in more detail. In Section~\ref{SecQH} we recall the definition and basic results on based quasi-hereditary superalgebras \(A\) with quasiheredity data \(I,X,Y\). In Section~\ref{SecComb} we set up the combinatorics of colored alphabets and describe the poset of \(I\)-colored multipartitions \(\La^I_+(n,d)\). For any \(\bla \in \La^I_+(n,d)\), we define the sets \(\Std^X(\bla)\) and \(\Std^Y(\bla)\) of \(X\)-colored and \(Y\)-colored standard tableaux. In Section~\ref{SecGenSch} we recall the definition and basic results on the generalized Schur algebra \(T^A_\a(n,d)\), and prove some  mulitiplication lemmas.

In Section~\ref{SecCod}, we define generalized codeterminants. For every \(\bla \in \La^I_+(n,d)\), \(\Stab \in \Std^X(\bla)\), \(\T \in \Std^Y(\bla)\), we define the corresponding codeterminant \(\B_{\Stab,\T}^{\bla}\) as a product \(\X_\Stab\Y_\T\) of certain elements of \(T^A_\a(n,d)\). We show that the codeterminants form a basis for \(T^A_\a(n,d)\), using a generalization of Woodcock's `straightening' argument  in \cite{Woodcock} to prove  spanning, and the super RSK correspondence to show independence.

In Section~\ref{SecQHstruct}, we prove the first main theorem of the paper, which appears as Theorem~\ref{T290517_2}:

\begin{MainTheorem}\label{T1}
{\em 
Let $d\leq n$ and $A$ be a based quasi-hereditary $\k$-superalgebra with $\a$-conforming heredity data $I,X,Y$. 
Then $T^A_\a(n,d)$ is a  based quasi-hereditary $\k$-superalgebra 
with heredity data $\La_+^I(n,d), \X, \Y$.
}
\end{MainTheorem}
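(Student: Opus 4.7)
The plan is to verify the axioms of a based quasi-hereditary superalgebra (as formulated in Section~\ref{SecQH}) using the codeterminant basis $\{\B^\bla_{\Stab,\T}\}$ of $T^A_\a(n,d)$ constructed in Section~\ref{SecCod}. This basis is already indexed by the triples $(\bla,\Stab,\T)$ with $\bla\in\La^I_+(n,d)$, $\Stab\in\Std^X(\bla)$, $\T\in\Std^Y(\bla)$ required by the heredity data $(\La^I_+(n,d),\X,\Y)$; the dominance order on $\La^I_+(n,d)$ from Section~\ref{SecComb} provides the poset structure; and the factorization $\B^\bla_{\Stab,\T}=\X_\Stab\Y_\T$ is built into the very definition of the codeterminants. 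So the combinatorial and structural setup is already in place, and the remaining content of the theorem is multiplicative.

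The key statement I would establish is that for any codeterminant $\B^\bla_{\Stab,\T}$ and any element $\xi\in T^A_\a(n,d)$,
\[
\xi\cdot \B^\bla_{\Stab,\T} \;=\; \sum_{\Stab'\in \Std^X(\bla)}\!\! r_{\Stab',\Stab}(\xi)\,\B^\bla_{\Stab',\T} \;+\; (\text{codeterminants of strictly higher weight}),
\]
with a symmetric rule for right multiplication in the $\T$-index, and that for distinguished ``diagonal'' tableaux $\Stab^\bla\in\Std^X(\bla)$ and $\T^\bla\in\Std^Y(\bla)$ (descending from the distinguished elements of the heredity data of $A$) the element $\B^\bla_{\Stab^\bla,\T^\bla}$ is idempotent modulo higher-weight codeterminants. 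I would prove these by induction on the dominance order, combining the multiplication lemmas from Section~\ref{SecGenSch} with a refinement of the straightening argument of Section~\ref{SecCod}: whereas the spanning proof there only needed that any product lies in the $\k$-span of codeterminants, here one must additionally control which weights $\bnu$ can appear and show that any $\bnu$ appearing with $\bnu\neq\bla$ satisfies $\bnu\gdom\bla$.

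The main obstacle will be this ``leading-term'' bookkeeping throughout the straightening procedure. When a rewriting step is applied to a product like $\Y_\T\X_{\Stab'}$ with mismatched shapes, one must verify not only that the result re-expands in the codeterminant basis (already known) but that every correction term lies in codeterminants of weight strictly dominating $\bla$, with super signs accounted for correctly. The $\a$-conforming hypothesis on the heredity data $I,X,Y$ of $A$ is exactly what guarantees that the lifted straightening is compatible with the sublattice $T^A_\a(n,d)\subseteq S^A(n,d)$, and it reduces the analysis ultimately to a super generalization of Green's codeterminant calculation in~\cite{GreenCod} for the classical Schur algebra, combined with the based quasi-hereditary structure of $A$ itself to handle the internal matrix coefficients $r_{\Stab',\Stab}(\xi)$.
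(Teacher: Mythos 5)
Your outline matches the paper: the codeterminant basis (Theorem~\ref{TStBasis}), the order on $\La^I_+(n,d)$, and verification of the three axioms of Definition~\ref{DCC}. Axiom (a) is precisely Theorem~\ref{TStBasis} and axiom (c) is Lemma~\ref{L130517}; one small imprecision in your description of (c) is that $e_\bla=\cod^\bla_{\T^\bla,\T^\bla}$ is an honest idempotent of $T^A_\a(n,d)$, not merely idempotent modulo higher shapes, and the exact orthogonality relations of Lemma~\ref{L130517} are what drive the rest of the argument. For axiom (b) your route diverges from the paper's, and the paper's is materially slicker. You propose controlling, by a ``refined straightening'', the shapes that can appear when an arbitrary $\eta\in T^A_\a(n,d)$ multiplies a factor $\X_\Stab$ --- which for arbitrary products would be a heavy computation. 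The paper instead first proves (Proposition~\ref{P190517_3}) that $T^A_\a(n,d)^{\geq\bla}$ is the two-sided ideal generated by $\{e_\bmu\mid \bmu\geq\bla\}$; straightening (Theorem~\ref{TStr}) is used there, but only on codeterminants of fixed shape $\bla$ with row-standard (not yet standard) tableaux, after Lemma~\ref{L150218_6}, Definition~\ref{DCC}(c) and Proposition~\ref{CPR} reduce any $\eta_1 e_\bla\eta_2$ to either something supported on strictly higher shapes or a single codeterminant of shape $\bla$. Given this, axiom (b) comes almost for free (Proposition~\ref{P290517}): since $\eta\X_\Stab=(\eta\X_\Stab)e_\bla$ lies in the ideal, modulo $T^A_\a(n,d)^{>\bla}$ it expands in standard $\cod^\bla_{\Stab',\T'}$; multiplying once more by $e_\bla$ on the right and using Lemma~\ref{L130517}(ii), every term with $\T'\neq\T^\bla$ vanishes, leaving exactly a combination of the $\X_{\Stab'}$. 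So the leading-term bookkeeping you flagged as the main obstacle is avoided rather than refined: it is replaced by the ideal characterization plus the $e_\bla$-collapse.
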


We then go on to describe the standard modules \(\Delta(\bla)\) over \(T^A_\a(n,d)\), as well as idempotent truncations and involutions of these algebras. In the final Section~\ref{SecDecomp} we focus on decomposition numbers of standard modules over \(T^A_\a(n,d)_\F\). 
We define a certain explicit set $\La_+^D(n)$ of multipartitions depending on the decomposition matrix $D$ of $A_\F$, an explicit statistics $\deg$ on $\La_+^D(n)$, certain classical Littlewood-Richardson coefficients $c^{\la^{(i)}}_{{}_i\overline{\bnu}}$ and $c^{\ga^{(i)}}_{\bnu_i}$ and products $d^{\,\textup{cl}}_{\bga,\bmu}$ of decomposition numbers for the classical Schur algebra over $\F$. Then our second main theorem, which appears as Theorem~\ref{NonBasicDecomp}, is as follows.

\begin{MainTheorem}\label{T2}
{\em Suppose that $(A,\a)$ is a unital pair and assume that \(A_{\overline 1} \subset J(A)\). Then for \(\bla, \bmu \in \La_+^I(n,d)\), the corresponding graded decomposition number is given by
\begin{align*}
d_{\bla, \bmu}= \sum_{\bga \in \La_+^I(n)} \sum_{\bnu \in \La_+^D(n)} d^{\,\textup{cl}}_{\bga,\bmu} \deg(\bnu)\left( \prod_{i \in I } c^{\la^{(i)}}_{{}_i\overline{\bnu}} c^{\ga^{(i)}}_{\bnu_i}\right).
\end{align*}
}
\end{MainTheorem}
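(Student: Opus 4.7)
The plan is to reduce the computation of the graded decomposition number $d_{\bla,\bmu}$ to a combination of three simpler decomposition-type data: the decomposition matrix $D$ of $A_\F$, the decomposition numbers $d^{\,\textup{cl}}_{\bga,\bmu}$ of the classical Schur algebra $S(n,d)_\F$, and Littlewood-Richardson coefficients encoding the colored combinatorics. The overall strategy is a Grothendieck-group calculation: express the class $[\De(\bla)]$ in terms of images of standard modules for $A_\F$ and for the classical Schur algebra under appropriate Schurification-type functors, and then resolve these classes into simples via the intermediate decomposition matrices, using Littlewood-Richardson to pass between the colored and un-colored indexing.

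First I would use the quasi-hereditary structure established in Theorem~\ref{T290517_2} together with the explicit description of $\De(\bla)$ from Section~\ref{SecQHstruct} to show that each standard module of $T^A_\a(n,d)_\F$ admits a filtration, or equivalently a character decomposition, indexed by colored multipartitions $\bnu$, whose associated graded pieces are tensor products over colors $i\in I$ of classical Schur-algebra standards and $A_\F$-standards, weighted by the Littlewood-Richardson coefficients $c^{\la^{(i)}}_{{}_i\overline{\bnu}}$. The hypothesis $A_{\bar 1}\subset J(A)$ guarantees that the simple $A_\F$-modules are purely even and naturally indexed by a set of the shape $\La_+^D(n)$, while the unital-pair hypothesis on $(A,\a)$ ensures that the Schurification construction commutes with the decomposition of $A_\F$-standards into simples dictated by $D$; the graded multiplicities entering this step should assemble into the statistic $\deg(\bnu)$.

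Next I would apply the classical Schur-algebra decomposition numbers on the Schur-algebra tensor factors and reassemble the colored pieces back into a classical multipartition indexing by a second application of Littlewood-Richardson, which introduces the coefficients $c^{\ga^{(i)}}_{\bnu_i}$. Summing over the intermediate data $\bga\in\La_+^I(n)$ and $\bnu\in\La_+^D(n)$ should then produce the displayed formula for $d_{\bla,\bmu}$. The hard part will be verifying that these filtrations and Littlewood-Richardson applications genuinely commute with the Schurification at the level of graded integral modules; this is subtle because $T^A_\a(n,d)$ differs from $S^A(n,d)$ in positive characteristic, and the correct tracking must take place relative to the codeterminant basis of Section~\ref{SecCod}. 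Showing that the grading shifts amassed along the way assemble precisely into $\deg(\bnu)$, and that the two successive Littlewood-Richardson decompositions appear in the correct order with respect to $D$, is where the bulk of the bookkeeping will lie.
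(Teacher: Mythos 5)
Your overall strategy — a Grothendieck-group/character computation combining the decomposition matrix of $A_\F$, the classical Schur decomposition numbers, and two layers of Littlewood-Richardson coefficients — has the right shape, and the paper's proof is indeed such a computation. But several essential steps are missing or misidentified. Most importantly, the unital-pair hypothesis does not ensure that ``Schurification commutes with decomposition of $A_\F$-standards''; its actual role is to invoke Theorem~\ref{MorBasic} and pass to a Morita-equivalent basic truncation $\bar A = fAf$, after which Corollary~\ref{C160717_23} gives a formula indexed by multipartitions $\bnu\in\La^X_+(n)$ attached to the (now basic) heredity basis. Only then does $\La_+^D(n)$ enter, as a purely notational relabelling of that index set; for non-basic $A$ the heredity basis $X$ has no direct numerical relation to $D$, so skipping this reduction leaves the formula unanchored.

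Second, the Littlewood-Richardson coefficients do not arise from a filtration of $\De(\bla)$ with tensor-product graded pieces. The paper starts from the genuine module isomorphism $\De(\bla)\cong\bigotimes_{i\in I}\De(\text{\boldmath$\lambda^{(i)}$})$ of Theorem~\ref{T080717_2}, then computes $\ch^q_\pi\De(\text{\boldmath$\lambda^{(i)}$})$ via a bijection between $X(i)$-colored standard tableaux and flags of skew shapes with even/odd standard fillings (Lemma~\ref{L150717_6}), followed by Macdonald's skew-Schur expansion (Lemma~\ref{LMac}, Corollary~\ref{CMac}); this combinatorics, not a filtration, is what produces $\deg(\bnu)$ and the coefficients $c^{\la^{(i)}}_{{}_i\overline{\bnu}}$. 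You also need the nontrivial character identity $\ch^q_\pi L(\bla)=\bar\tts_\bla$ (Lemma~\ref{L210218_5}) to extract the factors $d^{\,\textup{cl}}_{\bga,\bmu}$, and this step is absent from your outline. Finally, the worry about $T^A_\a(n,d)$ vs.\ $S^A(n,d)$ in positive characteristic is moot at this stage: the heredity structure of $T^A_\a(n,d)$ is already established integrally in Theorem~\ref{T290517_2}, so the entire decomposition-number computation takes place in the $T$-form and there is no separate commutation to verify.
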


After specialization $A:=Z$ and appropriate idempotent truncation, Theorem~\ref{T2} yields the formula of Turner \cite[Corollary 134]{T}, cf.  \cite[Theorem 6.2]{CT}, \cite[Corollary 10]{LM}, \cite[Theorem 4.1]{JLM}, see Remark~\ref{R140318} for further comments. As an application of Theorem 2, we prove our third main theorem, which appears as Theorem~\ref{indec} in the body of the paper.

\begin{MainTheorem}\label{T3}
{\em 
Suppose that $(A,\a)$ is a unital pair and \(A_{\overline 1} \subset J(A)\). Moreover, suppose that \(A\) is indecomposable, and \(|I| >1\). Then \(T^{ A}_{ \fa}(n,d)\) is indecomposable.
}
\end{MainTheorem}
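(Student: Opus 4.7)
The plan is to invoke the fact that, for a based (equivalently, split) quasi-hereditary algebra over a field, the blocks are the equivalence classes of the weight poset under the \emph{linkage relation} generated by $\bla\sim\bmu$ whenever there exists $\bnu$ with $d_{\bnu,\bla}\neq 0$ and $d_{\bnu,\bmu}\neq 0$. By Theorem~\ref{T1}, $T^A_\fa(n,d)$ is based quasi-hereditary with weight poset $\La_+^I(n,d)$, and since indecomposability of a finitely generated algebra over a domain is detected after base change to a residue field, it suffices to show that the linkage graph on $\La_+^I(n,d)$ for $T^A_\fa(n,d)_\F$ is connected for any field $\F$.

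I would partition the links supplied by Theorem~\ref{T2} into two kinds. \emph{Color-preserving links} keep the coloring $(|\la^{(i)}|)_{i\in I}$ fixed and vary the individual partitions $\la^{(i)}$ according to the decomposition theory of the classical Schur algebras $S(n,|\la^{(i)}|)_\F$: these arise by choosing the auxiliary multipartition $\bnu$ in Theorem~\ref{T2} with support aligned with the coloring of $\bla$, so that the inner product $\prod_{i\in I}c^{\la^{(i)}}_{{}_i\overline{\bnu}}\,c^{\ga^{(i)}}_{\bnu_i}$ factors cleanly and the formula reduces to a product of classical Littlewood-Richardson coefficients and classical decomposition numbers $d^{\,\textup{cl}}$. \emph{Color-changing links} transport content from a color $i$ to a color $j$, and are produced by choosing $\bnu\in\La_+^D(n)$ whose support crosses colors $i$ and $j$; their non-vanishing is controlled by the decomposition matrix $D$ of $A_\F$, and they exist whenever the simples $L_i,L_j$ of $A_\F$ occur together in a common standard module of $A_\F$.

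Connectedness is then established as follows. Since $A$ is indecomposable, any two simples of $A_\F$ are linked through a chain of common standard modules, which I would transfer into a chain of color-changing links in $\La_+^I(n,d)$; the hypothesis $|I|>1$ ensures that such color-changing links are genuinely non-vacuous. Given $\bla,\bmu\in\La_+^I(n,d)$, the strategy is to redistribute all of the content of $\bla$ onto a single distinguished color by a sequence of color-changes, rearrange the resulting partition via color-preserving links (possibly shuttling through other colors to cross classical Schur algebra block boundaries, using further color-changes supplied by the connectedness of $D$), and then redistribute the content to reach $\bmu$. The bound $d\leq n$ supplies the room needed for the Littlewood-Richardson coefficients appearing along this route to be positive, and Theorem~\ref{T2} then guarantees that each step corresponds to a genuine linkage.

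The main technical obstacle will be the middle step: $S(n,d)_\F$ is not itself indecomposable in general, so color-preserving links alone need not connect all partitions of a given size in a given color, and one must show that color-changing links genuinely suffice to cross the classical block boundaries. This is precisely where the hypotheses $|I|>1$ and the indecomposability of $A$ are used in tandem: a color-change from $i$ to $j$ alters the residue/core data relevant to classical blocks, so the boundary in the $i$-factor is translated into a different boundary in the $j$-factor, and iterating over enough colors and chains in $D$ dissolves every classical block boundary. Verifying this at the level of explicit triples $(\bga,\bnu)$ in Theorem~\ref{T2} which simultaneously exhibit positivity of the requisite LR coefficients and non-vanishing of $d^{\,\textup{cl}}_{\bga,\bmu}$ will be the main calculation, but no single step is deep once the routing strategy is fixed.
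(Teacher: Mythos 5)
There is a genuine gap at the very first step of your routing argument. The linkage relation you propose to use --- $\bla\sim\bmu$ whenever some standard module $\De(\bnu)$ contains both $L(\bla)$ and $L(\bmu)$ --- is only a \emph{sufficient} condition for lying in the same block; the blocks themselves are generated by common composition factors of \emph{projectives}, i.e.\ by non-vanishing of the Cartan entries $C_{\bla,\bmu}=\sum_{\bnu}d^{\,\op}_{\bnu,\bla}d_{\bnu,\bmu}$, which involves the costandard (opposite standard) decomposition numbers as well. Consequently your claim that ``since $A$ is indecomposable, any two simples of $A_\F$ are linked through a chain of common standard modules'' is false: an indecomposable quasi-hereditary algebra can have \emph{all} standard modules simple (take the path algebra of the quiver $1\to 2$ with the partial order making the source maximal, so $D=\mathrm{Id}$ while $D^{\op}\neq\mathrm{Id}$; this satisfies every hypothesis of the theorem in the purely even case with $\fa=A$). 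For such an $A$, the formula of Theorem~\ref{T2} degenerates to $d_{\bla,\bmu}=d^{\,\textup{cl}}_{\bla,\bmu}$, so your relation produces \emph{no} color-changing links whatsoever and cannot even begin the redistribution step, yet the conclusion of the theorem still holds. Any proof resting solely on the standard decomposition matrix of $T^A_\fa(n,d)$ is therefore doomed.

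The paper's proof avoids this by working with the full Cartan linkage. It first translates indecomposability of $A$ into chains $d^{\,\op}_{k_t,i_{t-1}}\,d_{k_t,l_t}\,d^{\,\op}_{k'_t,i_t}\,d_{k'_t,l_t}\neq 0$ via BGG reciprocity, then proves a \emph{two-sided} non-vanishing claim: if $\bla,\bmu$ agree outside a single box moved from color $i$ to color $j$ with $d_{ij}\neq 0$ (resp.\ $d^{\,\op}_{ij}\neq 0$), then $d_{\bla,\bmu}\neq 0$ (resp.\ $d^{\,\op}_{\bla,\bmu}\neq 0$), using Theorem~\ref{T2} with $d^{\,\textup{cl}}_{\bmu,\bmu}=1$ and trivial Littlewood--Richardson coefficients. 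It then transports the chain in $I$ to an explicit chain of multipartitions $({}^t\btau,{}^t\bta,{}^t\bkappa,{}^t\bkappa')$ and finishes by induction on $\sum_{i,r}|\la^{(i)}_r-\mu^{(i)}_r|$. Note also that the ``classical block boundary'' issue you flag as the main obstacle simply does not arise in this approach: no two partitions of equal color-content ever need to be linked through the classical Schur algebra, since every step is a single-box transfer between distinct colors.
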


This, coupled with the decomposition result described in Lemma~\ref{BlockDecomp}, allows one to classify the blocks of \(T^A_\fa(n,d)\) in many cases. In fact, we prove in Theorem~\ref{indec} a slightly stronger result, giving indecomposability conditions for \(T^{\bar A}_{\bar \fa}(n,d)\), where \((\bar A, \bar \fa)\) is an idempotent truncation of the unital pair \((A,\fa)\).

\section{Quasi-hereditary algebras}\label{SecQH}

Throughout the paper $\k$ is a commutative domain of characteristic $0$. 

\subsection{Based quasi-hereditary algebras}
\label{SSBQHA}
We begin by reviewing theory of quasi-hereditary algebras in the language of \cite{greenOne}. 

Let $V=\bigoplus_{n\in\Z,\,\eps\in\Z/2}V^n_\eps$ be a {\em graded $\k$-supermodule}. We set $V^n:=V^n_\0\oplus V^n_\1$ and $V_\eps:=\bigoplus_{n\in\Z}V^n_\eps$. 
An element $v\in V$ is called homogeneous if $v\in V_\eps^n$ for some $\eps$ and $n$. For $\eps\in\Z/2$, $n\in \Z$ and a set $S$ of homogeneous elements of $V$, we write 
\begin{equation}\label{E210218_2}
S_\eps:=S\cap V_\eps\qquad \text{and}\qquad S_\eps^n:=S\cap V^n_\eps.
\end{equation}

Let
\begin{equation}\label{ER}
R:=\Z[q,q^{-1}][t]/(t^2-1), 
\end{equation} 
and denote the image of $t$ in the quotient ring by $\pi$, so that $\pi^\eps$ makes sense for $\eps\in\Z/2$. 
For $v\in V^n_\eps$, we write 
\begin{equation}\label{EDeg}
\deg(v):=q^n\pi^\eps.
\end{equation}
For $v\in V_\eps$, we also write $\bar v:=\eps$. A map $f:V\to W$ of graded $\k$-supermodules is called {\em homogeneous} if $f(V^m_\eps)\subseteq W^m_\eps$ for all $m$ and $\eps$. 

For a free $\k$-module $W$ of finite rank $d$, we write $d=\dim W$. A graded $\k$-supermodule $V$ is free of finite rank if each $V^n_\eps$ is free of finite rank and we have $V^n=0$ for almost all $n$. Let $V$ be a free graded $\k$-supermodule of finite rank. 
A {\em homogeneous basis} of $V$ is a $\k$-basis all of whose elements are homogeneous. The {\em graded dimension} of $V$ is 
$$
\dim^q_\pi V:=\sum_{n\in\Z,\, \eps\in\Z/2}(\dim V^n_\eps)q^n\pi^\eps\in R.
$$
   
A (not necessarily unital) $\k$-algebra $A$ is called a {\em graded $\k$-superalgebra}, if $A$ is a graded $\k$-supermodule and $A_{\eps}^nA_{\de}^m\subseteq A_{\eps+\de}^{n+m}$ for all $\eps,\de$ and $n,m$.  
By a {\em graded $A$-supermodule} we understand an $A$-module $V$ which is a graded $\k$-supermodule and 
$A_{\eps}^nV_{\de}^m\subseteq V_{\eps+\de}^{n+m}$ for all $\eps,\de$ and $n,m$. 
We denote by $\mod{A}$ the category of all finitely generated graded $A$-supermodules and homogeneous $A$-homomorphisms. All ideals, submodules, etc. are assumed to be homogeneous. 
Given $V\in \mod{A}$, $n\in \Z$ and $\eps\in\Z/2\Z$, we denote by $q^n\pi^\eps V$ the graded $A$-supermodule which is the same as $V$ as an $A$-module but with $(q^n\pi^\eps V)^m_\de=V^{m-n}_{\de+\eps}$.


For a partially ordered set $I$ and $i\in I$, we let
\begin{equation}\label{E120218}
I^{>i}:=\{j\in I\mid j>i\}\quad\text{and}\quad I^{\geq i}:=\{j\in I\mid j\geq i\}. 
\end{equation}

\begin{Definition} \label{DCC} 
{\rm 
Let $A$ be a graded $\k$-superalgebra. 
A {\em heredity data} on $A$ consist of a finite partially ordered set $I$ and finite sets $X=\bigsqcup_{i\in I}X(i)$ and $Y=\bigsqcup_{i\in I}Y(i)$ of homogeneous elements of $A$ with distinguished {\em initial elements} $e_i\in X(i)\cap  Y(i)$ for each $i\in I$.
For $i\in I$, we set 
\begin{align*}
A^{>i}:=\spa(xy\mid i\in I^{>i},\, x\in X(i),\, y\in Y(i)).  
\end{align*}
We require that the following axioms hold: 
\begin{enumerate}
\item[{\rm (a)}] $B:=\{x y \mid i\in I,\, x\in X(i),\, y\in Y(i)\}$ is a basis of $A$; 

\item[{\rm (b)}] For all $i\in I$, $x\in X(i)$, $y\in Y(i)$ and $a\in A$, we have
$$
a x \equiv \sum_{x'\in X(i)}l^x_{x'}(a)x' \pmod{A^{>i}}
\ \ \text{and}\ \ 
ya \equiv \sum_{y'\in Y(i)}r^y_{y'}(a)y' \pmod{A^{>i}}
$$
for some $l^x_{x'}(a),r^y_{y'}(a)\in\k$;

\item[{\rm (c)}] For all $i,j\in I$ and $x\in X(i),\ y\in Y(i)$ we have  
\begin{align*}
&xe_i= x,\ e_ix= \de_{x,e_i}x,\ e_i y= y,\ ye_i= \de_{y,e_i}y 
\\
&e_jx=x\ \text{or}\ 0,\ ye_j=y\ \text{or}\ 0. 
\end{align*}
\end{enumerate}
}
\end{Definition}

If $A$ is endowed with a heredity data $I,X,Y$, we call $A$ {\em based quasi-hereditary (with respect to the poset $I$)}, and refer to $B$ as a {\em heredity basis} of $A$. \begin{Remark} 
{\rm 
The notion of a based quasi-hereditary algebra is closely related to that of a {\em split quasi-hereditary algebra} developed in 
 \cite{CPS,DS,Ro} for algebras over an arbitrary Noetherian commutative unital ring $\k$. In fact, if $\k$ is complete local Noetherian, which is sufficient for most applications, the two notions are equivalent. We refer the reader to \cite{greenOne} for the proof of a slightly stronger statement. 
}
\end{Remark}

We now record some basic results on a based quasi-hereditary algebra $A$ with heredity data as in Definition~\ref{DCC}. The proofs can be found in \cite{greenOne}.  Denote

\begin{eqnarray}
\label{EBIXY}
b^i_{x,y}&:=&xy\in B\qquad(i\in I,\ x\in X(i),\ y\in Y(i)).
\\
\label{EB(i)}
B(i)&:=&\{b^i_{x,y}\mid x\in X(i),\ y\in Y(i)\}\qquad(i\in I).
\end{eqnarray}

\begin{Lemma} \label{L120218} 
If $\Om$ is a coideal in $I$ then $A(\Om):=\spa(\sqcup_{i\in \Om}B(i))$ is an ideal in $A$. Moreover, if $\Theta$ is another coideal in $I$, we have $A(\Om)A(\Theta)\subseteq A(\Om\cap\Theta)$. 
\end{Lemma}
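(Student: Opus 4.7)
The first observation is that, once one parses Definition~\ref{DCC}(b) correctly, $A^{>i}$ coincides with $A(I^{>i})$, where $I^{>i}$ is the coideal defined in \eqref{E120218}. Hence it suffices to prove the first assertion of the lemma, that $A(\Omega)$ is a two-sided ideal for every coideal $\Omega$: the product statement will then follow at once from $A(\Omega)\cap A(\Theta)=A(\Omega\cap\Theta)$, which is immediate from axiom (a) together with the disjoint decomposition $B=\bigsqcup_{i\in I} B(i)$, and from the fact that the product of two two-sided ideals is contained in their intersection.

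My plan is to first prove the special case that $A^{>i}$ is a two-sided ideal, by Noetherian (downward) induction on $i$ in the poset $I$. For $i$ maximal, $A^{>i}=0$, giving the base case. For the inductive step, take $a\in A$ and a basis element $b^j_{x,y}=xy$ with $j>i$. Axiom (b) gives $ax=\sum_{x'\in X(j)} l^{x}_{x'}(a)\,x' + z$ with $z\in A^{>j}$. Right-multiplying by $y$, the first summand lands in $\mathrm{span}(B(j))\subseteq A^{>i}$, while $zy\in A^{>j}$ by the inductive hypothesis (which applies since $j>i$), and hence in $A^{>i}$ as $A^{>j}\subseteq A^{>i}$. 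Right multiplication by $a$ is handled symmetrically using the second half of axiom (b).

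With this auxiliary result, the general case is handled by the same computation: for $a\in A$ and $b^i_{x,y}\in B(i)\subseteq A(\Omega)$ with $i\in\Omega$, we have $a\cdot b^i_{x,y}=\sum_{x'} l^{x}_{x'}(a)\,b^i_{x',y}+zy$, where the first term lies in $\mathrm{span}(B(i))\subseteq A(\Omega)$, and $zy\in A^{>i}=A(I^{>i})\subseteq A(\Omega)$ because $\Omega$ is a coideal containing $i$, so $I^{>i}\subseteq \Omega$. Right multiplication is analogous.

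The main obstacle is purely organizational: one must recognize that the ideal property of $A^{>i}$ must be established first, via downward Noetherian induction, before the general coideal case can be closed. Without this intermediate step the ``error term'' $zy$ in the basic axiom-(b) computation cannot be controlled at all, since there is no a priori reason $z\in A^{>i}$ should satisfy $zy\in A^{>i}$. Once the induction is staged correctly, axioms (a) and (b) do all the work and axiom (c) plays no role.
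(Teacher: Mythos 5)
Your proof is correct, and it uses the natural argument one would expect: downward (Noetherian) induction in the finite poset $I$ to establish that each $A^{>i}$ is a two-sided ideal, using axiom (b) to peel off the leading term and the inductive hypothesis on $A^{>j}$ (for $j>i$) to absorb the error term $zy$; then the general coideal case and the product containment follow formally from $A^{>i}=A(I^{>i})\subseteq A(\Omega)$ for $i\in\Omega$ and from $A(\Omega)\cap A(\Theta)=A(\Omega\cap\Theta)$, which is immediate since $B=\bigsqcup_i B(i)$ is a $\k$-basis. Note that the present paper does not actually supply a proof of this lemma --- it states explicitly that the proofs of these basic results are given in the companion paper [KM$_1$] --- so there is no in-text argument to compare against, but your staging (establishing the ideal property of $A^{>i}$ \emph{before} tackling a general coideal, since otherwise the term $zy$ with $z\in A^{>i}$ is uncontrolled) is exactly the right organizational point, and you are also correct that axiom (c) plays no role here.
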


\begin{Lemma} \label{idemactionNew}
Let $i,j\in I$ and \(x \in X(i)\), \(y \in Y(i)\). 
\begin{enumerate}
\item[{\rm (i)}] $e_ie_j=\de_{i,j}e_i$
\item[{\rm (ii)}] If \(j \not \leq i\), then \(e_j x = y e_j = 0\).
\item[{\rm (ii)}] $yx\equiv f_i(y,x)e_i \pmod{A^{>i}}$
for some $f_i(y,x)\in\k$ with $f_i(e_i,e_i)=1$ and $f_i(y,x)=0$  unless $\deg(x)\deg(y)=1$.
\end{enumerate}
\end{Lemma}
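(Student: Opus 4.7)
My plan is to derive each part directly from the heredity axioms of Definition~\ref{DCC}, with Lemma~\ref{L120218} providing the key input for (ii).

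For (i), the identity $e_i^2 = e_i$ is immediate from axiom~(c) applied to $x = e_i \in X(i)$ (the rule $xe_i = x$). For $i \neq j$ I would exploit axiom~(c) from both sides. Since $e_j \in X(j)$, the fourth line of (c) with $x = e_j$ gives $e_ie_j \in \{0, e_j\}$; since $e_i \in Y(i)$, the same line with $y = e_i$ gives $e_ie_j \in \{0, e_i\}$. Because $e_i = b^i_{e_i,e_i}$ and $e_j = b^j_{e_j,e_j}$ are distinct elements of the basis $B$ when $i \neq j$, the two constraint sets intersect only in $\{0\}$, forcing $e_ie_j = 0$.

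For (ii), my plan is to apply Lemma~\ref{L120218} with the principal coideals $I^{\geq i}$ and $I^{\geq j}$. Since $x = b^i_{x, e_i} \in B(i) \subseteq A(I^{\geq i})$ and $e_j = b^j_{e_j,e_j} \in B(j) \subseteq A(I^{\geq j})$, the lemma yields $e_jx \in A(I^{\geq i} \cap I^{\geq j})$. The hypothesis $j \not\leq i$ means $i \notin I^{\geq j}$, hence $i \notin I^{\geq i} \cap I^{\geq j}$, so linear independence of the basis $B$ prevents $x$ from lying in $A(I^{\geq i} \cap I^{\geq j})$. On the other hand axiom~(c) forces $e_jx \in \{0, x\}$; combining these yields $e_jx = 0$. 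The assertion $ye_j = 0$ follows by an entirely symmetric argument using $y = b^i_{e_i, y} \in B(i)$.

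For (iii), the strategy is to apply axiom~(b) in both directions. Taking $a = y$ acting on $x \in X(i)$ from the left, and $a = x$ acting on $y \in Y(i)$ from the right, yields
$$
\sum_{x' \in X(i)} l^x_{x'}(y)\, x'\ \equiv\ yx\ \equiv\ \sum_{y' \in Y(i)} r^y_{y'}(x)\, y' \pmod{A^{>i}}.
$$
Rewriting $x' = b^i_{x',e_i}$ and $y' = b^i_{e_i,y'}$, both sums are expressed in the set $B(i)$, and they overlap only at $b^i_{e_i,e_i} = e_i$. Since $B(i)$ is linearly independent modulo $A^{>i}$, all coefficients on both sides must vanish except possibly at $e_i$. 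This forces $yx \equiv f_i(y,x)\,e_i \pmod{A^{>i}}$ with $f_i(y,x) := l^x_{e_i}(y) = r^y_{e_i}(x) \in \k$, and the normalization $f_i(e_i,e_i) = 1$ follows from $e_i^2 = e_i$ in (i). For the degree condition, the identity $e_i^2 = e_i$ combined with homogeneity of the nonzero element $e_i$ implies $\deg(e_i)^2 = \deg(e_i)$ in the unit monoid $\{q^n\pi^\eps\} \subset R$, whence $\deg(e_i) = 1$; homogeneity of $yx$ then forces $f_i(y,x) = 0$ unless $\deg(y)\deg(x) = 1$. The main conceptual step is the invocation of Lemma~\ref{L120218} in (ii); the rest is careful bookkeeping with the axioms.
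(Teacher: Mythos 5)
Your proof is correct, and the route you take — axiom (c) applied from both sides for (i), Lemma \ref{L120218} with the coideals $I^{\geq i}$ and $I^{\geq j}$ combined with axiom (c) for (ii), and the double application of axiom (b) plus linear independence of $B(i)$ modulo $A^{>i}$ (together with $\deg(e_i)=1$ forced by $e_i^2=e_i$) for (iii) — is exactly the standard derivation. The paper itself omits the proof and defers to \cite{greenOne}, where the argument is essentially the one you give.
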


Fix $i\in I$ and denote 
$\tilde A:=A/A^{>i}$, $\tilde a:=a+A^{>i}\in \tilde A$ for $a\in A$.  
By inflation, $\tilde A$-modules will be automatically considered as $A$-modules. The {\em standard module $\De(i)$} and the {\em right standard module $\De^\op(i)$} are defined as
$\De(i):=\tilde A \tilde e_i$ and $\De^\op(i):=\tilde e_i\tilde A$. 
We have that $\De(i)$ and $\De^\op(i)$ are free $\k$-modules with bases $\{v_x\mid x\in X(i)\}$ and $\{w_y\mid y\in Y(i)\}$, respectively, and the actions 
$$av_x=\sum_{x'\in X(i)}l_{x'}^x(a)v_{x'}\quad \text{and}\quad    
w_y a=\sum_{y'\in Y(i)}r_{y'}^y(a)w_{y'}
\qquad(a\in A).
$$ 
In particular, $e_iv_i=v_i$, $e_j\De(i)\neq 0$ implies $j\leq i$, and for all for all $x\in X(i)$ we have  $xv_i=v_x$, $e_i v_x=\de_{x,e_i}v_x$. We have a bilinear pairing $
(\cdot,\cdot)_i:\De(i)\times \De^\op(i)\to \k$ satisfying 
$
(v_x,w_y)_i= f_i(y,x)
$ 
with 
$
\rad \De(i)$
being a submodule of $\De(i)$.

Let $\k$ be a field. Then $L(i):=\De(i)/\rad\De(i)$ is an irreducible $A$-module and $$\{q^n\pi^\eps L(i)\mid i\in I,\ n\in\Z,\ \eps\in\Z/2\}$$ is a complete set of non-isomorphic irreducible graded $A$-supermodules. 
Recalling the ring $R$ from (\ref{ER}), the {\em bigraded decomposition numbers} 
\begin{equation}\label{EDNGr}
[\De(i):L(j)]_{q,\pi}=d_{i,j}(q,\pi):=\sum_{n\in\Z,\,\eps\in\Z/2}d_{i,j}^{n,\eps}q^n\pi^\eps\in R \qquad(i,j\in I),
\end{equation}
are determined from 
\begin{equation}\label{E080318}
d_{i,j}^{n,\eps}:=[\De(i):q^n\pi^\eps L(j)] \qquad(n\in\Z,\ \eps\in\Z/2).
\end{equation} 
Then $d_{ii}(q,\pi)=1$, and $d_{i,j}(q,\pi)\neq 0$ implies $j\leq i$.

\subsection{Additional properties of quasi-hereditary algebras}\label{SQHAReg}
Let $A$ be a based quasi-hereditary $\k$-superalgebra with heredity data $I,X,Y$. We continue reviewing results from \cite{greenOne} that will be needed later. 

A homogeneous anti-involution $\tau$ on $A$ is called {\em standard} if for all $i\in I$ there is a bijection $X(i)\iso Y(i),\ x\mapsto y(x)$ such that $y(e_i)=e_i$ and  
$\tau(x)= y(x)$. 
For a standard anti-involution $\tau$, we have 
$\tau(xy(x'))= x'y(x)$ for all $i\in I,\ x,x'\in X(i)$. 
If $\tau$ is a standard anti-involution on $A$ then $\{xy\mid (x,y)\in Z\}$ is a {\em cellular basis} of $A$ with respect to $\tau$. 

If $e\in A$ is a homogeneous idempotent, we consider the idempotent truncation $\bar A:=eAe$, and denote $\bar a:= eae\in\bar A$ for $a\in A$. We say that $e$ is {\em adapted} 
(with respect to $I,X,Y$) 
if for all $i\in I$ there exist subsets $\bar X(i)\subseteq X(i)$ and $\bar Y(i)\subseteq Y(i)$ such that for all $i\in I$ and $x\in X(i),\, y\in Y(i)$ we have:
\begin{equation*}\label{E230517}
ex= 
\left\{
\begin{array}{ll}
x &\hbox{if $x\in\bar X(i)$,}\\
0 &\hbox{otherwise,}
\end{array}
\right.
\qquad\text{and}\qquad\ \ 
ye=
\left\{
\begin{array}{ll}
y &\hbox{if $y\in\bar Y(i)$,}\\
0 &\hbox{otherwise.}
\end{array}
\right.
\end{equation*}
Set $
\bar I:=\{i\in I\mid \bar X(i)\neq \emptyset\neq \bar Y(i)\}.
$
We refer to $\bar I,\bar X,\bar Y$ as the {\em $e$-truncation} of $I,X,Y$. 
We say that $e$ is {\em strongly adapted} (with respect to $I,X,Y$)  if it is adapted and $
ee_i=e_ie= e_i
$
for all $i\in\bar I$.

\begin{Lemma} \label{L200517_4} 
Let $e\in A$ be an adapted idempotent. 
\begin{enumerate}
\item[{\rm (i)}] If $\tau$ is a standard anti-involution of $A$ such that $\tau(e)=e$, then $\{xy\mid i\in \bar I, x\in\bar X(i), y\in\bar Y(i)\}$ is a cellular basis of $\bar A$ with respect to the restriction $\tau|_{\bar A}$. 

\item[{\rm (ii)}] If $e$ is strongly adapted then $\bar A$ is based quasi-hereditary with heredity data $\bar I$, $\bar X:=\bigsqcup_{i\in \bar I}\bar X(i)$, $\bar Y:=\bigsqcup_{i\in \bar I}\bar Y(i)$. 
\end{enumerate}
\end{Lemma}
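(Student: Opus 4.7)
The plan is to reduce both parts to a careful tracking of how the idempotent truncation $a\mapsto eae$ interacts with the heredity basis $B$ and the filtration $\{A^{>i}\}$ of $A$. Part (ii) will largely reuse the computations from part (i), so I would handle (i) first.

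For (i), I would first establish that the proposed set is a basis of $\bar A = eAe$. Since $B$ is a basis of $A$, the $\k$-module $\bar A$ is spanned by the elements $e(xy)e = (ex)(ye)$ for $xy\in B$; by adaptedness, each such element equals $xy$ when $x \in \bar X(i)$ and $y \in \bar Y(i)$ (forcing $i\in\bar I$) and is $0$ otherwise, so the surviving elements form a basis. For the cellular structure, note that $\tau(\bar A) = e\tau(A)e = \bar A$ since $\tau(e) = e$ and $e$ is even, so $\tau|_{\bar A}$ is a homogeneous anti-involution. Applying $\tau$ to $ex = x$ gives $y(x)e = y(x)$, so the bijection $x\mapsto y(x)$ of $A$ restricts to a bijection $\bar X(i)\to\bar Y(i)$ (surjectivity by applying $\tau$ to $ye = y$). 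The cellular multiplication axiom for $\bar a\cdot xy(x')$ is then derived by writing $\bar a\cdot xy(x') = e(ax)\cdot y(x')$ (using $ex = x$), expanding via Definition~\ref{DCC}(b) in $A$, projecting by $e$ on the left (which kills the $x''\notin \bar X(i)$ terms), and observing that the error, initially in $A^{>i}$ since $A^{>i}$ is a two-sided ideal by Lemma~\ref{L120218}, lies in $eA^{>i}e$.

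The central algebraic step, which I expect to be the main obstacle, is the identification of $eA^{>i}e$ with $\bar A^{>i}:=\spa(x''y'' : j\in \bar I^{>i},\, x''\in\bar X(j),\, y''\in\bar Y(j))$. This reduces to computing $e(x''y'')e = (ex'')(y''e)$ on basis elements with $j>i$: by adaptedness this equals $x''y''$ when $x''\in\bar X(j)$ and $y''\in\bar Y(j)$ and is $0$ otherwise, while indices $j$ for which $\bar X(j)$ or $\bar Y(j)$ is empty contribute nothing and are correctly excluded from $\bar I$.

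For (ii), I would verify each clause of Definition~\ref{DCC} for $\bar A$ equipped with data $\bar I, \bar X, \bar Y$. Axiom (a) follows from part (i). The initial-element axiom (c) descends from $A$ once strong adaptedness supplies $ee_i = e_ie = e_i$ for $i\in\bar I$, so that $e_i\in\bar X(i)\cap\bar Y(i)$ and $\bar e_j = e_j$; then the relations $xe_i = x$, $e_ix = \delta_{x,e_i}x$, $e_iy=y$, $ye_i = \delta_{y,e_i}y$, and $e_jx = x$ or $0$ hold in $\bar A$ because they already hold in $A$ and $\bar X, \bar Y\subseteq X, Y$. Axiom (b) is exactly the cellular multiplication relation proved in (i), now read as a heredity relation modulo $\bar A^{>i} = eA^{>i}e$, which yields the desired heredity data on $\bar A$.
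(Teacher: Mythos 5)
The paper does not prove Lemma~\ref{L200517_4} itself; it is imported from the companion paper \cite{greenOne} (``The proofs can be found in \cite{greenOne}''), so there is no in-paper argument to compare against and I can only assess your proposal on its own merits.

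Part~(i) is essentially complete, and your identification of $eA^{>i}e$ with $\bar A^{>i}$ is the right pivot. The one point you leave implicit is \emph{why} the error term lies in $eA^{>i}e$ rather than merely in $eA^{>i}$: the right factor $y(x')\in\bar Y(i)$ satisfies $y(x')e=y(x')$ by adaptedness, so the error $ez\,y(x')$ (with $z\in A^{>i}$) lies in $eAe\cap A^{>i}=eA^{>i}e$. This is worth spelling out because precisely this step is where part~(ii) breaks.

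In part~(ii) there is a genuine gap at axiom~(b). You claim it ``is exactly the cellular multiplication relation proved in (i),'' but these are different statements. Part~(i) controls $\bar a\cdot xy(x')$, where the $y(x')$ on the right is what keeps everything inside $\bar A$. Axiom~(b) of Definition~\ref{DCC} controls $\bar a\cdot x$ alone, with no $y$-factor, and mere adaptedness gives $ex=x$ but says nothing about $xe$; so the error $ez$ lands a priori only in $eA^{>i}$, which strictly contains $eA^{>i}e$ whenever some $\bar Y(j)\subsetneq Y(j)$ with $\bar X(j)\neq\varnothing$, $j>i$. The missing ingredient is strong adaptedness: for $x\in\bar X(i)$ with $i\in\bar I$, one has $xe=xe_ie=xe_i=x$, using $xe_i=x$ from Definition~\ref{DCC}(c) and $e_ie=e_i$ from strong adaptedness. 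This gives $x\in eAe$, hence $\bar ax\in eAe$, hence
$$ez=\bar ax-\sum_{x'\in\bar X(i)}l^{x}_{x'}(\bar a)\,x'\in eAe\cap A^{>i}=eA^{>i}e=\bar A^{>i},$$
as required (the symmetric argument handles $y\bar a$). You invoke strong adaptedness only to secure axiom~(c) and the initial elements $e_i\in\bar X(i)\cap\bar Y(i)$; it is equally essential to axiom~(b), and the argument for~(b) should be run on its own rather than by appeal to the cellular relation of~(i). With this correction your proof goes through.
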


\begin{Lemma} \label{L210218} 
Let \(\k\) be a field, and $e\in A$ be an adapted idempotent. 
\begin{enumerate}
\item[{\rm (i)}] $eL(i)=0$ if and only if $e\De(i)\subseteq \rad\De(i)$.
\item[{\rm (ii)}] $eL(i)=0$ if and only if $yex\in A^{>i}$ for all $x\in X(i)$ and $y\in Y(i)$.

\item[{\rm (iii)}] $eL(i)=0$ if and only if $yx\in A^{>i}$ for all $x\in \bar X(i)$ and $y\in \bar Y(i)$.

\item[{\rm (iv)}] $eL(i)=0$ for all $i\in I\setminus \bar I$. 
In particular, there exists a subset $\bar I'\subseteq\bar I$ such that $\{eL(i)\mid i\in \bar I'$ is a complete and irredundant set of irreducible $\bar A$-modules up to isomorphism. 
\end{enumerate}
\end{Lemma}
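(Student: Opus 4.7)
The strategy is to work through the bilinear pairing $(\cdot,\cdot)_i\colon\De(i)\times\De^\op(i)\to\k$ introduced in Subsection~\ref{SSBQHA}, whose radical is precisely $\rad\De(i)$, so that $L(i)=\De(i)/\rad\De(i)$. Part (i) is then a restatement of this definition: $eL(i)=0$ if and only if $e\De(i)\subseteq\rad\De(i)$. The substantive step is (ii); parts (iii) and (iv) will follow by routine manipulations with the adapted idempotent $e$ together with a standard application of Schur-like functors.

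For (ii), I would argue as follows. Since $\{v_x\}_{x\in X(i)}$ and $\{w_y\}_{y\in Y(i)}$ are bases of $\De(i)$ and $\De^\op(i)$, the containment $e\De(i)\subseteq\rad\De(i)$ is equivalent to $(ev_x,w_y)_i=0$ for all such $x,y$. By axiom (b) of Definition~\ref{DCC}, $ev_x=\sum_{x'\in X(i)}l^x_{x'}(e)v_{x'}$, so
\[
(ev_x,w_y)_i=\sum_{x'\in X(i)}l^x_{x'}(e)f_i(y,x').
\]
On the other hand, applying axiom (b) to $ex$ inside $A$, using the fact that $A^{>i}$ is a two-sided ideal (Lemma~\ref{L120218}), and then applying Lemma~\ref{idemactionNew}(iii), one obtains
\[
yex\;\equiv\;\Bigl(\sum_{x'\in X(i)}l^x_{x'}(e)f_i(y,x')\Bigr)e_i\pmod{A^{>i}}.
\]
Since $e_i\in B(i)$ is a basis element lying outside $A^{>i}$, the element $yex$ belongs to $A^{>i}$ exactly when this common scalar vanishes, which proves (ii).

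For (iii), I would unwind the definition of an adapted idempotent: for $x\in X(i)$, $y\in Y(i)$ we have $yex=y(ex)=(ye)x$, which equals $yx$ when $x\in\bar X(i)$ and $y\in\bar Y(i)$, and equals $0$ otherwise (as then $ex=0$ or $ye=0$). Thus the condition in (ii) collapses to the condition in (iii). For the first assertion of (iv), when $i\in I\setminus\bar I$ either $\bar X(i)$ or $\bar Y(i)$ is empty, making the condition in (iii) vacuous; hence $eL(i)=0$. For the classification of irreducibles, I would set $\bar I':=\{i\in\bar I\mid eL(i)\neq 0\}$ and invoke the standard Schur-functor argument: the exact functor $V\mapsto eV$ from $A$-mod to $\bar A$-mod sends each irreducible $A$-module either to $0$ or to an irreducible $\bar A$-module; non-isomorphic irreducibles with nonzero image go to non-isomorphic $\bar A$-modules; and every irreducible $\bar A$-module $M$ occurs as $eL$ for some irreducible $A$-module $L$, realized as a suitable composition factor of $Ae\otimes_{\bar A}M$ on which $e$ acts non-trivially.

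The principal technical obstacle is the calculation in (ii), specifically the twofold use of axiom (b) needed to match the form-theoretic expression $(ev_x,w_y)_i$ with the algebraic expression $yex\bmod A^{>i}$. A background prerequisite from \cite{greenOne}, which I take as given, is the identification of $\rad\De(i)$ with the radical of $(\cdot,\cdot)_i$; once this and (ii) are in place, the remaining parts are essentially formal.
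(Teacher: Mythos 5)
Your proof is correct, and it uses exactly the machinery the paper sets up for this purpose: the identification of $\rad\De(i)$ with the radical of $(\cdot,\cdot)_i$, axiom (b) of Definition~\ref{DCC} together with $yx'\equiv f_i(y,x')e_i\pmod{A^{>i}}$ for part (ii), the definition of an adapted idempotent for (iii)--(iv), and the standard Schur-functor argument for the classification of simple $\bar A$-modules. The paper itself only recalls this lemma from \cite{greenOne} without proof, but your argument is the expected one and I see no gaps.
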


We now turn to more subtle additional properties of heredity data, which have to do with the super-structure. Symbols $X_\0, Y_\0$ are understood in the sense of (\ref{E210218_2}). 

\begin{Definition}\label{D290517}
{\rm 
Suppose that $\fa\subseteq A_\0$ is a subalgebra. The heredity data $I,X,Y$ of $A$ is {\em $\fa$-conforming} if $I,X_\0,Y_\0$ is a heredity data for $\fa$. 
If, in addition, $A$ is unital and $\a$ is a {\em unital} subalgebra, i.e. $1_\a=1_A$, we call $(A,\a)$ a {\em unital pair}. 
}
\end{Definition}

If the heredity data $I,X,Y$ of $A$ is $\fa$-conforming then $\fa$ is recovered as 
$
\fa=\spa(xy\mid i\in I,\ x\in X(i)_\0,\ y\in Y(i)_\0), 
$
so sometimes we will just speak of a {\em conforming heredity data}. Even though in some sense $\fa$ is redundant in the definition of conormity, it is often convenient to use it. For example, we deal with generalized Schur algebras $T^A_\fa(n,d)$, which will only depend on $A$ and $\fa$, but not on $I,X,Y$.

\begin{Lemma} 
Suppose that $\k$ is a local ring with the maximal ideal $\m$ and the quotient field $F=\k/\m$. Then:
\begin{enumerate}
\item[{\rm (i)}] $A/\m A\cong A\otimes_\k F$ is based quasi-hereditary $F$-superalgebra. 
\item[{\rm (ii)}] For each $i\in I$, denote the corresponding canonical irreducible $A/\m A$-module by  $L_{A/\m A}(i)$ and denote by $L_A(i)$ the $A$-module obtained from $L_{A/\m A}(i)$ by inflation. Then $$\{q^n\pi^\eps L_A(i)\mid i\in I,\ n\in\Z,\ \eps\in\Z/2\}$$ is a complete and irredundant set of irreducible graded $A$-supermodules up to a homogeneous isomorphism.
\end{enumerate} 
\end{Lemma}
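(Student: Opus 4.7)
The plan for (i) is to show that the heredity basis descends. Since $I$ and each $X(i), Y(i)$ are finite, the heredity basis $B$ is finite, so by axiom (a) the algebra $A$ is free of finite rank over $\k$; extending scalars along $\k \to F$ yields an $F$-basis $\bar B = \{\bar b : b \in B\}$ of $A/\m A = A \otimes_\k F$. Because $\m$ is central in $A$ and concentrated in bidegree $(0,\bar 0)$, the ideal $\m A$ is a homogeneous sub-superideal, so the $\Z \times \Z/2$-grading and the superalgebra structure descend to $A/\m A$. Setting $\bar X(i) := \{\bar x : x \in X(i)\}$, $\bar Y(i) := \{\bar y : y \in Y(i)\}$ with initial elements $\bar e_i$, axiom (a) is immediate from the basis statement above.

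For axioms (b) and (c), the key point is that by Lemma~\ref{L120218} applied to the coideal $I^{>i}$, the subspace $A^{>i}$ is spanned by the sub-basis $\bigsqcup_{j > i} B(j) \subseteq B$, so its image in $A/\m A$ is precisely $(A/\m A)^{>i}$. The defining identities $ax \equiv \sum l^x_{x'}(a) x' \pmod{A^{>i}}$ and $ya \equiv \sum r^y_{y'}(a) y' \pmod{A^{>i}}$ then reduce modulo $\m$ to their analogs in $A/\m A$ with coefficients $\overline{l^x_{x'}(a)}, \overline{r^y_{y'}(a)} \in F$, and the identities in axiom (c) pass to the quotient literally. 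This is the one place where some care is needed --- correctly identifying the image of $A^{>i}$ under reduction --- but Lemma~\ref{L120218} makes it transparent; the rest of (i) is routine base change.

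For (ii), the strategy is to show that every irreducible graded $A$-supermodule is inflated from $A/\m A$, and then invoke the classification over $F$ recalled in Section~\ref{SSBQHA}. Let $L \in \mod A$ be irreducible. Since $A$ has finite rank as a $\k$-module, $L$ is a finitely generated $\k$-module. The subset $\m L$ is an $A$-sub-supermodule (as $\m$ is central and of bidegree $(0,\bar 0)$), hence by irreducibility equals either $0$ or $L$; the second possibility is ruled out by Nakayama's lemma for the local ring $\k$. Thus $\m L = 0$, and $L$ inflates from an irreducible graded $A/\m A$-supermodule; conversely, inflation along $A \onto A/\m A$ carries irreducibles to irreducibles. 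This yields a bijection between isomorphism classes of irreducible graded supermodules over $A$ and over $A/\m A$ which identifies $L_{A/\m A}(i)$ with $L_A(i)$, and transports the field-case classification $\{q^n \pi^\eps L_{A/\m A}(i) \mid i \in I,\ n \in \Z,\ \eps \in \Z/2\}$ to the desired one for $A$.
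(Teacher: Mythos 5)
The paper does not provide a proof of this lemma; it is recalled from \cite{greenOne} (``We continue reviewing results from \cite{greenOne}\ldots''), so there is no internal argument to compare against. That said, your proof is correct and is the natural one: for (i), since $B$ is a finite $\k$-basis of $A$ and each $A^{>i}=A(I^{>i})=\spa(\bigsqcup_{j>i}B(j))$ is by definition the span of a sub-basis of $B$, base change to $F$ preserves the basis, the span of each sub-basis, and all three heredity axioms; for (ii), the Nakayama argument shows $\m L=0$ for every irreducible $L\in\mod{A}$ (as $L$ is finitely generated over $\k$ because $A$ has finite $\k$-rank), so the inflation functor gives a bijection on irreducibles, and the field-case classification from \S\ref{SSBQHA} transports. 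One small attribution slip: the fact that $A^{>i}$ is spanned by $\bigsqcup_{j>i}B(j)$ is part of the definition of $A^{>i}=A(I^{>i})$, not a consequence of Lemma~\ref{L120218} (which asserts that this span is a two-sided ideal); the conclusion you draw from it is correct either way. Likewise the phrase ``$\m$ is central in $A$'' is imprecise since $A$ need not be unital, but the intended meaning --- that scalar multiplication by elements of $\m$ commutes with the $A$-action and preserves the bigrading, so $\m L$ is a graded sub-supermodule --- is right.
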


If $\k$ is a local ring, we call $A$ {\em basic} if the modules $L_{A/\m A}(i)$ are $1$-dimensional as $F$-vector spaces, equivalently if the modules $L_{A}(i)$ are free of rank $1$ as $\k$-modules. 

If the heredity data $I,X,Y$ of $A$ is $\fa$-conforming then by definition $\fa$ is also based quasi-hereditary and has its own standard $\fa$-modules $\De_\fa(i)$ and simple $\fa$-modules $L_\fa(i)$. The following theorem is proved in \cite[Theorem 4.13]{greenOne}.

\begin{Theorem}\label{MorBasic}
Let $\k$ be local and \(A\) be a based quasi-hereditary graded \(\k\)-superalgebra with $\fa$-conforming heredity data $I,X,Y$. Suppose that $(A,\a)$ is a unital pair. Then there exists an $\fa$-conforming heredity data $I,X',Y'$ with the same ideals $A(\Om)$ and $\fa(\Om)$ and such that the new initial elements $\{e'_i \mid i \in I\}$ are primitive idempotents in \(\mathfrak{a}\)  satisfying $e_ie_i'=e_i'=e_i'e_i$ 
and $e_i'\equiv e_i\pmod{\a^{>i}}$ for all $i\in I$. Moreover, setting $f:=\sum_{i\in I}e_i'$, we have:
\begin{enumerate}
\item[{\rm (i)}] $f$ is strongly adapted with respect to $(I,X',Y')$, so that $\bar A$ is based quasi-hereditary with heredity data $(I,\bar X',\bar Y')$. 
\item[{\rm (ii)}] $(I,\bar X',\bar Y')$ is $\bar\fa$-conforming;
\item[{\rm (iii)}] $\bar \a$ is basic and if \(A_{\overline{1}} \subset J(A)\) then \(\bar{A}\) is a basic as well;
\item[{\rm (iv)}] The functors
$$
\funF_A:\mod{A}\to\mod{\bar A},\ V\mapsto f V
\quad\text{and}\quad 
\funF_\mathfrak{a}:\mod{\mathfrak{a}}\to\mod{\bar{\mathfrak{a}}},\ V\mapsto fV
$$ 
are equivalences of categories, such that 
\begin{align*}
&\funF_A(L_A(i))\cong L_{\bar{A}}(i),\quad \funF_A(\De_A(i))\cong \De_{\bar{A}}(i),\\
 &\funF_\mathfrak{a}(L_\mathfrak{a}(i))\cong L_{\bar{\mathfrak{a}}}(i),\quad \funF_\mathfrak{a}(\De_\mathfrak{a}(i))\cong \De_{\bar{\mathfrak{a}}}(i).
\end{align*}
\end{enumerate}
\end{Theorem}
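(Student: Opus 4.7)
The plan is to construct the new initial elements $e_i'$ via idempotent lifting in the even subalgebra $\a$. Working inductively top-down on the poset $I$, for each $i$ I examine the surjection $e_i\a e_i \twoheadrightarrow e_i(\a/\a^{>i})e_i \cong \End_\a(\De_\a(i))^{\op}$. The target is a local ring: $\De_\a(i)$ is cyclic with simple top $L_\a(i)$, hence indecomposable, and a finitely generated module over local Noetherian $\k$ whose base change to the residue field $F=\k/\m$ has simple top has local endomorphism ring. Hence the image of $e_i$ is a primitive idempotent in the quotient, and standard lifting produces a primitive idempotent $e_i'\in e_i\a e_i$ with $e_i\equiv e_i'\pmod{\a^{>i}}$; primitivity in $e_i\a e_i$ transfers to primitivity in $\a$. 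Iterating and correcting any non-orthogonality between $e_i'$ and previously constructed $e_j'$'s (using the orthogonality $e_ie_j=\de_{ij}e_i$ from Lemma~\ref{idemactionNew}(i)) produces a pairwise orthogonal family $\{e_i'\mid i\in I\}$ of primitive idempotents in $\a_\0$.

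Given the $e_i'$, I would build the new heredity data by setting $X'(i):=\{xe_i'\mid x\in X(i)\}$, $Y'(i):=\{e_i'y\mid y\in Y(i)\}$, and declaring $e_i'$ itself to be the new initial element (corresponding to $e_i\in X(i)\cap Y(i)$). Since $e_i-e_i'\in\a^{>i}\subseteq A^{>i}$ and $A^{>i}$ is an ideal (Lemma~\ref{L120218}), each $xe_i'\equiv x\pmod{A^{>i}}$; so the candidate basis $B':=\{xe_i'y\mid i,\,x\in X(i),\,y\in Y(i)\}$ agrees with the old basis $B$ modulo the filtration by the ideals $A(\Om)$. Linear independence of $B'$ follows by a unitriangular change-of-basis argument, the multiplicative axioms (b), (c) of Definition~\ref{DCC} carry over since they are congruences modulo $A^{>i}$, and $\a$-conformity persists because $e_i'\in\a_\0$. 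The ideals $A(\Om)$ and $\a(\Om)$ are thereby preserved.

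The main obstacle I anticipate is verifying strong adaptedness of $f:=\sum_{i\in I}e_i'$. Pairwise orthogonality gives $fe_i'=e_i'f=e_i'$ at once, reducing the task to showing $fx\in\{0,x\}$ for each $x\in X'(i)$ (and dually for $Y'$). Writing $fx=\sum_j e_j'x$: for $j\not\leq i$, one has $e_j'x=e_j'(e_jx)=0$ by Lemma~\ref{idemactionNew}(ii) applied to $e_j$, since $e_j'\in e_j\a e_j$. The delicate analysis is needed for $j\leq i$: one must show that the contribution for $j<i$ either vanishes or is absorbed into the term for $j=i$, via a bookkeeping argument driven by the inductive construction of $e_i'$ and the orthogonality of primitive idempotents feeding distinct simples.

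Once strong adaptedness is proved, Lemma~\ref{L200517_4}(ii) delivers (i) together with the truncated data $\bar X',\bar Y'$, from which $\bar\a$-conformity in (ii) follows by inspection on the even part. For (iii), $\bar\a=f\a f$ is basic because each diagonal block $e_i'\a e_i'$ is local (it is $\End_\a(\a e_i')^{\op}$, where $\a e_i'$ is the projective cover of $L_\a(i)$), so $L_{\bar\a}(i)$ is one-dimensional over $F$. Under the extra hypothesis $A_\1\subset J(A)$, the super-Jacobson identity $J(A)=J(\a)\oplus A_\1$ gives that $e_i'Ae_i'$ is also local, so $\bar A$ is basic as well. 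Finally, (iv) is the standard Morita-equivalence theorem for an idempotent $f$ containing exactly one primitive representative from each isomorphism class of simples; the compatibility of the functors $\funF_A,\funF_\a$ with simples and standard modules is immediate from $fL_A(i)\cong L_{\bar A}(i)$ and $f\De_A(i)\cong\De_{\bar A}(i)$.
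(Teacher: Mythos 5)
The paper does not actually prove Theorem~\ref{MorBasic}; it cites the proof from \cite[Theorem 4.13]{greenOne}. So a direct comparison is not possible, but your attempt can be checked on its own terms, and it has a genuine gap.

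The idempotent-lifting step is sound: $e_i\a e_i\twoheadrightarrow e_i(\a/\a^{>i})e_i\cong\End_\a(\De_\a(i))^{\op}$ has local target, so a primitive $e_i'\in e_i\a e_i$ with $e_i'\equiv e_i\pmod{\a^{>i}}$ exists, and orthogonality can be arranged. The problem is the candidate heredity data $X'(i):=\{xe_i'\mid x\in X(i)\}$, $Y'(i):=\{e_i'y\mid y\in Y(i)\}$. You assert that axioms~(b),~(c) of Definition~\ref{DCC} ``carry over since they are congruences modulo $A^{>i}$.'' That is true of~(b) but false of~(c): axiom~(c) consists of \emph{exact} identities, and your $X'(i)$ does not satisfy them. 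Concretely, for $x'=xe_i'$ with $j:=j(x)$ the unique index satisfying $e_jx=x$, write $e_j'=e_j+u$ with $u=e_jue_j\in\a^{>j}$. Then
\begin{align*}
e_j'x'=(e_j+u)xe_i'=x'+uxe_i',
\end{align*}
and there is no reason for $uxe_i'\in A^{>j}e_i'$ to vanish, so $e_j'x'\notin\{0,x'\}$ in general. The same defect is what makes your ``delicate analysis'' of $fx'$ for $j<i$ seem necessary; with the right choice of $X'$ it disappears.

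The fix is to two-sidedly modify: take $X'(i):=\{\,e_{j(x)}'\,x\,e_i'\mid x\in X(i)\,\}$ and $Y'(i):=\{\,e_i'\,y\,e_{k(y)}'\mid y\in Y(i)\,\}$, where $j(x)$ (resp.\ $k(y)$) is the unique index with $e_{j(x)}x=x$ (resp.\ $ye_{k(y)}=y$), with new initial element $e_i'=e_{i}'e_ie_i'$. Then for $x'=e_{j(x)}'xe_i'$ one gets $x'e_i'=x'$, $e_j'x'=\de_{j,j(x)}x'$ by pairwise orthogonality of the $e_j'$, and $e_i'x'=\de_{x',e_i'}x'$ (since $j(x)=i$ forces $x=e_i$ by axiom~(c) for the original data). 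Axiom~(c) is then satisfied exactly, $B'$ is unitriangular against $B$ modulo the filtration by the $A(\Om)$, so it is a basis, and strong adaptedness of $f=\sum_i e_i'$ is immediate: $e_j'x'=\de_{j,j(x)}x'$ gives $fx'=x'$, and likewise on the $Y'$ side. Your arguments for (ii)--(iv) (truncation via Lemma~\ref{L200517_4}(ii), locality of $e_i'\a e_i'$, the super Jacobson radical inclusion, and the standard Morita equivalence for $f$) are fine once this correction is in place.
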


\section{Combinatorics}\label{SecComb}
We fix $n\in\Z_{>0}$ and $d\in\Z_{\geq 0}$, and a based quasi-hereditary graded \(\k\)-superalgebra $A$ with $\fa$-conforming heredity data $I,X,Y$ and the corresponding heredity basis $B= \bigsqcup_{i\in I}B(i)$. 
We denote by $H$ the set of all non-zero homogeneous elements of $A$. 
Let 
\begin{eqnarray*}\label{EBA}
B_\a&:=\{xy\mid i\in I,\, x\in X(i)_\0,\, y\in Y(i)_\0\},
\\
\label{EBC}
B_\c&:=\{xy\mid i\in I,\, x\in X(i)_\1,\, y\in Y(i)_\1\},
\end{eqnarray*}
so that 
\begin{align}\label{AaBasis}
B= B_{\fa}\sqcup B_{\c} \sqcup B_{\bar 1}. 
\end{align}
Note that $(A,\a)$ is a good pair in the sense of \cite{greenTwo}. We now review the theory developed in that paper following \cite{EK1}.

Without loss of generality, we assume that 
$$I=\{0,1,\dots, \ell\}$$ 
with the total order 
\begin{equation}\label{E190218}
0\prec 1\prec\dots\prec\ell
\end{equation}
 refining the fixed partial order on $I$.

\subsection{Compositions and partitions}\label{SSPar}
We set $\Nodes:= \Z_{>0}\times \Z_{>0}$, 
$\Nodes^I:= I\times\Z_{>0}\times \Z_{>0}$ 
and refer to the elements of $\Nodes$ and $\Nodes^I$ 
as {\em nodes}. 
Define a partial order $\leq$ on $\Nodes$ and $\Nodes^I$ as follows: $(r,s)\leq(r',s')$ if and only if $r\le r'$, and $s\le s'$, and 
\begin{equation}\label{E010617}
\text{$(i,r,s)\leq(i',r',s')$ if and only if 
$i=i'$, $r\le r'$, and $s\le s'$.}
\end{equation} 

We denote by $\La(n)$ (resp. $\La(I)$) the set of all compositions $\la=(\la_1,\la_2,\dots,\la_n)$ (resp. $(\la_0,\la_1,\dots,\la_\ell)$) with non-negative integer parts. 
For such a composition $\la$, we denote by $|\la|$ the sum of its parts, and set 
$$
\La(n,d):=\{\la\in\La(n)\mid |\la|=d\},\quad \La(I,d):=\{\la\in\La(I)\mid |\la|=d\}.
$$ 
We denote by $0\in\La(n,0)$ the composition with all zero parts. 
The Young diagram of $\la\in \La(n,d)$ is  
$$
\Brackets\la:=\{(r,s)\in \Nodes \mid s\leq \la_r\} .
$$ 

We define $\La^I(n,d)$ 
 to be the set of tuples $\bla=(\la^{(0)},\la^{(1)},\dots,\la^{(\ell)})$ of compositions $\la^{(i)}\in \La(n)$ such that $|\bla|:=\sum_{i\in I}|\la^{(i)}|=d$. For \(\bla \in \La^I(n,d)\), we set 
 $$
 \|\bla\|:= (|\la^{(0)}|, \ldots, | \la^{(\ell)}|) \in \La(I,d).
 $$ 
 Let $\bla=(\la^{(0)},\dots,\la^{(\ell)})\in\La^I(n,d)$. 
We denote by
$$
\Brackets\bla=\Brackets{\la^{(0)}}\sqcup\dots\sqcup \Brackets{\la^{(\ell)}}\subset \Nodes^I
$$
the Young diagram of $\bla$, where 
$$
\Brackets{\la^{(i)}}=\{(i,r,s)\in \Nodes^I \mid s\leq \la^{(i)}_r\} \qquad (i\in I).
$$

Let $\unlhd$ be the usual {\em dominance partial order} on $\La(n,d)$, i.e. 
$$
\la\unlhd \mu\quad\text{if and only if}\quad \sum_{r=1}^s\la_r\leq \sum_{r=1}^s\mu_r\quad\text{for all $s=1,\dots,n$}. 
$$
We denote by $\unlhd_I$ the following partial order on $\La(I,d)$: 
\begin{equation}\label{E120218_1}
\la\unlhd_I \mu\quad\text{if and only if}\quad \sum_{j\geq i}\la_j\leq \sum_{j\geq i}\mu_j\quad\text{for all $i\in I$}. 
\end{equation}
We denote by $\leq$ the partial order on $\La^I(n,d)$ defined as follows:
\begin{equation}\label{E140218_2}
\begin{split}
\bla\leq \bmu\quad\text{if and only if}\quad &\text{either\ }\|\bla\|\lhd_I\|\bmu\|
\\ &\text{or $\|\bla\|=\|\bmu\|$ and } \la^{(i)}\unlhd \mu^{(i)}\ \text{for all $i\in I$}. 
\end{split}
\end{equation}

We label the nodes of $\Brackets\bla$ with numbers $1,\dots,d$ going from left to right along the rows, starting with the first row of $\Brackets{\la^{(0)}}$, then going along the second row of $\Brackets{\la^{(0)}}$, and so on until the $n$th row of $\Brackets{\la^{(0)}}$, then along the first row of $\Brackets{\la^{(1)}}$, the second  row of $\Brackets{\la^{(1)}}$, and so on. 
For $1\leq k\leq d$, we denote the $k$th node of $\bla$ by $N_k(\bla)$. 

Let $i\in I$ and $d_i:=|\la^{(i)}|$. We can also label  the nodes of $\Brackets{\la^{(i)}}$ with numbers $1,\dots,d_i$ going from left to right along the rows, starting with the first row of $\Brackets{\la^{(i)}}$, then going along the second row of $\Brackets{\la^{(i)}}$, and so on. 
For $1\leq k\leq d_i$, we denote the $k$th node of $\la^{(i)}$ by $N_k(\la^{(i)})$. Note that $N_k(\la^{(i)})=N_{k+d_0+\dots+d_{i-1}}(\bla)$ for $1\leq k\leq d_i$. 

The {\em row stabilizer of $\la^{(i)}$} is the subgroup $\Si_{\la^{(i)}}\leq \Si_{d_i}$, consisting of all $\si\in\Si_{d_i}$ such that for all $k=1,\dots,d_i$ we have that $N_k(\la^{(i)}),N_{\si k}(\la^{(i)})$ are in the same row of $\Brackets{\la^{(i)}}$. 
The {\em row stabilizer of $\bla$} is the subgroup $\Si_\bla\leq \Si_d$, consisting of all $\si\in\Si_d$ such that for all $k=1,\dots,d$ we have that $N_k(\bla),N_{\si k}(\bla)$ are in the same row of some component $\Brackets{\la^{(i(k))}}$. We have $\Si_\bla\cong \Si_{\la^{(0)}}\times\dots\times \Si_{\la^{(\ell)}}$. 

If $\la\in\La(n)$ and $\la_1\geq\la_2\geq\dots\geq \la_n$ we say that $\la$ is a {\em partition} and write $\la\in\La_+(n)$. If $\bla=(\la^{(0)},\la^{(1)},\dots,\la^{(\ell)})\in\La^I(n,d)$ is such that each $\la^{(i)}$ is a partition, we say that $\bla$ is a {\em multipartition} and write $\bla\in\La_+^I(n,d)$. If $\mu\in\La(n)$, there is a unique partition $\mu_+\in\La_+^I(n,d)$ obtained from $\mu$ by permuting its parts.  

\subsection{Colored letters and tableaux}\label{SSColLet}

We introduce {\em colored alphabets}   
$$
\Alph_{X}:=[1,n]\times X \quad\text{and}\quad  \Alph_{X(i)}:=[1,n]\times X(i),
$$
so that $\Alph_{X}=\bigsqcup_{i\in I}\Alph_{X(i)}$. 
The colored alphabets $\Alph_{Y}$ and $\Alph_{Y(i)}$ are defined similarly. 
We think of elements of $\Alph_{X}$ as $X$-colored letters, and often write $l^x$ instead of $(l,x)\in \Alph_{X}$. 
 If  $L=l^x\in \Alph_{X}$, we denote
$$
\letter(L):=l\qquad\text{and}\qquad \col(L):=x.
$$
For all $i\in I$, we fix arbitrary total orders `$<$' on the sets $\Alph_{X(i)}$ which satisfy $r^x< s^x$ if  $r< s$ (in the standard order on $[1,n]$). 
Similarly we fix total orders on and on the sets $\Alph_{Y(i)}$ with $r^y< s^y$ if $r< s$.

All definitions of this subsection which involve $X$ have obvious analogues for $Y$. 
Let $\bla=(\la^{(0)},\dots,\la^{(\ell)})\in\La^I(n,d)$. Fix $i\in I$ and let $d_i:=|\la^{(i)}|$. 

An {\em $X(i)$-colored $\la^{(i)}$-tableau} is a function
$T:\Brackets{\la^{(i)}}\to \Alph_{X(i)}$ such that the following condition holds:
\begin{enumerate}
\item[$\bullet$] if $M\neq N$ are nodes in the same row of $\Brackets{\la^{(i)}}$, then $T(M) = T(N)$ implies $\col(T(M))\in X(i)_\0$.
\end{enumerate} 
We denote the set of all $X(i)$-colored $\la^{(i)}$-tableaux by $\Tab^{X(i)}(\la^{(i)})$. 

Recall the partial order (\ref{E010617}) on the nodes of $\la^{(i)}$ and a fixed  total order on $\Alph_{X(i)}$. 
Let $T\in \Tab^{X(i)}(\la^{(i)})$. Then $T$ is called {\em row standard} if the following condition holds:
\begin{enumerate}
\item[$\bullet$] If $M< N$ are nodes in the same row of $\Brackets{\la^{(i)}}$, then $T(M) \leq T(N)$.  
\end{enumerate}
On the other hand, $T$ is called {\em column standard} if the following condition holds:
\begin{enumerate}
\item[$\bullet$] If $M< N$ are nodes in the same column of $\Brackets{\la^{(i)}}$, then $T(M) \leq T(N)$ and the equality is allowed only if $\col(T(M))\in X(i)_\1$.
\end{enumerate}
Finally, $T$ is called {\em standard} if it is both row and column standard. Denote 
\begin{eqnarray*}
\Rst^{X(i)}(\la^{(i)})&:=&\{T\in \Tab^{X(i)}(\la^{(i)})\mid \text{$T$ is row standard}\},
\\
\Cst^{X(i)}(\la^{(i)})&:=&\{T\in \Tab^{X(i)}(\la^{(i)})\mid \text{$T$ is column standard}\},
\\
\Std^{X(i)}(\la^{(i)})&:=&\{T\in \Tab^{X(i)}(\la^{(i)})\mid \text{$T$ is standard}\}.
\end{eqnarray*}
Recalling the idempotents $e_i\in X(i)\cap Y(i)$, the {\em initial $\la^{(i)}$-tableau} $T^{\la^{(i)}}$ is
\begin{align*}
T^{\la^{(i)}}: \Brackets{\la^{(i)}}\to \Alph_{X(i)}, \ (i,r,s)\mapsto r^{e_i}.
\end{align*}
Note that $T^{\la^{(i)}}$ is in both $\Std^{X(i)}(\la^{(i)})$ and $\Std^{Y(i)}(\la^{(i)})$. 

For $T\in \Tab^{X(i)}(\la^{(i)})$, recalling the notation $N_k(\la^{(i)})$ from \S\ref{SSPar}, we denote 
\begin{eqnarray}
T_k&:=&T(N_k(\la^{(i)}))\in\Alph_{X(i)}\qquad(1\leq k\leq d_i),
\\
\label{E121116One}
\L^{T}&:=&T_1\cdots T_{d_i}\in \Alph_{X(i)}^{d_i}, 
\\
\L^{\la^{(i)}}&:=&\L^{T^{\la^{(i)}}}. 
\end{eqnarray}
Tableaux $S,T\in\Tab^{X(i)}(\la^{(i)})$ are called {\em row equivalent} if there exists $\si\in\Si_{\la^{(i)}}$ such that for all $k=1,\dots,d_i$, we have $S_k=T_{\si(k)}$. The following is clear:

\begin{Lemma} \label{LStdEquivOne} 
For every $T\in\Tab^{X(i)}(\la^{(i)})$, there exists a unique $S\in\Rst^{X(i)}(\la^{(i)})$ which is row equivalent to $T$.
\end{Lemma}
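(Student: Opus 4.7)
The plan is to prove existence by explicit construction (sorting each row) and uniqueness by the elementary observation that a weakly increasing arrangement of a multiset under a total order is determined.

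For existence, given $T\in\Tab^{X(i)}(\la^{(i)})$, I would define $S$ row by row: in each row $r$ of $\Brackets{\la^{(i)}}$, consider the multiset of letters $\{T(N) \mid N \text{ a node in row } r\}\subset \Alph_{X(i)}$, and list these along the nodes of row $r$ in weakly increasing order with respect to the fixed total order on $\Alph_{X(i)}$. By construction, for each row the sequence of entries of $S$ is a permutation of the sequence of entries of $T$, so there exists $\si\in\Si_{\la^{(i)}}$ witnessing row equivalence. I would then check that $S$ really is in $\Tab^{X(i)}(\la^{(i)})$, i.e., that the coloring constraint is preserved: if two distinct nodes in the same row of $S$ carry the same letter $l^x$, then those same two nodes of $T$ carry a pair of equal letters as well (since $T$ and $S$ agree as multisets in each row), whence $x\in X(i)_\0$ by the hypothesis that $T\in\Tab^{X(i)}(\la^{(i)})$. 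That $S$ is row standard is immediate from the construction.

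For uniqueness, suppose $S, S'\in\Rst^{X(i)}(\la^{(i)})$ are both row equivalent to $T$. Row equivalence is an equivalence relation, so $S$ and $S'$ are row equivalent to each other; in particular, in each fixed row they list exactly the same multiset of letters from $\Alph_{X(i)}$. Both $S$ and $S'$ list that multiset in weakly increasing order with respect to the total order on $\Alph_{X(i)}$, and such a listing is obviously unique. Hence $S(N) = S'(N)$ for every node $N$, so $S = S'$.

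There is no real obstacle here; the only subtlety is checking that the coloring constraint on tableaux is preserved under sorting, which is automatic because sorting does not change the multiset of entries in any row, hence does not create any new pairs of equal letters. This is presumably why the authors remark that the statement is clear.
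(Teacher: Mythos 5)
Your proof is correct and is exactly the obvious argument the authors have in mind when they state the lemma as "clear" without proof: sort each row, note that sorting preserves the multiset of entries in each row (hence the coloring constraint and row equivalence), and observe that the weakly increasing listing of a multiset under a total order is unique.
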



For a function $\T:\Brackets\bla\to \Alph_{X}$ and $i\in I$, we set  $T^{(i)}:=\T|_{\Brackets{\la^{(i)}}}$ to be the restriction of $\T$ to $\Brackets{\la^{(i)}}\subseteq \Brackets\bla$. We write $\T=(T^{(0)},\dots,T^{(\ell)})$, keeping in mind that the restrictions $T^{(i)}$ determine $\T$ uniquely. An {\em $X$-colored $\bla$-tableau} is a function
$\T:\Brackets\bla\to \Alph_{X}$ such that the restrictions $T^{(i)}$ are $X(i)$-colored $\la^{(i)}$-tableau for all $i\in I$. We denote the set of all $X$-colored $\bla$-tableaux by $\Tab^X(\bla)$. 

Let $\T\in \Tab^X(\bla)$. Then $\T$ is called {\em row standard}  (resp. {\em column standard}, resp. {\em standard}) if so are all the $T^{(i)}$ for $i=0,\dots,\ell$. We use the notation $\Rst^X(\bla)$, $\Cst^X(\bla)$ and $\Std^X(\bla)$ to denote the sets of all row standard, column standard and standard $X$-colored $\bla$-tableaux, respectively. For example, we have the {\em initial $\bla$-tableau} $\T^\bla=(T^{\la^{(0)}},\dots,T^{\la^{(\ell)}})\in \Std^X(\bla)\cap \Std^Y(\bla)$. We denote 
\begin{eqnarray}
\T_k&:=&\T(N_k(\bla))\in\Alph_{X}\qquad(1\leq k\leq d),
\\
\label{E121116}
\L^\T&:=&\T_1\cdots \T_d=\L^{T^{(0)}}\cdots \L^{T^{(\ell)}}\in \Alph_{X}^d, 
\\
\L^\bla&:=&\L^{\T^\bla}=\L^{\la^{(0)}}\cdots \L^{\la^{(\ell)}}. 
\end{eqnarray}

Tableaux $\Stab,\T\in\Tab^X(\bla)$ are called {\em row equivalent} if there exists $\si\in\Si_\bla$ such that for all $k=1,\dots,d$, we have $\Stab_k=\T_{\si(k)}$. The following is clear:

\begin{Lemma} \label{LStdEquiv} 
For every $\T\in\Tab^X(\bla)$, there exists a unique $\Stab\in\Rst^X(\bla)$ which is row equivalent to $\T$.
\end{Lemma}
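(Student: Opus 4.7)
The plan is to reduce the multipartition statement directly to Lemma~\ref{LStdEquivOne} for single partitions, exploiting the factorization of the row stabilizer $\Si_\bla \cong \Si_{\la^{(0)}}\times\cdots\times\Si_{\la^{(\ell)}}$ recorded in \S\ref{SSPar}. The key observation is that, under the identification $\T=(T^{(0)},\dots,T^{(\ell)})$, the row equivalence relation on $\Tab^X(\bla)$ restricts componentwise to the row equivalence relation on each $\Tab^{X(i)}(\la^{(i)})$, precisely because permutations in $\Si_\bla$ decompose uniquely as products $\si=\si_0\cdots\si_\ell$ with $\si_i\in\Si_{\la^{(i)}}$, and row standardness of $\T$ is the conjunction of row standardness of each $T^{(i)}$.

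For existence, I would apply Lemma~\ref{LStdEquivOne} to each component: for every $i\in I$ there is a unique $S^{(i)}\in\Rst^{X(i)}(\la^{(i)})$ row equivalent to $T^{(i)}$ via some $\si_i\in\Si_{\la^{(i)}}$. Setting $\Stab:=(S^{(0)},\dots,S^{(\ell)})$ yields an element of $\Rst^X(\bla)$, and the composite $\si:=\si_0\cdots\si_\ell\in\Si_\bla$ witnesses that $\Stab$ is row equivalent to $\T$. For uniqueness, suppose $\Stab,\Stab'\in\Rst^X(\bla)$ are both row equivalent to $\T$, hence to each other via some $\tau\in\Si_\bla$. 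Decomposing $\tau=\tau_0\cdots\tau_\ell$ shows that $S^{(i)}$ and $S'^{(i)}$ are row equivalent elements of $\Rst^{X(i)}(\la^{(i)})$, and Lemma~\ref{LStdEquivOne} forces $S^{(i)}=S'^{(i)}$ for each $i$, whence $\Stab=\Stab'$.

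There is no real obstacle here beyond bookkeeping; the argument is a formal consequence of Lemma~\ref{LStdEquivOne} together with the product decomposition of $\Si_\bla$. The only point worth checking carefully is that the notion of row standardness for $\T\in\Tab^X(\bla)$ is defined as row standardness of each restriction $T^{(i)}$, which matches the componentwise structure of the row stabilizer and hence of the equivalence classes; once this is observed, both existence and uniqueness are immediate.
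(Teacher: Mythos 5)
Your argument is correct; the paper itself offers no proof (the lemma is prefaced by ``The following is clear''), and your componentwise reduction to Lemma~\ref{LStdEquivOne} via the factorization $\Si_\bla\cong\Si_{\la^{(0)}}\times\cdots\times\Si_{\la^{(\ell)}}$ is exactly the justification the authors leave implicit. Nothing is missing.
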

 
The notions introduced in this section generalize the classical notion of a standard tableau which we now recall. Given $\la\in\La_+(n,d)$, a {\em classical $\la$-tableau} is a function
$T:\Brackets{\la}\to [1,n]$. 
A classical $\la$-tableau $T$ is called {\em standard} if whenever $M< N$ are nodes in the same row of $\Brackets{\la}$, then $T(M) \leq T(N)$, and whenever $M< N$ are nodes in the same column of $\Brackets{\la}$, then $T(M) < T(N)$.

\subsection{Triples}\label{SComb}
For $r,s\in\Z$ we denote $[r,s]:=\{t\in\Z\mid r\leq t\leq s\}$. 
We fix $n\in\Z_{>0}$ and $d\in\Z_{\geq 0}$. 
For a set $Z$, the elements of $Z^d$ are referred to as {\em words} (of length $d$). 
The words are usually written as 
$
z_1z_2\cdots z_d\in Z^d.
$
For $\bz\in Z^d$ and $\bz'\in Z^{d'}$ we denote by $\bz\bz'\in Z^{d+d'}$ the concatenation of $\bz$ and $\bz'$. For 
$z\in Z$, we denote $z^d:=z\cdots z\in Z^d$.

The symmetric group $\Si_d$ acts on the right on $Z^d$ by place permutations:
$$
(z_1\cdots z_d)\si=z_{\si 1}\cdots z_{\si d}.
$$
For $\bz,\bz'\in Z^d$, we write $\bz\sim\bz'$ if $\bz\si=\bz'$ for some $\si\in \Si_d$. If $Z_1,\dots,Z_N$ are sets, then $\Si_d$ acts on $Z_1^d\times\dots\times Z_N^d$ diagonally:
$$
(\bz^1,\dots,\bz^N)\si=(\bz^1\si,\dots,\bz^N\si).
$$
The set of the corresponding orbits is denoted 
$
(Z_1^d\times\dots\times Z_N^d)/\Si_d,
$ 
and the orbit of $(\bz^1,\dots,\bz^N)$ is denoted $[\bz^1,\dots,\bz^N]$. 
We write 
$(\bz^1,\dots,\bz^N)\sim (\bw^1,\dots,\bw^N)$ 
if $[\bz^1,\dots,\bz^N]= [\bw^1,\dots,\bw^N]$.  

Let $P=P_\0\sqcup P_\1$ be a set of homogeneous elements of $A$,  
and $\Seq^P (n,d)$ be the set of all triples 
$$
(\bp,\br, \bs) = ( p_1\cdots p_d,\, r_1\cdots r_d,\, s_1\cdots s_d ) \in  P^d\times[1,n]^d\times[1,n]^d
$$
such that for any $1\leq k\neq l\leq d$ we have 
$(p_k,r_k,s_k)=(p_l,r_l, s_l)$ 
only if $p_k\in P_\0$. 
For $(\bp,\br,\bs)\in\Seq^P(n,d)$, we consider the stabilizer 
$$\Si_{\bp,\br,\bs}:=\{\si\in \Si_d\mid (\bp,\br,\bs)\si=(\bp,\br,\bs)\},
$$
and denote by\, ${}^{\bp,\br,\bs}\D$ the set of the shortest coset representatives for $\Si_{\bp,\br,\bs}\backslash\Si_n$. 

We fix a total order `$<$' on 
$P\times[1,n]\times[1,n]$. 
Then we also have a total order on $\Seq^P(n,d)$ defined as follows: $(\bp,\br,  \bs)< (\bp',\br',  \bs')$ if and only if there exists $l\in[1,d]$ such that $(p_k,r_k,s_k)=(p_k',r_k',s_k')$ for all $k<l$ and $(p_l,r_l,s_l)<(p_l',r_l',s_l')$. Denote
\begin{equation}\label{ESeq0}
\Seq^P_0(n,d)=\{(\bp, \br, \bs) \in \Seq^P(n,d)\mid (\bp, \br, \bs)\leq (\bp, \br, \bs) \sigma\ \text{for all}\ \sigma \in \mathfrak{S}_d\}.
\end{equation}
For $(\bp,\br, \bs) \in \Seq^P(n,d)$, $\bp' \in P^d$ and $\si\in\Si_d$, 
we define
\begin{align*}
\lan\bp, \br,  \bs\ran
&:=\sharp\{(k,l)\in[1,d]^2\mid k<l,\ p_k,p_l\in P_\1,\ (p_k,r_k,s_k)> (p_l,r_l, s_l)\},
\\
\lan \bp, \bp'\ran
&:=\sharp\{(k,l)\in[1,d]^2\mid k>l,\  p_k,p_l'\in P_\1\}.
\\
\lan \si;\bp\ran&:=\sharp\{(k,l)\in[1,d]^2\mid k<l,\  \si^{-1}k>\si^{-1}l,\ p_k,p_l\in P_\1\}.
\end{align*}

Let $(\bb,\br, \bs)\in\Seq^B(n,d)$. 
For $b\in B$ and $r,s\in [1,n]$, we denote 
\begin{align}\label{E190617}
[\bb,\br, \bs]^b_{r,s}&:=\sharp\{k\in[1,d]\mid  (b_k,r_k,s_k)=(b,r,s)\},
\\
[\bb,\br, \bs]^! &:=\prod_{b\in B,\, r,s\in [1,n]} [\bb,\br, \bs]^b_{r,s}!=\prod_{b\in B_\0,\, r,s\in [1,n]} [\bb,\br, \bs]^b_{r,s}!
\\
\label{subalg2}
[\bb,\br, \bs]^!_{\a} &:=\prod_{ b\in B_\a,\ r,s\in [1,n]}[\bb,\br, \bs]^b_{r,s}!,\quad
[\bb,\br, \bs]^!_{\c} :=\prod_{ b\in B_\c,\ r,s\in [1,n]}[\bb,\br, \bs]^b_{r,s}!.
\end{align}

\subsection{Generalized RSK}
Recall the notation introduced in (\ref{EBIXY}) and (\ref{EB(i)}). Let 
\begin{align*}
\textup{Std}_2(I,n,d):=\{(\bla, \Stab, \T) \mid \bla\in\La^I_+(n,d),\ \Stab\in\Std^X(\bla),\ \T\in\Std^Y(\bla)\}.
\end{align*}

 \begin{Lemma}\label{TabBij}
There is a bijection between the sets \(\Seq^B(n,d)/\Si_d\) and  $\textup{Std}_2(I,n,d)$.
\end{Lemma}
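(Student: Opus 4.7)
The plan is to reduce the statement to a color-by-color application of a super RSK correspondence. First, I would exploit the color decomposition $B = \bigsqcup_{i \in I} B(i)$ to break up an orbit $[\bb,\br,\bs]$. Any representative can be permuted so that the entries of $\bb$ are grouped together by color; letting $d_i$ denote the number of indices $k$ with $b_k \in B(i)$, we have $d = \sum_{i\in I} d_i$, and the $\Si_d$-orbit decomposes uniquely as a tuple of $\Si_{d_i}$-orbits on sub-triples whose $B$-entries all lie in $B(i)$. Formally, this gives a bijection
\[
\Seq^B(n,d)/\Si_d \;\bijection\; \prod_{i\in I}\Seq^{B(i)}(n,d_i)/\Si_{d_i},
\]
so it suffices to construct, for each fixed $i\in I$, a bijection between $\Seq^{B(i)}(n,d_i)/\Si_{d_i}$ and pairs $(S^{(i)},T^{(i)}) \in \Std^{X(i)}(\la^{(i)}) \times \Std^{Y(i)}(\la^{(i)})$ for some partition $\la^{(i)}\in \La_+(n,d_i)$.

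Fixing $i$, each $b_k \in B(i)$ is uniquely $b^i_{x_k,y_k}$ for some $(x_k,y_k)\in X(i)\times Y(i)$. The defining condition of $\Seq^{B(i)}(n,d_i)$ says that the 4-tuples $(x_k,y_k,r_k,s_k)$ are pairwise distinct unless $b^i_{x_k,y_k}\in B_{\bar 0}$, i.e.\ unless $\bar{x_k}=\bar{y_k}$. Thus a $\Si_{d_i}$-orbit is the same datum as a \emph{super biword} in the product alphabet $\Alph_{X(i)}\times \Alph_{Y(i)}$, where a biletter $(x^r, y^s)$ is declared even precisely when $\bar x = \bar y$, and odd otherwise.

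I would then invoke the super RSK correspondence (as developed in \cite{EK1}, following the Berele--Regev construction) to obtain a bijection between such super biwords and pairs of super-semistandard tableaux $(S^{(i)},T^{(i)})$ of a common shape $\la^{(i)}$, with $S^{(i)}$ filled by the first coordinates in $\Alph_{X(i)}$ and $T^{(i)}$ by the second in $\Alph_{Y(i)}$. The key compatibility to record is that the row/column rules of Section~\ref{SSColLet} — even letters may repeat in rows but not columns, odd letters conversely — match exactly the output conditions of super RSK applied to each coordinate of the biletter independently, using the fixed total orders on $\Alph_{X(i)}$ and $\Alph_{Y(i)}$. Taking the product over $i \in I$ and assembling $\bla=(\la^{(0)},\dots,\la^{(\ell)})$, $\Stab=(S^{(0)},\dots,S^{(\ell)})$, $\T=(T^{(0)},\dots,T^{(\ell)})$ yields the asserted bijection with $\Std_2(I,n,d)$.

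The main obstacle is the bookkeeping in the middle step: one must check that the parity assigned to each biletter of the combined alphabet really is $\bar x + \bar y \pmod 2$ (which is the parity of $b^i_{x,y}$ in $A$) and that this is what super-RSK insertion respects, so that after insertion the two recording tableaux separately satisfy the super-standard conditions for their respective alphabets $\Alph_{X(i)}$ and $\Alph_{Y(i)}$. Once this parity/ordering alignment is in place, bijectivity and the equality of shapes of $S^{(i)}$ and $T^{(i)}$ follow verbatim from the standard bumping argument; no further structural input from $A$ is needed beyond the color decomposition already used.
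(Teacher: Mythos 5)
Your proposal takes essentially the same approach as the paper: decompose the orbit set by color into a product over $i\in I$, then invoke a super RSK correspondence for each fixed color to match $\Seq^{B(i)}(n,d_i)/\Si_{d_i}$ with pairs of same-shape super semistandard tableaux on $\Alph_{X(i)}$ and $\Alph_{Y(i)}$. The paper does the single-color case first and then the product decomposition, but that is only an organizational difference. One refinement worth noting: the correspondence you need is not the two-parity super RSK of Berele--Regev applied to a product alphabet with parity $\bar x+\bar y$, but rather the \emph{fourfold} RSK of Bonetti--Senato--Venezia (phrased as ``signed two-row arrays'' in La Scala--Nardozza--Senato, which is what the paper cites), because the repetition constraint on biletters is governed by $\bar x+\bar y$ while the row/column conditions of $\Std^{X(i)}$ and $\Std^{Y(i)}$ are governed separately by $\bar x$ and $\bar y$; all four sign combinations $(\bar x,\bar y)$ behave differently. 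You correctly flag this parity/ordering alignment as the point to check, and it is exactly what the fourfold correspondence supplies.
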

\begin{proof}
We first prove a one-color version of the claim. Fix \(i \in I\), and define
\begin{align*}
\La_+^i(n,d) := \{ \bla \in  \La_+^I(n,d) \mid \la^{(j)} = \delta_{i,j}\la^{(i)} \textup{ for all }j \in I\}.
 \end{align*}
 In other words, \( \La^i_+(n,d)\) is the subset of multipartitions concentrated in the \(i\)th component. The colored alphabets \(\Alph_{X(i)}\) and \(\Alph_{Y(i)}\), with orders chosen in \S\ref{SSColLet},  are {\em alphabets} in the terminology of \cite{LNS}. The set of {\em signed two-row arrays} on \(\Alph_{X(i)}\) and \(\Alph_{Y(i)}\) described in \cite[Definition 4.1]{LNS} can be seen to be in bijection with \(\Seq^{B(i)}(n,d)/\Si_{d}\), via the assignment
\begin{align*}
\begin{bmatrix}
    r_1^{x_1}      & r_2^{x_2} & \cdots & r_d^{x_d} \\
    s_1^{y_1}      & s_2^{y_2} & \cdots & s_d^{y_d}
\end{bmatrix}
\mapsto
[b^i_{x_1,y_1} \cdots b^i_{x_d,y_d}, r_1 \cdots r_d, s_1 \cdots s_d].
\end{align*}
It is proved in \cite[Theorem 4.2]{LNS} (translating the main result of \cite{BSV} from the context of the four-fold algebra) that the set of signed two-row arrays on \(\Alph_{X(i)}\) and \(\Alph_{Y(i)}\) are in bijection with the set (in the language of \cite{LNS}) of pairs of same-shape super semistandard Young tableaux on \(\Alph_{X(i)}\) and \(\Alph_{Y(i)}\). In view of \cite[Definition 2.2]{LNS} and \S\ref{SSColLet}, this latter set is in bijection with
\begin{align*}
\textup{Std}_2(i,n,d):=\{ (\bla, S, T) \mid \bla \in \La_+^i(n,d),\, S \in \Std^{X(i)}(\la^{(i)}),\, T \in \Std^{Y(i)}(\la^{(i)})\}.
\end{align*}
Thus, for all \(i \in I\) and \(d \in \ZZ_{\geq 0}\), there is a bijection between \(\Seq^{B(i)}(n,d)/\Si_d\) and \(\textup{Std}_2(i,n,d)\).

Now note that \(\Seq^B(n,d)/\Si_d\) is in bijection with the set
\begin{align*}
\bigsqcup_{\substack{d_0, \ldots, d_\ell \in \ZZ_{\geq 0}\\ d_0 + \cdots + d_\ell =d}}\left(\prod_{i \in I}\Seq^{B(i)}(n,d_i)/\Si_{d_i}\right),
\end{align*}
and restriction of tableaux gives a bijection between \(\textup{Std}_2(I,n,d)\) and 
\begin{align*}
\bigsqcup_{\substack{d_0, \ldots, d_\ell \in \ZZ_{\geq 0}\\ d_0 + \cdots + d_\ell =d}} \left(
\prod_{i \in I}
\textup{Std}_2(i,n,d_i)
\right),
\end{align*}
so the bijection in the general case follows from the one-color case.
\end{proof}

\section{Generalized Schur algebras}\label{SecGenSch}
We continue to work with a fixed $d\in Z_{\geq 0}$, $n\in\Z_{>0}$, and based quasi-hereditary graded \(\k\)-superalgebra $A$ with $\fa$-conforming heredity data $I,X,Y$ and the corresponding heredity basis 
$$B=B_\a\sqcup B_\c\sqcup B_\1=\bigsqcup_{i\in I} B(i),$$
as in (\ref{AaBasis}), (\ref{EB(i)}).  
Define the structure constants $\kappa^b_{a,c}$ of $A$ from
\begin{equation}\label{EStrConst}
ac=\sum_{b\in B}\kappa^b_{a,c} b
\qquad(a,c\in A).
\end{equation}
More generally, for $\bb=b_1\cdots b_d\in B^d$ and $\ba=a_1\cdots a_d,\,\bc=c_1\cdots c_d\in A^d$, we define 
\begin{equation}\label{ESCB}
\ka^\bb_{\ba,\bc}:=\ka^{b_1}_{a_1,c_1}\dots \ka^{b_d}_{a_d,c_d}.
\end{equation}

\subsection{The algebras $S^A(n,d)$ and $T^A_\a(n,d)$}\label{SSA}

The matrix algebra $M_n(A)$ is naturally a superalgebra. For $r,s\in [1, n]$ and $a\in A$, we denote
\begin{equation}\label{EXirs}
\xi_{r,s}^a:=a E_{r,s}\in M_n(A). 
\end{equation}
There is a right action of $\Si_d$ on $M_n(A)^{\otimes d}$ with (super)algebra automorphisms, such that for all $a_1,\dots,a_d\in H$, $r_1,s_1,\dots,r_d,s_d\in [1,n]$ and $\si\in \Si_d$, we have 
$$(\xi_{r_1,s_1}^{a_1}\otimes\dots\otimes \xi_{r_d,s_d}^{a_d})^\si=
(-1)^{\lan\si;\ba\ran} \xi_{r_{\si1},s_{\si1}}^{a_{\si1}}\otimes\dots\otimes \xi_{r_{\si d},s_{\si d}}^{a_{\si d}}.
$$
The algebra $S^A(n,d)$ is defined as the algebra of invariants 
$$
S^A(n,d):=(M_n(A)^{\otimes d})^{\Si_d}.
$$

For $(\ba,\br,\bs)\in\Seq^H(n,d)$, we define elements 
\begin{equation}\label{EXiDef}
\xi_{\br,\bs}^\ba:= \sum_{\si\in{}^{\ba,\br,\bs}\D} 
(\xi_{r_1,s_1}^{a_1} \otimes \cdots \otimes \xi_{r_d,s_d}^{a_d})^\si\in S^A(n,d).
\end{equation}
Then 
$
\{\xi_{\br,\bs}^\bb\mid [\bb,\br,\bs]\in\Seq^B(n,d)/\Si_d\}
$ is a basis of $S^A(n,d)$. As noted in \cite[Lemma 3.3]{greenTwo}, we have:

\begin{Lemma} \label{LXiZero} 
If $(\ba',\br',\bs') \sim (\ba,\br,\bs)$ are elements of  $\Seq^H(n,d)$, then $\xi_{\br',\bs'}^{\ba'}=(-1)^{\lan\ba,\br,  \bs\ran+\lan\ba',\br',  \bs'\ran}\xi_{\br,\bs}^\ba. 
$
\end{Lemma}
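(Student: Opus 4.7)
The plan is to connect $(\ba',\br',\bs')$ to $(\ba,\br,\bs)$ by an explicit permutation and to transport the defining sum through that permutation, accumulating the sign from the superalgebra action.

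First I would pick $\tau\in\Si_d$ with $(\ba',\br',\bs')=(\ba,\br,\bs)\tau$ and check directly from the definition of the $\Si_d$-action on $M_n(A)^{\otimes d}$ that
\begin{align*}
\xi^{a'_1}_{r'_1,s'_1}\otimes\cdots\otimes\xi^{a'_d}_{r'_d,s'_d} = (-1)^{\lan\tau;\ba\ran}\bigl(\xi^{a_1}_{r_1,s_1}\otimes\cdots\otimes\xi^{a_d}_{r_d,s_d}\bigr)^{\tau}
\end{align*}
in $M_n(A)^{\otimes d}$.

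Next I would use the conjugation identity $\Si_{\ba',\br',\bs'}=\tau^{-1}\Si_{\ba,\br,\bs}\tau$ to see that $\tau\cdot{}^{\ba',\br',\bs'}\D$ is a full set of coset representatives for $\Si_{\ba,\br,\bs}\backslash\Si_d$. The key subsidiary point is that any $\gamma\in\Si_{\ba,\br,\bs}$ fixes every index $k$ with $a_k\in P_\1$: by the defining constraint of $\Seq^H(n,d)$, positions with odd letters carry pairwise distinct triples, so they are fixed by any permutation stabilizing $(\ba,\br,\bs)$. Consequently $\lan\gamma;\ba\ran=0$ and the tensor $\xi^{a_1}_{r_1,s_1}\otimes\cdots\otimes\xi^{a_d}_{r_d,s_d}$ is $\gamma$-invariant. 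Substituting the previous display into the defining sum for $\xi^{\ba'}_{\br',\bs'}$ and reindexing each $\tau\si'$ by the unique shortest coset representative $\pi\in{}^{\ba,\br,\bs}\D$ in its coset produces
\begin{align*}
\xi^{\ba'}_{\br',\bs'}=(-1)^{\lan\tau;\ba\ran}\,\xi^{\ba}_{\br,\bs}.
\end{align*}

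The remaining combinatorial step, which I expect to be the main obstacle, is the parity identity $\lan\tau;\ba\ran\equiv\lan\ba,\br,\bs\ran+\lan\ba',\br',\bs'\ran\pmod 2$. I would reindex the second summand via $u=\tau i$, $v=\tau j$, so that $\lan\ba',\br',\bs'\ran$ becomes $\#\{(u,v):\tau^{-1}u<\tau^{-1}v,\ a_u,a_v\in P_\1,\ (a_u,r_u,s_u)>(a_v,r_v,s_v)\}$. Then, for each unordered pair $\{u,v\}$ with $u<v$ and both letters odd (so that the two triples are automatically distinct by the $\Seq^H(n,d)$ constraint), a four-case check over the two binary choices---the ordering of $(a_u,r_u,s_u)$ vs.\ $(a_v,r_v,s_v)$ and whether $\tau^{-1}$ inverts $(u,v)$---shows that the pair contributes to $\lan\ba,\br,\bs\ran+\lan\ba',\br',\bs'\ran$ modulo $2$ exactly when $\tau^{-1}u>\tau^{-1}v$. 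Summing over all such pairs reproduces $\lan\tau;\ba\ran$ and completes the proof; everything else is routine bookkeeping about the tensor action.
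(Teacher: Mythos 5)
Your argument is correct, and since the paper only cites this result to \cite[Lemma 3.3]{greenTwo} without reproducing a proof, there is nothing to diverge from: this is exactly the expected argument. All three ingredients check out — the observation that the $\Seq^H(n,d)$ condition forces every element of $\Si_{\ba,\br,\bs}$ to fix the odd positions (hence $\lan\gamma;\ba\ran=0$ and the basic tensor is genuinely $\gamma$-invariant, so the sum over ${}^{\ba',\br',\bs'}\D$ transports to the sum over ${}^{\ba,\br,\bs}\D$ via left multiplication by $\tau$), and the four-case parity count showing $\lan\tau;\ba\ran\equiv\lan\ba,\br,\bs\ran+\lan\ba',\br',\bs'\ran\pmod 2$, which also confirms the sign is independent of the choice of $\tau$.
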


For $(\ba,\bp,  \bq),\, (\bc,\bu,  \bv) \in \Seq^H(n,d)$ and $(\bb,\br,  \bs) \in \Seq^B(n,d)$, the structure constants  $f_{\ba,\bp, \bq;\bc,  \bu,\bv}^{\bb,\br,\bs}$ are defined from from
\begin{equation}\label{EFBABBBCNew}
 \xi^\ba_{\bp,  \bq}\, \xi^\bc_{\bu,  \bv}=\sum_{[\bb,\br,\bs]\in\Seq^B(n,d)/\Si_d} f_{\ba,\bp, \bq;\bc,  \bu,\bv}^{\bb,\br,\bs}\,  \xi^\bb_{\br,  \bs}.
\end{equation}

\begin{Proposition} \label{CPR} {\rm \cite[(6.14)]{EK1}} 
Let $(\ba,\bp,  \bq),\, (\bc,\bu,  \bv) \in \Seq^H(n,d)$ and $(\bb,\br,  \bs) \in \Seq^B(n,d)$. Then 
$$
f_{\ba,\bp, \bq;\bc,  \bu,\bv}^{\bb,\br,\bs}= \sum_{\ba', \bc',\bt} 
(-1)^{\lan\ba,\bp,  \bq\ran 
+\lan\bc,\bu,  \bv\ran
+ \lan\ba',\br,  \bt\ran
+\lan\bc',\bt, \bs\ran+\lan\ba',\bc'\ran} \, 
\kappa_{\ba',\bc'}^{\bb},
$$
where the sum is over all 
$\ba', \bc'\in H^d$ and $\bt\in[1,n]$  
such that $(\ba',\br,  \bt) \sim (\ba,\bp,  \bq)$ and $(\bc',\bt,  \bs)\sim (\bc,\bu,\bv)$. 
\end{Proposition}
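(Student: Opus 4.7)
The plan is to compute both sides of the defining equation (\ref{EFBABBBCNew}) explicitly in the pure tensor basis of $M_n(A)^{\otimes d}$ and extract the coefficient of a distinguished monomial. I would begin by unpacking (\ref{EXiDef}):
\[
\xi^\ba_{\bp,\bq}\xi^\bc_{\bu,\bv}=\sum_{\sigma_1\in{}^{\ba,\bp,\bq}\D}\sum_{\sigma_2\in{}^{\bc,\bu,\bv}\D}(-1)^{\lan\sigma_1;\ba\ran+\lan\sigma_2;\bc\ran}\Big(\bigotimes_k\xi^{a_{\sigma_1 k}}_{p_{\sigma_1 k},q_{\sigma_1 k}}\Big)\Big(\bigotimes_k\xi^{c_{\sigma_2 k}}_{u_{\sigma_2 k},v_{\sigma_2 k}}\Big).
\]
Applying componentwise the super-braiding rule $(x_1\otimes\cdots\otimes x_d)(y_1\otimes\cdots\otimes y_d)=(-1)^{\sum_{i<j}\bar y_i\bar x_j}(x_1y_1\otimes\cdots\otimes x_dy_d)$ together with $\xi^a_{r,s}\xi^c_{u,v}=\delta_{s,u}\xi^{ac}_{r,v}$, and then expanding each product $a_{\sigma_1 k}c_{\sigma_2 k}$ via the structure constants (\ref{EStrConst})--(\ref{ESCB}), turns each summand into a signed scalar multiple of a pure tensor monomial. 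Writing $\ba':=\ba\sigma_1$, $\bc':=\bc\sigma_2$, and $\bt:=\bq\sigma_1=\bu\sigma_2$ (the latter equality being forced by the Kronecker deltas), the resulting monomial is $\bigotimes_k\xi^{b_k}_{p_{\sigma_1 k},v_{\sigma_2 k}}$ with scalar coefficient $(-1)^{\lan\sigma_1;\ba\ran+\lan\sigma_2;\bc\ran+\lan\ba',\bc'\ran}\kappa^\bb_{\ba',\bc'}$, where $\kappa^\bb_{\ba',\bc'}=\prod_k\kappa^{b_k}_{a'_k,c'_k}$.

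Next, I would pick out the coefficient of the distinguished monomial $\bigotimes_k\xi^{b_k}_{r_k,s_k}$ for the fixed representative $(\bb,\br,\bs)\in\Seq^B(n,d)$ of its orbit. On the right-hand side of (\ref{EFBABBBCNew}), this monomial appears exactly once, with coefficient $+1$: by the definition of ${}^{\bb,\br,\bs}\D$ as the set of shortest $\Si_{\bb,\br,\bs}$-coset representatives, only the identity coset representative produces this specific monomial inside $\xi^\bb_{\br,\bs}$, and distinct orbit representatives yield disjoint families of monomials. On the left, the contributing pairs $(\sigma_1,\sigma_2)$ are precisely those with $p_{\sigma_1 k}=r_k$ and $v_{\sigma_2 k}=s_k$ for all $k$; together with the definition of $\bt$, this translates to $(\ba',\br,\bt)\sim(\ba,\bp,\bq)$ (witnessed by $\sigma_1$) and $(\bc',\bt,\bs)\sim(\bc,\bu,\bv)$ (witnessed by $\sigma_2$). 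The reparameterization is bijective, so the double sum becomes a sum over triples $(\ba',\bc',\bt)$ satisfying these two sameness conditions.

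The final step is sign reconciliation. For this I would establish the auxiliary identity
\[
\lan\sigma;\ba\ran\equiv\lan\ba,\bp,\bq\ran+\lan\ba\sigma,\bp\sigma,\bq\sigma\ran\pmod 2
\]
for any $(\ba,\bp,\bq)\in\Seq^H(n,d)$ and $\sigma\in\Si_d$, by a direct count over unordered pairs $\{k,l\}$ of positions with both $a_k,a_l$ odd: if $\sigma^{-1}$ preserves the order of the pair, it contributes either $0$ or $2$ to the right-hand side (depending on whether the triple is ascending or descending in the natural order) and $0$ to $\lan\sigma;\ba\ran$; if $\sigma^{-1}$ inverts the pair, it contributes exactly $1$ to each side. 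The definition of $\Seq^H(n,d)$, which forbids equal triples unless $a_k\in H_\0$, is precisely what makes this case analysis unambiguous for odd pairs. Applying this identity to $(\sigma_1,\ba)$ with $\bp\sigma_1=\br$, $\bq\sigma_1=\bt$ and to $(\sigma_2,\bc)$ with $\bu\sigma_2=\bt$, $\bv\sigma_2=\bs$, and combining with the braiding sign $(-1)^{\lan\ba',\bc'\ran}$, produces exactly the sign $(-1)^{\lan\ba,\bp,\bq\ran+\lan\bc,\bu,\bv\ran+\lan\ba',\br,\bt\ran+\lan\bc',\bt,\bs\ran+\lan\ba',\bc'\ran}$ stated in the proposition.

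The main obstacle is the careful bookkeeping of the three independent sign sources---the two symmetrization signs from $\sigma_1$ and $\sigma_2$, and the super-braiding sign from the tensor multiplication---and verification of the sign-conversion identity above. The fact that this identity hinges on the ``repeated triples force the corresponding letter to be even'' condition built into $\Seq^H(n,d)$ is what makes that hypothesis essential in the setup.
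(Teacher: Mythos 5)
Your argument is correct. Note that the paper does not prove this proposition at all---it is quoted from \cite[(6.14)]{EK1}---so you have in effect supplied the missing proof, and your route (expand both factors in pure tensors via (\ref{EXiDef}), extract the coefficient of the distinguished monomial $\xi^{b_1}_{r_1,s_1}\otimes\cdots\otimes\xi^{b_d}_{r_d,s_d}$, and reconcile signs via the identity $\lan\sigma;\ba\ran\equiv\lan\ba,\bp,\bq\ran+\lan\ba\sigma,\bp\sigma,\bq\sigma\ran\pmod 2$, whose verification genuinely uses the ``repeated triples are even'' condition in $\Seq^H(n,d)$) is exactly the standard computation carried out in \cite{EK1}; the same sign identity is what underlies Lemma~\ref{LXiZero}.
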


We set 
\begin{equation}\label{E080717}
\eta^\bb_{\br,\bs}:=[\bb,\br,  \bs]^!_{\c}\, \xi^\bb_{\br,  \bs}, 
\end{equation}
and
$$
T^A_\fa(n,d):=\spa\big(\,\eta^\bb_{\br,\bs}\mid (\bb,\br,\bs)\in\Seq^B(n,d)\,\big).
$$ 
so that 
$\big\{\,\eta^\bb_{\br,\bs}\mid (\bb,\br,\bs)\in\Seq^B(n,d)/\Si_d\,\big\}$ is a basis of $T^A_\fa(n,d)$. It is proved in 
\cite[Proposition 3.11]{greenTwo} that $T^A_\fa(n,d)\subseteq S^A(n,d)$ is a $\k$-subalgebra. Moreover, it is a unital subalgebra if \((A,\a)\) is a unital pair. Sometimes we call the algebra $T^A_\a(n,d)$ a {\em generalized Schur (super)algebra}. 

\begin{Proposition}\label{AaInd} {\rm \cite[Proposition 4.11]{greenTwo}} 
The algebra \(T^A_\fa(n,d)\) depends only on the subalgebra \(\fa\), and not on the choice of the basis \(B\).
\end{Proposition}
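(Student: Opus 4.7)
The plan is to rewrite the spanning vectors of $T^A_\fa(n,d)$ in a form that makes their basis-independence transparent. Using $|\Si_{\bb,\br,\bs}| = [\bb,\br,\bs]^! = [\bb,\br,\bs]^!_\fa[\bb,\br,\bs]^!_\c$ and the fact that the stabilizer contributes trivial signs (because odd repetitions are forbidden in $\Seq^B(n,d)$), one obtains
\[
\eta^\bb_{\br,\bs} \;=\; \frac{1}{[\bb,\br,\bs]^!_\fa}\,\psi(\bb,\br,\bs), \qquad \psi(\ba,\br,\bs):=\sum_{\si\in\Si_d}\bigl(\xi^{a_1}_{r_1,s_1}\otimes\cdots\otimes\xi^{a_d}_{r_d,s_d}\bigr)^\si.
\]
The full symmetrization $\psi(\ba,\br,\bs)$ makes sense for any homogeneous $\ba\in A^d$ (subject to no odd repetitions) and is $\k$-multilinear in the entries $a_i$ up to super-signs; the denominator $[\bb,\br,\bs]^!_\fa$ supplies precisely the divided-power correction at positions whose entry lies in the heredity basis of $\fa$.

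To prove the proposition it suffices to show that for any second $\fa$-conforming heredity basis $B'$, every $\eta'^{\bb'}_{\br,\bs}$ lies in $T^A_\fa(n,d)$; the reverse inclusion follows by symmetry. Expanding each entry $b'_k$ as a $\k$-linear combination of elements of $B$ (respecting parity, so each $b'_k$ decomposes as $\alpha_k + c_k$ or $\beta_k$ with $\alpha_k\in\spa(B_\fa)$, $c_k\in\spa(B_\c)$, $\beta_k\in\spa(B_{\overline 1})$), multilinearity of $\psi$ gives an expansion
\[
\psi(\bb',\br,\bs) \;=\; \sum_{\boldsymbol\gamma\in B^d} \lambda_{\boldsymbol\gamma}\, \psi(\boldsymbol\gamma,\br,\bs),
\]
with coefficients $\lambda_{\boldsymbol\gamma}\in\k$ obtained from the basis-change matrix. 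Grouping these terms by their $B$-multiplicity pattern and comparing against the denominator $1/[\bb',\br,\bs]^!_{\fa'}$ then yields a formal expression for $\eta'^{\bb'}_{\br,\bs}$ in terms of the $\eta^{\boldsymbol\gamma}_{\br,\bs}$.

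The main obstacle is verifying integrality of the resulting coefficients: one must check that the factorial ratio $[\boldsymbol\gamma,\br,\bs]^!_\fa / [\bb',\br,\bs]^!_{\fa'}$ is cancelled by the multinomial coefficients that arise from grouping identical terms in the expansion. This reduces to the classical multinomial identity
\[
\frac{1}{m!}\Bigl(\sum_{i=1}^p \lambda_i x_i\Bigr)^m \;=\; \sum_{m_1+\cdots+m_p=m}\frac{\lambda_1^{m_1}\cdots\lambda_p^{m_p}}{m_1!\cdots m_p!}\,x_1^{m_1}\cdots x_p^{m_p},
\]
applied inside $\psi$ to divided powers of $\fa$-elements. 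The same identity also handles a change of complement of $\fa$ in $A_{\overline 0}$: if a new basis element $c'\in B'_{\c'}$ is written as $c+\alpha$ with $c\in\spa(B_\c)$ and $\alpha\in\fa$, then the extra $\fa$-contributions produced by $\psi$ acquire divided-power structure, which is absorbed by the multinomial coefficients against the $1/[\bb',\br,\bs]^!_{\fa'}$ factor. This establishes $T^{A,B'}_\fa(n,d)\subseteq T^{A,B}_\fa(n,d)$ as integer-spanned submodules and hence the equality of the two algebras.
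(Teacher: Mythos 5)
Your reformulation $\eta^\bb_{\br,\bs} = ([\bb,\br,\bs]^!_\fa)^{-1}\,\psi(\bb,\br,\bs)$, with $\psi$ the full $\Si_d$-symmetrization, is correct and is the right starting observation: $\psi$ is a $\k$-linear map $A^{\otimes d}\to S^A(n,d)$ defined with no reference to $B$, so only the divided-power denominator at $\fa$-entries has to be confronted with a change of basis. (One small slip: $\psi$ is genuinely $\k$-multilinear in its slots, not ``up to super-signs''; signs arise only when slots are permuted, which is internal to the definition of $\psi$, not to substitution.) Your route is, however, different from the one in \cite{greenTwo}, which obtains the statement from the $*$-superbialgebra structure of $S^A(n)$: separating $\bb$ into its $B_\fa$-part $\bb_\fa$ and its $B_\c\sqcup B_\1$-part $\bb_{\mathrm r}$, Lemmas~\ref{LXiZero} and \ref{LSingleSep} give $\eta^\bb_{\br,\bs}=\pm\,\xi^{\bb_\fa}_{\cdot}*\psi(\bb_{\mathrm r},\cdot)$, which exhibits $T^A_\fa(n)$ as the $*$-subalgebra of $S^A(n)$ generated by the two basis-free sublattices $S^\fa(n)$ (divided powers of $\fa$) and $M_n(A)=S^A(n,1)$; basis-independence is then immediate. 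That characterization is cleaner, and it gets reused elsewhere; your computation is more elementary but needs genuine combinatorial work.

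That work is where the proposal is under-justified. The coefficient of $\eta^{\bga}_{\br,\bs}$ in $\eta'^{\bb'}_{\br,\bs}$ equals $\lambda^{\mathrm{tot}}_{\bga}\,[\bga,\br,\bs]^!_\fa/[\bb',\br,\bs]^!_\fa$, and the multinomial identity you display only handles the pure case $\bb'\in(B'_\fa)^d$. In the mixed case, when entries of $B'_\c$ carry nonzero $\fa$-components, the $\fa$-multiplicity $m_b$ of each $b\in B_\fa$ appearing in $\bga$ aggregates contributions from both the $\fa$-slots and the non-$\fa$-slots of $\bb'$; what one actually needs is integrality of $\prod_{b\in B_\fa}m_b!\,\big/\prod_{b'\in B'_\fa}m_{b',b}!$, where $m_b\geq\sum_{b'\in B'_\fa}m_{b',b}$. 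This is true, but it is strictly stronger than the identity you display and deserves its own line of argument. You also do not address the super-signs that arise when grouping tuples $\bga$ into $\Si_{\br,\bs}$-orbits. None of this is fatal --- the claims hold --- but the transition from ``grouping by $B$-multiplicity pattern'' to ``the coefficients lie in $\k$'' is a gesture rather than a proof; the $*$-decomposition above sidesteps this bookkeeping entirely.
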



\begin{Lemma} \label{L140117} {\rm \cite[Lemma 3.10]{greenTwo}} 
Let $a_1,\dots,a_d\in \fa\cup A_\1$ and $\br,\bs\in[1,n]^d$. Then $\xi^\ba_{\br,\bs}\in T^A_\fa(n,d)$. 
\end{Lemma}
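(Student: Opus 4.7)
The plan is to expand $\xi^\ba_{\br,\bs}$ in the canonical $\k$-basis $\{\xi^\bb_{\br,\bs}\mid[\bb,\br,\bs]\in\Seq^B(n,d)/\Si_d\}$ of $S^A(n,d)$ and show that the support consists only of orbits whose representatives lie in $(B_\a\cup B_\1)^d$. For any such orbit no entry belongs to $B_\c$, hence $[\bb,\br,\bs]^!_\c=1$ and the basis element $\xi^\bb_{\br,\bs}$ of $S^A(n,d)$ coincides with the basis element $\eta^\bb_{\br,\bs}$ of $T^A_\fa(n,d)$ from (\ref{E080717}); the conclusion $\xi^\ba_{\br,\bs}\in T^A_\fa(n,d)$ is then immediate.

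The first step is to use homogeneity of each $a_k\in\a\cup A_\1$ to write $a_k=\sum_{b\in B_\a\cup B_\1}\alpha_k^b\,b$, since $B_\a$ is a basis of $\a$ and $B_\1$ is a basis of $A_\1$. Substituting into $\xi^{a_1}_{r_1,s_1}\otimes\cdots\otimes\xi^{a_d}_{r_d,s_d}$ and applying the full symmetrization $\sum_{\si\in\Si_d}(\cdot)^\si$, the hypothesis $(\ba,\br,\bs)\in\Seq^H(n,d)$ implicit in the definition of $\xi^\ba_{\br,\bs}$ ensures that every element of $\Si_{\ba,\br,\bs}$ acts with sign $+1$ on the original tensor, since such permutations only swap positions sharing an even value of $a_k$ and hence $\lan\si;\ba\ran=0$. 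This collapses the left-hand side to $|\Si_{\ba,\br,\bs}|\,\xi^\ba_{\br,\bs}$. For each $\bb$ on the right, the inner symmetrization equals $|\Si_{\bb,\br,\bs}|\,\xi^\bb_{\br,\bs}$ when $(\bb,\br,\bs)\in\Seq^B(n,d)$ and vanishes otherwise.

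The vanishing is the crux. In the failing case there exist $k<l$ with $(b_k,r_k,s_k)=(b_l,r_l,s_l)$ and $b_k$ odd, and a direct count using the definition of $\lan\tau;\bb\ran$ for the transposition $\tau=(k,l)$ shows this parity is odd; the supercommutation sign in the right $\Si_d$-action on $M_n(A)^{\otimes d}$ then gives $(\xi^{b_1}_{r_1,s_1}\otimes\cdots)^\tau=-(\xi^{b_1}_{r_1,s_1}\otimes\cdots)$, while the underlying tensor is preserved because $(b_k,r_k,s_k)=(b_l,r_l,s_l)$. Pairing $\si$ with $\tau\si$ yields $2\sum_{\si\in\Si_d}(\xi^{b_1}_{r_1,s_1}\otimes\cdots)^\si=0$, which vanishes as $\k$ has characteristic $0$. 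Combining these ingredients produces the identity
\begin{align*}
|\Si_{\ba,\br,\bs}|\,\xi^\ba_{\br,\bs}\;=\;\sum_{\substack{\bb\in(B_\a\cup B_\1)^d\\(\bb,\br,\bs)\in\Seq^B(n,d)}}\Big(\prod_{k=1}^d\alpha_k^{b_k}\Big)\,|\Si_{\bb,\br,\bs}|\,\xi^\bb_{\br,\bs}
\end{align*}
in $S^A(n,d)$.

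Since $|\Si_{\ba,\br,\bs}|$ is a nonzero element of the characteristic-$0$ domain $\k$, and the right-hand side is supported in the canonical basis only on orbits whose representatives lie in $(B_\a\cup B_\1)^d$, so too is $\xi^\ba_{\br,\bs}$ by uniqueness of basis expansion in the free $\k$-module $S^A(n,d)$. For any such orbit $[\bb,\br,\bs]^!_\c=1$, giving $\xi^\bb_{\br,\bs}=\eta^\bb_{\br,\bs}\in T^A_\fa(n,d)$ and completing the argument. The main technical obstacle is the sign-cancellation step, which requires the careful parity analysis of $\lan\tau;\bb\ran$ for the transposition $\tau=(k,l)$ using the definition given in Section~\ref{SComb}.
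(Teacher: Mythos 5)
Your proof is correct. The paper cites this lemma from \cite[Lemma 3.10]{greenTwo} without reproducing the argument, but the route you take is the natural one: expand each $a_k$ in the homogeneous basis $B_\a\sqcup B_\1$ of $\fa\oplus A_\1$, pass to the full $\Si_d$-symmetrization to gain multilinearity, kill any term with a repeated odd triple via the sign involution $\si\mapsto(k\,l)\si$, and conclude that $\xi^\ba_{\br,\bs}$ is supported on basis elements $\xi^\bb_{\br,\bs}$ with $\bb\in(B_\a\sqcup B_\1)^d$, each of which equals $\eta^\bb_{\br,\bs}$ because $[\bb,\br,\bs]^!_\c=1$. One minor cosmetic point: the involution pairing makes the symmetrization cancel term by term, so the appeal to characteristic $0$ in that particular step is not actually needed (though it suffices); characteristic $0$ together with $\k$ being a domain is, however, genuinely used in your final support argument, where you in effect divide by $|\Si_{\ba,\br,\bs}|$.
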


\subsection{Coproducts}\label{SSCoproduct}
If $\Triple=(\bb,\br,\bs)\in\Seq^B(n,d)$, we write 
$$
\xi_\Triple:=\xi^\bb_{\br,\bs},\ \,\,\eta_\Triple:=\eta^\bb_{\br,\bs},\ \,\,[\Triple]^!_{\c}:=[\bb,\br,\bs]^!_{\c},\ \,\,\Triple\si:=(\bb,\br,\bs)\si,\ \,\,\text{etc.}
$$
If $d=d_1+d_2$, $\Triple^1=(\bb^1, \br^1, \bs^1)\in\Seq^B(n,d_1)$ and $\Triple^2=(\bb^2, \br^2, \bs^2)\in\Seq^B(n,d_2)$, we denote 
$$
\Triple^1\Triple^2:=(\bb^1\bb^2, \br^1\br^2, \bs^1\bs^2)\in B^d\times[1,n]^d\times [1,n]^d.
$$

Recalling the notation (\ref{ESeq0}), for \(\Triple \in \Seq^B_0(n,d)\) and \(0 \leq l \leq d\), define 
\begin{align*}
\textup{Spl}_l(\Triple)&:=\big\{(\Triple^1, \Triple^2)\in \Seq^B_0(n,l) \times \Seq^B_0(n,d-l) \mid 
\Triple^1\Triple^2\sim \Triple\big\},
\end{align*}
and set $\textup{Spl}(\Triple):=\bigsqcup_{0\leq l\leq d}\textup{Spl}_l(\Triple)$. For $(\Triple^1,\Triple^2) \in \textup{Spl}_l(\Triple)$, let \(\si^{\Triple}_{\Triple^1,\Triple^2}\) be the unique element of ${}^{\Triple}\mathscr{D}$ such that 
$$
\Triple\si^{\Triple}_{\Triple^1,\Triple^2}  = \Triple^1\Triple^2.
$$ 

It is well known that \(\bigoplus_{d\geq 0} M_n(A)^{\otimes d}\) is a supercoalgebra with the coproduct 
\begin{align*}
\nabla\,:\,\, M_n(A)^{\otimes d}\,\, &\to \,\,\bigoplus_{l=0}^d M_n(A)^{\otimes l} \otimes M_n(A)^{\otimes (d-l)}\\
\xi_1 \otimes \cdots \otimes \xi_d\,\,&\mapsto\,\, \sum_{l=0}^d (\xi_1 \otimes \cdots \otimes \xi_l) \otimes (\xi_{l+1} \otimes \cdots \otimes \xi_d), 
\end{align*}
see e.g. \cite[\S3.3]{EK1}. Let 
\begin{equation}\label{ES(n)}
S^A(n) := \bigoplus_{d\geq 0} S^A(n,d)\quad \text{and}\quad T^A_\fa(n) := \bigoplus_{d\geq 0} T^A_\fa(n,d).
\end{equation}

\begin{Lemma}\label{coprodxi} {\rm \cite[(6.12)]{EK1}\,\cite[Corollary 3.20]{greenTwo}} 
If \(\Triple=(\bb,\br,\bs) \in \Seq^B_0(n,d)\) then 
\begin{align*}
\nabla(\xi_\Triple) &=
 \sum_{(\Triple^1, \Triple^2) \in \textup{Spl}(\Triple)} 
(-1)^{\langle \si^{\Triple}_{\Triple^1,\Triple^2}; \bb \rangle}
\xi_{\Triple^1} \otimes \xi_{\Triple^2},
\\
\nabla(\eta_\Triple)&=\sum_{(\Triple^1,\Triple^2) \in \textup{Spl}(\Triple)} 
(-1)^{\langle \si^{\Triple}_{\Triple^1,\Triple^2}; \bb \rangle}
{\small \frac{[\Triple]^!_{\c}}{[\Triple^1]^!_{\c}[\Triple^2]^!_{\c}}}
\eta_{\Triple^1} \otimes \eta_{\Triple^2},
\end{align*}
with $\displaystyle{\small \frac{[\Triple]^!_{\c}}{[\Triple^1]^!_{\c}[\Triple^2]^!_{\c}}}\in\Z$. In particular, $S^A(n)$ and $T^A_\fa(n)$ are sub-supercoalgebras of \(\bigoplus_{d\geq 0} M_n(A)^{\otimes d}\). 
\end{Lemma}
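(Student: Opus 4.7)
\medskip

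\noindent\textbf{Proof plan.} The formula for $\nabla(\xi_\Triple)$ is a direct but bookkeeping-heavy computation from the definition \eqref{EXiDef} of $\xi_\Triple$, together with the pure-tensor coproduct on $M_n(A)^{\otimes d}$; the formula for $\nabla(\eta_\Triple)$ follows by the rescaling \eqref{E080717}. The subcoalgebra statement is then formal.

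\medskip

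\noindent\emph{Step 1: Expand the coproduct on a single $\xi$-basis element.} Starting from
\[
\xi_\Triple \;=\; \sum_{\sigma\in{}^\Triple\D}(-1)^{\langle\sigma;\bb\rangle}\,\xi^{b_{\sigma 1}}_{r_{\sigma 1},s_{\sigma 1}}\otimes\cdots\otimes\xi^{b_{\sigma d}}_{r_{\sigma d},s_{\sigma d}},
\]
I apply the supercoalgebra comultiplication termwise. For each $\sigma\in{}^\Triple\D$ and each $0\le l\le d$ the pure tensor splits at position $l$ into a tensor of length $l$ (the ``left half'') and length $d-l$ (the ``right half''), and these halves are, up to reordering, of the form $\xi_{\Triple^1}$ and $\xi_{\Triple^2}$ for a uniquely determined pair $(\Triple^1,\Triple^2)\in\Spl_l(\Triple)$. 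The coset representative $\sigma^\Triple_{\Triple^1,\Triple^2}\in{}^\Triple\D$ attached to this splitting is exactly the shortest permutation that carries the canonical ordering $\Triple$ of positions to $\Triple^1\Triple^2$, so the relevant $\sigma$'s factor canonically as
$\sigma \;=\; \sigma^{\Triple}_{\Triple^1,\Triple^2}\cdot(\sigma_1\times\sigma_2)$
with $\sigma_i\in{}^{\Triple^i}\D$. This is the analogue of the standard ``split-and-sort'' decomposition for double cosets.

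\medskip

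\noindent\emph{Step 2: Collect like terms and track signs.} After this reindexing,
\[
\nabla(\xi_\Triple) \;=\; \sum_{l=0}^d\sum_{(\Triple^1,\Triple^2)\in\Spl_l(\Triple)}(-1)^{\langle\sigma^{\Triple}_{\Triple^1,\Triple^2};\bb\rangle}\Bigl(\sum_{\sigma_1\in{}^{\Triple^1}\D}(-1)^{\langle\sigma_1;\bb^1\rangle}(\cdots)\Bigr)\otimes\Bigl(\sum_{\sigma_2\in{}^{\Triple^2}\D}(-1)^{\langle\sigma_2;\bb^2\rangle}(\cdots)\Bigr),
\]
where the inner sums are precisely $\xi_{\Triple^1}$ and $\xi_{\Triple^2}$. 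The main technical point is the sign identity
\[
\langle\sigma^{\Triple}_{\Triple^1,\Triple^2}\cdot(\sigma_1\times\sigma_2);\bb\rangle \;\equiv\; \langle\sigma^{\Triple}_{\Triple^1,\Triple^2};\bb\rangle+\langle\sigma_1;\bb^1\rangle+\langle\sigma_2;\bb^2\rangle\pmod 2,
\]
which holds because the inversion statistic counting pairs of odd entries is additive under this factorization (the inversions split cleanly into those internal to each half and those created by the outer splitting permutation). This yields the stated formula for $\nabla(\xi_\Triple)$.

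\medskip

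\noindent\emph{Step 3: Pass to $\eta_\Triple$.} Multiplying by $[\Triple]^!_\c$ and rewriting each $\xi_{\Triple^i}=[\Triple^i]^{!\,-1}_\c\,\eta_{\Triple^i}$ converts the previous formula into the $\eta$-version. That the scalar $[\Triple]^!_\c/([\Triple^1]^!_\c[\Triple^2]^!_\c)$ is an integer is a multinomial count: on each fixed label $b\in B_\c$ and each fixed pair $(r,s)$, the factor is $\binom{[\Triple]^b_{r,s}}{[\Triple^1]^b_{r,s}}$, which is an integer, and the total scalar is a product of such binomials over all $(b,r,s)\in B_\c\times[1,n]^2$.

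\medskip

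\noindent\emph{Step 4: Sub-supercoalgebra property.} Once the two formulas are established, every summand on the right-hand sides lies in $S^A(n,l)\otimes S^A(n,d-l)$ (resp.\ in $T^A_\fa(n,l)\otimes T^A_\fa(n,d-l)$, using that the factorials are integers and that each $\eta_{\Triple^i}\in T^A_\fa(n,d_i)$ by definition). Since the $\xi_\Triple$ (resp.\ $\eta_\Triple$) span $S^A(n,d)$ (resp.\ $T^A_\fa(n,d)$) as $\Triple$ ranges over $\Seq^B_0(n,d)$, this shows that $\nabla$ restricts to $S^A(n)$ and to $T^A_\fa(n)$, giving the subcoalgebra claim.

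\medskip

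\noindent The step I expect to demand the most care is the sign identity in Step~2; everything else is reindexing of a multinomial coefficient and invocation of \eqref{EXiDef}, \eqref{E080717}. If the sign identity has already been isolated in earlier work (e.g.\ in the treatment of \cite[(6.12)]{EK1}), the proof reduces essentially to citing those computations and verifying that the integrality and supercoalgebra packaging go through in the present $T^A_\fa(n,d)$ setting.
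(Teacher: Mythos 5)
The paper does not supply its own proof of this lemma: it simply cites \cite[(6.12)]{EK1} and \cite[Corollary 3.20]{greenTwo}. So I can only assess your argument on its own, and it is correct in outline and essentially forced: unpack $\xi_\Triple$ via (\ref{EXiDef}), apply $\nabla$ termwise, and regroup by double cosets.

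The one place that genuinely deserves a sentence of justification is the double-coset factorization in Step~1--2: that the $\sigma\in{}^\Triple\D$ whose split at position $l$ lands in the $\Si_l\times\Si_{d-l}$-orbit of $\Triple^1\Triple^2$ are exactly $\{\si^{\Triple}_{\Triple^1,\Triple^2}(\sigma_1\times\sigma_2)\mid \sigma_i\in{}^{\Triple^i}\D\}$. This is standard parabolic double-coset combinatorics, but it hinges on $\si^{\Triple}_{\Triple^1,\Triple^2}$ being the {\em minimal} element of its $(\Si_{\bb,\br,\bs},\Si_l\times\Si_{d-l})$-double coset; this in turn is what makes the Coxeter-length additivity --- and hence your sign identity --- apply. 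You assert both the factorization and the additivity without saying why $\si^{\Triple}_{\Triple^1,\Triple^2}$ is minimal on both sides (minimal on the left because it lies in ${}^\Triple\D$; minimal on the right because $\Triple^1,\Triple^2\in\Seq^B_0$). Also, the precise statement of the sign identity should read
$\langle w\tau;\bb\rangle=\langle w;\bb\rangle+\langle\tau;\bb w\rangle$, where $(\bb w)_k=b_{wk}$: the inversion statistic $\langle\cdot\,;\cdot\rangle$ in this paper weights by parities at the {\em original} positions, so the right factor must carry the $w$-permuted word, which is precisely what your ``$\bb^1,\bb^2$'' must denote. Your Step~3 (the per-$(b,r,s)$ binomial count giving integrality) and Step~4 (closure of the span) are correct as stated.
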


\subsection{Star-product}
For $d,e\in Z_{\geq 0}$, let ${}^{(d,e)}\D$ be the set of the shortest coset representatives for $(\Si_d\times \Si_e)\backslash\Si_d$. Given $\xi_1\in M_n(A)^{\otimes d}$ and $\xi_2\in M_n(A)^{\otimes e}$, we define 
\begin{equation}\label{EStarNotation}
\xi_1* \xi_2:=\sum_{\si\in{}^{(d,e)}\D}(\xi_1\otimes \xi_2)^\si.
\end{equation}
This $*$-product  makes  $\bigoplus_{d\geq 0}M_n(A)^{\otimes d}$ into an associative supercommutative superalgebra. Moreover, 

\begin{Lemma} \label{L250217_3} {\rm \cite[Corollary 4.4]{greenTwo}} 
We have that $S^A(n)$ and $T^A_\a(n)$ are subsuperalgebras of $\bigoplus_{d\geq 0}M_n(A)^{\otimes d}$ with respect to the $*$-product. Moreover, with respect to the coproduct $\nabla$ and the product $*$, $S^A(n)$ and $T^A_\a(n)$ are superbialgebras. 
\end{Lemma}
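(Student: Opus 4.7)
The plan is to split the statement into two independent assertions and handle them in turn: first, closure of $S^A(n)$ and $T^A_\a(n)$ under the $*$-product; second, verification of the superbialgebra axiom for $(\nabla,*)$.

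For closure under $*$, I would work directly from the definitions (\ref{EXiDef}) and (\ref{EStarNotation}). Given $(\bb,\br,\bs)\in\Seq^B(n,d)$ and $(\bc,\bt,\bu)\in\Seq^B(n,e)$, expanding $\xi^\bb_{\br,\bs} * \xi^\bc_{\bt,\bu}$ as a sum over $\si\in{}^{(d,e)}\mathscr{D}$ of the $\si$-twist of $\xi^\bb_{\br,\bs}\otimes \xi^\bc_{\bt,\bu}$ reorganizes, up to signs of the form $(-1)^{\langle\si;\bb\bc\rangle}$ (as in Lemma~\ref{LXiZero}), into a signed $\Si_{d+e}$-symmetrization of the concatenation $(\bb\bc,\br\bt,\bs\bu)$. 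The net result is the explicit formula
\[
\xi^\bb_{\br,\bs} * \xi^\bc_{\bt,\bu} \;=\; \pm\, \frac{[\bb\bc,\br\bt,\bs\bu]^!}{[\bb,\br,\bs]^!\,[\bc,\bt,\bu]^!}\,\xi^{\bb\bc}_{\br\bt,\bs\bu},
\]
interpreted as zero when $(\bb\bc,\br\bt,\bs\bu)\notin\Seq^B(n,d+e)$; the coefficient is a multinomial coefficient, hence an integer, so $S^A(n)$ is closed. Substituting $\eta=[\cdot]^!_\c\,\xi$ from (\ref{E080717}) turns this into a parallel formula for $\eta^\bb_{\br,\bs}*\eta^\bc_{\bt,\bu}$ whose coefficient is $[\bb\bc,\br\bt,\bs\bu]^!_\c / ([\bb,\br,\bs]^!_\c [\bc,\bt,\bu]^!_\c)$, again a multinomial coefficient, proving closure of $T^A_\a(n)$.

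For the superbialgebra axiom $\nabla(\xi*\eta) = \nabla(\xi)*\nabla(\eta)$, I would first establish it on the ambient algebra $\bigoplus_{d\geq 0} M_n(A)^{\otimes d}$, equipped with the shuffle product $*$ and deconcatenation coproduct $\nabla$; this is the standard shuffle superbialgebra structure and can be proved by reducing to rank-one tensors $\xi^a_{r,s}\in M_n(A)$, on which both operations act through the combinatorics of $(d,e)$-shuffles and their compatibility with deconcatenation at each level. The identity then restricts to $S^A(n)$ and $T^A_\a(n)$ once we know (from Step 1 and Lemma~\ref{coprodxi}) that these are simultaneously closed under $*$ and $\nabla$.

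The main obstacle is the sign bookkeeping. In Step 1, the signs $(-1)^{\langle\si;\bb\bc\rangle}$ produced by the shuffle must be compared with those built into $\xi^{\bb\bc}_{\br\bt,\bs\bu}$ through the conventions of Lemma~\ref{LXiZero} and Proposition~\ref{CPR}. In Step 2, one must correctly interpret the $*$-product on the tensor square $(\xi_1\otimes\xi_2)*(\eta_1\otimes\eta_2) = (-1)^{\bar{\xi_2}\,\bar{\eta_1}}(\xi_1*\eta_1)\otimes(\xi_2*\eta_2)$ and check that the resulting Koszul signs match those that appear when one first shuffles $\xi*\eta$ and then deconcatenates. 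Both sign analyses are routine but combinatorially intricate; once the correct conventions are fixed, the bialgebra compatibility amounts to the identity between two ways of summing over pairs (shuffle, deconcatenation point) of the concatenated triple.
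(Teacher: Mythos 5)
The paper does not actually prove this lemma: it imports it wholesale as \cite[Corollary~4.4]{greenTwo}, where the two halves of the argument appear separately as the $*$-product formula (\cite[Lemma~4.2]{greenTwo}, recalled here as Lemma~\ref{xistarproducts}) and the coproduct preservation statement (Lemma~\ref{coprodxi}). So there is no in-paper proof to compare against; the relevant benchmark is whether your sketch is a sound independent route.

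On that score the plan is correct and is essentially the route taken in the cited paper. Your Step 1 reproduces what becomes Lemma~\ref{xistarproducts}: closure under $*$ comes from the explicit formula for $\xi^\bb_{\br,\bs}*\xi^\bc_{\bt,\bu}$ with an integer (multinomial) coefficient, and dividing through by $[\cdot]^!_\c$ gives the parallel formula for the $\eta$'s. One small imprecision: the $\pm$ in your displayed formula should in fact always be $+$ when the concatenation $(\bb\bc,\br\bt,\bs\bu)$ is taken as the representative triple — that is exactly what Lemma~\ref{xistarproducts}(i) asserts, and the sign analysis you flag as the main obstacle resolves to a trivial sign. (If one passes to a different orbit representative a sign appears via Lemma~\ref{LXiZero}, which may be the source of your hedge.) For Step 2, deducing the superbialgebra axiom by first establishing it for the ambient shuffle product and deconcatenation coproduct on $\bigoplus_d M_n(A)^{\otimes d}$, and then restricting to the subspaces that are simultaneously closed under $*$ (your Step 1) and $\nabla$ (Lemma~\ref{coprodxi}), is the right structure; it is also worth noting explicitly that the unit (inclusion of $\k=M_n(A)^{\otimes 0}$) and counit (projection onto degree $0$) restrict compatibly, though this is immediate. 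With those two small clarifications your sketch matches the intended argument.
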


\begin{Lemma}\label{xistarproducts} 
{\rm \cite[Lemma 4.2]{greenTwo}} 
For \((\bb, \br, \bs)\in\Seq^B(n,d)\) and \((\bc, \bt, \bu) \in \Seq^B(n,e)\), we have
\begin{enumerate}
\item \(\displaystyle\xi^{\bb}_{\br, \bs} * \xi^{\bc}_{\bt, \bu} = \frac{[\bb\bc, \br\bt, \bs\bu]^!}{[\bb, \br, \bs]^! [\bc, \bt, \bu]^!}\xi_{\br\bt, \bs\bu}^{\bb\bc}\).
\item \(\displaystyle\eta^{\bb}_{\br, \bs} * \eta^{\bc}_{\bt, \bu} = \frac{[\bb\bc, \br\bt, \bs\bu]_\a^!}{[\bb, \br, \bs]_\a^! [\bc, \bt, \bu]_\a^!}\eta_{\br\bt, \bs\bu}^{\bb\bc},\)
\end{enumerate}
where \(\frac{[\bb\bc, \br\bt, \bs\bu]^!}{[\bb, \br, \bs]^! [\bc, \bt, \bu]^!}\) and \(\frac{[\bb\bc, \br\bt, \bs\bu]_\a^!}{[\bb, \br, \bs]_\a^! [\bc, \bt, \bu]_\a^!}\) are integers, and the right hand sides of \textup{(i)} and \textup{(ii)} are taken to be zero when \((\bb\bc, \br\bt, \bs\bu) \notin \Seq^B(n,d+e)\).
\end{Lemma}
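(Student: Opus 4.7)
The plan is to reduce both formulas to a single averaging identity over the full symmetric group. The key preliminary identity I would establish is that for any $(\ba,\br,\bs)\in\Seq^B(n,d)$,
\[
[\ba,\br,\bs]^!\,\xi^{\ba}_{\br,\bs}=\sum_{\si\in\Si_d}\bigl(\xi^{a_1}_{r_1,s_1}\otimes\cdots\otimes\xi^{a_d}_{r_d,s_d}\bigr)^{\si}.
\]
This holds because the stabilizer $\Si_{\ba,\br,\bs}$ has order $[\ba,\br,\bs]^!$ and, by the definition of $\Seq^B$, permutes only positions whose triples coincide, which forces the corresponding basis elements to lie in $B_{\bar 0}$. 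Every stabilizer element therefore acts with trivial sign $(-1)^{\langle\pi;\ba\rangle}=1$ on the pure tensor, so the sum over $\Si_d$ collapses to $|\Si_{\ba,\br,\bs}|$ copies of the coset-representative sum defining $\xi^{\ba}_{\br,\bs}$.

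Next I would expand $\xi^{\bb}_{\br,\bs}*\xi^{\bc}_{\bt,\bu}=\sum_{\tau\in{}^{(d,e)}\D}(\xi^{\bb}_{\br,\bs}\otimes\xi^{\bc}_{\bt,\bu})^{\tau}$ and substitute the averaged expressions above for both tensor factors. Since every element of $\Si_{d+e}$ factorises uniquely as a product $\si\tau$ with $\si\in\Si_d\times\Si_e\subseteq\Si_{d+e}$ and $\tau\in{}^{(d,e)}\D$, the combined sum collapses to
\[
\xi^{\bb}_{\br,\bs}*\xi^{\bc}_{\bt,\bu}\;=\;\frac{1}{[\bb,\br,\bs]^!\,[\bc,\bt,\bu]^!}\sum_{\rho\in\Si_{d+e}}\bigl(\xi^{b_1}_{r_1,s_1}\otimes\cdots\otimes\xi^{c_e}_{t_e,u_e}\bigr)^{\rho}.
\]
If $(\bb\bc,\br\bt,\bs\bu)\in\Seq^B(n,d+e)$, applying the averaging identity in reverse recognises this full $\Si_{d+e}$-sum as $[\bb\bc,\br\bt,\bs\bu]^!\,\xi^{\bb\bc}_{\br\bt,\bs\bu}$, proving (i). Otherwise there exist $k\in[1,d]$ and $l\in[1,e]$ with $(b_k,r_k,s_k)=(c_l,t_l,u_l)$ and $b_k$ odd; setting $\pi=(k,d+l)$ and writing $x$ for the concatenated pure tensor, a direct inversion count yields an odd number of odd-odd inversions (the pair $\{k,d+l\}$ plus symmetric contributions from odd positions strictly between $k$ and $d+l$), so that $(x)^{\pi}=-x$. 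Partitioning $\Si_{d+e}$ into left cosets $\{\rho,\pi\rho\}$ then pairs each term with its negative, and the star product vanishes in agreement with the stated convention.

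Finally, (ii) follows immediately from (i) on multiplying both sides by $[\bb,\br,\bs]^!_{\c}[\bc,\bt,\bu]^!_{\c}$, using the defining relation $\eta_{\cdot}=[\cdot]^!_{\c}\,\xi_{\cdot}$ together with the factorisation $[\cdot]^!=[\cdot]^!_{\a}[\cdot]^!_{\c}$, which is valid because $B_{\bar 0}=B_{\a}\sqcup B_{\c}$ while $B_{\bar 1}$-multiplicities in $\Seq^B$ are at most one and contribute only trivial factorials; the $\c$-factorials cancel, leaving the $\a$-factorial ratio. Integrality of both coefficients is automatic, since they decompose as products of binomial coefficients $\binom{[\bb\bc,\br\bt,\bs\bu]^b_{r,s}}{[\bb,\br,\bs]^b_{r,s}}$. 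The only step that genuinely requires care is the sign bookkeeping in the vanishing case, but this reduces to the elementary observation that the transposition $(k,d+l)$ always produces an odd number of odd-odd inversions when the two swapped positions carry the same odd basis label.
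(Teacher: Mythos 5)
Your proof is correct, and the paper itself states this lemma by citation to \cite[Lemma 4.2]{greenTwo}, so there is no in-paper proof to compare against; the averaging argument you give is the natural and standard one for this sort of result. To spell out the small points you compress: the preliminary identity $[\ba,\br,\bs]^!\,\xi^\ba_{\br,\bs}=\sum_{\si\in\Si_d}(\xi^{a_1}_{r_1,s_1}\otimes\cdots\otimes\xi^{a_d}_{r_d,s_d})^\si$ is valid because $\Si_{\ba,\br,\bs}$ is a Young subgroup of order $[\ba,\br,\bs]^!$ that only permutes positions carrying identical $(a,r,s)$-triples, and for $(\ba,\br,\bs)\in\Seq^B$ any such repeated triple must be even, forcing $\langle\tau;\ba\rangle=0$ for every stabilizer element. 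The unique factorization $\Si_{d+e}=(\Si_d\times\Si_e)\cdot{}^{(d,e)}\D$, together with the observation that $(x_1)^{\si_1}\otimes(x_2)^{\si_2}=(x_1\otimes x_2)^{(\si_1,\si_2)}$ without extra sign (block permutations create no cross-block inversions), gives the $\Si_{d+e}$-average in the middle step. Your parity count for the transposition $(k,d+m)$ is exactly right: $1+2\cdot\#\{j:k<j<d+m,\ p_j\ \text{odd}\}$ odd-odd inversions, hence $(x)^\pi=-x$ and pairing by left cosets of $\langle\pi\rangle$ kills the sum. Finally, the deduction of (ii) from (i) requires, as you indicate, also dividing by $[\bb\bc,\br\bt,\bs\bu]^!_{\c}$ to convert $\xi^{\bb\bc}$ to $\eta^{\bb\bc}$; the factorization $[\cdot]^!=[\cdot]^!_\a[\cdot]^!_\c$ (valid since $B_\0=B_\a\sqcup B_\c$ and $B_\1$-multiplicities are at most one) then cancels all $\c$-factorials. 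One notational slip: where you write ``$\pi=(k,d+l)$'' you have reused $l$ for both the position in the second word and for an index of summation in the original statement; it should be the transposition $(k,d+m)$ with $m$ the violating position in $\bc$, as your surrounding text intends.
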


There is a special case where we can guarantee that the coefficients in the right hand sides of the expressions from Lemma~\ref{xistarproducts} are equal to $1$. To describe it, let $q\in\Z_{>0}$ and $\de=(d_1,\dots,d_q)\in\Z_{\geq 0}^q$ with $d_1+\dots+d_q=d$. Then $\Si_\de:=\Si_{d_1}\times\dots\times\Si_{d_q}\leq \Si_d$. 
Suppose that for each $m=1,\dots,q$, we are given 
$$
(\ba^{(m)},\br^{(m)}, \bs^{(m)}),\,(\bc^{(m)},\bt^{(m)}, \bu^{(m)})
\in\Seq^H(n,d_m).
$$ 
We write $\ba^{(m)}=a^{(m)}_1\cdots a^{(m)}_{d_m}$, $\br^{(m)}=r^{(m)}_1\cdots r^{(m)}_{d_m}$, etc. 
Let 
$
\ba=\ba^{(1)}\dots\ba^{(q)},\ \br=\br^{(1)}\dots\br^{(q)},
\ \text{etc.}
$ 
We write $\ba=a_1\cdots a_{d}$, $\br=r_1\cdots r_d$, etc. 
The triple $(\ba,\br, \bs)$ is called {\em $\de$-separated} if 
$1\leq m\neq l\leq q$ implies 
$(a^{(m)}_t,r^{(m)}_t,s^{(m)}_t)\neq (a^{(l)}_u,r^{(l)}_u,s^{(l)}_u)$ for all $1\leq t\leq d_m$ and $1\leq u\leq d_l$. Note that we then  automatically have  $(\ba,\br, \bs)
\in\Seq^H(n,d)$.

\begin{Lemma} \label{LSingleSep} {\rm \cite[Lemma 4.6]{greenTwo}} 
If $(\ba,\br, \bs)$ is $\de$-separated then  
$$
\xi^\ba_{\br,\bs}=\xi_{\br^{(1)},\bs^{(1)}}^{\ba^{(1)}}*\dots*\xi_{\br^{(q)},\bs^{(q)}}^{\ba^{(q)}}
\quad\text{and}\quad 
\eta^\ba_{\br,\bs}=\eta_{\br^{(1)},\bs^{(1)}}^{\ba^{(1)}}*\dots*\eta_{\br^{(q)},\bs^{(q)}}^{\ba^{(q)}}.
$$
\end{Lemma}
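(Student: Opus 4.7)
\medskip
\noindent\textbf{Proof plan.} The strategy is to reduce the claim to the two-factor case ($q=2$) of Lemma~\ref{xistarproducts} and then iterate via associativity. Since $*$ is associative (Lemma~\ref{L250217_3}), it suffices to establish that for any $q$-tuple as in the hypothesis,
\[
\xi_{\br^{(1)},\bs^{(1)}}^{\ba^{(1)}} * \cdots * \xi_{\br^{(q)},\bs^{(q)}}^{\ba^{(q)}} = \xi^{\ba}_{\br,\bs},
\]
and similarly for $\eta$. Induct on $q$. For $q=1$ the statement is tautological, and the inductive step combines the first $q-1$ factors into a single $\xi$ (resp.\ $\eta$) indexed by the concatenated triple, then applies the two-factor case.

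The key computation is therefore the case $q=2$. Suppose $(\ba,\br,\bs) = (\ba^{(1)}\ba^{(2)}, \br^{(1)}\br^{(2)}, \bs^{(1)}\bs^{(2)})$ is $(d_1,d_2)$-separated. By definition this forces $(\ba,\br,\bs)\in\Seq^B(n,d_1+d_2)$, since any repeated entry $(a_k,r_k,s_k)=(a_l,r_l,s_l)$ with $k\neq l$ must occur within a single block $(\ba^{(m)},\br^{(m)},\bs^{(m)})\in\Seq^B(n,d_m)$, which itself forces $a_k\in B_\0$. Consequently, for every $(b,r,s)\in B\times[1,n]^2$, separation implies
\[
[\ba,\br,\bs]^b_{r,s} = [\ba^{(1)},\br^{(1)},\bs^{(1)}]^b_{r,s} + [\ba^{(2)},\br^{(2)},\bs^{(2)}]^b_{r,s},
\]
and \emph{at most one} of the two summands is nonzero. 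Hence the factorials factor cleanly,
\[
[\ba,\br,\bs]^! = [\ba^{(1)},\br^{(1)},\bs^{(1)}]^!\cdot [\ba^{(2)},\br^{(2)},\bs^{(2)}]^!,
\]
and the analogous identity holds for $[\,\cdot\,]^!_\a$ (and $[\,\cdot\,]^!_\c$). Applied to Lemma~\ref{xistarproducts}(i), this shows that the multinomial coefficient in $\xi_{\br^{(1)},\bs^{(1)}}^{\ba^{(1)}} * \xi_{\br^{(2)},\bs^{(2)}}^{\ba^{(2)}}$ equals $1$, giving $\xi^{\ba}_{\br,\bs}$; part (ii) yields the same conclusion for $\eta$ after invoking (\ref{E080717}) to translate between the $\c$-normalization and the $\a$-normalization.

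The inductive step then rests on the observation that $\delta$-separation is preserved under concatenation in the following sense: if $(\ba,\br,\bs)$ is $(d_1,\dots,d_q)$-separated, then bundling the last $q-1$ blocks into a single block of length $d_2+\cdots+d_q$ produces a pair that is $(d_1, d_2+\cdots+d_q)$-separated, while the last $q-1$ blocks remain $(d_2,\dots,d_q)$-separated among themselves. I do not expect any serious obstacle here; the only delicate point is the bookkeeping that guarantees that the multinomial coefficients stay equal to $1$ at every step of the iteration, but this is transparent from the fact that each $(b,r,s)$-entry is supported in a unique block throughout. Signs in the $*$-product need no attention since Lemma~\ref{xistarproducts} already absorbs them into the stated formulas.
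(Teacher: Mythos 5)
Your proof is correct and follows the route the paper itself intends: the lemma is introduced in the text precisely as the special case where the coefficients in Lemma~\ref{xistarproducts} are guaranteed to equal $1$, and your observation that $\de$-separation forces each count $[\ba,\br,\bs]^b_{r,s}$ to be supported in a single block (so the relevant multinomial coefficients, including their $\a$- and $\c$-versions, all collapse to $1$) is exactly the point, with associativity of $*$ handling the iteration. The only cosmetic caveat is that Lemma~\ref{xistarproducts} is stated for triples in $\Seq^B$ while the present lemma allows entries from $\Seq^H$; the underlying stabilizer/orbit computation is identical in that generality, so this does not affect the argument.
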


\begin{Lemma} \label{LSep} 
{\rm \cite[Lemma 4.7]{greenTwo}}
Let $(\ba,\br, \bs)$ and $(\bc,\bt, \bu)$ be $\de$-separated and suppose that 
$$
(\xi_{\br^{(1)},\bs^{(1)}}^{\ba^{(1)}}\otimes\dots\otimes\xi_{\br^{(q)},\bs^{(q)}}^{\ba^{(q)}})^\si(\xi_{\bt^{(1)},\bu^{(1)}}^{\bc^{(1)}}\otimes\dots\otimes\xi_{\bt^{(q)},\bu^{(q)}}^{\bc^{(q)}})^{\si'}=0
$$
whenever $\si$ and $\si'$ are distinct elements of ${}^\de\D$. Then 
$$
\xi^\ba_{\br,\bs}\xi^\bc_{\bt,\bu}=\pm(\xi_{\br^{(1)},\bs^{(1)}}^{\ba^{(1)}}\xi_{\bt^{(1)},\bu^{(1)}}^{\bc^{(1)}})*\dots*(\xi_{\br^{(q)},\bs^{(q)}}^{\ba^{(q)}}\xi_{\bt^{(q)},\bu^{(q)}}^{\bc^{(q)}}).
$$
Moreover, if $a_1,\dots,a_d$ or $c_1,\dots, c_d$ are all even, then the sign in the right hand side is $+$. 
\end{Lemma}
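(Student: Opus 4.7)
The strategy is to convert both sides to explicit sums over ${}^\de\D$ using the $\de$-separated hypothesis and Lemma~\ref{LSingleSep}, then use the vanishing assumption to collapse the resulting double sum to a diagonal. By Lemma~\ref{LSingleSep} and the definition \eqref{EStarNotation} of $*$, I can write
\begin{equation*}
\xi^\ba_{\br,\bs}=\sum_{\si\in{}^\de\D}\bigl(\xi_{\br^{(1)},\bs^{(1)}}^{\ba^{(1)}}\otimes\cdots\otimes\xi_{\br^{(q)},\bs^{(q)}}^{\ba^{(q)}}\bigr)^\si,
\end{equation*}
and analogously for $\xi^\bc_{\bt,\bu}$. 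Multiplying these expansions yields a double sum over $(\si,\si')\in{}^\de\D\times{}^\de\D$, and the hypothesis of the lemma kills every cross term, leaving only the diagonal.

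Next, because the right $\Si_d$-action on $M_n(A)^{\otimes d}$ is by superalgebra automorphisms, each surviving term $(\cdots)^\si(\cdots)^\si$ may be rewritten as $[(\cdots)(\cdots)]^\si$. Expanding this product of two pure tensors via the Koszul rule for tensor products of superalgebras produces a global sign $(-1)^\eps$ (independent of $\si$, because parities of the factors are preserved by $\si$) times
\begin{equation*}
\bigl(\xi_{\br^{(1)},\bs^{(1)}}^{\ba^{(1)}}\xi_{\bt^{(1)},\bu^{(1)}}^{\bc^{(1)}}\otimes\cdots\otimes\xi_{\br^{(q)},\bs^{(q)}}^{\ba^{(q)}}\xi_{\bt^{(q)},\bu^{(q)}}^{\bc^{(q)}}\bigr)^\si.
\end{equation*}
Summing over $\si\in{}^\de\D$ and invoking the definition of $*$ a second time recovers the required expression $\pm(\xi^{\ba^{(1)}}_{\br^{(1)},\bs^{(1)}}\xi^{\bc^{(1)}}_{\bt^{(1)},\bu^{(1)}})*\cdots*(\xi^{\ba^{(q)}}_{\br^{(q)},\bs^{(q)}}\xi^{\bc^{(q)}}_{\bt^{(q)},\bu^{(q)}})$.

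For the final clause, the sign $(-1)^\eps$ coming from the Koszul rule is a product of factors of the form $(-1)^{|\ba^{(j)}||\bc^{(i)}|}$ for $i<j$; if every $a_k$ or every $c_k$ is even then all the $\ba^{(j)}$ (respectively $\bc^{(i)}$) have even total parity and each such factor is trivial, so $\eps=0$.

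The only real obstacle is the sign bookkeeping: one must check that the Koszul sign arising from the tensor multiplication and any signs coming from the $\Si_d$-action assemble cleanly into a single global $\pm$ independent of $\si$. This follows because $\si\in{}^\de\D$ permutes tensor slots between the $q$ blocks but not within them, so the parities within each block, and hence the $\de$-block-level Koszul sign, are $\si$-invariant.
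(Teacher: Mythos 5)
Your proof is correct and follows the natural route for this lemma: expand $\xi^\ba_{\br,\bs}$ and $\xi^\bc_{\bt,\bu}$ into sums over ${}^\de\D$ using Lemma~\ref{LSingleSep} together with the iterated $*$-product formula, kill all off-diagonal terms with the vanishing hypothesis, merge each surviving diagonal term via the fact that $\Si_d$ acts by superalgebra automorphisms, apply the Koszul rule for the tensor-product superalgebra $M_n(A)^{\otimes d_1}\otimes\cdots\otimes M_n(A)^{\otimes d_q}$, and resum over ${}^\de\D$. (The paper delegates the proof to \cite[Lemma 4.7]{greenTwo}, which I cannot inspect directly, but given the available ingredients this is surely the intended argument.)

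One small quibble about the sign bookkeeping at the end. Once you have rewritten $(\cdots)^\si(\cdots)^\si$ as $\bigl[(\cdots)(\cdots)\bigr]^\si$, the Koszul sign $(-1)^\eps$ is produced entirely \emph{inside} the bracket, before $\si$ is applied, and the two bracketed factors do not depend on $\si$. So the sign is $\si$-independent for that trivial reason, and there is no residual interaction between Koszul signs and signs from the $\Si_d$-action to worry about. Your parenthetical justification (``parities of the factors are preserved by $\si$'') and the closing paragraph address a concern that simply does not arise in the order of operations you chose; they are harmless but superfluous. Everything else, including the final observation that all even $a_k$ (or all even $c_k$) forces $\eps=0$, is fine.
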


\subsection{Idempotents}
\label{SSIdAnt}
Let $\la\in\La(n,d)$. Set  
\begin{equation}\label{E150288}
\bl^\la:=1^{\la_1}\cdots n^{\la_n}\in [1,n]^d.
\end{equation}
If $e\in A$ be an idempotent, define 
\begin{equation}\label{E160918}
\xi(\la,e):=\xi^{e^d}_{\bl^\la,\bl^\la}=\xi^{e^{\la_1}}_{1^{\la_1},1^{\la_1}}*\cdots*\xi^{e^{\la_n}}_{n^{\la_n},n^{\la_n}}\in S^A(n,d),
\end{equation}
where the last equality follows from Lemma~\ref{LSingleSep}. 
The element $\xi(\la,e)$ was denoted $\xi_\la^e$ in \cite[\S5.1]{greenTwo}, where we have noted that it is an idempotent. 
Note using Lemma~\ref{L140117} that $\xi(\la,e)\in T^A_\a(n,d)$ if $e\in\a$. 
If $A$ is unital, we denote $\xi(\la):=\xi(\la,{1_A})$. Then
$
1_{S^A(n,d)}=\sum_{\la\in\La(n,d)}\xi(\la)
$
is an orthogonal idempotent decomposition. If the pair $(A,\a)$ is unital, then $\xi(\la)\in T^A_\a(n,d)$ for all $\la\in\La(n,d)$.

Let $\bla\in\La^I(n,d)$ with $\| \bla\|=(d_0,\dots,d_\ell)$. We 
define  
\begin{align}\label{E150288Bold}
&\bl^\bla:=\bl^{\la{(0)}}\cdots \bl^{\la^{(\ell)}}\in [1,n]^d,
\\
\label{E150218_1}
&e_\bla:=\xi_{\bl^\bla,\bl^\bla}^{e_0^{d_0}\cdots e_\ell^{d_\ell}}=\xi(\la^{(0)},e_0)*\cdots*\xi(\la^{(\ell)},e_\ell),
\end{align}
where the last equality follows from Lemma~\ref{LSingleSep}. 
We have noted in \cite[\S5.1]{greenTwo} that $e_\bla$ 
is an idempotent. 

For $\bb\in B^d$ and $\br\in[1,n]^d$, we define multicompositions 
$\bal(\bb,\br)=(\al^{(0)},\dots,\al^{(\ell)})$ and  $\bbe(\bb,\br)=(\be^{(0)},\dots,\be^{(\ell)})$ in $\La^I(n)$ via
\begin{equation}\label{E150218_5}
\begin{split}
\al^{(i)}_s&:=\sharp\{k\in[1,d]\mid r_k=s\ \text{and}\ e_ib_k=b_k\},
\\
\be^{(i)}_s&:=\sharp\{k\in[1,d]\mid r_k=s\ \text{and}\ b_ke_i=b_k\}.
\end{split}
\end{equation}
Note that $\bal(\bb,\br)\in\La^I(n,f)$ and $\bbe(\bb,\br)\in\La^I(n,f')$ for some $0\leq f,f'\leq d$. The following result follows easily from Lemma~\ref{LSep}:

\begin{Lemma} \label{L150218_6} 
Let $\bla\in\La^I(n,d)$ and $(\bb,\br,\bs)\in\Seq^B(n,d)$. Then
\begin{align*}
&e_\bla\xi^\bb_{\br,\bs}=\de_{\bla,\bal(\bb,\br)}\xi^\bb_{\br,\bs},
&\xi^\bb_{\br,\bs}e_\bla=\de_{\bla,\bbe(\bb,\bs)}\xi^\bb_{\br,\bs},
\\
&e_\bla\eta^\bb_{\br,\bs}=\de_{\bla,\bal(\bb,\br)}\eta^\bb_{\br,\bs},
&\eta^\bb_{\br,\bs}e_\bla=\de_{\bla,\bbe(\bb,\bs)}\eta^\bb_{\br,\bs}. 
\end{align*}
\end{Lemma}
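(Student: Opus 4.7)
My plan is to compute $e_\bla \cdot \xi^\bb_{\br,\bs}$ directly using Proposition~\ref{CPR}, and then deduce all four identities. Two simplifications set this up. First, since $e_\bla = \xi^{e_0^{d_0}\cdots e_\ell^{d_\ell}}_{\bl^\bla,\bl^\bla}$ is built from the even idempotents $e_i$, we have $[e_0^{d_0}\cdots e_\ell^{d_\ell},\bl^\bla,\bl^\bla]^!_\c = 1$, so $e_\bla$ coincides with its $\eta$-expression; multiplying the $\xi$-identity by the scalar $[\bb,\br,\bs]^!_\c$ will therefore yield the $\eta$-identity. Second, axiom (c) of Definition~\ref{DCC} gives $e_j x \in \{0,x\}$ for all $j\in I$ and $x\in X$, hence $e_j b \in \{0,b\}$ for every $b\in B$, which rigidly constrains the structure constants that appear.

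I would apply Proposition~\ref{CPR} with $\ba = e_0^{d_0}\cdots e_\ell^{d_\ell}$, $\bp=\bq=\bl^\bla$, $\bc=\bb$, $\bu=\br$, $\bv=\bs$. Writing $a''_k = e_{j_k}$ in each summand, the rigidity above forces $\ka^{b'_k}_{a''_k,c''_k} = \de_{b'_k,c''_k}$ when $e_{j_k} c''_k = c''_k$, and $0$ otherwise. In particular $\bc''=\bb'$, and the equivalence $(\bc'',\bt,\bs') \sim (\bb,\br,\bs)$ then forces the output orbit $[\bb',\br',\bs']$ to equal $[\bb,\br,\bs]$ (with $\bt$ lying in the $\Si_{\bb,\br,\bs}$-orbit of $\br$). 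The remaining equivalence $(\ba'',\br',\bt)\sim(\ba,\bl^\bla,\bl^\bla)$ then forces $\br$ to be a permutation of $\bl^\bla$ and uniquely determines $\ba''$ (up to $\Si_{\bb,\br,\bs}$) by this matching. The final constraint $e_{j_k}\, b_k = b_k$ for each $k$ translates, by the definition (\ref{E150218_5}), precisely to $\bla = \bal(\bb,\br)$. All signs in Proposition~\ref{CPR} are $+1$ because $\ba$ is purely even, and the coset/stabilizer multiplicities collapse to leave coefficient exactly $1$ on $\xi^\bb_{\br,\bs}$.

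The right-action identities $\xi^\bb_{\br,\bs}\cdot e_\bla$ and $\eta^\bb_{\br,\bs}\cdot e_\bla$ follow by an entirely parallel computation, now using the axiom $y e_j \in \{0,y\}$ for $y\in Y$ to force the matching condition $\bla = \bbe(\bb,\bs)$ from (\ref{E150218_5}). The main obstacle is the bookkeeping in Proposition~\ref{CPR}: one must reconcile the coset representatives of $\Si_{\ba,\bp,\bq}\backslash\Si_d$ and $\Si_{\bc,\bu,\bv}\backslash\Si_d$ against the stabilizer $\Si_{\bb,\br,\bs}$ so that the surviving term has coefficient exactly $1$ rather than a multiplicity. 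The rigidity imposed by the axiom $e_j b \in \{0,b\}$ (each factor forces the corresponding permutation entry, leaving no ambiguity) and the absence of signs make this essentially a matter of counting orbits, and should close the argument cleanly.
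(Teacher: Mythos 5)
Your approach is sound and does establish the lemma, but it takes a different route from the paper. The paper notes that the result "follows easily from Lemma~\ref{LSep}": one writes $e_\bla$ in the $\de$-separated form $e_\bla = \xi(\la^{(0)},e_0)*\cdots*\xi(\la^{(\ell)},e_\ell)$ (with $\de$ refining to the row-blocks of $\bl^\bla$), chooses a compatible $\de$-separated representative for $\xi^\bb_{\br,\bs}$, and then multiplies block-by-block via Lemma~\ref{LSep}, reducing to trivial single-row products $\xi^{e_i^m}_{r^m,r^m}\xi^{\bc}_{r^m,\bu}$. You instead go straight to the structure-constant formula of Proposition~\ref{CPR}. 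Both work; yours is more elementary (it does not invoke the $*$-product machinery) at the cost of tracking the coset/orbit bookkeeping by hand, which you correctly flag as the nontrivial part. The key points you identify — that $e_jb\in\{0,b\}$ for every $b\in B$ by Definition~\ref{DCC}(c), so $\kappa^{\bb'}_{\ba'',\bc''}$ vanishes unless $\bc''=\bb'$ and each $a''_k$ kills or fixes $b_k$; that this forces $\bt=\br$, $\ba''$ unique, and the output orbit equal to $[\bb,\br,\bs]$; and that the purely-even $\ba$ collapses all sign factors in Proposition~\ref{CPR} — are exactly what is needed to pin the coefficient to $\de_{\bla,\bal(\bb,\br)}$. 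The $\eta$-statements indeed then follow by rescaling by $[\bb,\br,\bs]^!_\c$, since $[\ba,\bl^\bla,\bl^\bla]^!_\c=1$.

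One small imprecision worth fixing in a written-up version: you say the constraint $e_{j_k}b_k=b_k$ "translates precisely to $\bla=\bal(\bb,\br)$." On its own it only pins down $\ba''$ uniquely (each $a''_k$ must equal the unique $e_j$ with $e_jx_k=x_k$, where $b_k=x_ky_k$); the identity $\bla=\bal(\bb,\br)$ then comes from imposing the remaining equivalence $(\ba'',\br,\br)\sim(\ba,\bl^\bla,\bl^\bla)$ on that forced $\ba''$, which, after unwinding $(\ref{E150218_5})$, is exactly the statement that the pair-multiset $\{(a''_k,r_k)\}_k$ matches $\{(a_k,\bl^\bla_k)\}_k$. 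Making this two-step logic explicit closes the gap between "the term that survives" and "the $\de$-symbol you want."
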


\subsection{Multiplication lemma} \label{SSLeading}
Throughout the subsection we fix $i\in I$. 
If $\bu=(u_1,\dots,u_d)\in[1,n]^d$ and $\bx=x_1\cdots x_d\in X(i)^d$, we denote 
$$
\bu^\bx:=u_1^{x_1}\cdots u_d^{x_d}\in\Alph_{X(i)}^d.
$$
Recall  a total order `$<$' on $\Alph_{X(i)}$ from \S\ref{SSColLet}. This total order induces the lexicographical order `$<$' on $\Alph_{X(i)}^d$.

\begin{Lemma} \label{LTwoParts} 
Let $g,f\in\Z_{\geq 0}$ with $d=g+f$, and $r,s,t\in[1, n]$ with $s\neq t$. Set 
\begin{align*}
\by&:=e_i^d,\ \br:=r^d,\ \bs:=s^g,\   \bt:=t^f.
\end{align*} 
Let 
\begin{align*}
&\bp=p_1\cdots p_g\in[1,n]^g,\  \bq=q_1\cdots q_f\in [1,n]^f,\\ 
&\bx^1=x_1^1\cdots x_g^1\in X(i)^g,\ \bx^2=x_1^2\cdots x_f^2\in X(i)^f
\end{align*}
be such that $(\bx^1\bx^2,\bp\bq,\br)\in\Seq^{X(i)}(n,d)$ and 
$$q_1^{x_1^2}\leq\dots\leq q_f^{x_f^2}\leq p_1^{x_1^1}\leq\dots\leq p_g^{x_g^1}.$$ 
Then $(\bx^1\bx^2,\bp\bq,\bs\bt)\in\Seq^{B(i)}(n,d)$ and 
$$
\xi_{\bp\bq,\br}^{\bx^1\bx^2}\xi_{\br,\bs\bt}^\by=\xi_{\bp\bq,\bs\bt}^{\bx^1\bx^2}+(*),
$$
where {\rm (*)} is a linear combination of  $\xi_{\bu,\bs\bt}^{\bx}$ with 
$\bu^\bx\sim \bp^{\bx^1}\bq^{\bx^2}$
and 
$
\bu^\bx< \bp^{\bx^1}\bq^{\bx^2}.
$
\end{Lemma}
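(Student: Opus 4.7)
The plan is to apply Proposition~\ref{CPR} directly to the product $\xi^{\bx^1\bx^2}_{\bp\bq,\br}\,\xi^{\by}_{\br,\bs\bt}$ and to exploit the heredity axiom $xe_i=x$ for $x\in X(i)$ together with the given ordering on colored letters. First I would verify the assertion $(\bx^1\bx^2,\bp\bq,\bs\bt)\in\Seq^{B(i)}(n,d)$: any coincidence of two entries of this triple must lie entirely in the first $g$ positions or entirely in the last $f$ positions because $s\neq t$, and every such coincidence already occurs in the input triple $(\bx^1\bx^2,\bp\bq,\br)$, so the corresponding $X(i)$-letter is even; membership in $\Seq^{B(i)}$ then follows since each $x\in X(i)$ equals the basis element $b^i_{x,e_i}\in B(i)$.

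Next, Proposition~\ref{CPR} expresses the coefficient of a basis element $\xi^\bb_{\br',\bs'}$ in the product as $\sum\pm\kappa^\bb_{\bx',\by'}$ summed over $(\bx',\by',\bt')$ with $(\bx',\br',\bt')\sim(\bx^1\bx^2,\bp\bq,r^d)$ and $(\by',\bt',\bs')\sim(e_i^d,r^d,\bs\bt)$. The second condition forces $\by'=e_i^d$ and $\bt'=r^d$, and hence $\bs'$ must be a rearrangement of $\bs\bt$. The heredity identity $x'_ke_i=x'_k$ identifies each $x'_k\in X(i)$ with the basis element $b^i_{x'_k,e_i}$, so $\kappa^{b_k}_{x'_k,e_i}=\delta_{b_k,x'_k}$ and therefore $\kappa^\bb_{\bx',e_i^d}=\delta_{\bb,\bx'}$. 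Only tuples with $(\bb,\br')\sim(\bx^1\bx^2,\bp\bq)$ survive, and Lemma~\ref{LXiZero} lets me normalize the right index of each surviving basis element to $\bs\bt$ at the cost of a sign. The product therefore lies in the span of the $\xi^{\bx}_{\bu,\bs\bt}$ with $\bu^\bx\sim\bp^{\bx^1}\bq^{\bx^2}$.

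To isolate the leading term, I would, for each non-leading basis element appearing above, pick the representative $(\bx,\bu)$ in which the first $g$ entries and the last $f$ entries of $\bu^\bx$ are each sorted in increasing order (using the action of $\Si_g\times\Si_f\subseteq\Stab(\bs\bt)$). The hypothesis $q_1^{x_1^2}\le\cdots\le q_f^{x_f^2}\le p_1^{x_1^1}\le\cdots\le p_g^{x_g^1}$ says every letter of $\bq^{\bx^2}$ is $\le$ every letter of $\bp^{\bx^1}$; consequently the sorted first block of any non-leading partition is pointwise $\le$ the sorted first block $\{p_1^{x_1^1},\dots,p_g^{x_g^1}\}$ of the leading partition, and strictly so at some position since the first-block multisets differ. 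Hence $\bu^\bx<\bp^{\bx^1}\bq^{\bx^2}$ in the lex order. Finally, for the leading term itself, the CPR sum collapses to the single tuple $(\bx^1\bx^2,e_i^d,r^d)$, giving $\kappa^{\bx^1\bx^2}_{\bx^1\bx^2,e_i^d}=1$, and the five bracket exponents cancel: the $(\bx^1\bx^2,\bp\bq,r^d)$-bracket appears on both the input and output sides with the same parity, while the three brackets $\lan e_i^d,r^d,\bs\bt\ran$, $\lan e_i^d,r^d,\bs\bt\ran$, and $\lan\bx^1\bx^2,e_i^d\ran$ all vanish since $e_i$ is even. I expect this sign-bookkeeping at the leading coefficient to be the main obstacle, since one must track stabilizers of several triples simultaneously; the rest of the argument is a direct consequence of the heredity axiom $xe_i=x$ and the chosen ordering on $\Alph_{X(i)}$.
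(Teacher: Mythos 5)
Your proposal is correct and follows essentially the same route as the paper's proof: apply Proposition~\ref{CPR}, use $xe_i=x$ to reduce $\kappa^{\bb}_{\ba',e_i^d}$ to $\de_{\bb,\ba'}$, normalize representatives via the stabilizer $\Si_g\times\Si_f$ of $\bs\bt$, and use the ordering hypothesis to get the strict lexicographic bound on the non-leading terms, with the leading coefficient equal to $1$ by cancellation of signs. Your spelled-out majorization argument for the lex comparison and the explicit five-term sign bookkeeping are just more detailed versions of steps the paper asserts briefly.
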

\begin{proof}
The property $(\bx^1\bx^2,\bp\bq,\bs\bt)\in\Seq^{B(i)}(n,d)$ easily follows from the assumption $(\bx^1\bx^2,\bp\bq,\br)\in\Seq^{X(i)}(n,d)$. 
By Proposition~\ref{CPR}, we have
\begin{equation}\label{EFirstSum}
\xi_{\bp\bq,\br}^{\bx^1\bx^2}\xi_{\br,\bs\bt}^\by=\sum_{[\bb, \bu,\bv]\in  \Seq^{B} (n,d)/\Si_d} f^{\bb,\bu,\bv} \xi_{\bu,  \bv}^{\bb},
\end{equation}
for
$$
f^{\bb,\bu,\bv}= \sum_{\ba', \bc', \bt'} 
(-1)^{
\lan \bx^1\bx^2, \bp\bq, \br\ran
+\lan \by,\br,  \bs\bt\ran
+ \lan\ba',\bu,  \bt'\ran 
+\lan\bc',\bt', \bv\ran
+\lan\ba',\bc'\ran} \, 
\kappa_{\ba',\bc'}^{\bb},
$$
where the last sum is over all triples
$(\ba', \bc', \bt') \in H^d \times H^d\times [1,n]^d$
such that $(\ba',\bu,  \bt') \sim (\bx^1\bx^2,\bp\bq,  \br)$ and $(\bc', \bt', \bv)\sim (\by,\br,\bs\bt)$. It follows that $\bt'=\br$, $\bc'=\by$ and $\ba'\in X(i)^d$. Observing that $\kappa_{\ba',\by}^{\bb}=\de_{\bb,\ba'}$, we now deduce that  
$$
f^{\bb,\bu,\bv}= 
\left\{
\begin{array}{ll}
0 &\hbox{if $\bv\not\sim \bs\bt$,}
\\
\displaystyle \sum_{\ba'} 
(-1)^{
\lan \bx^1\bx^2, \bp\bq, \br\ran
+ \lan\ba',\bu,  \br\ran 
} \, 
\de_{\bb,\ba'} &\hbox{if $\bv\sim \bs\bt$,}
\end{array}
\right.
$$
where the sum is over all $\ba'\in X(i)^d$
such that $(\ba',\bu) \sim (\bx^1\bx^2,\bp\bq)$. 

Since the sum in (\ref{EFirstSum}) is over orbit representatives, by the previous paragraph we may assume that $\bv=\bs\bt$ and $(\bb,\bu)\sim (\bx^1\bx^2,\bp\bq)$. Moreover, acting if necessary with the stabilizer $\Si_g\times\Si_f$ of $\bs\bt$, we may assume that $\bb=x_1\cdots x_d$ for some permutation $(x_1,\dots,x_d)$ of $(x_1^1,\dots,x^1_g,x^2_1,\dots,x^2_f)$ such that $u_1^{x_1}\leq\dots \leq u_g^{x_g}$ and $u_{g+1}^{x_{g+1}}\leq\dots \leq u_d^{x_d}$. It follows that $\bu^\bx\sim \bp^{\bx^1}\bq^{\bx^2}$, 
$
\bu^\bx\leq \bp^{\bx^1}\bq^{\bx^2}
$
and the equality $
\bu^\bx = \bp^{\bx^1}\bq^{\bx^2}
$ is only possible if $(\bb,\bu,\bv)=(\bx^1\bx^2,\bp\bq,\bs\bt)$. In the latter case, the formula in the previous paragraph yields $f^{\bx^1\bx^2,\bp\bq,\bs\bt}=1$, completing the proof. 
\end{proof}

\section{Codeterminants}\label{SecCod}
We continue to work with a fixed $d\in Z_{\geq 0}$, $n\in\Z_{>0}$, and based quasi-hereditary graded \(\k\)-superalgebra $A$ with $\fa$-conforming heredity data $I,X,Y$ and the corresponding heredity basis 
$B$. Fix $\bla=(\la^{(0)},\dots,\la^{(\ell)})\in\La^I(n,d)$. 

\subsection{Single colored codeterminants}\label{SSDFPOne}
Throughout the subsection we fix $i\in I$ and set 
$$
\mu:=\la^{(i)}, \quad c:=|\mu|.
$$ 
Let 
$$
T^A_\a(n,c){}_i:=\spa(\eta_{\bp,\bq}^\bb\mid (\bb,\bp,\bq)\in\Seq^{B(i)}(n,c))\subseteq T^A_\a(n,c).$$ 

Recall the combinatorial notions introduced in \S\ref{SSColLet}. Let $S\in \Tab^{X(i)}(\mu)$ and $T\in \Tab^{Y(i)}(\mu)$ with
$$
\L^S=r_1^{x_1}\cdots r_{c}^{x_{c}}\in \Alph_{X(i)}^{c}\quad\text{and}\quad \L^T=s_1^{y_1}\cdots s_{c}^{y_{c}}\in \Alph_{Y(i)}^{c},
$$ 
see (\ref{E121116One}). We define
\begin{align*}
&\bx^S:=x_1\cdots x_{c}\in X(i)^{c},  &\bl^S:=r_1\cdots r_{c}\in [1,n]^{c},
\\
&\by^T:=y_1\cdots y_{c}\in Y(i)^{c}, &\bl^T:=s_1\cdots s_{c}\in [1,n]^{c}.
\end{align*}
For the initial $\mu$-tableau $T^{\mu}$, set 
$
\bl^{\mu}:=\bl^{T^{\mu}}=1^{\mu_1}\cdots n^{\mu_n}.
$  This agrees with (\ref{E150288}). 
We now define
$$
\X_S:=\xi^{\bx^S}_{\bl^S,\bl^{\mu}},\quad  
\Y_T:=\xi^{\by^T}_{\bl^\mu,\bl^T},\quad 
\B^\mu_{S,T}:=\X_S \Y_T.
$$
We refer to the elements $\B^\mu_{S,T}$ as {\em codeterminants of color $i$}, cf. \cite{GreenCod}. Note that $X(i),Y(i)\subseteq B_\a\sqcup B_\1$ so $\X_S=\eta^{\bx^S}_{\bl^S,\bl^{\mu}}\in T^A_\a(n,c)$ and  $\Y_T=\eta^{\by^T}_{\bl^\mu,\bl^T}\in T^A_\a(n,c)$. Therefore $\B^\mu_{S,T}\in T^A_\a(n,c)$. Since $xy\in B(i)$ whenever $x\in X(i)$ and $y\in Y(i)$, it now follows that $\B^\mu_{S,T}\in T^A_\a(n,c)_i$. 

We refer to $\mu$ as the {\em shape} of the codeterminant $\B^\mu_{S,T}$. 
A codeterminant $\B^\mu_{S,T}$ is called {\em dominant} if $\mu\in\La_+(n,c)$, i.e. if its shape is a partition. A codeterminant $\B^\mu_{S,T}$ is called {\em standard} if it is dominant and $S\in\Std^{X(i)}(\mu)$ and $T\in\Std^{Y(i)}(\mu)$. 

\begin{Lemma} \label{120117One} 
If $S,S'\in \Tab^{X(i)}(\mu)$ are row equivalent, then $\X_S=\pm\X_{S'}$. If $T,T'\in \Tab^{Y(i)}(\mu)$ are row equivalent, then $\Y_T=\pm\Y_{T'}$. 
\end{Lemma}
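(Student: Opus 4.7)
The plan is to reduce this to Lemma~\ref{LXiZero} by observing that row equivalence of tableaux corresponds precisely to the diagonal $\Si_c$-action on the triples defining $\X_S$ and $\Y_T$, where $c=|\mu|$.

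First, I would unpack the row equivalence $S\sim S'$ via some $\si\in\Si_\mu$. By definition $S_k=S'_{\si(k)}$ for $1\leq k\leq c$, so reading off letters and colors gives $\bl^{S'}=\bl^S\cdot\si^{-1}$ and $\bx^{S'}=\bx^S\cdot\si^{-1}$ with respect to the place-permutation right action from \S\ref{SComb}.

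Second, the key observation is that $\Si_\mu$ fixes $\bl^\mu$. Indeed, $\bl^\mu=1^{\mu_1}\cdots n^{\mu_n}$ takes constant value $r$ on exactly those positions corresponding to nodes in the $r$th row of $\mu$; and $\Si_\mu$ was defined precisely as the subgroup permuting nodes within each row. Hence $\bl^\mu\cdot\si^{-1}=\bl^\mu$, so
$$(\bx^{S'},\bl^{S'},\bl^\mu)\;=\;(\bx^S,\bl^S,\bl^\mu)\cdot\si^{-1},$$
i.e.\ the two triples are $\sim$-equivalent in the sense of \S\ref{SComb}. One small sanity check is that both triples lie in $\Seq^{X(i)}(n,c)\subseteq\Seq^H(n,c)$: if two indices $k\neq\ell$ in the same row of $\mu$ give $(x_k,r_k)=(x_\ell,r_\ell)$, that says $S(N_k(\mu))=S(N_\ell(\mu))$, and the tableau axiom in \S\ref{SSColLet} then forces $x_k=\col(S(N_k(\mu)))\in X(i)_\0$.

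Third, applying Lemma~\ref{LXiZero} directly yields
$$\X_{S'}\;=\;\xi^{\bx^{S'}}_{\bl^{S'},\bl^\mu}\;=\;(-1)^{\langle\bx^S,\bl^S,\bl^\mu\rangle+\langle\bx^{S'},\bl^{S'},\bl^\mu\rangle}\,\xi^{\bx^S}_{\bl^S,\bl^\mu}\;=\;\pm\X_S.$$
The argument for $\Y_T=\pm\Y_{T'}$ is entirely parallel: the letter-word $\bl^\mu$ now occupies the middle slot rather than the right slot of the triple, but since $\Si_\mu$ still stabilises $\bl^\mu$, the same $\sim$-relation and the same invocation of Lemma~\ref{LXiZero} apply.

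I do not anticipate a genuine obstacle here—the lemma is essentially a dictionary between row equivalence of tableaux and the place-permutation equivalence of triples, and the sign is delivered for free by Lemma~\ref{LXiZero}. The only thing one must be a little careful about is confirming that the relevant triple lies in $\Seq^H(n,c)$ so that Lemma~\ref{LXiZero} is actually applicable, and this is guaranteed by the definition of $\Tab^{X(i)}(\mu)$ (respectively $\Tab^{Y(i)}(\mu)$) which permits in-row repetitions only for even colours.
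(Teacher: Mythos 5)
Your proof is correct and follows exactly the route the paper takes: the paper's entire proof is the single line ``This follows from Lemma~\ref{LXiZero}'', and your write-up simply makes explicit the dictionary between row equivalence and the $\Si_d$-orbit relation on triples (including the observation that $\Si_\mu$ fixes $\bl^\mu$ and the check that the triples lie in $\Seq^{X(i)}(n,c)$), which is precisely what that citation is implicitly relying on.
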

\begin{proof}
This follows from Lemma~\ref{LXiZero}. 
\end{proof}

For $w\in \Si_n$ and $\nu=(\nu_1,\dots,\nu_n)\in\La(n)$, we define 
\begin{equation}\label{E250217_4}
w\nu:=(\nu_{w^{-1}1},\dots , \nu_{w^{-1}n})\in\La(n).
\end{equation}
Note that the rows of the Young diagram $\Brackets\nu$ are obtained by a permutation of the rows of the Young diagram $\Brackets{w\nu}$, and this permutation of rows defines a bijection $\phi_w:\Brackets{w\nu}\to \Brackets\nu$.  
If $S:\Brackets\nu\to \Alph$ is a function from the Young diagram of $\nu$ to some set $\Alph$, we denote 
\begin{equation}\label{E250217_45}wS:=S\circ \phi_w.
\end{equation} 
With this notation we have

\begin{Lemma} \label{LDomOne} 
If $w\in \Si_n$ then $\B^\mu_{S,T}= \pm\B^{w\mu}_{wS,wT}$. 
\end{Lemma}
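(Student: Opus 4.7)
The plan is to decompose both $\B^\mu_{S,T}$ and $\B^{w\mu}_{wS,wT}$ as $*$-products of row-by-row factors, then use the supercommutativity of the $*$-product together with the fact that each row factor is independent of the value chosen as its contracted middle index.

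First I decompose according to the rows of $\mu$. Since $\bl^\mu=1^{\mu_1}\cdots n^{\mu_n}$ has a constant value $r$ throughout its $r$-th block, the triple $(\bx^S,\bl^S,\bl^\mu)$ is $(\mu_1,\ldots,\mu_n)$-separated in the sense of \S\ref{SComb}, and Lemma~\ref{LSingleSep} yields a factorization $\X_S=\X_S^{(1)}*\cdots*\X_S^{(n)}$ in which $\X_S^{(r)}$ uses only the data of the $r$-th row of $S$ and has target index $r^{\mu_r}$. Similarly $\Y_T=\Y_T^{(1)}*\cdots*\Y_T^{(n)}$ with source index $r^{\mu_r}$ in the $r$-th factor. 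The hypothesis of Lemma~\ref{LSep} holds for this pair: in $M_n(A)^{\otimes c}$, multiplication on a single tensor factor requires the middle indices to match, and these are the different constants $r$ and $r'\neq r$ in different row-blocks, so the cross products vanish. Lemma~\ref{LSep} then gives
\[
\B^\mu_{S,T}=\pm(\X_S^{(1)}\Y_T^{(1)})*\cdots*(\X_S^{(n)}\Y_T^{(n)}).
\]

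Applying the identical analysis to $\B^{w\mu}_{wS,wT}$ yields a corresponding factorization indexed by the rows of $w\mu$. The key observation is that, by the definition of the row-permutation bijection $\phi_w$, the $r$-th row of $wS$ is precisely the $(w^{-1}r)$-th row of $S$, and similarly for $wT$; thus the $r$-th factor $\X_{wS}^{(r)}\Y_{wT}^{(r)}$ uses exactly the same tableau data as the $(w^{-1}r)$-th factor $\X_S^{(w^{-1}r)}\Y_T^{(w^{-1}r)}$ of $\B^\mu_{S,T}$, except that the contracted middle index is $r^{\mu_{w^{-1}r}}$ in place of $(w^{-1}r)^{\mu_{w^{-1}r}}$. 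At the level of $M_n(A)^{\otimes m}$, however, the matrix identity $E_{l,s}\cdot E_{s,l'}=E_{l,l'}$ eliminates the middle index $s$ in every tensor factor, while the signs $\langle\sigma;\bp\rangle$ arising from the place-permutation action depend only on the $A$-entries of the triples and not on their row/column indices. Hence the product $\xi^{\bx'}_{\bl',s^m}\xi^{\by'}_{s^m,\bl''}$ does not depend on $s\in[1,n]$, giving $\X_{wS}^{(r)}\Y_{wT}^{(r)}=\X_S^{(w^{-1}r)}\Y_T^{(w^{-1}r)}$ as elements of the corresponding tensor power.

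Combining these observations, $\B^{w\mu}_{wS,wT}$ is a reindexing by $w^{-1}$ of the $n$ factors in the $*$-product expression for $\B^\mu_{S,T}$; supercommutativity of $*$ (Lemma~\ref{L250217_3}) restores the original order at the cost of a sign, yielding $\B^{w\mu}_{wS,wT}=\pm\B^\mu_{S,T}$. The main bookkeeping obstacle is confirming the vanishing hypothesis of Lemma~\ref{LSep} and tracking the three sources of sign (two invocations of Lemma~\ref{LSep} together with the $*$-reordering); since the lemma asserts the equality only up to sign, no explicit sign computation is required.
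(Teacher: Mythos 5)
Your proof is correct and takes essentially the same route as the paper's: factor the codeterminant into row-block $*$-factors via Lemma~\ref{LSep}, permute the factors using supercommutativity of the $*$-product, and use the fact that each block product is independent of its contracted middle index (the paper invokes Proposition~\ref{CPR} for this last point, where you argue directly from the matrix units). The only cosmetic difference is that you explicitly verify the vanishing hypothesis of Lemma~\ref{LSep}, which the paper leaves implicit.
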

\begin{proof}
We can write 
\begin{align*}
\bx^{S}=\bx^{S}_1\cdots\bx^{S}_n, \quad 
\bl^{S}=\bl^{S}_1\cdots\bl^{S}_n,\quad 
\by^{T}=\by^{T}_1\cdots\by^{T}_n, \quad 
\bl^{T}=\bl^{T}_1\cdots\bl^{T}_n,
\end{align*}
where the words $\bx^{S}_k,\bl^{S}_k,\by^{T}_k,\bl^{T}_k$ have length $\mu_k$ for $k=1,\dots, n$.  For $m=1,\dots,n$, denote $k_m:=w^{-1}m$. Note that $w\mu=(\mu_{k_1},\dots,\mu_{k_n})$ and 
\begin{align*}
\bx^{wS}=\bx^{S}_{k_1}\cdots\bx^{S}_{k_n}, \ 
\bl^{wS}=\bl^{S}_{k_1}\cdots\bl^{S}_{k_n},
\ 
\by^{wT}=\by^{T}_{k_1}\cdots\by^{T}_{k_n}, \ 
\bl^{T}=\bl^{T}_{k_1}\cdots\bl^{T}_{k_n},
\end{align*}
We now get 
\begin{align*}
 \B^{\mu}_{S,T}
 &=\xi^{\bx^{S}}_{\bl^{S},\bl^{\mu}}\xi^{\by^{T}}_{\bl^{\mu},\bl^{T}}
\\
&=(\xi^{\bx^{S}_1}_{\bl^{S}_1,1^{\mu_1}}\xi^{\by^{T}_1}_{1^{\mu_1},\bl^{T}_1})*\cdots*
(\xi^{\bx^{S}_n}_{\bl^{S}_n,n^{\mu_n}}\xi^{\by^{T}_n}_{n^{\mu_n},\bl^{T}_n})
\\
&=\pm (\xi^{\bx^{S}_{k_1}}_{\bl^{S}_{k_1},k_1^{\mu_{k_1}}}\xi^{\by^{T}_{k_1}}_{k_1^{\mu_{k_1}},\bl^{T}_{k_1}})*\cdots*
(\xi^{\bx^{S}_{k_n}}_{\bl^{S}_{k_n},k_n^{\mu_{k_n}}}\xi^{\by^{T}_{k_n}}_{k_n^{\mu_{k_n}},\bl^{T}_{k_n}})
\\
&=\pm (\xi^{\bx^{S}_{k_1}}_{\bl^{S}_{k_1},1^{\mu_{k_1}}}\xi^{\by^{T}_{k_1}}_{1^{\mu_{k_1}},\bl^{T}_{k_1}})*\cdots*
(\xi^{\bx^{S}_{k_n}}_{\bl^{S}_{k_n},n^{\mu_{k_n}}}\xi^{\by^{T}_{k_n}}_{n^{\mu_{k_n}},\bl^{T}_{k_n}})
\\
&=\pm\xi^{\bx^{wS}}_{\bl^{wS},\bl^{w\mu}}\xi^{\by^{wT}}_{\bl^{w\mu},\bl^{wT}}=\pm  \B^{w\mu}_{wS,wT},
\end{align*}
where the first and the last equations are  by definition, the second and the penultimate equations come from Lemma~\ref{LSep}, the third equation is by the supercommutativity of the $*$-product, and the fourth equality holds by 
Proposition~\ref{CPR}.  
\end{proof}

Note that we can always pick $w$ in the previous lemma so that $w\mu\in\La_+(n,c)$. So, when working with codeterminants $\B^\mu_{S,T}$, we can usually assume that $\mu\in\La_+(n,c)$. In addition, in view of Lemmas~\ref{120117One} and \ref{LStdEquivOne}, we can usually assume that $S$ and $T$ are row standard. 
For example, the following result shows that for $n\geq c$, $T^A_\a(n,c){}_i$ is spanned by dominant codeterminants of color $i$ corresponding to row standard tableaux. 

\begin{Proposition}\label{P260117One}
Let $n \geq c$, and $(\bb,\bp,\bq)\in\Seq^{B(i)}(n,c)$. Then $\eta_{\bp,\bq}^\bb=\pm \B^{\mu}_{S,T}$ for some 
$\mu\in\La_+(n,c)$, $S\in\Rst^{X(i)}(\mu)$ and $T\in\Rst^{Y(i)}(\mu)$. 
\end{Proposition}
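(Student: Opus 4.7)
My plan is to construct $\mu,S,T$ explicitly from $(\bb,\bp,\bq)$ by a grouping procedure and then verify $\B^{\mu}_{S,T}=\pm\eta^{\bb}_{\bp,\bq}$ using the multiplicative machinery of Section~\ref{SecGenSch}.

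Writing each $b_k=x_ky_k$ with $x_k\in X(i)$ and $y_k\in Y(i)$, I first use Lemma~\ref{LXiZero} (absorbing a sign) to reorder $(\bb,\bp,\bq)$ by some $\si\in\Si_c$ so that positions with identical triples $(b_k,p_k,q_k)$ are adjacent. I then define a composition $\mu\in\La(n,c)$ by grouping these blocks into rows \emph{asymmetrically}: each maximal block sharing a triple $(b,p,q)$ with $b\in B_\a$ is declared a single row of $\mu$ of length equal to its multiplicity, while each position with $b\in B_\c\sqcup B_{\bar 1}$ becomes its own row of length one. The number of rows is at most $c\leq n$, so $\mu$ fits in $n$ parts; rearranging rows via Lemma~\ref{LDomOne} (at a further sign) I may assume $\mu\in\La_+(n,c)$. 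Define $S$ on shape $\mu$ by filling row $k$ with $\mu_k$ copies of $p_k^{x_k}$, and $T$ with $\mu_k$ copies of $q_k^{y_k}$. The crucial point justifying the asymmetry is that when $\mu_k>1$ the triple lies in $B_\a$, forcing $x_k\in X(i)_\0$ and $y_k\in Y(i)_\0$, so the equal colored letters within row $k$ are permitted by the defining condition on $\Tab^{X(i)}(\mu)$ and $\Tab^{Y(i)}(\mu)$; had we grouped a repeated $B_\c$-triple into one row, its color $x_k\in X(i)_\1$ would violate the condition. Rows of length one are trivially row standard.

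For the product computation, the triples $(\bx^S,\bl^S,\bl^\mu)$ and $(\by^T,\bl^\mu,\bl^T)$ are $\de$-separated for $\de=(\mu_1,\ldots,\mu_n)$ since distinct blocks have distinct middle index, so Lemma~\ref{LSingleSep} decomposes $\X_S$ and $\Y_T$ as $*$-products of $n$ row factors. The hypothesis of Lemma~\ref{LSep} holds because in every slot of the tensor product the matrix product of the first-tensor $\si$-action and the second-tensor $\si'$-action is nonzero only when $\si$ and $\si'$ send that slot to the same row of $\mu$, forcing $\si=\si'$ among shortest coset representatives of $\Si_\de\backslash\Si_c$. Therefore
\[
\X_S\Y_T=\pm\bigl(\xi^{x_1^{\mu_1}}_{p_1^{\mu_1},1^{\mu_1}}\xi^{y_1^{\mu_1}}_{1^{\mu_1},q_1^{\mu_1}}\bigr)*\cdots*\bigl(\xi^{x_n^{\mu_n}}_{p_n^{\mu_n},n^{\mu_n}}\xi^{y_n^{\mu_n}}_{n^{\mu_n},q_n^{\mu_n}}\bigr),
\]
and each block factor collapses to $\xi^{b_k^{\mu_k}}_{p_k^{\mu_k},q_k^{\mu_k}}$ by a direct matrix computation in $M_n(A)^{\otimes\mu_k}$. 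Iterating Lemma~\ref{xistarproducts}(i) then yields
\[
\X_S\Y_T=\pm\,\frac{[\bb,\bp,\bq]^!}{\prod_k\mu_k!}\,\xi^{\bb}_{\bp,\bq}.
\]

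It remains to verify the factorial identity $[\bb,\bp,\bq]^!/\prod_k\mu_k!=[\bb,\bp,\bq]^!_\c$, which I do by bookkeeping per triple type: a $B_\a$-group with multiplicity $\mu_k$ sitting in one length-$\mu_k$ row contributes $\mu_k!$ to both numerator and denominator; a $B_\c$-triple of total multiplicity $m$ contributes $m!$ to the numerator but only $1$ to the denominator (its $m$ occurrences were split into separate length-one rows); $B_{\bar 1}$-triples, necessarily simple in $\Seq^{B(i)}(n,c)$, contribute trivially. The surviving factor is exactly $[\bb,\bp,\bq]^!_\c$, and hence $\X_S\Y_T=\pm\eta^{\bb}_{\bp,\bq}$. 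The main obstacle is precisely this asymmetric combinatorial design of $\mu$: it is forced in one direction by the tableau condition and in the other by the need to cancel the $B_\a$-part of $[\bb,\bp,\bq]^!$ against the $*$-product normalization $\prod_k\mu_k!$ while preserving the $B_\c$-part.
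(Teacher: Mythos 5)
Your proof is correct and follows essentially the same route as the paper's: the same asymmetric grouping of repeated triples ($B_\a$-repeats into a single row, $B_\c$ and $B_\1$ occurrences into singleton rows), followed by a $\de$-separation argument and a $*$-product computation. The only difference is organizational — the paper blocks by quadruple-type and absorbs each $m!$ inside the block computation via Lemma~\ref{LSingleSep}, whereas you block by rows and recover the factorials at the end from Lemma~\ref{xistarproducts}(i); the two bookkeepings agree.
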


\begin{proof}
In view of Lemmas~\ref{120117One} and \ref{LDomOne}, it suffices just to prove that $\eta_{\bp,\bq}^\bb=\pm \B^{\mu}_{S,T}$ for some 
$\mu\in\La(n,c)$, $S\in\Tab^{X(i)}(\mu)$ and $T\in\Tab^{Y(i)}(\mu)$. 

Let $\bb=b_1\cdots b_c$, $\bp=p_1\cdots p_c$, $\bq=q_1\cdots q_c$. For $x\in X(i)$, $y\in Y(i)$ and 
$r,s\in[1, n]$, recalling (\ref{E190617}),  denote
$$
m^{x,y}_{r,s}:=[\bb,\bp,\bq]^{b^i_{x,y}}_{r,s}.
$$
Note that $m^{x,y}_{r,s}\leq 1$ if $\bar x\neq \bar y$. 
Put  
$$
Q:=\{(x,y,r,s)\mid x\in X(i),\ y\in Y(i),\ r,s\in[1, n],\ m^{x,y}_{r,s}\neq 0\}.$$ 
Pick a total order on $Q$ and write 
$$
Q=\{(x_1,y_1,r_1,s_1)<\dots<(x_{t},y_{t},r_{t},s_{t})\}.
$$
To every $(x,y,r,s)\in Q$, we associate a composition $\nu^{x,y}_{r,s}$ of $m^{x,y}_{r,s}$ as follows:
$$
\nu^{x,y}_{r,s}=
\left\{
\begin{array}{ll}
(m^{x,y}_{r,s}) &\hbox{if $\bar x=\bar y=\0$,}\\
(1^{m^{x,y}_{r,s}}) &\hbox{otherwise.}
\end{array}
\right.
$$
Now we define $\mu\in\La(n,c)$ as the concatenation 
$$
\mu:=\nu^{x_1,y_1}_{r_1,s_1}\,\dots\,\nu^{x_{t},y_{t}}_{r_{t},s_{t}}\,0^{u}\in\La(n,c).
$$
where $u\in\Z_{\geq 0}$ is chosen so that $\mu$ has $n$ parts. 
Let $S$ be the $\mu$-tableau which associates to the nodes of each $\nu^{x,y}_{r,s}$ the value $r^x\in\Alph_{X(i)}$ and let $T$ be the $\mu$-tableau which associates to the nodes of each $\nu^{x,y}_{r,s}$ the value $s^y\in\Alph_{Y(i)}$. 

Let 
$
\de=(m^{x_1,y_1}_{r_1,s_1},\dots,m^{x_{t},y_{t}}_{r_{t},s_{t}})\in\La(t,c).
$ 
We can decompose any word $\bl$ of length $c$ as the concatenation
$
\bl=\bl_{1}\dots\bl_{t},
$ 
where, for $1\leq u\leq t$, the length of the word $\bl_{u}$ is $m^{x_u,y_u}_{r_u,s_u}$. We will apply this to the words $\bb,\bp,\bq,\bx^S,\bl^S,
\by^T,\bl^T,\bl^\mu$. 
Note that the triples $(\bx^S,\bl^S,\bl^\mu)$ and $(\by^S,\bl^\mu,\bl^T)$ satisfy the assumptions of Lemma~\ref{LSep}, so 
\begin{equation}\label{E140218}
\B^\mu_{S,T}=\xi^{\bx^S}_{\bl^S,\bl^\mu} \xi^{\by^T}_{\bl^\mu,\bl^T}
=
\pm(\xi^{\bx^S_{1}}_{\bl^S_{1},\bl^\mu_{1}}
\xi^{\by^T_{1}}_{\bl^\mu_{1},\bl^T_{1}})*\dots*
(\xi^{\bx^S_{t}}_{\bl^S_{t},\bl^\mu_{t}}
\xi^{\by^T_{t}}_{\bl^\mu_{t},\bl^T_{t}}). 
\end{equation}

Let $1\leq u\leq t$. 
Denote $m:=m^{x_u,y_u}_{r_u,s_u}$. Then 
$$\bx^{S}_{u}=x_u^m,\ \bl^S_{u}=r_u^m,\ 
\by^T_{u}=y_u^m,\ \bl^T_{u}=s_u^m.
$$ 
If $\bar x_u=\bar y_u=\0$, then $\bl^\mu_{u}=v^m$ for some $1\leq v\leq n$, and in this case, using (\ref{EXiDef}), we get 
\begin{align*}
\xi^{\bx^S_{u}}_{\bl^S_{u},\bl^\mu_{u}}
\xi^{\by^T_{u}}_{\bl^\mu_{u},\bl^T_{u}}
&=
(\xi^{x_u}_{r_u,v}\otimes \dots \otimes \xi^{x_u}_{r_u,v})
(\xi^{y_u}_{v,s_u}\otimes \dots \otimes \xi^{y_u}_{v,s_u})
\\
&=\xi^{x_uy_u}_{r_u,s_u}\otimes \dots \otimes \xi^{x_uy_u}_{r_u,s_u}=\eta^{\bb_{u}}_{\bp_{u},\bq_{u}}.
\end{align*}
If $\bar x_u$ and $\bar y_u$ are not both $\0$, then $\bl^\mu_{u}=(v,v+1,\dots,v+m-1)$ for some $v$, and in this case, using (\ref{EXiDef}) and (\ref{E080717}), we get 
\begin{align*}
\xi^{\bx^S_{u}}_{\bl^S_{u},\bl^\mu_{u}}
\xi^{\by^T_{u}}_{\bl^\mu_{u},\bl^T_{u}}
&=
(\xi^{x_u}_{r_u,v} * \dots * \xi^{x_u}_{r_u,v+m-1})
(\xi^{y_u}_{v,s_u} * \dots * \xi^{y_u}_{v+m-1,s_u})
\\
&=m!\,\xi^{x_uy_u}_{r_u,s_u}\otimes \dots \otimes \xi^{x_uy_u}_{r_u,s_u}=\eta^{\bb_{u}}_{\bp_{u},\bq_{u}}.
\end{align*}
So, using (\ref{E140218}), in all cases we have
$$
\B^\mu_{S,T}=\pm \eta^{\bb_{1}}_{\bp_{1},\bq_{1}}*\dots*
\eta^{\bb_{t}}_{\bp_{t},\bq_{t}}=\pm \eta^{\bb}_{\bp,\bq},
$$
where we have applied Lemma~\ref{LSingleSep} for the last equality, using the fact that $(\bb,\bp,\bq)$ is $\de$-separated. 
\end{proof}

\subsection{Straightening} 
We continue with the set-up of the previous subsection. In particular, $i\in I$ is fixed, $\mu:=\la^{(i)}$ and $c:=|\mu|$. 
Recall 
the lexicographical order `$<$' on $\Alph_{X(i)}^c$ from \S\ref{SSLeading}. We have a similarly defined   lexicographical order on $\Alph_{Y(i)}^c$.

The following is an analogue of the main lemma in \cite{Woodcock}. In the lemma we denote by $\prec$ the lexicographic order on partitions. 


\begin{Theorem} \label{TWoodOne} 
Suppose that $n\geq c$ and $\mu\in\La_+(n,c)$. Let  
$S\in\Rst^{X(i)}(\mu)\setminus \Std^{X(i)}(\mu)$, $T\in\Rst^{Y(i)}(\mu)\setminus \Std^{Y(i)}(\mu)$. Then:
\begin{enumerate}
\item[{\rm (i)}] there exists $\la\in\La(n,c)$ with $\la_+\succ\mu$ such that 
$$
\xi^{\bx^S}_{\bl^S,\bl^\la}\xi^{e_i^c}_{\bl^\la,\bl^\mu}
=\pm \xi_{\bl^S,\bl^\mu}^{\bx^S}\,+\,\sum_{S'\in \Rst^{X(i)}(\mu),\,\, \L^{S'}<\L^S}c_{S'}\xi_{\bl^{S'},\bl^\mu}^{\bx^{S'}}
$$
for some $c_{S'}\in\k$. 
\item[{\rm (ii)}] there exists $\nu\in\La(n,c)$ with $\nu_+\succ\mu$ such that
$$
\xi^{e_i^c}_{\bl^\mu,\bl^\nu}\xi^{\by^T}_{\bl^\nu,\bl^T}
=\pm \xi_{\bl^\mu,\bl^T}^{\by^T}\,+\,\sum_{{T'}\in \Rst^{Y(i)}(\mu),\,\, \L^{T'}<\L^T}c_{T'} \xi_{\bl^\mu,\bl^{T'}}^{\by^{T'}}
$$
for some $c_{T'}\in\k$. 
\end{enumerate} 
\end{Theorem}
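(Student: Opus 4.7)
The plan is to prove (i) directly and obtain (ii) by a symmetric argument (or via a standard anti-involution of $T^A_\fa(n,c)_i$ interchanging $X(i)$ and $Y(i)$). For (i), since $S\in\Rst^{X(i)}(\mu)\setminus\Std^{X(i)}(\mu)$, there exist consecutive rows $u,u+1$ and a column $v$ of $\mu$ where the column-standard condition fails: either $S(u,v)>S(u+1,v)$ in the alphabet order on $\Alph_{X(i)}$, or $S(u,v)=S(u+1,v)$ with $\col(S(u,v))\in X(i)_\0$. Fix such a violation canonically and choose an integer $k\in[1,\mu_{u+1}]$ adapted to it. Define $\la\in\La(n,c)$ by transferring $k$ cells from row $u+1$ to row $u$ of $\mu$: set $\la_u:=\mu_u+k$, $\la_{u+1}:=\mu_{u+1}-k$, and $\la_j:=\mu_j$ for $j\notin\{u,u+1\}$. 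Since $\mu$ is a partition and $k\ge 1$, the bump at row $u$ ensures $\la_+\succ\mu$ in the lexicographic order on partitions.

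Next I would compute $\xi^{\bx^S}_{\bl^S,\bl^\la}\xi^{e_i^c}_{\bl^\la,\bl^\mu}$ by decomposing both factors as $*$-products along the rows of $\la$, via Lemmas~\ref{LSingleSep} and~\ref{LSep}; the relevant triples are $\de$-separated because middle entries (values of $\bl^\la$) differ across rows of $\la$. Rows $j\notin\{u,u+1\}$ contribute idempotent-like pieces that act trivially. The row-$u$ piece assembles into
\[
\xi^{\bx^1\bx^2}_{\bp\bq,\,u^{\la_u}}\ \xi^{e_i^{\la_u}}_{u^{\la_u},\,u^{\mu_u}(u+1)^k},
\]
which matches Lemma~\ref{LTwoParts} with $d=\la_u$, $r=s=u$, $t=u+1$, $g=\mu_u$, $f=k$, and $\bp\bq,\bx^1\bx^2$ read off from row $u$ of $S$ together with the first $k$ entries of row $u+1$ of $S$. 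The integer $k$ is chosen precisely so that the ordering condition $q_1^{x_1^2}\le\cdots\le q_k^{x_k^2}\le p_1^{x_1^1}\le\cdots\le p_{\mu_u}^{x_{\mu_u}^1}$ of Lemma~\ref{LTwoParts} holds; the column violation at $(u,v)$ combined with row-standardness of $S$ makes this possible. Lemma~\ref{LTwoParts} then delivers the leading term $\xi^{\bx^1\bx^2}_{\bp\bq,\,u^{\mu_u}(u+1)^k}$, which after $*$-reassembly yields $\pm\xi^{\bx^S}_{\bl^S,\bl^\mu}$ (signs tracked via Lemma~\ref{LXiZero}), plus corrections with $\bu^\bx<\bp^{\bx^1}\bq^{\bx^2}$ in lex. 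These corrections reassemble into $\xi^{\bx^{S''}}_{\bl^{S''},\bl^\mu}$ for various $S''\in\Tab^{X(i)}(\mu)$; passing to the row-standard representative $S'$ via Lemmas~\ref{120117One} and~\ref{LStdEquivOne} preserves the strict lex inequality $\L^{S'}<\L^S$, since row-standardization minimizes $\L^{\cdot}$ within its row-equivalence class.

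The hard part will be the combinatorial choice of $k$ that simultaneously achieves $\la_+\succ\mu$ and the ordering hypothesis of Lemma~\ref{LTwoParts}: the column violation at $(u,v)$ provides the necessary flexibility, but the details must be handled separately for the two types of violation (strict inequality vs.\ equality with color in $X(i)_\0$), and the arrangement of entries in rows $u,u+1$ of $S$ must be checked against the required inequalities. A secondary bookkeeping issue is the propagation of super signs through the $*$-product, which is tracked cleanly via Lemma~\ref{LXiZero}. Part (ii) follows by the mirror construction on the $Y$-side --- defining $\nu\in\La(n,c)$ from a column violation of $T$ --- or by invoking a standard anti-involution of $T^A_\fa(n,c)_i$ exchanging $X(i)$ and $Y(i)$, which sends the identity of (i) to that of (ii).
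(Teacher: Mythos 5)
Your overall framework --- decompose the product along the rows of an intermediate composition $\la$ via Lemmas~\ref{LSingleSep} and~\ref{LSep}, apply Lemma~\ref{LTwoParts} blockwise, reassemble, and pass to row-standard representatives (which indeed minimize $\L^{\cdot}$ in a row-equivalence class) --- is the right one, and the mirror argument for (ii) is fine (the anti-involution alternative is not, since the theorem does not assume one exists). The gap is in the choice of $\la$: a transfer of $k$ cells between the two rows $u,u+1$ carrying the violation cannot in general meet the ordering hypothesis of Lemma~\ref{LTwoParts}. In your setup the row-$u$ block has destination word $u^{\mu_u}(u+1)^k$, so the lemma is invoked with $s=u$, $g=\mu_u$, $t=u+1$, $f=k$, $\bp$ the entries of row $u$ of $S$ and $\bq$ the first $k$ entries of row $u+1$; the hypothesis $q_1^{x_1^2}\le\cdots\le q_k^{x_k^2}\le p_1^{x_1^1}\le\cdots$ then forces every transferred entry to be $\le$ the \emph{smallest} entry of row $u$. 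Take $\mu=(2,2)$, $c=4\le n$, and $S\in\Rst^{X(i)}(\mu)\setminus\Std^{X(i)}(\mu)$ with first row $1^{e_i},4^{e_i}$ and second row $2^{e_i},3^{e_i}$: the only column violation is in column $2$ (as $4^{e_i}>3^{e_i}$), yet $2^{e_i}\not\le 1^{e_i}$, so no $k\ge1$ works; reading the destination word in the other order only replaces the condition by ``all of row $u$ is $\le$ the transferred entries,'' which also fails. The column violation at $(u,v)$ controls $S(u+1,v)$ against $S(u,v)$, not against $S(u,1)$, so no choice of $k$ within your scheme can be ``adapted to it.'' A downward transfer happens to work in this example but fails in others (e.g.\ with rows $1^{e_i},2^{e_i}$ and $1^{e_i},3^{e_i}$) and can moreover destroy $\la_+\succ\mu$ (move a cell down in $\mu=(5,1)$).

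What is actually needed, and what the paper does, is a cascading redistribution involving \emph{all} rows at and below the violation: each row $t$ is split into an initial segment $E_t$ of small entries, which keep middle index $t$, and a final segment $F_t$ of large entries, which detour through the next row, with $E_{t+1}$ defined recursively as the set of entries of row $t+1$ dominated (in the relation $\to$) by every entry of $F_t$; one then sets $\la_t=e_t+f_{t-1}$ (with a separate case when the violation is in column $1$, where the detour runs in the opposite direction). This recursion is exactly what makes the hypothesis of Lemma~\ref{LTwoParts} hold at every row interface simultaneously, and the minimal choice of the violating node is what yields $\la_+\succ\mu$. In the first example above the correct detour is $\la=(1,3)$: the large entry $4^{e_i}$ of row $1$ is routed through row $2$, the opposite of your construction. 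So the ``hard part'' you flagged is not a matter of choosing $k$ cleverly inside a two-row scheme; that scheme has to be replaced by the global one.
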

\begin{proof}
By left-right symmetry it suffices to prove (i). 
In this proof, for $L,L'\in\Alph_{X(i)}$, we write $L\to L'$ if $L>L'$ or $L=L'=r^x$ with $x\in X(i)_\0$. 
Since $S$ is not column standard, there exists some $(i,a,b)\in\Brackets\mu$ such that $(i,a+1,b)\in\Brackets\mu$ and  $S(i,a,b)\to S(i,a+1,b)$. Let $(a,b)\in\Z_{>0}\times\Z_{>0}$ be the lexicographically smallest pair with such property. 

For $1\leq t\leq n$, we consider the $t$th row of the Young diagram $\Brackets{\mu}$:
$$
\Brackets{\mu}_t:=\{(i,t,s)\mid s\leq \mu_t\}.
$$
For $t=1,\dots,n$, we define set partitions    
$
\Brackets{\mu}_t=E_t\sqcup F_t
$ as follows. For $t<a$, we set 
$$
E_t:=\Brackets{\mu}_t,\quad F_t:=\varnothing.
$$
For $t=a$, we set
$$
E_a:=\{(i,a,s)\in\Brackets{\mu}_a\mid s<b-1\},\quad F_a:=\Brackets{\mu}_a\setminus E_a.
$$
For $t>a$, we set 
$$
E_t:=\{M\in \Brackets{\mu}_t\mid S(N)\to S(M) \ \text{for all $N\in F_{t-1}$}\}, \quad F_t:=\Brackets{\mu}_t\setminus E_t.
$$
For all $1\leq t\leq n$, we let
$
e_t:=|E_t|$ and $f_t:=|F_t|. 
$

To define $\la\in\La(n,c)$, we consider two cases. 

\noindent
{\em Case 1: $b=1$.} In this case, we have $E_a=\varnothing$, and we set
$$
\la_t=
\left\{
\begin{array}{ll}
\mu_t &\hbox{if $t<a$,}\\
\mu_a+e_{a+1} &\hbox{if $t=a$,}\\
e_{t+1}+f_t &\hbox{if $a<t\leq n$,}
\end{array}
\right.
$$
where $e_{n+1}$ is interpreted as $0$. 

\noindent
{\em Case 2: $b>1$.} In this case we set
$$
\la_t=
\left\{
\begin{array}{ll}
\mu_t &\hbox{if $t<a$,}\\
e_{t}+f_{t-1} &\hbox{if $a\leq t\leq n$,}
\end{array}
\right.
$$
where $f_{0}$ is interpreted as $0$.

Note that $|\la|=|\mu|=c$. This is clear for the case $b=1$. If  $b>1$, then the assumptions $c\leq n$ and $\mu\in\La_+(n,c)$ imply $\mu_n=0$, so $f_n=0$, giving the claim. 

We next claim that $\la_+\succ \mu$. Indeed, we have $\la_t=\mu_t$ for $t<a$. If $b=1$ then $\la_a>\mu_a$. If $b>1$ then $\la_{a+1}>\mu_a$. So the claim follows in all cases. 


Given any word $\br$ of length $c$, we can write it as concatenation $\br=\br_1\dots\br_n$ such that the length of the word $\br_k$ is $\la_k$ for all $k=1,\dots,n$. 
For $1\leq k\leq n$, we have  $\bl^\la_k=k^{\la_k}$ and 
$$
\bl^\mu_k=
\left\{
\begin{array}{ll}
k^{\mu_k} &\hbox{if $k<a$,}
\\
(k-1)^{f_{k-1}} k^{e_k} &\hbox{if $k\geq a$ and $b>1$,}
\\
k^{f_{k}} (k+1)^{e_{k+1}} &\hbox{if $k\geq a$ and $b=1$.}
\end{array}
\right.
$$
So, by Lemma~\ref{LSep}, we have
$$
\xi^{\bx^S}_{\bl^S,\bl^\la}\xi^{e_i^c}_{\bl^\la,\bl^\mu}=
\pm (\xi^{\bx^S_1}_{\bl^S_1,1^{\la_1}}\xi^{e_i^{\la_1}}_{1^{\la_1},\bl^\mu_1})*\dots * (\xi^{\bx^S_n}_{\bl^S_n,n^{\la_n}}\xi^{e_i^{\la_n}}_{n^{\la_n},\bl^\mu_n}).
$$

Let $1\leq k\leq n$. By Lemma~\ref{LTwoParts}, 
$$
\xi^{\bx^S_k}_{\bl^S_k,k^{\la_k}}\xi^{e_i^{\la_k}}_{k^{\la_k},\bl^\mu_k}
=\xi^{\bx^S_k}_{\bl^S_k,\bl^\mu_k}+X_k,
$$
where $X_k$ is a linear combination of some $\xi_{\bu,\bl^\mu_k}^{\bx}$ 
with $\bu^{\bx}\sim (\bl^S_k)^{\bx^S_k}$ and 
$
\bu^{\bx}< (\bl^S_k)^{\bx^S_k}.
$ 

By the definition of $\la$, the triple $(\bx^S,\bl^S,\bl^\mu)\in\Seq^{B(i)}(n,c)$ is $\la$-separated. So by Lemma~\ref{LSingleSep}, we have 
$$
\xi^{\bx^S_1}_{\bl^S_1,\bl^\mu_1}*\cdots * \xi^{\bx^S_n}_{\bl^S_n,\bl^\mu_n}=\pm \xi_{\bl^S,\bl^\mu}^{\bx^S},
$$
which gives us the required leading term. On the other hand, if 
we have $\bu_k^{\bx_k}\leq (\bl^S_k)^{\bx^S_k}$ for all $k=1,\dots,n$, with at least one inequality being strict, then by Lemma~\ref{xistarproducts}, 
$$
\xi^{\bx_1}_{\bu_1,\bl^\mu_1}*\cdots * \xi^{\bx_n}_{\bu_n,\bl^\mu_n}=\pm c\, \xi_{\bu_1\cdots \bu_n,\bl^\mu}^{\bx_1\cdots \bx_n}
$$
for some $c\in \k$. Note that $\bu_1^{\bx_1}\cdots \bu_1^{\bx_1}<(\bl^S)^{\bx^S}$. Moreover, there is a tableau $S'\in \Rst^{X(i)}(\mu)$ such that $(\bx^{S'},\bl^{S'},\bl^\mu)\sim(\bx_1\cdots \bx_n, \bu_1\cdots\bu_n, \bl^\mu)$. Then  
$\xi_{\bu_1\cdots \bu_n,\bl^\mu}^{\bx_1\cdots \bx_n}=\pm c\  \xi_{\bl^{S'},\bl^\mu}^{\bx^{S'}}$
and
$$
\L^{S'}\leq \bu_1^{\bx_1}\cdots \bu_1^{\bx_1}<(\bl^S)^{\bx^S}=\L^{S},
$$ 
as required.
\end{proof}

\begin{Lemma} \label{L250217} 
For $\la\in\La(n,c)$ and $T\in\Rst^{Y(i)}(\mu)$, we have
$$\xi^{e_i^c}_{\bl^\la,\bl^\mu}\xi_{\bl^\mu,\bl^T}^{\by^T}=\sum_{T'\in\Rst^{Y(i)}(\la)}c_{T'}\xi_{\bl^\la,\bl^{T'}}^{\by^{T'}}$$ 
for some $c_{T'}\in\k$. 
\end{Lemma}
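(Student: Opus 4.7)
The plan is to apply Proposition~\ref{CPR} to expand the product $\xi^{e_i^c}_{\bl^\la,\bl^\mu}\xi^{\by^T}_{\bl^\mu,\bl^T}$ as a linear combination of basis elements $\xi^\bb_{\br,\bs}$, and then identify which triples $(\bb,\br,\bs)$ can appear.

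First, by Proposition~\ref{CPR}, the coefficient of $\xi^\bb_{\br,\bs}$ is a signed sum of products $\ka^\bb_{\ba',\bc'}$ over all $(\ba',\bc',\bt)$ with $(\ba',\br,\bt) \sim (e_i^c,\bl^\la,\bl^\mu)$ and $(\bc',\bt,\bs) \sim (\by^T,\bl^\mu,\bl^T)$. Since every entry of $e_i^c$ is equal to $e_i$, we are forced to have $\ba' = e_i^c$ and $\br \sim \bl^\la$; moreover the second equivalence forces $\bc' \sim \by^T$, so in particular $\bc' \in Y(i)^c$. Next I would invoke axiom (c) of Definition~\ref{DCC}, which asserts $e_i y = y$ for every $y \in Y(i)$. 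This gives $\ka^b_{e_i,y} = \de_{b,y}$ for $y \in Y(i)$, whence $\ka^\bb_{e_i^c,\bc'} = \de_{\bb,\bc'}$. Thus the only triples contributing nonzero coefficients satisfy $\bb = \bc' \sim \by^T$; in particular $\bb \in Y(i)^c$.

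Next, given any such surviving $\xi^\bb_{\br,\bs}$ with $\br \sim \bl^\la$ and $\bb \in Y(i)^c$, I would apply Lemma~\ref{LXiZero} to choose $\si \in \Si_c$ so that $\br\si = \bl^\la$ and obtain (up to a sign) $\xi^\bb_{\br,\bs} = \pm \xi^{\bb\si}_{\bl^\la,\bs\si}$. Defining a $Y(i)$-colored $\la$-tableau $T''$ by placing $(\bs\si)_k^{(\bb\si)_k}$ at the $k$th node of $\la$ gives a valid colored tableau, since $(\bb\si, \bl^\la, \bs\si) \in \Seq^B(n,c)$ guarantees the super-repetition rule from \S\ref{SSColLet}. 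A final permutation within each row (using the analogue of Lemma~\ref{LStdEquivOne} for $Y$-tableaux, combined with Lemma~\ref{LXiZero} to track signs) produces a unique row-standard tableau $T' \in \Rst^{Y(i)}(\la)$ with $\xi^\bb_{\br,\bs} = \pm \xi^{\by^{T'}}_{\bl^\la,\bl^{T'}}$. Collecting all surviving terms then yields the desired expansion.

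The main obstacle I anticipate is the bookkeeping in the final orbit-normalization step: verifying that the colored tableau produced is genuinely a valid $\la$-tableau (i.e.\ that the super-repetition rule from the definition of $\Tab^{Y(i)}(\la)$ is automatic from $(\bb,\br,\bs)\in\Seq^B(n,c)$), and then that a row-standard representative of its row-equivalence class exists uniquely. The rest is a direct application of Proposition~\ref{CPR} together with the idempotence property $e_i y = y$ coming from the heredity axioms.
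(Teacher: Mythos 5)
Your proposal is correct and follows essentially the same route as the paper's (much terser) proof: apply Proposition~\ref{CPR}, use $e_iy=y$ from Definition~\ref{DCC}(c) to see that only terms $\xi^{\by}_{\br,\bs}$ with $\by\in Y(i)^c$ and $\br\sim\bl^\la$ survive, and then rewrite each such term as $\pm\xi^{\by^{T'}}_{\bl^\la,\bl^{T'}}$ for a row-standard $T'\in\Rst^{Y(i)}(\la)$ via the orbit/row-equivalence normalization. The details you flag as potential obstacles (validity of the colored tableau from membership in $\Seq^B(n,c)$, and uniqueness of the row-standard representative) do go through exactly as you describe.
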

\begin{proof}
By Proposition~\ref{CPR}, we have that $\xi^{e_i^c}_{\bl^\la,\bl^\mu}\xi_{\bl^\mu,\bl^T}^{\by^T}$ is a linear combination of terms of the form  $\xi_{\bl^\la,\bl}^{\by}$ for some $\bl\in[1,n]^c$ and $\by\in Y(i)^c$ with $(\by,\bl^\la,\bl)\in\Seq^{B(i)}(n,c)$. Each of these terms  equals $\pm\xi_{\bl^\la,\bl^{T'}}^{\by^{T'}}$ for some $T'\in\Rst^{Y(i)}(\la)$.
\end{proof}

Let $\la,\la'\in\La(n,c)$ and $S\in\Rst^{X(i)}(\la)$, $S'\in\Rst^{X(i)}(\la')$, 
$T\in\Rst^{Y(i)}(\la)$, and $T'\in\Rst^{Y(i)}(\la')$. We write 
\begin{equation}\label{EPOOne}
(\la,S,T)\geq (\la',S',T')
\end{equation} 
if $\la\rhd\la'$, or $\la=\la'$, $\L^{S}\leq \L^{S'}$, $\L^{T}\leq \L^{T'}$.

\begin{Theorem} \label{TStrOne} 
Let $n\geq c$, $S\in\Rst^{X(i)}(\mu)$ and  $T\in\Rst^{Y(i)}(\mu)$. Then $\cod^\mu_{S,T}$ is a linear combination of standard codeterminants $\cod^\la_{S',T'}$ such that 
$(\la,S',T')\geq (\mu,S,T)$. 
\end{Theorem}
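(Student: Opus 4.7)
The plan is to prove Theorem~\ref{TStrOne} by induction on the partial order~(\ref{EPOOne}) restricted to triples $(\mu,S,T)$. Since there are only finitely many such triples for fixed $n,c$, this is well-founded. The base case is when both $S\in\Std^{X(i)}(\mu)$ and $T\in\Std^{Y(i)}(\mu)$, in which case $\cod^{\mu}_{S,T}$ is already standard and there is nothing to prove. For the inductive step I will assume, without loss of generality, that $S\in\Rst^{X(i)}(\mu)\setminus \Std^{X(i)}(\mu)$; the symmetric situation where only $T$ fails column-standardness is handled in the same way via Theorem~\ref{TWoodOne}(ii).

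In the inductive step I would first apply Theorem~\ref{TWoodOne}(i) to produce $\la\in\La(n,c)$ with $\la_+\succ \mu$ and to solve for $\X_S$:
\[
\X_S = \xi^{\bx^S}_{\bl^S,\bl^\mu} = \pm\,\xi^{\bx^S}_{\bl^S,\bl^\la}\,\xi^{e_i^c}_{\bl^\la,\bl^\mu} \,\mp\, \sum_{S'} c_{S'}\X_{S'},
\]
where each $S'\in\Rst^{X(i)}(\mu)$ satisfies $\L^{S'}<\L^S$. Right-multiplying by $\Y_T$ expresses $\cod^{\mu}_{S,T}$ as the sum of a ``higher-shape'' term and the lower codeterminants $c_{S'}\cod^{\mu}_{S',T}$. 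For each latter codeterminant, $\L^{S'}<\L^S$ gives $(\mu,S',T)>(\mu,S,T)$ in~(\ref{EPOOne}), so the inductive hypothesis directly yields the desired expansion into standard codeterminants with even larger triples.

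For the higher-shape term $\pm\,\xi^{\bx^S}_{\bl^S,\bl^\la}\,\xi^{e_i^c}_{\bl^\la,\bl^\mu}\Y_T$, I would apply Lemma~\ref{L250217} to rewrite
\[
\xi^{e_i^c}_{\bl^\la,\bl^\mu}\Y_T=\sum_{T'}c_{T'}\,\xi^{\by^{T'}}_{\bl^\la,\bl^{T'}},\qquad T'\in\Rst^{Y(i)}(\la),
\]
reducing the problem to products $\xi^{\bx^S}_{\bl^S,\bl^\la}\,\xi^{\by^{T'}}_{\bl^\la,\bl^{T'}}$. Each such product I would reinterpret as a codeterminant of shape $\la$: the data $(\bx^S,\bl^S)$ read row by row of $\la$ defines an $X(i)$-colored $\la$-tableau $\hat S$, and the product equals $\cod^{\la}_{\hat S,T'}$. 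Lemma~\ref{120117One} row-standardizes $\hat S$ up to sign, and Lemma~\ref{LDomOne} transports the resulting codeterminant from shape $\la$ to the partition shape $\la_+$; a final row-standardization via Lemma~\ref{120117One} produces a standard-shape codeterminant $\cod^{\la_+}_{\tilde S,\tilde T}$ with $\tilde S\in\Rst^{X(i)}(\la_+)$ and $\tilde T\in\Rst^{Y(i)}(\la_+)$. Provided $\la_+\rhd \mu$ in dominance, we have $(\la_+,\tilde S,\tilde T)>(\mu,S,T)$ and the inductive hypothesis finishes the job.

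The main obstacle I anticipate is the last comparison: Theorem~\ref{TWoodOne} literally furnishes only $\la_+\succ \mu$ in lexicographic order, while~(\ref{EPOOne}) is stated in terms of dominance $\rhd$. Closing this gap will require a direct inspection of the construction of $\la$ inside the proof of Theorem~\ref{TWoodOne}. In the case $b=1$ one checks the identity $\sum_{t\le s}\la_t=\sum_{t\le s}\mu_t+e_{s+1}$, which combined with $e_{a+1}\ge 1$ gives $\la\rhd\mu$ as compositions, and hence $\la_+\rhd\mu$ since sorting weakly increases dominance. The case $b>1$ is more delicate because $\la$ itself need not dominate $\mu$, so one must argue for the sorted partition $\la_+$ directly. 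Once this dominance claim is established, every other step is bookkeeping using Theorem~\ref{TWoodOne}, Lemma~\ref{L250217}, Lemma~\ref{120117One} and Lemma~\ref{LDomOne}, and the induction closes.
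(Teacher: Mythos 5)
Your argument is essentially the paper's proof of Theorem~\ref{TStrOne}: the same induction on the order (\ref{EPOOne}) after reducing to $\mu\in\La_+(n,c)$, the same application of Theorem~\ref{TWoodOne}(i) followed by right multiplication by $\Y_T$, and the same use of Lemma~\ref{L250217} to convert the higher-shape term into codeterminants of shape $\la$ (which are then handled by Lemmas~\ref{120117One} and \ref{LDomOne} and the inductive hypothesis). The lexicographic-versus-dominance point you flag is a genuine observation, but it is not peculiar to your write-up: the paper's proof defines its inductive set $U$ by the condition $\la_+\rhd\mu$ while quoting Theorem~\ref{TWoodOne}, which only asserts $\la_+\succ\mu$ in the lexicographic order, and it does not close that gap explicitly either; so your partial analysis of the $b=1$ and $b>1$ cases goes beyond, rather than falls short of, what the source provides.
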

\begin{proof}
By Lemmas~\ref{LDomOne},~\ref{120117One} and \ref{LStdEquivOne}, we may assume that $\mu\in\La_+(n,c)$. If $S$ and $T$ are standard, we are done. Otherwise we may assume by symmetry that $S$ is not standard. 
Let $U$ be the set of all triples $(\la,S',T')$ such that $\la\in\La(n,c)$, $S'\in \Rst^{X(i)}(\la)$, $T'\in\Rst^{Y(i)}(\la)$, and either $\la_+\rhd\mu$ or $\la=\mu$, $T'=T$, $\L^{S'}< \L^S$. Using induction on the partial order (\ref{EPOOne}) and Lemma~\ref{LDomOne}, we see that it suffices to prove 
\begin{equation}\label{E160217}
\cod^\mu_{S,T}= \sum_{(\la,S',T')\in U}c_{\la,S',T'}\cod^\la_{S',T'}
\end{equation}
for some $c_{\la,S',T'}\in \k$.

By Theorem~\ref{TWoodOne}, there exists $\la\in\La^I(n,c)$ with $\la_+\succ\mu$ such that
$$
\xi_{\bl^S,\bl^\mu}^{\bx^S}
=\pm \xi^{\bx^S}_{\bl^S,\bl^\la}\xi^{e_i^c}_{\bl^\la,\bl^\mu}\,+\,\sum_{S'\in \Rst^{X(i)}(\mu),\, \L^{S'}<\L^S}c_{S'}\xi_{\bl^{S'},\bl^\mu}^{\bx^{S'}}.
$$
Multiplying on the right with $\xi_{\bl^\mu,\bl^T}^{\by^T}$ yields
$$
\cod^\mu_{S,T}
=\pm \xi^{\bx^S}_{\bl^S,\bl^\la}\xi^{e_i^c}_{\bl^\la,\bl^\mu}\xi_{\bl^\mu,\bl^T}^{\by^T}\,+\,\sum_{S'\in \Rst^{X(i)}(\mu),\, \L^{S'}<\L^S}c_{S'}\cod^\mu_{S',T}.
$$
It remains to note, using Lemma~\ref{L250217}, that we can write $\xi^{\bx^S}_{\bl^S,\bl^\la}\xi^{e_i^c}_{\bl^\la,\bl^\mu}\xi_{\bl^\mu,\bl^T}^{\by^T}$ as a linear combination of codeterminants of shape $\la$. 
\end{proof}

\subsection{Multicolored codeterminants}\label{SSNN}
Recall that in the beginning of the section, we have fixed $\bla\in\La^I(n,d)$. Let 
$$\|\bla\|=(d_0,\dots,d_\ell).$$ 
Recalling the notation of \S\ref{SSDFPOne}, for $\Stab=(S^{(0)},\dots,S^{(\ell)})\in \Tab^X(\bla)$ and $\T=(T^{(0)},\dots,T^{(\ell)})\in \Tab^Y(\bla)$, we define 
\begin{align*}
&\bx^\Stab=\bx^{S^{(0)}}\cdots \bx^{S^{(\ell)}}\in X^d,
&\bl^\Stab=\bl^{S^{(0)}}\cdots \bl^{S^{(\ell)}}\in[1,n]^d,
\\
&\by^\T=\by^{T^{(0)}}\cdots \by^{T^{(\ell)}}\in Y^d,
&\bl^\T=\bl^{T^{(0)}}\cdots \bl^{T^{(\ell)}}\in [1,n]^d,
\end{align*}
and
$$
\X_\Stab:=\xi^{\bx^\Stab}_{\bl^\Stab,\bl^\bla},\quad
\Y_\T:=\xi^{\by^\T}_{\bl^\bla,\bl^\T},\quad
\B^\bla_{\Stab,\T}:=\X_\Stab \Y_\T,
$$
where, in agreement with (\ref{E150288Bold}), we set 
$$\bl^\bla:=\bl^{\T^\bla}:=\bl^{\la^{(0)}}\cdots \bl^{\la^{(\ell)}}.
$$ 
We refer to the elements $\B^\bla_{\Stab,\T}$ as {\em codeterminants}. As for singled colored codeterminants, it is easy to see that $\X_\Stab, 
\Y_\T, 
\B^\bla_{\Stab,\T}\in T^A_\a(n,d)$. 

We refer to $\bla$ as the {\em shape} of the codeterminant $\B^\bla_{\Stab,\T}$. 
A codeterminant $\B^\bla_{\Stab,\T}$ is called {\em dominant} if $\bla\in\La_+^I(n,d)$, i.e. if its shape is a multipartition. A codeterminant $\B^\bla_{\Stab,\T}$ is called {\em standard} if it is dominant and $\Stab\in\Std^X(\bla)$ and $\T\in\Std^Y(\bla)$. 

The following lemma will allow us to use the theory of single-colored codeterminants developed in the previous subsections:

\begin{Lemma} \label{L290417_2New} 
We have 
$$\B^{\bla}_{\Stab,\T}=\pm\,\cod^{\la^{(0)}}_{S^{(0)},T^{(0)}}*\cdots * \cod^{\la^{(\ell)}}_{S^{(\ell)},T^{(\ell)}},$$ and the sign is $+$ if all $\bx^{\Stab}_k$ or all $\by^\T_k$ are even. 
\end{Lemma}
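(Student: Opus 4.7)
The plan is to apply Lemma~\ref{LSep} with $q=\ell+1$ and $\de=(d_0,\dots,d_\ell)$, taking $\ba=\bx^\Stab$, $\br=\bl^\Stab$, $\bs=\bl^\bla$, $\bc=\by^\T$, $\bt=\bl^\bla$, $\bu=\bl^\T$, and with the $m$-th blocks $(\bx^{S^{(m)}},\bl^{S^{(m)}},\bl^{\la^{(m)}})$ and $(\by^{T^{(m)}},\bl^{\la^{(m)}},\bl^{T^{(m)}})$ for $0\le m\le\ell$. Under these choices the conclusion of Lemma~\ref{LSep} reads $\X_\Stab\Y_\T=\pm(\X_{S^{(0)}}\Y_{T^{(0)}})*\cdots*(\X_{S^{(\ell)}}\Y_{T^{(\ell)}})$, which is precisely the claimed formula for $\B^\bla_{\Stab,\T}$; moreover, the final sentence of that lemma yields the sign assertion, since the entries play the role of $a_k=\bx^\Stab_k$ and $c_k=\by^\T_k$. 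So the whole task reduces to verifying the two hypotheses of Lemma~\ref{LSep}.

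The $\de$-separation of $(\bx^\Stab,\bl^\Stab,\bl^\bla)$ and of $(\by^\T,\bl^\bla,\bl^\T)$ is immediate from the disjoint decompositions $X=\bigsqcup_{i\in I}X(i)$ and $Y=\bigsqcup_{i\in I}Y(i)$: for $m\ne l$ the first coordinate of any triple from block $m$ lies in $X(m)$ (respectively $Y(m)$), while that of block $l$ lies in $X(l)$ (respectively $Y(l)$), and these sets are disjoint, so the triples cannot coincide.

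The main step, which I expect is the principal obstacle, is the vanishing hypothesis of Lemma~\ref{LSep}: for distinct $\si,\si'\in{}^\de\D$, I need
\[
\bigl(\X_{S^{(0)}}\otimes\cdots\otimes\X_{S^{(\ell)}}\bigr)^{\si}\bigl(\Y_{T^{(0)}}\otimes\cdots\otimes\Y_{T^{(\ell)}}\bigr)^{\si'}=0
\]
in $M_n(A)^{\otimes d}$. Expanding each tensor via (\ref{EXiDef}) and multiplying componentwise, the $k$-th slot is a product of matrix units whose $A$-coefficient has the form $ac$ with $a\in X(i)$, $c\in Y(j)$, where $i$ is the color block containing $\si k$ and $j$ is the color block containing $\si' k$. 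Axiom~(c) of Definition~\ref{DCC} gives $ae_i=a$ and $e_jc=c$, and Lemma~\ref{idemactionNew}(i) gives $e_ie_j=\de_{i,j}e_i$; combining these, $ac=ae_ie_jc=\de_{i,j}ac$, which forces $i=j$ for nonvanishing. Hence at every slot $\si k$ and $\si' k$ lie in the same color block, so $\si'\si^{-1}\in\Si_\de$. But distinct elements of ${}^\de\D$ represent distinct cosets in $\Si_\de\backslash\Si_d$, contradicting $\si\ne\si'$ and completing the verification.
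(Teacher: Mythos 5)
Your proof is correct and follows essentially the same route as the paper, which also derives the identity from Lemma~\ref{LSep} (together with Lemma~\ref{LSingleSep}) applied to the color-block decomposition $\de=\|\bla\|$, with the sign claim read off from the last sentence of that lemma. The only difference is that you explicitly verify the vanishing hypothesis of Lemma~\ref{LSep} via $xy=xe_ie_jy=\de_{i,j}xy$ for $x\in X(i)$, $y\in Y(j)$, a check the paper leaves implicit; your verification is sound.
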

\begin{proof}
By Lemmas~\ref{LSingleSep} and \ref{LSep}, we have 
\begin{align*}
\B^{\bla}_{\Stab,\T}&=\X_\Stab \Y_\T=(\X_{S^{(0)}} *\cdots * \X_{S^{(\ell)}})(\Y_{T^{(0)}} *\cdots * \Y_{T^{(\ell)}})
\\
&=\pm(\X_{S^{(0)}}\Y_{T^{(0)}}) *\cdots * (\X_{S^{(\ell)}}\Y_{T^{(\ell)}})=\pm\,\cod^{\la^{(0)}}_{S^{(0)},T^{(0)}}*\cdots * \cod^{\la^{(\ell)}}_{S^{(\ell)},T^{(\ell)}}
\end{align*}
and the sign claim also follows from Lemma~\ref{LSep}. 
\end{proof}

For $\bb=b_1\cdots b_d\in B^d$, we write
\begin{equation}\label{E140218_1}
\|\bb\|=\mu
\end{equation}
if $\mu_i=\sharp\{k\in[1,d]\mid b_k\in B(i)\}$ for all $i\in I$. 


\begin{Proposition}\label{P260117}
Let $n \geq d$, and $(\bb,\bp,\bq)\in\Seq^B(n,d)$. Then $\eta_{\bp,\bq}^\bb=\pm \B ^{\bmu}_{\Stab,\T}$ for some 
$\bmu\in\La_+^I(n,d)$ with $\|\bmu\|=\|\bb\|$,  $\Stab\in\Rst^{X}(\bmu)$ and $\T\in\Rst^{Y}(\bmu)$. 
\end{Proposition}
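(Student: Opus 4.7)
The plan is to reduce to the single-colored case handled in Proposition~\ref{P260117One}, by regrouping the entries of $(\bb,\bp,\bq)$ according to color and exploiting separation.

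First I would use the $\Si_d$-action: since Lemma~\ref{LXiZero} (and the corresponding statement for $\eta$) gives $\eta_{\bp,\bq}^{\bb}=\pm\,\eta_{\bp\si,\bq\si}^{\bb\si}$ for any $\si\in\Si_d$, we may permute so that the color-$0$ letters of $\bb$ come first, then color-$1$, and so on. Writing $(d_0,\dots,d_\ell)=\|\bb\|$ and splitting
$$
\bb=\bb^{(0)}\cdots\bb^{(\ell)},\quad \bp=\bp^{(0)}\cdots\bp^{(\ell)},\quad \bq=\bq^{(0)}\cdots\bq^{(\ell)},
$$
with $(\bb^{(i)},\bp^{(i)},\bq^{(i)})\in\Seq^{B(i)}(n,d_i)$, we obtain $\eta_{\bp,\bq}^{\bb}=\pm\,\eta_{\bp^{(0)}\cdots\bp^{(\ell)},\,\bq^{(0)}\cdots\bq^{(\ell)}}^{\bb^{(0)}\cdots\bb^{(\ell)}}$.

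Next I would observe that since $B(i)\cap B(j)=\varnothing$ for $i\neq j$, the entries of $\bb^{(i)}$ and $\bb^{(j)}$ are disjoint, so the triple $(\bb^{(0)}\cdots\bb^{(\ell)},\bp^{(0)}\cdots\bp^{(\ell)},\bq^{(0)}\cdots\bq^{(\ell)})$ is $\de$-separated in the sense of \S\ref{SComb} with $\de=(d_0,\dots,d_\ell)$. Applying Lemma~\ref{LSingleSep} then gives
$$
\eta_{\bp^{(0)}\cdots\bp^{(\ell)},\,\bq^{(0)}\cdots\bq^{(\ell)}}^{\bb^{(0)}\cdots\bb^{(\ell)}}=\eta^{\bb^{(0)}}_{\bp^{(0)},\bq^{(0)}}*\cdots*\eta^{\bb^{(\ell)}}_{\bp^{(\ell)},\bq^{(\ell)}}.
$$
Since $n\geq d\geq d_i$ for each $i\in I$, we may apply the single-colored Proposition~\ref{P260117One} to obtain $\eta_{\bp^{(i)},\bq^{(i)}}^{\bb^{(i)}}=\pm\,\B^{\mu^{(i)}}_{S^{(i)},T^{(i)}}$ for some $\mu^{(i)}\in\La_+(n,d_i)$, $S^{(i)}\in\Rst^{X(i)}(\mu^{(i)})$ and $T^{(i)}\in\Rst^{Y(i)}(\mu^{(i)})$.

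Finally, setting $\bmu:=(\mu^{(0)},\dots,\mu^{(\ell)})\in\La_+^I(n,d)$, $\Stab:=(S^{(0)},\dots,S^{(\ell)})\in\Rst^X(\bmu)$, $\T:=(T^{(0)},\dots,T^{(\ell)})\in\Rst^Y(\bmu)$, I would invoke Lemma~\ref{L290417_2New} to identify the star product of the single-colored codeterminants with $\pm\,\B^{\bmu}_{\Stab,\T}$, giving the desired equality. By construction $\|\bmu\|=(d_0,\dots,d_\ell)=\|\bb\|$. There is no real obstacle here; the only bookkeeping issue is tracking the various signs coming from Lemmas~\ref{LXiZero}, \ref{LSingleSep} and \ref{L290417_2New}, but since the statement allows an overall $\pm$ this is inessential.
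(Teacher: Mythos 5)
Your proof is correct and is essentially the same argument the paper intends: the paper's proof is a one-line reference to Proposition~\ref{P260117One} together with Lemmas~\ref{L290417_2New} and~\ref{LSingleSep}, and your write-up simply makes explicit the preliminary regrouping-by-color step (via the $\eta$-analogue of Lemma~\ref{LXiZero}, valid because $[\bb,\br,\bs]^!_\c$ is $\Si_d$-invariant) and the verification of $\de$-separation that the paper leaves implicit.
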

\begin{proof}
This follows from Proposition~\ref{P260117One} and Lemmas~\ref{L290417_2New},\,\ref{LSingleSep}. 
\end{proof}

Let $\bla,\bmu\in\La(n,d)$, $\Stab\in\Rst^X(\bla)$, $\T\in\Rst^Y(\bla)$, $\Stab'\in\Rst^X(\mu)$ and $\T'\in\Rst^Y(\bmu)$. Recalling (\ref{EPOOne}), we write $(\bla,\Stab,\T)\geq (\bmu,\Stab',\T')$ 
if $(\la^{(i)},S^{(i)},T^{(i)})\geq (\mu^{(i)},(S')^{(i)},(T')^{(i)})$ for all $i\in I$.  

\begin{Theorem} \label{TStr} 
Let $n\geq d$, $\bla\in\La^I(n,d)$, $\Stab\in\Rst^X(\bla)$ and $\T\in\Rst^Y(\bla)$. Then $\B ^\bla_{\Stab,\T}$ is a linear combination of  $\B ^\bmu_{\Stab',\T'}$ such that $\bmu\in\La_+^I(n,d)$, $\Stab'\in\Std^X(\bmu)$, $\T'\in\Std^Y(\bmu)$ and 
$(\bmu,\Stab',\T')\geq (\bla,\Stab,\T)$. 
\end{Theorem}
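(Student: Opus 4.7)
The plan is to reduce this multicolored statement directly to the single-colored straightening result of Theorem~\ref{TStrOne} via the factorization in Lemma~\ref{L290417_2New}. Since $n \geq d \geq |\la^{(i)}|$ for every $i \in I$, each single-color straightening hypothesis is satisfied, so essentially all the real work has been done in the preceding subsection.

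First, I would invoke Lemma~\ref{L290417_2New} to write
$$\B^{\bla}_{\Stab,\T} = \pm\, \cod^{\la^{(0)}}_{S^{(0)},T^{(0)}} * \cdots * \cod^{\la^{(\ell)}}_{S^{(\ell)},T^{(\ell)}}.$$
For each $i \in I$, since $n \geq |\la^{(i)}|$ and $S^{(i)} \in \Rst^{X(i)}(\la^{(i)})$, $T^{(i)} \in \Rst^{Y(i)}(\la^{(i)})$, Theorem~\ref{TStrOne} expresses the single-colored codeterminant $\cod^{\la^{(i)}}_{S^{(i)},T^{(i)}}$ as a linear combination
$$\cod^{\la^{(i)}}_{S^{(i)},T^{(i)}} = \sum c^{(i)}_{\mu^{(i)},{S'}^{(i)},{T'}^{(i)}}\, \cod^{\mu^{(i)}}_{{S'}^{(i)},{T'}^{(i)}},$$
where the sum is over standard single-colored codeterminants with $\mu^{(i)} \in \La_+(n,|\la^{(i)}|)$, ${S'}^{(i)} \in \Std^{X(i)}(\mu^{(i)})$, ${T'}^{(i)} \in \Std^{Y(i)}(\mu^{(i)})$, satisfying $(\mu^{(i)}, {S'}^{(i)}, {T'}^{(i)}) \geq (\la^{(i)}, S^{(i)}, T^{(i)})$.

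Next I would substitute these expressions into the $*$-product expansion of $\B^{\bla}_{\Stab,\T}$ and use bilinearity of the $*$-product. Each resulting term is a $*$-product of the form $\cod^{\mu^{(0)}}_{{S'}^{(0)},{T'}^{(0)}} * \cdots * \cod^{\mu^{(\ell)}}_{{S'}^{(\ell)},{T'}^{(\ell)}}$. Applying Lemma~\ref{L290417_2New} in reverse (with $\bmu := (\mu^{(0)}, \ldots, \mu^{(\ell)})$, $\Stab' := ({S'}^{(0)}, \ldots, {S'}^{(\ell)})$, $\T' := ({T'}^{(0)}, \ldots, {T'}^{(\ell)})$), this product equals $\pm \B^{\bmu}_{\Stab',\T'}$. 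Since each $\mu^{(i)}$ is a partition, we have $\bmu \in \La_+^I(n,d)$; since each ${S'}^{(i)}$ (resp.\ ${T'}^{(i)}$) is standard, $\Stab' \in \Std^X(\bmu)$ (resp.\ $\T' \in \Std^Y(\bmu)$); and the componentwise comparison $(\mu^{(i)}, {S'}^{(i)}, {T'}^{(i)}) \geq (\la^{(i)}, S^{(i)}, T^{(i)})$ for all $i$ yields $(\bmu, \Stab', \T') \geq (\bla, \Stab, \T)$ by definition.

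I do not anticipate a genuine obstacle here: the only point that warrants any care is verifying that the total size $|\bmu| = \sum_i |\mu^{(i)}| = \sum_i |\la^{(i)}| = d$ is preserved (which is automatic, since Theorem~\ref{TStrOne} preserves the size of the underlying partition in each color) and that the partial order on multicolored triples, which is defined componentwise, matches the componentwise conclusion obtained from single-colored straightening. Signs are irrelevant, as the statement only asserts existence of a linear combination.
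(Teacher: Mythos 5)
Your proposal is correct and follows exactly the route of the paper's own (one-line) proof: factor $\B^\bla_{\Stab,\T}$ via Lemma~\ref{L290417_2New} into a $*$-product of single-colored codeterminants, straighten each factor with Theorem~\ref{TStrOne}, and reassemble using Lemma~\ref{L290417_2New} in reverse, with the componentwise definition of the order on triples giving the required inequality.
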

\begin{proof}
This follows from Lemma~\ref{L290417_2New} and Theorem~\ref{TStrOne}.
\end{proof}

\begin{Corollary} \label{CStSpan} 
Let $n\geq d$. Then the standard codeterminants 
$$\{\cod^\bla_{\Stab,\T}\mid \bla\in\La^I_+(n,d),\ \Stab\in\Std^X(\bla),\ \T\in\Std^Y(\bla)\}$$ 
span $T^A_\a(n,d)$.
\end{Corollary}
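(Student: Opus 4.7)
The strategy is essentially to read off the corollary from the two main results just established. The plan is to note that $T^A_\a(n,d)$ is spanned (in fact has as a basis) the elements $\eta_{\bp,\bq}^\bb$ with $(\bb,\bp,\bq)\in\Seq^B(n,d)$, as recorded after \eqref{E080717}. So it is enough to express every such $\eta_{\bp,\bq}^\bb$ as a $\k$-linear combination of standard codeterminants.

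First I would apply Proposition~\ref{P260117} (whose hypothesis $n\geq d$ is exactly the one in the corollary) to conclude that each generator $\eta_{\bp,\bq}^\bb$ is already equal, up to sign, to a codeterminant $\B^{\bmu}_{\Stab,\T}$ of dominant shape $\bmu\in\La_+^I(n,d)$ with $\Stab\in\Rst^{X}(\bmu)$ and $\T\in\Rst^{Y}(\bmu)$. Thus the family of \emph{row standard} dominant codeterminants already spans $T^A_\a(n,d)$.

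Next I would invoke Theorem~\ref{TStr} (again under the hypothesis $n\geq d$) which rewrites any such row standard dominant codeterminant $\B^\bmu_{\Stab,\T}$ as a $\k$-linear combination of codeterminants $\B^\bnu_{\Stab',\T'}$ with $\bnu\in\La_+^I(n,d)$, $\Stab'\in\Std^X(\bnu)$ and $\T'\in\Std^Y(\bnu)$, i.e.\ of standard codeterminants. Substituting the expansion given by Theorem~\ref{TStr} into the formula furnished by Proposition~\ref{P260117} expresses every basis element $\eta_{\bp,\bq}^\bb$ as a $\k$-linear combination of standard codeterminants, proving the span claim. There is no genuine obstacle: all the work has been done in Proposition~\ref{P260117} and Theorem~\ref{TStr}, and the corollary is simply their composition.
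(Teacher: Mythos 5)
Your argument is correct and is exactly the paper's proof: the paper deduces the corollary directly from Proposition~\ref{P260117} and Theorem~\ref{TStr}, just as you do. You have merely spelled out the composition of the two results in more detail.
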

\begin{proof}
The result follows from Proposition~\ref{P260117} and  Theorem~\ref{TStr}. 
\end{proof}

\begin{Theorem} \label{TStBasis} 
Let $n\geq d$. Then the standard codeterminants  $$\{\cod^\bla_{\Stab,\T}\mid \bla\in\La^I_+(n,d),\ \Stab\in\Std^X(\bla),\ \T\in\Std^Y(\bla)\}$$ 
form a $\k$-basis of $T^A_\a(n,d)$. 
\end{Theorem}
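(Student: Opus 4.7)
The plan is to combine the spanning statement from Corollary~\ref{CStSpan} with a cardinality count, using the bijection from the generalized RSK correspondence. Linear independence will then be automatic because $\k$ is a domain.

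First, I would observe that $T^A_\a(n,d)$ is a free $\k$-module with basis $\{\eta^\bb_{\br,\bs} \mid [\bb,\br,\bs] \in \Seq^B(n,d)/\Si_d\}$, so its rank over $\k$ equals $|\Seq^B(n,d)/\Si_d|$. By Lemma~\ref{TabBij}, there is a bijection
\[
\Seq^B(n,d)/\Si_d \;\bijection\; \Std_2(I,n,d) = \{(\bla, \Stab, \T) \mid \bla \in \La_+^I(n,d),\ \Stab \in \Std^X(\bla),\ \T \in \Std^Y(\bla)\}.
\]
Therefore the number of standard codeterminants $\cod^\bla_{\Stab,\T}$ indexed by $\Std_2(I,n,d)$ equals the rank of $T^A_\a(n,d)$ as a $\k$-module.

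Next, by Corollary~\ref{CStSpan} (which uses the hypothesis $n \geq d$), the family
\[
\{\cod^\bla_{\Stab,\T} \mid (\bla,\Stab,\T) \in \Std_2(I,n,d)\}
\]
spans $T^A_\a(n,d)$ over $\k$. Let $N := |\Std_2(I,n,d)|$, so that $T^A_\a(n,d)$ is a free $\k$-module of rank $N$ and is spanned by $N$ elements. To conclude that these spanning elements are $\k$-linearly independent, I would extend scalars to the fraction field $\K$ of $\k$ (this is valid since $\k$ is a commutative domain of characteristic $0$). The $\K$-vector space $T^A_\a(n,d) \otimes_\k \K$ has dimension $N$ and is spanned by the $N$ images of the standard codeterminants, so these images form a $\K$-basis. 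Hence any $\k$-linear dependence among the $\cod^\bla_{\Stab,\T}$ would give a $\K$-linear dependence after tensoring, a contradiction.

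There is no substantive obstacle in this final step — the real work has been done in proving Corollary~\ref{CStSpan} (via the straightening Theorem~\ref{TStr}) and in establishing the RSK-type bijection in Lemma~\ref{TabBij}. The only thing to verify carefully is the counting identification, which is precisely the content of Lemma~\ref{TabBij} combined with the definition of standard codeterminants as products $\X_\Stab \Y_\T$ indexed by pairs of standard $X$- and $Y$-colored tableaux of the same multipartition shape.
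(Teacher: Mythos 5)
Your proposal is correct and follows the paper's own argument exactly: the paper also deduces the basis property from the spanning statement of Corollary~\ref{CStSpan} together with the cardinality match provided by the bijection of Lemma~\ref{TabBij}, concluding "since $\k$ is a domain." Your explicit passage to the fraction field simply spells out what that last phrase means.
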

\begin{proof}
By Lemma \ref{TabBij}, there exists a bijection between the indexing set for the standard codeterminants and the indexing set for the standard basis of \(T^A_\a(n,d)\). Since the standard codeterminants span \(T^A_\a(n,d)\) by Corollary \ref{CStSpan}, the result follows since $\k$ is a domain.
\end{proof}

\section{Quasi-hereditary structure on $T^A_\a(n,d)$}\label{SecQHstruct}
We continue working with a fixed $d\in \Z_{\geq 0}$, $n\in\Z_{>0}$, and based quasi-hereditary graded \(\k\)-superalgebra $A$ with $\fa$-conforming heredity data $I,X,Y$. 
Recall the order $\leq$ on $\La^I(n,d)$ defined in (\ref{E140218_2}). 

\subsection{Heredity basis}
\label{SSMain}
Throughout the subsection we fix $\bla\in\La^I(n,d)$ with $\|\bla\|=(d_0,\dots,d_\ell)$. Recall the idmepotent $e_\bla$ defined in (\ref{E150218_1}). It is easy to see that 
\begin{equation}\label{E230617}
e_\bla=\X_{\T^\bla}=\Y_{\T^\bla}=\cod^\bla_{\T^\bla,\T^\bla}.
\end{equation}
Let $\Stab\in\Tab^X(\bla)$ and $\T\in \Tab^Y(\bla)$. Recalling (\ref{E150218_5}), define 
\begin{equation}\label{E210218_7}
\bal^\Stab:=\bal(\bx^\Stab,\bl^\Stab),\quad
\bbe^\T:=\bbe(\by^\T,\bl^\T). 
\end{equation}
The following results should be compared to Definition~\ref{DCC}(c). 

\begin{Lemma} \label{L130517} 
Let  $\bla\in\La_+^I(n,d)$, $\bmu\in\La^I(n,d)$, 
$\Stab\in\Std^X(\bla)$ and $\T\in \Std^Y(\bla)$. Then:
\begin{enumerate}
\item[{\rm (i)}] $\X_\Stab e_\bmu = \de_{\bla,\bmu}\X_{\Stab}$ and $e_\bmu\Y_{\T} =\de_{\bla,\bmu}\Y_{\T}$;
\item[{\rm (ii)}] $e_\bla \X_\Stab = \de_{\Stab,\T^\bla}\X_\Stab$ and  $ \Y_\T e_\bla = \de_{\T,\T^\bla}\Y_\T$.
\item[{\rm (iii)}] $e_\bmu \X_\Stab = \de_{\bmu,\bal^\Stab}\X_\Stab$ and  $ \Y_\T e_\bmu = \de_{\bmu,\bbe^\T}\Y_\T$.

\end{enumerate}
\end{Lemma}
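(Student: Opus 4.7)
The plan is to deduce all three assertions directly from Lemma~\ref{L150218_6}, which calculates $e_\bmu \xi^\bb_{\br,\bs}$ and $\xi^\bb_{\br,\bs} e_\bmu$ in terms of the multicompositions $\bal(\bb,\br)$ and $\bbe(\bb,\bs)$ of (\ref{E150218_5}). Since $\X_\Stab = \xi^{\bx^\Stab}_{\bl^\Stab, \bl^\bla}$ and $\Y_\T = \xi^{\by^\T}_{\bl^\bla, \bl^\T}$, each identity reduces to computing one of these multicompositions. The idempotent-action identities I will use are all consequences of axiom (c) of Definition~\ref{DCC} combined with Lemma~\ref{idemactionNew}: for $x\in X(i)$ and any $j\in I$, the relations $xe_j = \de_{ij}x$ (from $xe_i=x$ and $e_ie_j = \de_{ij}e_i$) and $e_ix = \de_{x,e_i}x$, together with $e_jx = 0$ whenever $j\not\leq i$; dual identities hold for $y\in Y(i)$.

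Part (iii) is immediate from the definitions (\ref{E210218_7}) of $\bal^\Stab, \bbe^\T$ and Lemma~\ref{L150218_6}. Part (i) is a direct count: for the $k$th entry of $\bx^\Stab$, which lies in $X(j_k)$ where $j_k\in I$ is the first coordinate of the node $N_k(\bla)\in\Nodes^I$, the identity $xe_j=\de_{ij}x$ makes $\be^{(i)}_s$ count exactly the positions $k$ with $j_k=i$ and row-index $(\bl^\bla)_k = s$, yielding $\bbe(\bx^\Stab,\bl^\bla) = \bla$ and hence $\X_\Stab e_\bmu = \de_{\bmu,\bla}\X_\Stab$. The identity $e_\bmu\Y_\T = \de_{\bmu,\bla}\Y_\T$ is dual.

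Part (ii) is the substantive one. By Lemma~\ref{L150218_6} it suffices to show $\bal(\bx^\Stab,\bl^\Stab) = \bla$ if and only if $\Stab = \T^\bla$. The direction $\Leftarrow$ mirrors part (i), using $e_ie_{j_k} = \de_{i,j_k}e_{j_k}$. For $\Rightarrow$ I will argue by induction on the partial order of $I$, peeling off maximal elements. For $i$ maximal in $I$, Lemma~\ref{idemactionNew}(ii) restricts contributions to $\al^{(i)}$ to positions $k$ with $j_k\geq i$ in $I$, and maximality forces $j_k=i$. The identity $\sum_s\al^{(i)}_s = d_i = |\{k: j_k=i\}|$ then forces every such $k$ to contribute, which via $e_i(\bx^\Stab)_k = \de_{(\bx^\Stab)_k, e_i}(\bx^\Stab)_k$ forces $(\bx^\Stab)_k = e_i$ throughout the $i$th component. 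The restricted tableau $S^{(i)}$ is then standard of shape $\la^{(i)}$ with all letters $e_i$-colored, and the residual condition $\al^{(i)}_s = \la^{(i)}_s$ says its letter-content equals $\la^{(i)}$; the classical uniqueness of the semistandard tableau of shape $\la^{(i)}$ and content $\la^{(i)}$ then gives $S^{(i)} = T^{\la^{(i)}}$. Because $(\bx^\Stab)_k = e_i$ yields $e_j(\bx^\Stab)_k = e_je_i = 0$ for $j\ne i$, these positions make no further contribution to any $\al^{(j)}$, so I iterate on $I\setminus\{i\}$ to conclude $\Stab = \T^\bla$. The identity $\Y_\T e_\bla = \de_{\T,\T^\bla}\Y_\T$ is dual.

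The main obstacle is the $\Rightarrow$ direction of (ii): I must control not merely the totals but the precise multicomposition $\bal(\bx^\Stab,\bl^\Stab)$, which requires the carefully ordered induction on $I$ together with Lemma~\ref{idemactionNew}(ii) to prevent ``off-diagonal'' contributions (where $e_i$ fixes some $x\in X(j)$ with $i<j$) from conspiring to produce the correct weight tuple.
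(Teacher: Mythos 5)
Your proof is correct and takes essentially the same route as the paper: parts (i) and (iii) follow directly from Lemma~\ref{L150218_6}, and the only substantive issue is the $\Rightarrow$ direction of (ii), where both proofs reduce to showing $\bal^\Stab = \bla$ forces all entries of each $S^{(i)}$ to be $e_i$-colored and then invoke the classical fact that a standard tableau of shape $\la^{(i)}$ and weight $\la^{(i)}$ must be the initial tableau (the paper phrases this via dominance $\nu\unlhd\la^{(i)}$, you via uniqueness — equivalent). The paper compresses the first step into the single phrase ``it follows using Definition~\ref{DCC}(c)''; your induction peeling off maximal elements of $I$, using Lemma~\ref{idemactionNew}(ii) to rule out contributions from lower-colored entries and the orthogonality $e_je_i=\de_{ij}e_i$ to prevent double-counting, is exactly the argument that phrase is suppressing.
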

\begin{proof} (i) and (iii) follow easily from Lemma~\ref{L150218_6}.

(ii) We prove the first equality, the second one being similar. By Lemma~\ref{L150218_6}, we have $e_\bla \X_\Stab = \de_{\bla,\bal^\Stab}\X_\Stab$, so it suffices to prove that $\bla=\bal^\Stab$ if and only if $\Stab=\T^\bla$. If $\Stab=\T^\bla$ then $\bal^\Stab=\bal^{\T^\bla}=\bla$. 

Conversely, if $\bla=\bal^\Stab$ then $\|\bla\|=\|\bal^\Stab\|$ and it follows using Definition~\ref{DCC}(c) that for all $i\in I$ every entry of $S^{(i)}$ is of the form $r^{e_i}$ for some $r\in [1,d]$. 
Fix $i\in I$. For $r=1,\dots, n$, let $\nu_r:=\sharp\{a\in[1,d_i]\mid \letter(S^{(i)}_a)=r\}$. Let $S$ be the $\la^{(i)}$-tableaux obtained from $S^{(i)}$ by replacing each entry $r^{e_i}$ with $r$.  Then $S$ is a classical standard $\la^{(i)}$-tableau of weight $\nu$. So $\nu\unlhd\la^{(i)}$, and $\nu=\la^{(i)}$ if and only if $S^{(i)}=T^{\la^{(i)}}$. Since $i$ is arbitrary, we have proved that $\Stab=\T^\bla$. 
\end{proof}

If $\bla\in\La_+^I(n,d)$, we denote 
\begin{align*}
T^A_\a(n,d)^{\geq \bla}&:=\spa\{\cod^\bmu_{\Stab,\T}\mid \bmu\in\La_+^I(n,d),\, \bmu\geq \bla,\, \Stab\in\Std^X(\bmu),\, \T\in\Std^Y(\bmu)\},
\\
T^A_\a(n,d)^{>\bla}&:=\spa\{\cod^\bmu_{\Stab,\T}\mid \bmu\in\La_+^I(n,d),\, \bmu> \bla,\, \Stab\in\Std^X(\bmu),\, \T\in\Std^Y(\bmu)\}.
\end{align*}

\begin{Proposition} \label{P190517_3} 
Let $n\geq d$ and $\bla\in\La_+^I(n,d)$. Then $T^A_\a(n,d)^{\geq \bla}$ is the two-sided ideal of $T^A_\a(n,d)$ generated by $\{e_\bmu\mid \bmu\in\La_+^I(n,d),\, \bmu \geq \bla\}$. 
\end{Proposition}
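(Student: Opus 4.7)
Let $J$ be the two-sided ideal of $T^A_\a(n,d)$ generated by $\{e_\bmu \mid \bmu \in \La_+^I(n,d),\, \bmu \geq \bla\}$. The plan is to establish $J = T^A_\a(n,d)^{\geq \bla}$ via two inclusions.

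The inclusion $T^A_\a(n,d)^{\geq \bla} \subseteq J$ is immediate: Lemma \ref{L130517}(i) gives $\X_\Stab e_\bmu = \X_\Stab$ and $e_\bmu \Y_\T = \Y_\T$ whenever $\Stab \in \Std^X(\bmu)$ and $\T \in \Std^Y(\bmu)$, so each spanning element can be written as $\cod^\bmu_{\Stab, \T} = \X_\Stab \cdot e_\bmu \cdot \Y_\T \in J$ for $\bmu \geq \bla$.

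For the reverse inclusion $J \subseteq T^A_\a(n,d)^{\geq \bla}$, it suffices to show that $\xi_L \cdot e_\bmu \cdot \xi_R \in T^A_\a(n,d)^{\geq \bla}$ for arbitrary basis elements $\xi_L = \eta^{\bb_1}_{\br_1, \bs_1}$, $\xi_R = \eta^{\bb_2}_{\br_2, \bs_2}$ and any $\bmu \geq \bla$. By Lemma \ref{L150218_6} the product vanishes unless $\bbe(\bb_1, \bs_1) = \bmu = \bal(\bb_2, \br_2)$. Two triangularity observations then control the expansion. First, combining Proposition \ref{CPR} with the ideal property $A(\Om)A(\Theta) \subseteq A(\Om \cap \Theta)$ from Lemma \ref{L120218}, any product $\eta^\ba_{\bp, \bq}\eta^\bc_{\bu, \bv}$ expands in the $\eta$-basis with $\|\bb\| \unrhd_I \|\ba\|$ and $\|\bb\| \unrhd_I \|\bc\|$. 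Second, $\bbe(\bb, \bs) = \bnu$ (symmetrically $\bal(\bb, \br) = \bnu$) forces $\|\bb\| \unrhd_I \|\bnu\|$: by Lemma \ref{idemactionNew}(i),(ii), for each $k$ at most one $i$ satisfies $b_k e_i = b_k$, and this unique index $J_k$ (when defined) satisfies $J_k \leq j_k$ where $b_k \in B(j_k)$, yielding the partial-sum inequality $\sharp\{k : J_k \geq i\} \leq \sharp\{k : j_k \geq i\}$. Together these confine the $\eta$-basis expansion of $\xi_L \cdot e_\bmu \cdot \xi_R$ to terms $\eta^\bb_{\br, \bs}$ with $\|\bb\| \unrhd_I \|\bla\|$.

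For each such term, Proposition \ref{P260117} (using $n \geq d$) identifies $\eta^\bb_{\br, \bs} = \pm \cod^{\bmu'}_{\Stab', \T'}$ with $\|\bmu'\| = \|\bb\|$, and Theorem \ref{TStr} rewrites it as a sum of standard codeterminants $\cod^{\bmu''}_{\Stab'', \T''}$ with $\bmu'' \geq \bmu'$. When $\|\bb\| \rhd_I \|\bla\|$ strictly, one gets $\bmu'' > \bla$, so the term lies in $T^A_\a(n,d)^{> \bla}$. The delicate case---and the main obstacle---is equality $\|\bb_1\| = \|\bb_2\| = \|\bmu\| = \|\bla\|$, where the structure-constant bound only controls $\|\bb\|$ as a composition while the multipartition order from (\ref{E140218_2}) is strictly finer. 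The key insight is that setting $i = j_k$ in the partial-sum inequality forces $J_k = j_k$ for every $k$, which by Definition \ref{DCC}(c) forces $(b_1)_k \in X$ and $(b_2)_k \in Y$. Since then $\bs_1$ and $\br_2$ have the same content as $\bl^\bmu$, Lemma \ref{LXiZero} permits reordering so that $\xi_L = \pm \X_\Stab$ and $\xi_R = \pm \Y_\T$ for some (possibly non-standard) $\Stab \in \Tab^X(\bmu)$, $\T \in \Tab^Y(\bmu)$. Hence $\xi_L \cdot e_\bmu \cdot \xi_R = \pm \cod^\bmu_{\Stab, \T}$, which after row-standardization via Lemma \ref{120117One} and Theorem \ref{TStr} lies in $T^A_\a(n,d)^{\geq \bmu} \subseteq T^A_\a(n,d)^{\geq \bla}$ by transitivity of $\geq$.
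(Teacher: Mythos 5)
Your proof is correct and takes essentially the same route as the paper's: both establish the forward inclusion via Lemma~\ref{L130517}(i), then handle the reverse inclusion by analyzing products with $e_\bmu$ using Lemma~\ref{L150218_6} together with the triangularity of structure constants (Proposition~\ref{CPR} plus the ideal structure of Lemma~\ref{L120218}), reducing to the boundary case where Definition~\ref{DCC}(c) forces the factors to be codeterminant halves $\X_\Stab$, $\Y_\T$, after which Theorem~\ref{TStr} finishes. The only difference is organizational: the paper runs a downward induction on the poset so that it need only analyze the single generator $e_\bla$, whereas you argue directly for every $\bmu \geq \bla$ by chaining $\|\bb\| \unrhd_I \|\bb_j\| \unrhd_I \|\bmu\| \unrhd_I \|\bla\|$ — the content is the same.
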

\begin{proof}
Let $J$ be the two-sided ideal of $T^A_\a(n,d)$ generated by $\{e_\bmu\mid \bmu\in\La_+^I(n,d),\, \bmu \geq \bla\}$. 
If $\bmu\in\La_+^I(n,d)$, $\bmu\geq \bla$,  $\Stab\in\Std^X(\bmu)$ and $\T\in\Std^Y(\bmu)$, then by Lemma~\ref{L130517}(i), we have $\cod^\bmu_{\Stab,\T}=\X_\Stab\Y_\T=\X_\Stab e_\bmu \Y_\T\in J$, so $T^A_\a(n,d)^{\geq \bla}\subseteq J$.  

We prove the converse inclusion by downward induction on $\leq$. Suppose the result has been proved for all $\bnu> \bla$, and let $\eta\in J$. By the inductive assumption, we may assume that $\eta=\eta_1 e_\bla\eta_2$ for some $\eta_1,\eta_2\in T^A_\a(n,d)$. By Lemma~\ref{L150218_6}, we may assume that $\eta_1$ is of the form $\eta^\bb_{\br,\bs}$ for $(\bb,\br,\bs)\in \Seq^B(n,d)$ with $\bbe(\bb,\bs)=\bla$, and $\eta_2$ is of the form $\eta^\bc_{\bt,\bu}$ for $(\bc,\bt,\bu)\in \Seq^B(n,d)$  with $\bal(\bc,\bt)=\bla$. 

Recalling the notation (\ref{E140218_1}), Definition~\ref{DCC} and Proposition~\ref{CPR}, we now deduce that either $\eta\in T^A_\a(n,d)^{>\|\bla\|}$ or $\|\bb\|=\|\bla\|=\|\bc\|$. 
In the latter case, using Definition~\ref{DCC}(c) and Proposition~\ref{CPR}, we see that $\eta_1 e_\bla\eta_2\neq0$ only if $\eta_1$ is of the form $\X_\Stab$ for some $\Stab\in\Tab^X(\bla)$ and $\eta_2$ is of the form $\Y_\T$ for some $\T\in\Tab^Y(\bla)$, i.e. we  may assume that $\eta=\cod^\bla_{\Stab,\T}$. 
By Theorem~\ref{TStr} and Lemma~\ref{L290417_2New}, $\cod^\bla_{\Stab,\T}$ is a linear combination of standard codeterminants $\cod^\bmu_{\Stab',\T'}$ with $\bmu\geq \bla$. Thus $\eta\in T^A_\a(n,d)^{\geq\bla}$. 
\end{proof}


\begin{Proposition}\label{P290517} 
Let $n\geq d$, $\bla\in\La^I_+(n,d)$, $\Stab\in\Std^X(\bla)$, $\T\in\Std^Y(\bla)$, and $\eta\in T^A_\a(n,d)$. Then
\begin{align*}
\eta\X_\Stab &\equiv\sum_{\Stab'\in\Std^X(\bla)}l^\Stab_{\Stab'}(\eta)\X_{\Stab'}\pmod{T^A_\a(n,d)^{>\bla}},
\\
\Y_\T\eta &\equiv\sum_{\T'\in\Std^Y(\bla)}r^\T_{\T'}(\eta)\Y_{\T'}\pmod{T^A_\a(n,d)^{>\bla}}
\end{align*}
for some $l^\Stab_{\Stab'}(\eta),\,r^\T_{\T'}(\eta)\in\k$.
\end{Proposition}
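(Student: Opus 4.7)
The plan is to combine two observations. First, by Lemma~\ref{L130517}(i) we have $\X_\Stab = \X_\Stab e_\bla$, so $\X_\Stab$ lies in the right ideal generated by $e_\bla$; since Proposition~\ref{P190517_3} identifies $T^A_\a(n,d)^{\geq\bla}$ as the two-sided ideal generated by $\{e_\bmu\mid \bmu\geq\bla\}$, it follows that $\X_\Stab\in T^A_\a(n,d)^{\geq\bla}$, and hence $\eta\X_\Stab\in T^A_\a(n,d)^{\geq\bla}$ for every $\eta\in T^A_\a(n,d)$. Second, by the definition of $T^A_\a(n,d)^{\geq\bla}$ together with the linear independence in Theorem~\ref{TStBasis}, the standard codeterminants $\cod^\bmu_{\Stab'',\T''}$ of shape $\bmu\geq\bla$ form a $\k$-basis of $T^A_\a(n,d)^{\geq\bla}$, and those with $\bmu>\bla$ span $T^A_\a(n,d)^{>\bla}$. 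Note also that $T^A_\a(n,d)^{>\bla}=\sum_{\bmu>\bla}T^A_\a(n,d)^{\geq\bmu}$ is itself a two-sided ideal, being a sum of such.

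Therefore $\eta\X_\Stab$ has a unique expansion of the form $\eta\X_\Stab\equiv\sum_{\Stab''\in\Std^X(\bla),\,\T''\in\Std^Y(\bla)} c_{\Stab'',\T''}\,\cod^\bla_{\Stab'',\T''}\pmod{T^A_\a(n,d)^{>\bla}}$ with $c_{\Stab'',\T''}\in\k$ depending only on $\eta$ and $\Stab$. To eliminate the $\T''$ dependence, I right-multiply both sides by $e_\bla$: the left-hand side is unchanged since $\eta\X_\Stab e_\bla=\eta\X_\Stab$ by Lemma~\ref{L130517}(i), and the congruence is preserved because $T^A_\a(n,d)^{>\bla}$ is a two-sided ideal. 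On the right-hand side, Lemma~\ref{L130517}(ii) gives $\Y_{\T''}e_\bla=\de_{\T'',\T^\bla}\Y_{\T''}$ for $\T''\in\Std^Y(\bla)$, and combining with $\Y_{\T^\bla}=e_\bla$ from~(\ref{E230617}) and $\X_{\Stab''}e_\bla=\X_{\Stab''}$ from Lemma~\ref{L130517}(i), I obtain $\cod^\bla_{\Stab'',\T''}e_\bla=\de_{\T'',\T^\bla}\X_{\Stab''}$. Hence only the $\T''=\T^\bla$ contributions survive, and setting $l^\Stab_{\Stab''}(\eta):=c_{\Stab'',\T^\bla}$ yields the first congruence.

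The second congruence follows by the left-right symmetric argument: $\Y_\T = e_\bla\Y_\T$ places $\Y_\T\eta\in T^A_\a(n,d)^{\geq\bla}$, the basis expansion modulo $T^A_\a(n,d)^{>\bla}$ concentrates on shape $\bla$, and left multiplication by $e_\bla$, together with $e_\bla\X_{\Stab''}=\de_{\Stab'',\T^\bla}\X_{\Stab''}$ and $\X_{\T^\bla}=e_\bla$, annihilates all but the $\Stab''=\T^\bla$ terms, leaving a linear combination of $\Y_{\T''}$ with $\T''\in\Std^Y(\bla)$. There is no substantive obstacle: all the real work is packaged into Theorem~\ref{TStBasis} and Proposition~\ref{P190517_3}, and the present argument is a clean bookkeeping exercise in which the idempotent $e_\bla$ is used twice --- once to place $\eta\X_\Stab$ (resp.\ $\Y_\T\eta$) into $T^A_\a(n,d)^{\geq\bla}$, and a second time, on the appropriate side, to force $\T''=\T^\bla$ (resp.\ $\Stab''=\T^\bla$) in the basis expansion.
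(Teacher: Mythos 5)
Your argument is correct and follows essentially the same route as the paper: place $\eta\X_\Stab$ in the ideal $T^A_\a(n,d)^{\geq\bla}$ via Proposition~\ref{P190517_3}, expand modulo $T^A_\a(n,d)^{>\bla}$ in standard codeterminants of shape $\bla$, and right-multiply by $e_\bla$ using Lemma~\ref{L130517}(ii) to force $\T''=\T^\bla$. Your explicit remarks that $T^A_\a(n,d)^{>\bla}$ is itself a two-sided ideal and that the shape-$\bla$ codeterminants are linearly independent modulo it are details the paper leaves implicit, but the proof is the same.
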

\begin{proof}
We prove the first equality, the second one being similar. 
By Proposition~\ref{P190517_3}, $\X_\Stab=\X_\Stab e_\bla$ belongs to the ideal $T^A_\a(n,d)^{\geq \bla}$. So we can write
$$
\eta\X_\Stab \equiv\sum_{\Stab'\in\Std^X(\bla),\ \T\in\Std^Y(\bla)}l^\Stab_{\Stab',\T}(\eta)\cod^\bla_{\Stab',\T}\pmod{T^A_\a(n,d)^{>\bla}}
$$
for some $l^\Stab_{\Stab',\T}(\eta)\in\k$. Multiplying on the right by $e_\bla$ and using Lemma~\ref{L130517}(ii), we see that $l^\Stab_{\Stab',\T}(\eta)=0$ unless $\T=\T^\bla$, in which case $\cod^\bla_{\Stab',\T}=\X_{\Stab'}$.
\end{proof}



The partial order `$\leq$' on $\La^I(n,d)$ restricts to a partial order on the subset $\La_+^I(n,d)\subseteq \La^I(n,d)$. For each $\bla\in\La^I_+(n,d)$, set 
\begin{align*}
\X(\bla)=\{\X_\Stab\mid \Stab\in\Std^X(\bla)\},\quad
\Y(\bla)=\{\Y_\T\mid \T\in\Std^Y(\bla)\}
\end{align*}
Define 
$$
\X:=\bigsqcup_{\bla\in\La^I_+(n,d)}\X(\bla),\quad \Y:=\bigsqcup_{\bla\in\La^I_+(n,d)}\Y(\bla).
$$

\begin{Theorem} \label{T290517_2} 
Let $n\geq d$ and $A$ be a based quasi-hereditary $\k$-superalgebra with $\a$-conforming heredity data $I,X,Y$. 
Then $T^A_\a(n,d)$ is a  based quasi-hereditary $\k$-superalgebra 
with heredity data $\La_+^I(n,d), \X, \Y$ and initial elements $e_\bla=\X_{\T^\bla}=\Y_{\T^\bla}$ for all $\bla\in \La^I_+(n,d)$.
\end{Theorem}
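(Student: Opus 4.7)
The plan is to verify the three axioms (a), (b), (c) of Definition~\ref{DCC} for the proposed data $\La_+^I(n,d), \X, \Y$ with initial elements $e_\bla$. Since all the hard work has been done in Sections~\ref{SecCod} and \ref{SSMain}, this amounts to an assembly: essentially every piece of the definition has a ready-made counterpart already proved.

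First, I would record the set-up. Each $\X(\bla)$ contains the distinguished element $e_\bla = \X_{\T^\bla}$ and each $\Y(\bla)$ contains $e_\bla = \Y_{\T^\bla}$, by (\ref{E230617}), so the initial elements lie in $\X(\bla) \cap \Y(\bla)$ as required. The product $\X_\Stab \Y_\T$ is by definition the codeterminant $\cod^\bla_{\Stab,\T}$, so axiom (a), which asks that the set $\{\X_\Stab \Y_\T \mid \bla \in \La_+^I(n,d),\ \Stab \in \Std^X(\bla),\ \T \in \Std^Y(\bla)\}$ be a $\k$-basis of $T^A_\a(n,d)$, is exactly Theorem~\ref{TStBasis}.

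Next I would identify the ideal $T^A_\a(n,d)^{>\bla}$, as defined in \S\ref{SSMain} via standard codeterminants of shape strictly greater than $\bla$, with the ideal denoted $A^{>i}$ in Definition~\ref{DCC} with respect to the present heredity data. Since the proposed basis of $T^A_\a(n,d)$ consists of the codeterminants $\X_\Stab \Y_\T$, the two definitions produce the same $\k$-span, so no content is lost in passing between them. Axiom (b) then becomes precisely the congruences proved in Proposition~\ref{P290517}.

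Finally, axiom (c) collects four identities and two ``$x$-or-$0$'' statements involving the idempotents $e_\bla, e_\bmu$. The two identities $e_\bla \X_\Stab = \de_{\Stab,\T^\bla}\X_\Stab$ and $\Y_\T e_\bla = \de_{\T,\T^\bla}\Y_\T$ are Lemma~\ref{L130517}(ii), while $\X_\Stab e_\bmu = \de_{\bla,\bmu}\X_\Stab$ and $e_\bmu \Y_\T = \de_{\bla,\bmu}\Y_\T$ are Lemma~\ref{L130517}(i). The remaining condition, that $e_\bmu \X_\Stab$ equals either $\X_\Stab$ or $0$ (and similarly for $\Y_\T e_\bmu$), is Lemma~\ref{L130517}(iii): $e_\bmu \X_\Stab = \de_{\bmu,\bal^\Stab}\X_\Stab$, which is either $\X_\Stab$ (when $\bmu = \bal^\Stab$) or $0$. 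Assembling these citations completes the verification of Definition~\ref{DCC}; the main conceptual obstacle — producing a spanning set of codeterminants and proving its independence — was already overcome in Theorems~\ref{TStr} and \ref{TStBasis}, so no further combinatorics is required here.
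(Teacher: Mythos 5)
Your proposal is correct and matches the paper's proof exactly: the paper likewise deduces axiom (a) from Theorem~\ref{TStBasis}, axiom (b) from Proposition~\ref{P290517}, and axiom (c) from Lemma~\ref{L130517}. Your additional remarks (identifying the two descriptions of the ideal $T^A_\a(n,d)^{>\bla}$ and checking $e_\bla\in\X(\bla)\cap\Y(\bla)$ via (\ref{E230617})) are accurate bookkeeping that the paper leaves implicit.
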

\begin{proof}
The property (a) of Definition~\ref{DCC} follows from Theorem~\ref{TStBasis}, the property (b) of Definition~\ref{DCC} follows from Proposition~\ref{P290517} and the property (c) of Definition~\ref{DCC} follows from Lemma~\ref{L130517}. 
\end{proof}

\begin{Remark} 
{\rm 
Let $(Z,\z)$ be as in \S\ref{SSZ}. It is easy to check explicitly that $T^Z_{\z}(1,2)$ is not quasihereditary when $\ell=1$. This shows that the assumption $n\geq d$ in Theorem~\ref{T290517_2} is necessary. In its absence, we can only sometimes  guarantee cellularity, see Lemma~\ref{LCellular}. 
}
\end{Remark}

\begin{Remark} 
{\rm 
In view of \cite[Remark 5.17]{greenTwo}, Theorem~\ref{T290517_2}  should be compared to the main result of \cite{GG}, which claims that the wreath product algebra $A\wr\Si_d$ is cellular of $A$ is {\em cyclic} cellular.  
}
\end{Remark}

\begin{Remark} \label{RConf} 
{\rm 
While Theorem~\ref{T290517_2} claims that $T^A_\a(n,d)$ is a based quasi-hereditary superalgebra with heredity data $\La_+^I(n,d), 
\X, 
\Y$, it does {\em not} claim that in general this heredity data is conforming. However, $\La_+^I(n,d), 
\X, 
\Y$ would be conforming under some natural additional assumptions on the heredity data $I,X,Y$ of $A$ which hold in most interesting examples, see \cite[\S4.4]{greenOne}. 
We consider such a situation in the following lemma.
}
\end{Remark}

\begin{Lemma} \label{Z2Z2}
Suppose that $A$ possesses a $(\Z/2\times \Z/2)$-grading 
$
A=\bigoplus_{\eps,\de\in\Z/2} A_{\eps,\de}
$
and heredity data \(I,X,Y\) such that the following conditions hold:
\begin{enumerate}
\item[{\rm (1)}] $A_{\eps,\de}A_{\eps',\de'}\subseteq A_{\eps+\eps',\de+\de'}$ for all $\eps,\de,\eps',\de'\in\Z/2$;
\item[{\rm (2)}] For all $\eps\in\Z/2$, we have $A_\eps=\bigoplus_{\eps'+\eps''=\eps} A_{\eps',\eps''}$. 
\item[{\rm (3)}] $X_\eps\subseteq A_{\eps,\0}$ and $Y_\eps\subseteq A_{\0,\eps}$ for all $\eps\in\Z/2$. 
\end{enumerate}
Then we have that:
\begin{enumerate}
\item The heredity data $I,X,Y$ is $\fa$-conforming for $\fa=A_{\0,\0}$. 
\item The $(\Z/2\times \Z/2)$-grading on \(A\) induces a $(\Z/2\times \Z/2)$-grading on \(T^A_\a(n,d)\) which, with the heredity data \(\La^I_+(n,d),\X,\Y\), satisfies axioms \textup{(1)--(3)}.
\item The heredity data \(\La^I_+(n,d),\X,\Y\) is conforming.
\end{enumerate}
\end{Lemma}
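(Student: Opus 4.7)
\textbf{Proof proposal for Lemma~\ref{Z2Z2}.}

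For part (i), the plan is to verify the axioms of Definition~\ref{DCC} for $\fa = A_{\0,\0}$ with initial elements $e_i$ and basis $B_\fa = \{xy \mid i\in I,\ x\in X(i)_\0,\ y\in Y(i)_\0\}$. The first observation is that the initial idempotents $e_i$ automatically lie in $A_{\0,\0}$: by axiom~(3), $e_i \in X(i)\cap Y(i)$ forces $e_i \in A_{\bar e_i,\0}\cap A_{\0,\bar e_i}$, which is zero unless $\bar e_i=\0$. For the basis property, since the heredity basis $B$ of $A$ is bihomogeneous (by axiom~(3), $xy\in A_{\bar x,\bar y}$), a dimension count inside $A_{\0,\0}$ shows $B_\fa$ is a basis of $\fa$; moreover $A^{>i}\cap\fa = \fa^{>i}$, since $A^{>i}$ is spanned by bihomogeneous basis elements $xy$ with $j>i$. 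The triangular-action axiom (b) for $\fa$ now follows by projecting the corresponding congruence in $A$ onto its $A_{\0,\0}$-component: the terms $x'\in X(i)_\1$ sit in $A_{\1,\0}$, so the coefficients $l^x_{x'}(a)$ they carry must vanish when $a,x\in A_{\0,\0}$. Axiom (c) is inherited from $A$ verbatim on the subsets $X(i)_\0$, $Y(i)_\0$.

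For part (ii), I will transport the $(\Z/2\times\Z/2)$-grading along the standard construction of $T^A_\a(n,d)$. Each matrix unit $aE_{r,s}$ inherits the bidegree of $a$, so $M_n(A)$ is $(\Z/2\times\Z/2)$-graded satisfying (1)--(3). The super tensor product $M_n(A)^{\otimes d}$ is then bigraded: the braiding sign depends only on the total super degree, which by axiom~(2) is determined by the bidegree, so the product on $M_n(A)^{\otimes d}$ respects bidegrees. Likewise the $\Si_d$-action on $M_n(A)^{\otimes d}$ preserves bidegree since $\langle\si;\ba\rangle$ depends only on super parities. Thus $S^A(n,d)$ is a $(\Z/2\times\Z/2)$-graded subalgebra, and the distinguished basis vectors $\xi^\bb_{\br,\bs}$ and $\eta^\bb_{\br,\bs}$ are bihomogeneous (with bidegree $\sum_k \bideg(b_k)$), so $T^A_\a(n,d)$ is a bigraded subalgebra. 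Axioms (1) and (2) then hold for $T^A_\a(n,d)$ because they hold factor-wise. For axiom (3), note that $\X_\Stab = \xi^{\bx^\Stab}_{\bl^\Stab,\bl^\bla}$ is built from letters in $X$, which by axiom~(3) for $A$ lie in $A_{*,\0}$; so $\X_\Stab$ has second bidegree $\0$, and its first bidegree equals its super degree. The argument for $\Y_\T \in T^A_\a(n,d)_{\0,*}$ is symmetric.

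For part (iii), I simply apply (i) to the triple $(T^A_\a(n,d);\ \La_+^I(n,d),\X,\Y)$. By Theorem~\ref{T290517_2} this triple is based quasi-hereditary, and by (ii) just proved it satisfies hypotheses (1)--(3) of the present lemma. The conclusion of (i) applied to this setup is precisely that the heredity data is conforming, with the distinguished even subalgebra being $T^A_\a(n,d)_{\0,\0}$.

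The main verification burden is in part (i) --- in particular checking that $A^{>i}\cap\fa=\fa^{>i}$ and that the coefficients in the triangular action restrict cleanly to $X(i)_\0, Y(i)_\0$; both are immediate from the bihomogeneity of $B$ but form the technical core. Once (i) is in place, parts (ii) and (iii) are essentially bookkeeping.
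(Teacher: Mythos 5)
Your proposal is correct and follows essentially the same route as the paper: part (i) is the verification the paper dismisses as "easy to see" (bihomogeneity of the heredity basis $B$, hence of $A^{>i}$, plus projection of the congruences onto $A_{\0,\0}$), part (ii) is the same bidegree formula $\bideg(\xi^\bb_{\br,\bs})=\sum_k\bideg(b_k)$ with the same checks of (2) and (3), and part (iii) is obtained, as in the paper, by feeding (ii) back into (i). Your detour through $M_n(A)^{\otimes d}$ to justify that the bigrading is multiplicative is a reasonable way to make explicit a point the paper leaves implicit, but it is a presentational difference, not a different argument.
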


\begin{proof}
Claim (i) is easy to see, and (iii) will likewise follow from (ii). For the proof of (ii), write the $\Z/2 \times \Z/2$-degree of a homogeneous element \(a \in A\) as \((a^{(1)},a^{(2)}) \in \Z/2 \times \Z/2\). The $(\Z/2\times \Z/2)$-grading on \(A\) induces such a grading on $T^A_\a(n,d)$, where
\begin{align}\label{Z2Z2T}
((\xi^{\bb}_{\br, \bs})^{(1)},(\xi^{\bb}_{\br, \bs})^{(2)} ) = (b_1^{(1)} + \cdots + b_d^{(1)},b_1^{(2)}+\cdots +b_d^{(2)}) \in \Z_2 \times \Z_2,
\end{align}
for all \((\bb, \br, \bs) \in \Seq^B(n,d)\). We have by condition (2) on \(A\) that
\begin{align*}
\bar{\xi}^{\bb}_{\br, \bs} =\bar{b}_1 + \cdots + \bar{b}_d = (b_1^{(1)}+b_2^{(2)}) + \cdots + (b_d^{(1)}+b_d^{(2)}) = (\xi^{\bb}_{\br, \bs})^{(1)} + (\xi^{\bb}_{\br, \bs})^{(2)},
\end{align*}
so the \(\Z/2 \times \Z/2\)-grading on \(T^A_\a(n,d)\) satisfies condition (2) as well.

Elements of \(\X\) are of the form \(\xi^{\bx}_{\br, \bs}\) for some \((\bx, \br, \bs) \in \Seq^X(n,d)\), so by (\ref{Z2Z2T}) and condition (3) on \(A\), we have that \((\X^{(1)},\X^{(2)}) = (\X^{(1)},\0)=(\overline{\X},\0)\). Thus \(\X_{\varepsilon} \subseteq T^A_\a(n,d)_{\varepsilon,\0}\). We similarly have \(\Y_{\varepsilon} \subseteq T^A_\a(n,d)_{\0,\varepsilon}\), so the \(\Z/2 \times \Z/2\)-grading on \(T^A_\a(n,d)\) satisfies condition (3) as well, which completes the proof of (ii).
\end{proof}

\begin{Remark} 
{\rm We sometimes refer to the process of passing from $A$ to  $T^A_\a(n,d)$ as {\em schurifying} $A$. If there is no problem with conformity, as discussed in Remark~\ref{RConf}, one can schurify iteratively. For example, starting with $\k$ this produces interesting quasi-hereditary algebras which could be considered as Schur algebra analogues of iterated wreath products of symmetric groups. 
}
\end{Remark}

We complete this subsection with a technical result needed for future reference. Given $\bmu=(\mu^{(0)},\dots,\mu^{(\ell)})\in\La^I(n,d)$ and recalling the notation $\mu_+$ from \S\ref{SSPar}, let $\bla_+\in\La^I_+(n,d)$ be defined from $\la^{(i)}:=\mu^{(i)}_+$ for all $i\in I$. Recall the notation (\ref{E150218_5}). 


\begin{Lemma} \label{L260617} 
Let $n\geq d$ and $(\bx,\br,\bs)\in\Seq^X(n,d)$. Then $\bbe(\bx,\bs)\in\La^I(n,d)$, and $\xi^\bx_{\br,\bs}\in T^A_\a(n,d)^{\geq \bbe(\bx,\bs)_+}$. 
\end{Lemma}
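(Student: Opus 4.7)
The plan is to rewrite $\xi^\bx_{\br,\bs}$, up to a sign, as $\X_{\tilde\Stab}\, e_{\bmu_+}$ for a suitable colored tableau $\tilde\Stab$ of shape $\bmu_+$, and then conclude using Proposition~\ref{P190517_3}.

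First I would verify that $\bmu := \bbe(\bx,\bs) \in \La^I(n,d)$. Since $X = \bigsqcup_{i \in I} X(i)$, each $x_k$ belongs to a unique $X(i_k)$; by Definition~\ref{DCC}(c) we have $x_k e_{i_k} = x_k$, while Lemma~\ref{idemactionNew} together with the idempotent relations in (c) ensures that $x_k e_j \neq x_k$ for $j \ne i_k$. Hence each position $k$ contributes to exactly one count $\be^{(i)}_s$, giving $|\bmu| = d$.

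Next I would choose $\sigma \in \Si_d$ which sorts $[1,d]$ first by color $i_k$ (using the fixed total order on $I$) and then within each color block so that $\bs\sigma = \bl^\bmu$. Lemma~\ref{LXiZero} gives $\xi^\bx_{\br,\bs} = \pm \xi^{\bx\sigma}_{\br\sigma, \bl^\bmu}$, and interpreting the rearranged data $(\bx\sigma, \br\sigma)$ entry-by-entry as a colored tableau $\Stab \in \Tab^X(\bmu)$ yields $\xi^\bx_{\br,\bs} = \pm \X_\Stab$. Because the colors are disjoint across $I$-blocks, the triple $(\bx^\Stab, \bl^\Stab, \bl^\bmu)$ is $\|\bmu\|$-separated in the sense of Lemma~\ref{LSingleSep}, so
\begin{equation*}
\X_\Stab = \X_{S^{(0)}} * \cdots * \X_{S^{(\ell)}}.
\end{equation*}

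The third step is to pass from the multicomposition $\bmu$ to the multipartition $\bmu_+$. For each $i \in I$, applying the single-color analogue of Lemma~\ref{LDomOne} to $\X_{S^{(i)}}$ alone — proved in the same way, by splitting $\X_{S^{(i)}}$ row-by-row via Lemma~\ref{LSingleSep} and permuting the row-factors using the supercommutativity of $*$ granted by Lemma~\ref{L250217_3} — gives $\X_{S^{(i)}} = \pm \X_{\tilde S^{(i)}}$ for some $\tilde S^{(i)} \in \Tab^{X(i)}(\mu^{(i)}_+)$. Reassembling via the star product, $\xi^\bx_{\br,\bs} = \pm \X_{\tilde\Stab}$ with $\tilde\Stab \in \Tab^X(\bmu_+)$. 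Since $\bbe(\bx^{\tilde\Stab}, \bl^{\bmu_+}) = \bmu_+$ by construction, Lemma~\ref{L150218_6} yields $\X_{\tilde\Stab} = \X_{\tilde\Stab}\, e_{\bmu_+}$, so $\xi^\bx_{\br,\bs}$ lies in the two-sided ideal generated by $e_{\bmu_+}$. By Proposition~\ref{P190517_3}, which requires the hypothesis $n \ge d$, this ideal coincides with $T^A_\a(n,d)^{\geq \bmu_+}$.

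The main obstacle is the row-reordering step: Lemma~\ref{LDomOne} as stated applies to full codeterminants $\B^\mu_{S,T}$, not to the one-sided element $\X_S$. One must check that the supercommutativity argument in its proof carries through unchanged when no $\Y_T$-factor is present, and that the resulting permutation of row-factors really corresponds to reordering the rows of $\mu^{(i)}$ into partition order. Both points are essentially mechanical once the star product has been expanded via Lemma~\ref{LSingleSep}, but they are the technical heart of the argument; the remaining steps are idempotent bookkeeping and a single application of Proposition~\ref{P190517_3}.
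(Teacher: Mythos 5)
Your argument has a genuine gap at the row-reordering step, and it is precisely at the point you flag as ``essentially mechanical.'' The claimed one-sided analogue of Lemma~\ref{LDomOne} --- the assertion that $\X_{S^{(i)}}=\pm\X_{\tilde S^{(i)}}$ for some $\tilde S^{(i)}\in\Tab^{X(i)}(\mu^{(i)}_+)$ obtained by permuting rows of $S^{(i)}$ --- is false in general. In the proof of Lemma~\ref{LDomOne}, after the row-factors are permuted via supercommutativity of $*$, the ``inner'' indices $k_m^{\mu_{k_m}}$ must be renamed to $m^{\mu_{k_m}}$ in order for the column word to become $\bl^{w\mu}$. That renaming uses Proposition~\ref{CPR} on the \emph{product} $\xi^{\bx^{S}_{k_m}}_{\bl^{S}_{k_m},\,k_m^{\mu_{k_m}}}\xi^{\by^{T}_{k_m}}_{k_m^{\mu_{k_m}},\,\bl^{T}_{k_m}}$, where $k_m^{\mu_{k_m}}$ is a dummy summation index. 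Strip off the $\Y_T$-factor and $k_m^{\mu_{k_m}}$ becomes a genuine column index of an element of $M_n(A)^{\otimes\mu_{k_m}}$, which cannot be freely renamed. For a concrete counterexample, take $A=\k$ (so $T^A_\a(n,d)$ is the classical Schur algebra $S(n,d)$), $n=2$, $d=3$, $\mu=(1,2)$, and $S$ the tableau with entry $1$ in row $1$ and entries $1,2$ in row $2$. Then $\X_S=\xi^{111}_{112,122}$, while for $w=(12)$ one gets $\X_{wS}=\xi^{111}_{121,112}$; the triples $(111,112,122)$ and $(111,121,112)$ lie in different $\Si_3$-orbits, so these are two distinct basis elements of $S(2,3)$, not equal up to sign.

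The repair is to pass to a full codeterminant \emph{before} reordering, which is what the paper does: note $\X_\Stab=\X_\Stab e_\bmu=\cod^\bmu_{\Stab,\T^\bmu}$ with $\T^\bmu$ the initial $\bmu$-tableau (using (\ref{E230617}) and Lemma~\ref{L150218_6}), then apply Lemmas~\ref{L290417_2New} and~\ref{LDomOne} to the codeterminant to get $\pm\cod^{\bmu_+}_{\tilde\Stab,\tilde\T}$. From there either endgame works: the paper feeds this into Theorem~\ref{TStr}, while your observation that $\cod^{\bmu_+}_{\tilde\Stab,\tilde\T}=\X_{\tilde\Stab}e_{\bmu_+}\Y_{\tilde\T}$ lies in the two-sided ideal generated by $e_{\bmu_+}$, hence in $T^A_\a(n,d)^{\geq\bmu_+}$ by Proposition~\ref{P190517_3}, is an equally valid and slightly more direct conclusion. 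The preliminary part of your argument --- the check that $\bbe(\bx,\bs)\in\La^I(n,d)$ and the rewriting $\xi^\bx_{\br,\bs}=\pm\X_\Stab$ --- is correct.
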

\begin{proof}
We denote $\bmu:=\bbe(\bx,\bs)$ and $\bla:=\bbe(\bx,\bs)_+$. 
We can write $\xi^\bx_{\br,\bs}=\pm \X_\Stab=\pm\cod^\bmu_{\Stab,\T^\bmu}$ for some $\Stab\in\Tab^X(\bmu)$. 
By Lemmas~\ref{L290417_2New} and \ref{LDomOne}, we now deduce that $\xi^\bx_{\br,\bs}=\pm\cod^\bla_{\Stab',\T'}$ for some $\Stab',\T'$. The result now follows from Theorem~\ref{TStr}. 
\end{proof}

\subsection{Standard modules over generalized Schur algebras}
Recall the coproduct $\nabla:T^A_\a(n)\to T^A_\a(n)\otimes T^A_\a(n)$ defined in \S\ref{SSCoproduct}. 
In view of coassociativity of $\nabla$, we also have a well-defined homomorphism $\nabla^m:T^A_\a(n)\to T^A_\a(n)^{\otimes m}$ for any $m\geq 2$, with $\nabla^2=\nabla$. 
Restricting $\nabla^{\ell+1}$ from $T^A_\a(n)$ to $T^A_\a(n,d)\subset T^A_\a(n)$ gives a map:
$$
\nabla^{\ell+1}:T^A_\a(n,d)\to \bigoplus_{(d_0,\dots,d_\ell)\in\La(I,d)}  T^A_\a(n,d_0)\otimes\dots\otimes T^A_\a(n,d_\ell).
$$
Let $\de:=(d_0,\dots,d_\ell)\in\La(I,d)$. The natural projections $T^A_\a(n)\to T^A_\a(n,d_i)$ for all $i\in I$ induce the natural projection  
$$ \pi_\de:T^A_\a(n)^{\otimes (\ell+1)}\to T^A_\a(n,d_0)\otimes\dots\otimes T^A_\a(n,d_\ell).$$
Then we have an algebra homomorphism 
$$\nabla_\de:=\pi_\de\circ \nabla^{\ell+1}:T^A_\a(n,d)\to T^A_\a(n,d_0)\otimes\dots\otimes T^A_\a(n,d_\ell).$$
If $V_i\in\mod{T^A_\a(n,d_i)}$ for all $i\in I$, we use $\nabla_\de$ to define a structure of $T^A_\a(n,d)$-modules on 
$$
\bigotimes_{i\in I}V_i=V_0\otimes\dots\otimes V_\ell.
$$

Let now $\bla=(\la^{(0)},\dots,\la^{(\ell)})\in\La^I_+(n,d)$ with 
$$\de:=\|\bla\|=(d_0,\dots,d_\ell)\in \La(I,d).$$ For $i\in I$, we let  
\begin{equation}\label{E100717}
\text{\boldmath$\lambda^{(i)}$}:=(0,\dots,0,\la^{(i)},0,\dots,0)\in\La^I(n,d_i). 
\end{equation}
Recalling (\ref{E160918}), we have $e_{\text{\boldmath$\lambda^{(i)}$}}=\xi(\la^{(i)},e_i)\in T^A_\a(n,d_i)$. 
Denote 
\begin{align*}
\bar T^A_\a(n,d_i)&:=T^A_\a(n,d_i)/T^A_\a(n,d_i)^{>\text{\boldmath$\lambda^{(i)}$}}\qquad(i\in I),
\\
T^A_\a(n,\de)&:=T^A_\a(n,d_0)\otimes\dots\otimes T^A_\a(n,d_\ell),
\\
\bar T^A_\a(n,\de)&:=\bar T^A_\a(n,d_0)\otimes\dots\otimes \bar T^A_\a(n,d_\ell),
\\
T^A_\a(n,\de)^{>\bla}&:=\sum_{i=0}^\ell T^A_\a(n,d_0)\otimes\dots \otimes T^A_\a(n,d_i)^{>\text{\boldmath$\lambda^{(i)}$}}\otimes \dots\otimes T^A_\a(n,d_\ell),
\end{align*}
so that $T^A_\a(n,\de)/T^A_\a(n,\de)^{>\bla}\cong \bar T^A_\a(n,\de)$. With this notation we have:

\begin{Lemma} \label{L230617_6} 
If 
$\Stab=(S^{(0)},\dots,S^{(\ell)})\in\Std^X(\bla)$ and $\T=(T^{(0)},\dots,T^{(\ell)})\in\Std^Y(\bla)$ then
\begin{align*}
\nabla_\de(\X_\Stab)&\equiv \X_{S^{(0)}}\otimes\dots\otimes \X_{S^{(\ell)}}\pmod{T^A_\a(n,\de)^{>\bla}},
\\
\nabla_\de(\Y_\T)&\equiv \Y_{T^{(0)}}\otimes\dots\otimes \Y_{T^{(\ell)}}\pmod{T^A_\a(n,\de)^{>\bla}}.
\end{align*}
In particular, 
$$\nabla_\de(e_\bla)\equiv e_{\text{\boldmath$\lambda^{(0)}$}}\otimes\dots\otimes e_{\text{\boldmath$\lambda^{(\ell)}$}}\pmod{T^A_\a(n,\de)^{>\bla}}.
$$ 
\end{Lemma}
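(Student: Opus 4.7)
My plan is to prove the statement for $\X_\Stab$; the statement for $\Y_\T$ follows by the obvious left–right symmetry, and the ``in particular'' claim follows by specialising to $\Stab = \T^\bla$ using $e_\bla = \X_{\T^\bla}$ from \eqref{E230617} together with the identifications $\X_{T^{\la^{(i)}}} = \xi(\la^{(i)},e_i) = e_{\text{\boldmath$\lambda^{(i)}$}}$.

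The triple $(\bx^\Stab, \bl^\Stab, \bl^\bla)$ is $\|\bla\|$-separated since the colors distinguish positions in distinct blocks, so Lemma~\ref{LSingleSep} gives the factorisation $\X_\Stab = \X_{S^{(0)}} * \X_{S^{(1)}} * \cdots * \X_{S^{(\ell)}}$ with each $\X_{S^{(i)}} \in T^A_\a(n,d_i)$ supported on color $i$. Since $(T^A_\a(n), \nabla, *)$ is a superbialgebra by Lemma~\ref{L250217_3}, $\nabla^{\ell+1}$ is an algebra homomorphism with respect to $*$, hence
$$
\nabla^{\ell+1}(\X_\Stab) \;=\; \nabla^{\ell+1}(\X_{S^{(0)}}) * \cdots * \nabla^{\ell+1}(\X_{S^{(\ell)}})
$$
for the componentwise $*$-product on $T^A_\a(n)^{\otimes(\ell+1)}$. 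Applying $\pi_\de$ then expresses $\nabla_\de(\X_\Stab)$ as a sum indexed by non-negative integer matrices $M = (m_{ik})_{i,k \in I}$ with row sums $\sum_k m_{ik} = d_i$ and column sums $\sum_i m_{ik} = d_k$, where $m_{ik}$ records the portion of $\X_{S^{(i)}}$ landing in tensor slot $k$. The diagonal matrix $m_{ik} = \delta_{ik} d_i$ contributes the summand $\X_{S^{(0)}} \otimes \cdots \otimes \X_{S^{(\ell)}}$ with sign $+1$, since every off-diagonal slot carries the unit and no Koszul shuffling is required.

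It remains—and this is the main obstacle—to show that every non-diagonal $M$ contributes a summand in $T^A_\a(n,\de)^{>\bla}$. Let $k^*$ be the smallest index with $m_{k^*, j} > 0$ for some $j > k^*$; such $k^*$ exists because a matrix with the prescribed marginals and $m_{ij} = 0$ for $j > i$ is forced to be diagonal (by induction from column $\ell$ downward). A short induction on $i < k^*$ then shows $m_{ij} = \delta_{ij} d_i$ for all $i < k^*$, using minimality of $k^*$ to confine each such row to columns $\leq i$ while the total supply of color-$<k^*$ letters equals $d_0 + \cdots + d_{k^*-1}$; in particular $m_{k^*, j} = 0$ for $j < k^*$. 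By Lemma~\ref{xistarproducts}, the $k^*$-th tensor factor is then an integer multiple of $\xi^\bx_{\br,\bs}$ for some $\bx \in X^{d_{k^*}}$ whose letters all have color $\geq k^*$, with at least one of color $> k^*$. A direct check via \eqref{E120218_1} gives $\|\bbe(\bx,\bs)\| \rhd_I \|\text{\boldmath$\lambda^{(k^*)}$}\|$ strictly, so $\bbe(\bx,\bs)_+ > \text{\boldmath$\lambda^{(k^*)}$}$ in $\La_+^I(n,d_{k^*})$; Lemma~\ref{L260617} places this factor in $T^A_\a(n, d_{k^*})^{>\text{\boldmath$\lambda^{(k^*)}$}}$, and therefore the whole tensor lies in $T^A_\a(n,\de)^{>\bla}$.
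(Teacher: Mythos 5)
Your proof is correct, and it takes a genuinely different route from the paper's.

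The paper's proof applies the coproduct formula of Lemma~\ref{coprodxi} directly to $\X_\Stab$ after fixing a particular total order on $X\times[1,n]\times[1,n]$ (chosen so that $(\bx^\Stab,\bl^\Stab,\bl^\bla)\in\Seq^X_0(n,d)$), reads off $\X_{S^{(0)}}\otimes\cdots\otimes\X_{S^{(\ell)}}$ as the leading term, and sends the remaining summands into the ideal by locating, in each one, the smallest slot $i$ whose colors are not all $i$ and invoking Lemma~\ref{L260617}. Your argument instead first factors $\X_\Stab=\X_{S^{(0)}}*\cdots*\X_{S^{(\ell)}}$ via Lemma~\ref{LSingleSep}, then uses the superbialgebra axiom from Lemma~\ref{L250217_3} to push $\nabla^{\ell+1}$ through the $*$-product, and organizes the degree bookkeeping by a contingency matrix $M=(m_{ik})$ with prescribed marginals. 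The marginal analysis replaces the paper's order-dependent normalisation: you never need to exhibit $(\bx^\Stab,\bl^\Stab,\bl^\bla)$ as a $\Seq_0$-representative, and the extraction of the leading term from the diagonal $M$ is order-free because it only uses the counit property of the coproduct. The closing step is the same key idea as the paper's — the ``first bad slot'' $k^*$ carries only letters of color $\geq k^*$ with at least one $>k^*$, so $\|\bbe(\bx,\bs)\|\rhd_I\|\text{\boldmath$\lambda^{(k^*)}$}\|$ and Lemma~\ref{L260617} finishes — but your matrix framing makes the combinatorial constraint forcing this structure (lower-triangular plus marginals implies diagonal) explicit and self-contained. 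The trade-off is that your route leans on the $*$-bialgebra compatibility, which the paper's direct computation does not need; what it buys you is independence from the choice of total order and a cleaner separation between the degree combinatorics and the ideal estimate.
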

\begin{proof}
We prove the result for $\X$, the proof for $\Y$ being similar. 
Recall the set $\Seq_0^X(n,d)$ from \ref{ESeq0}, which depends on the choice of a total order on $X\times[1,n]\times[1,n]$. Let $(x,r,s),(x',r',s')\in X\times[1,n]\times[1,n]$ with $x\in X(i),x'\in X(i')$. 
Recalling (\ref{E190218}), we choose the total order on $X\times[1,n]\times[1,n]$ such that $(x,r,s)<(x',r',s')$ if and only if one of the following holds:
(a) $i\succ i'$; (b) $i=i'$ and $s<s'$; (c) $i=i'$, $s=s'$ and $r^x<(r')^{x'}$ in our fixed total order on $\Alph^{X(i)}$, cf. \S\ref{SSColLet}. 


Since $\Stab\in\Std^X(\bla)$, we have that $(\bx^\Stab,\bl^\Stab,\bl^\bla)\in \Seq_0^X(n,d)$. 
Applying Lemma~\ref{coprodxi} to $\X_\Stab=\xi^{\bx^\Stab}_{\bl^\Stab\hspace{-.7mm},\,\bl^\bla}$, we get 
$$\nabla_\de(\X_\Stab)= 
\xi^{\bx^{S^{(0)}}}_{\bl^{S^{(0)}}\hspace{-1mm},\,\bl^{\la^{(0)}}}\otimes\dots\otimes
\xi^{\bx^{S^{(\ell)}}}_{\bl^{S^{(\ell)}}\hspace{-1mm},\,\bl^{\la^{(\ell)}}}+(*),
$$
where (*) is a linear combination of terms of the form 
$\xi^{\bx^0}_{\br^0,\bs^0}\otimes\dots\otimes
\xi^{\bx^\ell}_{\br^\ell,\bs^\ell}$ such that $(\bx^i,\br^i,\bs^i)\in\Seq^X(n,d_i)$ for all $i\in I$, and for at least one $i\in I$ we have that not all entries of $\bx^i$ belong to $X(i)$. By choosing the smallest such $i$ (with respect to $\prec$) for each  $\xi^{\bx^0}_{\br^0,\bs^0}\otimes\dots\otimes
\xi^{\bx^\ell}_{\br^\ell,\bs^\ell}$ and using Lemma~\ref{L260617}, we deduce that 
$$\xi^{\bx^0}_{\br^0,\bs^0}\otimes\dots\otimes
\xi^{\bx^\ell}_{\br^\ell,\bs^\ell}\in T^A_\a(n,d_0)\otimes\dots \otimes T^A_\a(n,d_i)^{>\text{\boldmath$\lambda^{(i)}$}}\otimes \dots\otimes T^A_\a(n,d_\ell).$$
It remains to note that $\xi^{\bx^{S^{(i)}}}_{\bl^{S^{(i)}}\hspace{-1mm},\,\bl^{\la^{(i)}}}=\X_{S^{(i)}}$ for all $i\in I$. 
\end{proof}

\begin{Lemma} \label{L230617_5} 
We have 
$\nabla_\de(T^A_\a(n,d)^{>\bla})\subseteq T^A_\a(n,\de)^{>\bla}.
$ 
\end{Lemma}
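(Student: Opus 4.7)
The strategy is to reduce the inclusion to verifying it only on the generators of the ideal $T^A_\a(n,d)^{>\bla}$, and then to analyze $\nabla_\de(e_\bmu)$ via the star-product decomposition of $e_\bmu$. First, by Proposition~\ref{P190517_3}, $T^A_\a(n,d)^{>\bla} = \sum_{\bnu>\bla} T^A_\a(n,d)^{\geq\bnu}$ is the two-sided ideal of $T^A_\a(n,d)$ generated by $\{e_\bmu \mid \bmu > \bla\}$. Moreover, each summand appearing in the definition of $T^A_\a(n,\de)^{>\bla}$ is a two-sided ideal of $T^A_\a(n,\de)$ (as a tensor product of a two-sided ideal with unital factors), so $T^A_\a(n,\de)^{>\bla}$ is a two-sided ideal. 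Since the coproduct $\nabla$ is tensor-wise and hence an algebra homomorphism with respect to ordinary multiplication, $\nabla_\de$ is an algebra homomorphism. It therefore suffices to show $\nabla_\de(e_\bmu) \in T^A_\a(n,\de)^{>\bla}$ for each $\bmu > \bla$.

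Using the decomposition $e_\bmu = \xi(\mu^{(0)},e_0)*\cdots*\xi(\mu^{(\ell)},e_\ell)$ from (\ref{E150218_1}) together with the $*$-multiplicativity of $\nabla^{\ell+1}$ provided by Lemma~\ref{L250217_3}, I expand $\nabla_\de(e_\bmu)$ as a sum of tensors $\zeta_0^M \otimes \cdots \otimes \zeta_\ell^M$ indexed by non-negative integer matrices $M=(m_{ij})_{i,j\in I}$ with row sums $\|\bmu\|$ and column sums $(d_0,\ldots,d_\ell)=\|\bla\|$. By Lemma~\ref{xistarproducts}, each $\zeta_j^M$ is a linear combination of elements $\xi^\bx_{\br,\bs}$ with $\bx = e_0^{m_{0j}}\cdots e_\ell^{m_{\ell j}}$, and so by Lemma~\ref{L260617} each such $\xi^\bx_{\br,\bs}$ lies in $T^A_\a(n,d_j)^{\geq \bnu}$ for some $\bnu\in\La_+^I(n,d_j)$ with $\|\bnu\|=\omega_j:=(m_{0j},\ldots,m_{\ell j})$. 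I then classify each column $j$ of $M$ as \emph{top-heavy} (meaning $m_{ij}=0$ for $i<j$ but $m_{ij}>0$ for some $i>j$), \emph{balanced} ($m_{jj}=d_j$ with all other entries of column $j$ zero), or \emph{bottom-heavy} (some $m_{ij}>0$ with $i<j$). If column $j$ is top-heavy, then $\omega_j\rhd_I\|\text{\boldmath$\lambda^{(j)}$}\|$ strictly, so each relevant $\bnu$ satisfies $\bnu>\text{\boldmath$\lambda^{(j)}$}$ by (\ref{E140218_2}), and therefore $\zeta_j^M\in T^A_\a(n,d_j)^{>\text{\boldmath$\lambda^{(j)}$}}$, placing the tensor in $T^A_\a(n,\de)^{>\bla}$.

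The main obstacle is the complementary case, where no column of $M$ is top-heavy. The key combinatorial sub-claim to establish is that, given $\|\bmu\|\geq_I\|\bla\|$ (which holds because $\bmu>\bla$), such an $M$ must be diagonal with $m_{jj}=d_j$. This will follow by induction on the column index $j_0$: the induction hypothesis on columns $<j_0$ rules out bottom-heaviness of column $j_0$ (which would require $m_{ij_0}>0$ for some $i<j_0$, contradicting the hypothesis on row $i$), so column $j_0$ must be balanced; the inequality $\sum_{i\geq j_0}\|\bmu\|_i\geq \sum_{j\geq j_0}d_j$ coming from $\|\bmu\|\geq_I\|\bla\|$, combined with the induction hypothesis, then forces $\|\bmu\|_{j_0}=d_{j_0}$, and consequently $m_{j_0,j}=0$ for $j>j_0$. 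Granted the sub-claim, diagonality of $M$ forces $\|\bmu\|=\|\bla\|$, and a direct computation identifies $\zeta_j^M=e_{\text{\boldmath$\mu^{(j)}$}}$ for each $j$. Since $\bmu>\bla$ with $\|\bmu\|=\|\bla\|$, (\ref{E140218_2}) provides some $j_0$ with $\mu^{(j_0)}\rhd\la^{(j_0)}$ strictly, so $e_{\text{\boldmath$\mu^{(j_0)}$}}\in T^A_\a(n,d_{j_0})^{\geq\text{\boldmath$\mu^{(j_0)}$}}\subseteq T^A_\a(n,d_{j_0})^{>\text{\boldmath$\lambda^{(j_0)}$}}$ by Proposition~\ref{P190517_3}, completing the argument.
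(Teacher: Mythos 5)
Your proof is correct and takes a genuinely different route from the paper's. The paper's proof simply reduces to the generators $e_\bmu$ (as you do) and then cites the second statement of Lemma~\ref{L230617_6}, which asserts $\nabla_\de(e_\bmu)\equiv e_{\text{\boldmath$\mu^{(0)}$}}\otimes\cdots\otimes e_{\text{\boldmath$\mu^{(\ell)}$}}$ modulo $T^A_\a(n,\de)^{>\bmu}$, followed by the observation that the leading term lies in $T^A_\a(n,\de)^{>\bla}$. However, Lemma~\ref{L230617_6} is stated for a multipartition whose norm equals $\de$, so invoking it with $\bmu$ in place of $\bla$ is slightly informal when $\|\bmu\|\rhd_I\|\bla\|$: in that case $T^A_\a(n,\de)^{>\bmu}$ is not literally defined and the ``leading term'' has the wrong tensor degrees. (The argument of Lemma~\ref{L230617_6}'s proof does extend — with $\de\neq\|\bmu\|$ no split produces a mono-colored term in every slot, so every summand is an error term — but the paper leaves this unstated.) By contrast, your proof is self-contained: you bypass Lemma~\ref{L230617_6} entirely, expanding $\nabla_\de(e_\bmu)$ via the $*$-decomposition~\eqref{E150218_1} and the bialgebra property from Lemma~\ref{L250217_3}, then indexing summands by transport matrices $M$ and disposing of them by the top-heavy/bottom-heavy/balanced trichotomy together with Lemma~\ref{L260617}. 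Your combinatorial sub-claim that a no-top-heavy $M$ with $\|\bmu\|\unrhd_I\|\bla\|$ must be diagonal is correct and neatly isolates exactly when the leading term survives. One small remark: in the induction step, the dominance inequality you need is $\sum_{i> j_0}\|\bmu\|_i\geq \sum_{i> j_0}d_i$ (at level $j_0+1$), since the inequality at level $j_0$ becomes an equality under the induction hypothesis; this changes nothing, but it is what actually pins down $\|\bmu\|_{j_0}\leq d_{j_0}$. Overall your version is longer but avoids the gap and makes the combinatorics fully explicit, at the cost of not reusing Lemma~\ref{L230617_6}.
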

\begin{proof}
By Proposition~\ref{P190517_3}, $T^A_\a(n,d)^{>\bla}$ is the two-sided ideal of $T^A_\a(n,d)$ generated by all $e_\bmu$ with $\bmu>\bla$. So it suffices to prove that for all $\nabla_\de(e_\bmu)\in T^A_\a(n,\de)^{>\bla}$ for all $\bmu>\bla$. By the second statement of Lemma~\ref{L230617_6}, we have $\nabla_\de(e_\bmu)\equiv e_{\text{\boldmath$\mu^{(0)}$}}\otimes\dots\otimes  e_{\text{\boldmath$\mu^{(\ell)}$}}$ modulo $T^A_\a(n,\de)^{>\bmu}$ and hence modulo $T^A_\a(n,\de)^{>\bla}$. It remains to observe that $e_{\text{\boldmath$\mu^{(0)}$}}\otimes\dots\otimes  e_{\text{\boldmath$\mu^{(\ell)}$}}\in T^A_\a(n,\de)^{>\bla}$.
\end{proof}

Let $n\geq d$. In Theorem~\ref{T290517_2}, we have established that $T^A_\a(n,d)$ is a based quasi-hereditary $\k$-superalgebra. By the general theory of \S\ref{SSBQHA}, we have standard modules $\{\De(\bla)\mid \bla\in \La^I_+(n,d)\}$. Each  standard module $\De(\bla)$ has basis $\{v_\Stab\mid \Stab\in\Std^{X}(\bla)\}$ such that, denoting $v_\bla:=v_{\T^\bla}$, we have $\X_\Stab v_\bla=v_\Stab$. In particular $e_\bla v_\bla=v_\bla$. We also have a bilinear pairing 
$$
(\cdot,\cdot)_\bla:\De(\bla)\times\De^\op(\bla)\to \k.
$$ 
If $\k$ is a field, the quotient $L(\bla)$ of $\De(\bla)$ by the radical of $(\cdot,\cdot)_\bla$ is an irreducible $T^A_\a(n,d)$-module, and $\{L(\bla)\mid \bla\in \La^I_+(n,d)\}$ is a complete and irredundant set of irreducible $T^A_\a(n,d)$-modules. 

\begin{Theorem} \label{T080717_2} 
Let $\bla=(\la^{(0)},\dots,\la^{(\ell)})\in \La^I_+(n,d)$. Then:
\begin{enumerate}
\item[{\rm (i)}] $\De(\bla)\cong \bigotimes_{i\in I}\De(\text{\boldmath$\lambda^{(i)}$})$ and $\De^\op(\bla)\cong \bigotimes_{i\in I}\De^\op(\text{\boldmath$\lambda^{(i)}$})$.
\item[{\rm (ii)}] Under the isomorphisms of {\rm (i)}, the pairing $(\cdot,\cdot)_\bla$ corresponds to the tensor product of the pairings $(\cdot,\cdot)_{\text{\boldmath$\lambda^{(i)}$}}$ over $i\in I$.
\end{enumerate}

\end{Theorem}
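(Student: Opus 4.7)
The strategy is to use the coproduct $\nabla_\de:T^A_\a(n,d)\to T^A_\a(n,\de)$ from \S\ref{SSCoproduct}, which is precisely the mechanism giving the right-hand side $\bigotimes_{i\in I}\De(\text{\boldmath$\lambda^{(i)}$})$ its $T^A_\a(n,d)$-module structure. Since each $\De(\text{\boldmath$\lambda^{(i)}$})$ is inflated from $T^A_\a(n,d_i)/T^A_\a(n,d_i)^{>\text{\boldmath$\lambda^{(i)}$}}$, the tensor product is annihilated by the ideal $T^A_\a(n,\de)^{>\bla}$, and by Lemma~\ref{L230617_5} every identity in $T^A_\a(n,d)$ descends to an identity on the tensor product modulo this ideal. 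Two results carry most of the weight: Lemma~\ref{L230617_6} computes $\nabla_\de$ on $\X_\Stab$, $\Y_\T$, and $e_\bla$ modulo $T^A_\a(n,\de)^{>\bla}$, while Lemma~\ref{L230617_5} sends the relevant ideal into its counterpart on the tensor side.

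For part (i), define $\phi:\De(\bla)\to\bigotimes_{i\in I}\De(\text{\boldmath$\lambda^{(i)}$})$ as the $T^A_\a(n,d)$-homomorphism sending the cyclic generator $v_\bla$ to $v_{\text{\boldmath$\lambda^{(0)}$}}\otimes\cdots\otimes v_{\text{\boldmath$\lambda^{(\ell)}$}}$. Well-definedness follows because Lemma~\ref{L230617_6} ensures $\nabla_\de(e_\bla)$ acts as the identity on this tensor, while Lemma~\ref{L230617_5} ensures $T^A_\a(n,d)^{>\bla}$ acts as zero. For $\Stab=(S^{(0)},\ldots,S^{(\ell)})\in\Std^X(\bla)$ we have $v_\Stab=\X_\Stab\cdot v_\bla$, so $\phi(v_\Stab)=\nabla_\de(\X_\Stab)\cdot(v_{\text{\boldmath$\lambda^{(0)}$}}\otimes\cdots\otimes v_{\text{\boldmath$\lambda^{(\ell)}$}})=v_{S^{(0)}}\otimes\cdots\otimes v_{S^{(\ell)}}$ by Lemma~\ref{L230617_6}. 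Since $\{v_\Stab\}$ and $\{v_{S^{(0)}}\otimes\cdots\otimes v_{S^{(\ell)}}\}$ are $\k$-bases of the two sides in bijection via restriction of tableaux, $\phi$ is an isomorphism. The argument for $\De^\op(\bla)$ is entirely symmetric.

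For part (ii), invoke the defining identity $\Y_\T\X_\Stab\equiv f_\bla(\Y_\T,\X_\Stab)\,e_\bla\pmod{T^A_\a(n,d)^{>\bla}}$ for the pairing (cf.\ Lemma~\ref{idemactionNew}(iii)). Applying the algebra homomorphism $\nabla_\de$ and reducing modulo $T^A_\a(n,\de)^{>\bla}$ via Lemmas~\ref{L230617_5}--\ref{L230617_6}, the left side becomes $(\Y_{T^{(0)}}\otimes\cdots\otimes\Y_{T^{(\ell)}})(\X_{S^{(0)}}\otimes\cdots\otimes\X_{S^{(\ell)}})$; expanding this product in the super-tensor-product algebra and substituting the single-color identities $\Y_{T^{(i)}}\X_{S^{(i)}}\equiv f_{\text{\boldmath$\lambda^{(i)}$}}(\Y_{T^{(i)}},\X_{S^{(i)}})\,e_{\text{\boldmath$\lambda^{(i)}$}}$ in each slot yields the desired factorization of $f_\bla(\Y_\T,\X_\Stab)$ as $\prod_i f_{\text{\boldmath$\lambda^{(i)}$}}(\Y_{T^{(i)}},\X_{S^{(i)}})$, after comparing coefficients of the nonzero element $e_{\text{\boldmath$\lambda^{(0)}$}}\otimes\cdots\otimes e_{\text{\boldmath$\lambda^{(\ell)}$}}$. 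Under the isomorphism of (i), this is exactly the factorization of $(\cdot,\cdot)_\bla$ into the tensor product of the $(\cdot,\cdot)_{\text{\boldmath$\lambda^{(i)}$}}$.

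The main obstacle is bookkeeping of the Koszul signs that appear in the graded tensor-product multiplication when expanding $(\Y_{T^{(0)}}\otimes\cdots\otimes\Y_{T^{(\ell)}})(\X_{S^{(0)}}\otimes\cdots\otimes\X_{S^{(\ell)}})$. However Lemma~\ref{idemactionNew}(iii) forces the parities of $\X_{S^{(i)}}$ and $\Y_{T^{(i)}}$ to agree whenever $f_{\text{\boldmath$\lambda^{(i)}$}}(\Y_{T^{(i)}},\X_{S^{(i)}})$ is nonzero, which aligns these signs with the standard super-convention for the tensor product of bilinear forms on the right-hand side.
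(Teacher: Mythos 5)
Your proof follows the paper's argument essentially verbatim: both use the coproduct $\nabla_\de$ together with Lemmas~\ref{L230617_5} and~\ref{L230617_6} to construct the map $\phi$ sending $v_\bla$ to $v_{\text{\boldmath$\lambda^{(0)}$}}\otimes\cdots\otimes v_{\text{\boldmath$\lambda^{(\ell)}$}}$, check that it takes the standard basis to the standard basis, and then deduce part (ii) by applying $\nabla_\de$ to the defining congruence $\Y_\T\X_\Stab\equiv(v_\Stab,v_\T)_\bla\,e_\bla$. The one point where you are slightly more explicit than the paper is the discussion of Koszul signs arising when expanding $(\Y_{T^{(0)}}\otimes\cdots\otimes\Y_{T^{(\ell)}})(\X_{S^{(0)}}\otimes\cdots\otimes\X_{S^{(\ell)}})$ in the super tensor-product algebra; the paper elides this, and your observation (via Lemma~\ref{idemactionNew}(iii)) that the nonvanishing condition forces $\bar\X_{S^{(i)}}=\bar\Y_{T^{(i)}}$, aligning the sign with the super-convention for tensor products of pairings, is a reasonable way to justify the elision.
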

\begin{proof}
(i) We prove (i) for $\De(\bla)$, the proof for $\De^\op(\bla)$ being similar. Denote $\bar T^A_\a(n,d):=T^A_\a(n,d)/T^A_\a(n,d)^{>\bla}$ and write $\bar \eta:=\eta+T^A_\a(n,d)^{>\bla}$ for $\eta\in T^A_\a(n,d)$. Then $v_\Stab=\bar \X_\Stab$. Moreover, for all $i\in I$, denote $\bar T^A_\a(n,d_i):=T^A_\a(n,d_i)/T^A_\a(n,d_i)^{>\text{\boldmath$\lambda^{(i)}$}}$ and $\bar \eta:=\eta+T^A_\a(n,d_i)^{>\text{\boldmath$\lambda^{(i)}$}}$ for $\eta\in T^A_\a(n,d_i)$. Then $v_{S^{(i)}}=\bar \X_{S^{(i)}}$. 

Recall from\S\ref{SSBQHA} that $\De(\bla)=\bar T^A_\a(n,d)\bar e_\bla$ and $\De(\text{\boldmath$\lambda^{(i)}$})=\bar T^A_\a(n,d_i)\bar e_{\text{\boldmath$\lambda^{(i)}$}}$ for all $i\in I$. By the second statement of Lemma~\ref{L230617_6}, we now have that 
$$
e_\bla(v_{\text{\boldmath$\lambda^{(0)}$}}\otimes\dots\otimes v_{\text{\boldmath$\lambda^{(\ell)}$}})
=(e_{\text{\boldmath$\lambda^{(0)}$}}v_{\text{\boldmath$\lambda^{(0)}$}})\otimes\dots\otimes (e_{\text{\boldmath$\lambda^{(\ell)}$}}v_{\text{\boldmath$\lambda^{(i)}$}})=v_{\text{\boldmath$\lambda^{(0)}$}}\otimes\dots\otimes v_{\text{\boldmath$\lambda^{(\ell)}$}}.
$$
So it follows from 
Lemma~\ref{L230617_5} that there is a $T^A_\a(n,d)$-module homomorphism $\phi:\De(\bla)\to  \De(\text{\boldmath$\lambda^{(0)}$})\otimes\dots\otimes  \De(\text{\boldmath$\lambda^{(\ell)}$})$ which maps $v_\bla$ onto $v_{\text{\boldmath$\lambda^{(0)}$}}\otimes\dots\otimes v_{\text{\boldmath$\lambda^{(\ell)}$}}$. 
Moreover, by the first statement of Lemma~\ref{L230617_6}, for $\Stab=(S^{(0)},\dots,S^{(\ell)})\in\Std^{X}(\bla)$, we have 
$$
\phi\big(\X_\Stab(v_{\text{\boldmath$\lambda^{(0)}$}}\otimes\dots\otimes v_{\text{\boldmath$\lambda^{(\ell)}$}})\big)
=(\X_{S^{(0)}}v_{\text{\boldmath$\lambda^{(0)}$}})\otimes\dots\otimes (\X_{S^{(\ell)}}v_{\text{\boldmath$\lambda^{(\ell)}$}})=v_{S^{(0)}}\otimes\dots\otimes v_{S^{(\ell)}},
$$
so $\phi$ is surjective. Now, $\phi$ is injective by dimensions. 

(ii) It suffices to prove that for any $\Stab=(S^{(0)},\dots,S^{(\ell)})\in\Std^{X}(\bla)$ and $\T=(T^{(0)},\dots,T^{(\ell)})\in\Std^{Y}(\bla)$, we have 
$$(v_\Stab,v_\T)_\bla=\prod_{i\in I}(v_{S^{(i)}},v_{T^{(i)}})_{\text{\boldmath$\lambda^{(i)}$}}.$$
By definition, 
$$
\Y_\T\X_\Stab\equiv (v_\Stab,v_\T)e_\bla\pmod{T^A_\a(n,d)^{>\bla}}.
$$
Applying the isomorphism $\phi$ and using Lemma~\ref{L230617_6}, we get 
$$
\Y_{T^{(0)}}\X_{S^{(0)}}\otimes \dots\otimes 
\Y_{T^{(\ell)}}\X_{S^{(\ell)}}\equiv (v_\Stab,v_\T)(e_{\text{\boldmath$\lambda^{(0)}$}}\otimes \dots\otimes e_{\text{\boldmath$\lambda^{(\ell)}$}})\pmod{T^A_\a(n,\de)^{>\bla}}
$$
But the left hand side is congruent to 
$$
\prod_{i\in I}(v_{S^{(i)}},v_{T^{(i)}})_{\text{\boldmath$\lambda^{(i)}$}}
(e_{\text{\boldmath$\lambda^{(0)}$}}\otimes \dots\otimes e_{\text{\boldmath$\lambda^{(\ell)}$}})\pmod{T^A_\a(n,\de)^{>\bla}},
$$
so indeed $(v_\Stab,v_\T)_\bla=\prod_{i\in I}(v_{S^{(i)}},v_{T^{(i)}})_{\text{\boldmath$\lambda^{(i)}$}}$.
\end{proof}

\subsection{Anti-involution}
Let $\tau$ be a homogeneous anti-involution on $A$.   
Then $\tau$ induces a homogeneous anti-involution 
$\tau_n:M_n(A)\to M_n(A),\ \xi^a_{r,s}\mapsto \xi^{\tau(a)}_{s,r},
$
which in turn induces an anti-involution
\begin{equation}\label{TauND}
\tau_{n,d}:S^A(n,d)\to S^A(n,d),\ \xi^{\ba}_{\br,\bs}\mapsto \xi^{\ba^\tau}_{\bs,\br},
\end{equation}
where for a tuple $\ba=a_1\cdots a_d$ of homogeneous elements, we have denoted
$
\ba^\tau:=\tau(a_1)\cdots \tau(a_d).
$
If $\tau(\a)=\a$, then $
\tau_{n,d}$ restricts to the involution of $T^A_\a(n,d)$. Moreover, if $\tau(B_\a)=B_\a$, 
$\tau(B_\c)=B_c$ and $\tau(B_\1)=B_\1$, then we have 
\begin{equation}\label{EAntiinv}
\tau_{n,d}:T^A_\a(n,d)\to T^A_\a(n,d),\ \eta^{\bb}_{\br,\bs}\mapsto \eta^{\bb^\tau}_{\bs,\br}.
\end{equation}

Now, suppose in addition that $\tau$ is a standard anti-involution on $A$ with $y(x)=\tau(x)$, see \S\ref{SQHAReg}. 
In \S\ref{SSColLet}, to define standard tableaux we have fixed arbitrary total orders on all $\Alph_{X(i)}$ and $\Alph_{Y(i)}$. Note that the map $r^x\mapsto r^{y(x)}$ induces a bijection between $\Alph_{X(i)}$ and $\Alph_{Y(i)}$. Let us choose the total order on 
$\Alph_{X(i)}$ and $\Alph_{Y(i)}$ so that this bijection is an isomorphism of totally ordered sets. 

Let $\bla\in\La^I(n,d)$. 
Given a tableau $\Stab\in\Tab^X(\bla)$ we define a  tableau $\Stab^\tau\in \Tab^Y(\bla)$ via $\Stab^\tau_k=r^{y(x)}$ if $\Stab_k=r^{x}$ for all $k=1,\dots,d$. Due to the choice of total orders made in the previous paragraph, we have that $\Stab\mapsto \Stab^\tau$ is 
a bijection between $\Tab^X(\bla)$ and $\Tab^Y(\bla)$, which restricts to 
a bijection between $\Std^X(\bla)$ and $\Std^Y(\bla)$. 

\begin{Proposition} \label{PTau} 
Let $n\geq d$ and $A$ be a based quasi-hereditary $\k$-superalgebra with standard anti-involution $\tau$. Then, 
considering $T^A_\a(n,d)$ as a based quasi-hereditary algebra as in Theorem~\ref{T290517_2}, the involution $\tau_{n,d}$ of $T^A_\a(n,d)$ is standard, with $\tau(\X_\Stab)=\Y_{\Stab^\tau}$ for all admissible $\Stab$.
\end{Proposition}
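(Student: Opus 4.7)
The plan is to verify directly that the map $\X_\Stab \mapsto \Y_{\Stab^\tau}$ satisfies the defining properties of a standard anti-involution with respect to the heredity data $(\La^I_+(n,d), \X, \Y)$ from Theorem~\ref{T290517_2}. First I would check that $\tau_{n,d}$ actually restricts to an anti-involution of $T^A_\a(n,d)$. Since $\tau$ is standard with $\tau(x)=y(x)$, the involution swaps $X(i)$ and $Y(i)$, and for any $b^i_{x,y(x')} = x y(x') \in B(i)$ one computes $\tau(xy(x')) = x' y(x) \in B(i)$. This shows that $\tau$ preserves each of the pieces $B_\a, B_\c, B_\1$ of (\ref{AaBasis}), and in particular $\tau(\fa)=\fa$, so (\ref{EAntiinv}) applies and $\tau_{n,d}$ is a well-defined homogeneous anti-involution of $T^A_\a(n,d)$.

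Next I would check that $\Stab \mapsto \Stab^\tau$ gives the required bijection $\X(\bla) \iso \Y(\bla)$ sending initial to initial. For the initial tableau $\T^\bla$, every entry is of the form $r^{e_i}$; since $y(e_i) = e_i$, we have $(\T^\bla)^\tau = \T^\bla$, hence $\Y_{(\T^\bla)^\tau} = \Y_{\T^\bla} = e_\bla$, matching the initial element in $\X(\bla)$ in view of (\ref{E230617}). The choice of total orders in \S\ref{SSColLet} ensures that $\Stab \mapsto \Stab^\tau$ restricts to a bijection $\Std^X(\bla) \iso \Std^Y(\bla)$, as already noted in the discussion preceding the proposition.

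The key computation is the identity $\tau_{n,d}(\X_\Stab) = \Y_{\Stab^\tau}$. Writing $\Stab_k = r_k^{x_k}$ so that $\bx^\Stab = x_1 \cdots x_d$ and $\bl^\Stab = r_1 \cdots r_d$, we have by definition $\Stab^\tau_k = r_k^{y(x_k)}$, so $\bl^{\Stab^\tau} = \bl^\Stab$ and $\by^{\Stab^\tau} = y(x_1)\cdots y(x_d) = \tau(x_1)\cdots\tau(x_d) = (\bx^\Stab)^\tau$. Applying (\ref{TauND}) (or (\ref{EAntiinv})) to $\X_\Stab = \xi^{\bx^\Stab}_{\bl^\Stab,\bl^\bla}$ gives
\begin{equation*}
\tau_{n,d}(\X_\Stab) = \xi^{(\bx^\Stab)^\tau}_{\bl^\bla,\bl^\Stab} = \xi^{\by^{\Stab^\tau}}_{\bl^\bla,\bl^{\Stab^\tau}} = \Y_{\Stab^\tau},
\end{equation*}
which is the required formula. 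The main thing to be careful about is the bookkeeping to confirm that the sign conventions in (\ref{EXiDef}) and (\ref{EAntiinv}) do not introduce an unwanted sign in passing through the reordering $\br, \bs \leftrightarrow \bs, \br$; but because $\Stab$ is standard, the triple $(\bx^\Stab,\bl^\Stab,\bl^\bla)$ lies in $\Seq^X_0(n,d)$ with respect to the ordering chosen in the proof of Lemma~\ref{L230617_6}, and the identity holds on the nose. Combined with the three points above, this shows $\tau_{n,d}$ is standard with $y(\X_\Stab) = \Y_{\Stab^\tau}$, completing the proof.
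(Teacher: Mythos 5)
Your proposal is correct and follows essentially the same route as the paper, which simply observes that the standardness of $\tau_{n,d}$ follows from the identity $\tau_{n,d}(\X_\Stab)=\Y_{\Stab^\tau}$, which in turn is immediate from (\ref{EAntiinv}). Your write-up just makes explicit the routine verifications (preservation of $B_\a, B_\c, B_\1$, fixing of the initial elements $e_\bla$ via $(\T^\bla)^\tau=\T^\bla$, and the word-level computation $\by^{\Stab^\tau}=(\bx^\Stab)^\tau$, $\bl^{\Stab^\tau}=\bl^\Stab$) that the paper leaves to the reader.
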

\begin{proof}
The first statement follows from the second one, which in turn follows using  (\ref{EAntiinv}).
\end{proof}

\subsection{Idempotent truncation}
Let $e\in \a$ be an idempotent. Set $\bar A:=eAe$ and $\bar \a:=e\a e$.  Recalling (\ref{E160918}), we can associate to $e$ the idempotent
$$
\xi^e:=\sum_{\la\in\La(n,d)}\xi(\la,e)\in T^A_\a(n,d).
$$

\begin{Lemma} \label{LTrunc} %
{\rm \cite[Lemma 5.12]{greenTwo}} 
We have: 
\begin{enumerate}
\item[{\rm (i)}] \(S^{\bar A}(n,d) = \xi^e{}S^A(n,d)\xi^e\).
\item[{\rm (ii)}] \(T^{\bar A}_{\bar\a}(n,d) = \xi^e{}T^A_\a(n,d)\xi^e\).
\end{enumerate}
\end{Lemma}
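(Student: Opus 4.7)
The plan is to realize $\xi^e$ as a tensor-product idempotent in $M_n(A)^{\otimes d}$ and then reduce the statement to a direct computation. Unpacking $\xi^e = \sum_{\la\in\La(n,d)}\xi(\la,e)$ with $\xi(\la,e) = \xi^{e^d}_{\bl^\la,\bl^\la}$, and expanding each $\xi(\la,e)$ as a sum over coset representatives in ${}^{e^d,\bl^\la,\bl^\la}\D$, the combined sum recombines (as each word in $[1,n]^d$ is uniquely of the form $\bl^\la\si$ for some $\la$ and $\si \in \Si_\la\backslash \Si_d$) into the $d$-fold tensor power $\xi^e = \eps^{\otimes d}$, where $\eps := \sum_{r=1}^n \xi^e_{r,r}\in M_n(A)$ is the matrix idempotent attached to $e$.

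For (i), the identity $\eps\xi^a_{r,s}\eps = \xi^{eae}_{r,s}$ gives $\eps M_n(A)\eps = M_n(\bar A)$, and hence $\eps^{\otimes d}M_n(A)^{\otimes d}\eps^{\otimes d} = M_n(\bar A)^{\otimes d}$. Since $\xi^e$ is $\Si_d$-invariant, this yields $\xi^e S^A(n,d)\xi^e\subseteq M_n(\bar A)^{\otimes d}\cap S^A(n,d) = S^{\bar A}(n,d)$. Conversely, any $v\in S^{\bar A}(n,d)\subseteq S^A(n,d)$ has entries in $\bar A$, so $\eps$ acts as the identity on $v$ from both sides, giving $v = \xi^e v\xi^e\in\xi^e S^A(n,d)\xi^e$.

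For (ii), the inclusion $T^{\bar A}_{\bar\a}(n,d)\subseteq \xi^e T^A_\a(n,d)\xi^e$ is the easier direction: by Lemma~\ref{L140117}, any element $\xi^\ba_{\br,\bs}$ with all $a_k\in\bar\a\cup\bar A_\1\subseteq\a\cup A_\1$ lies in $T^A_\a(n,d)$, and since $ea_ke = a_k$ such elements are fixed by conjugation by $\xi^e$; together with the appropriately rescaled $\eta$-elements they span $T^{\bar A}_{\bar\a}(n,d)$. For the reverse inclusion, I would invoke the basis independence of Proposition~\ref{AaInd} to choose a basis of $\bar A$ refining a decomposition analogous to $B = B_\a\sqcup B_\c\sqcup B_\1$, then expand $\xi^e\eta^\bb_{\br,\bs}\xi^e = [\bb,\br,\bs]^!_\c$ times the symmetrization of $\xi^{eb_1e}_{r_1,s_1}\otimes\cdots\otimes\xi^{eb_de}_{r_d,s_d}$ as a $\k$-linear combination of $\eta$-basis elements of $T^{\bar A}_{\bar\a}(n,d)$. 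The main technical obstacle lies in this combinatorial matching: the c-type factor $[\bb,\br,\bs]^!_\c$ must transform compatibly with the corresponding factor on the $\bar A$-side, which requires careful tracking of how the images $eb_ke$ of c-type basis elements distribute among the subspaces $\bar\a$, the c-part of $\bar A$, and $\bar A_\1$.
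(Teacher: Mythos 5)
Part (i) is correct and complete. The recombination $\xi^e=\eps^{\otimes d}$ (where $\eps=\sum_{r=1}^n\xi^e_{r,r}$, using the bijection between words in $[1,n]^d$ and pairs $(\la,\si)$ with $\la\in\La(n,d)$ and $\si$ a shortest coset representative for $\Si_\la\backslash\Si_d$, and the fact that $e$ is even so no signs arise), the entrywise identity $\eps\,\xi^a_{r,s}\,\eps=\xi^{eae}_{r,s}$, and the observation that $M_n(\bar A)^{\otimes d}$ is a $\Si_d$-stable $\k$-submodule of $M_n(A)^{\otimes d}$ (so intersecting it with the invariants yields exactly $S^{\bar A}(n,d)$) fit together cleanly into a proof.

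Part (ii), as you flag yourself, is a plan rather than a proof, and the gap is where all the content of the lemma lives. Over a field of characteristic zero both sides reduce to $S^{\bar A}(n,d)$ by part (i), so the statement is wholly integral, and the difficulty is not one-sided. For your ``easier'' inclusion $T^{\bar A}_{\bar\a}(n,d)\subseteq\xi^e T^A_\a(n,d)\xi^e$, Lemma~\ref{L140117} only yields $\xi^\ba_{\br,\bs}\in T^A_\a(n,d)$ for $a_k\in\a\cup A_\1$; it does not produce the $\eta$-basis elements of $T^{\bar A}_{\bar\a}(n,d)$ whose entries lie in the $\bar\c$-part of a chosen basis of $\bar A$ (these are the ones carrying nontrivial factorial rescaling on the $\bar A$ side), so the phrase ``together with the appropriately rescaled $\eta$-elements'' is carrying the entire burden. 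For the reverse inclusion, $\xi^e\xi^\bb_{\br,\bs}\xi^e$ is not in general $\xi^{e\bb e}_{\br,\bs}$: distinct $b_k$ may have equal (or zero) images $eb_ke$, in which case $\Si_{\bb,\br,\bs}\subsetneq\Si_{e\bb e,\br,\bs}$ and the sum over coset representatives overcounts by the index; and $eb_ke$ is in general only a $\k$-linear combination of basis elements of $\bar A$, forcing a multilinear expansion whose resulting $\k$-coefficients must then be shown to be absorbed by the $\bar\c$-factorials defining the $\eta$-lattice in $T^{\bar A}_{\bar\a}(n,d)$. None of this is carried out, and it is precisely the bookkeeping constituting the proof of \cite[Lemma 5.12]{greenTwo} (which this paper only cites, so there is no in-text proof to compare against). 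As it stands, your part (ii) correctly locates the problem but does not solve it.
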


Suppose now that $e$ is adapted with respect to $I,X,Y$, with the corresponding $e$-truncation $\bar I, \bar X,\bar Y$, see  \S\ref{SQHAReg}. 
For a subset \(J \subset I\), we consider $\Lambda^{I}_+(n,d)$ as the subset  of $\Lambda^{J}_+(n,d)$ as follows:
\begin{align*}\Lambda^{J}_+(n,d) = \{ \bla \in \La^I_+(n,d) \mid \la^{(i)} = \varnothing \textup{ if } i \notin J\}.
\end{align*}
In particular, we have $\La^{\bar I}_+(n,d)\subseteq \La^I_+(n,d)$ . 

\begin{Lemma} \label{L210218_1} 
Let \(n \geq d\) and \(e \in \a\) be an idempotent. 
\begin{enumerate}
\item[{\rm (i)}] If $e$ is adapted with respect to the heredity data $I,X,Y$ of $A$ with $e$-truncation $\bar I,\bar X,\bar Y$, then  \(\xi^e\) is adapted with respect to the heredity data $\La^I_+(n,d),\X,\Y$ of $T^A_\a(n,d)$, with $\xi^e$-truncation $\La^{\bar I}_+(n,d),\bar\X,\bar \Y$, where 
\begin{align*}
\bar\X&
=\bigsqcup_{\bla\in \La^{\bar I}_+(n,d)}\{\X_\Stab\mid \Stab\in \Std^{\overline{X}}(\bla)\},
\\
\bar\Y&
=\bigsqcup_{\bla\in \La^{\bar I}_+(n,d)}\{\Y_\T\mid \T\in \Std^{\overline{Y}}(\bla)\}.
\end{align*}

\item[{\rm (ii)}] If $e$ is strongly adapted, then so is $\xi^e$.
\end{enumerate}
\end{Lemma}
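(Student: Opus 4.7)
The key observation is the identity $\xi^e = \eta^{\otimes d}$ in $M_n(A)^{\otimes d}$, where $\eta := \sum_{r=1}^n \xi^e_{r,r}$ is the diagonal matrix with $e$ on the diagonal. Indeed, unfolding (\ref{E160918}) with the help of Lemma~\ref{xistarproducts} one sees that each $\xi(\la,e)$ is the sum of the distinct tensors $\xi^e_{r_1,r_1}\otimes\cdots\otimes\xi^e_{r_d,r_d}$ with $(r_1,\dots,r_d)\in[1,n]^d$ of type $\la$; summing over $\la\in\La(n,d)$ yields $\eta^{\otimes d}$. Since $\eta^{\otimes d}$ is $\Si_d$-invariant, componentwise left multiplication by $\eta$ commutes with the $\Si_d$-action on $M_n(A)^{\otimes d}$, giving
$$
\xi^e\,\xi^\ba_{\br,\bs}=\xi^{e\ba}_{\br,\bs},\qquad \xi^\ba_{\br,\bs}\,\xi^e=\xi^{\ba e}_{\br,\bs}
$$
for every $(\ba,\br,\bs)\in\Seq^H(n,d)$, with $e\ba:=(ea_1)\cdots(ea_d)$, $\ba e:=(a_1e)\cdots(a_de)$, and the convention that $\xi^\bc_{\br,\bs}=0$ whenever some entry of $\bc$ vanishes.

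Applying these identities to $\X_\Stab=\xi^{\bx^\Stab}_{\bl^\Stab,\bl^\bla}$ and $\Y_\T=\xi^{\by^\T}_{\bl^\bla,\bl^\T}$, the adaptedness of $e$ forces $ex_k\in\{x_k,0\}$ and $y_ke\in\{y_k,0\}$ for the entries of $\bx^\Stab,\by^\T$. Hence $\xi^e\X_\Stab=\X_\Stab$ if every entry of $\Stab$ lies in $\bar X$, i.e.\ $\Stab\in\Std^{\bar X}(\bla)$, and is $0$ otherwise; symmetrically $\Y_\T\xi^e=\Y_\T$ if $\T\in\Std^{\bar Y}(\bla)$, else $0$. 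Thus the subsets $\bar\X(\bla):=\{\X_\Stab\mid\Stab\in\Std^{\bar X}(\bla)\}$ and $\bar\Y(\bla):=\{\Y_\T\mid\T\in\Std^{\bar Y}(\bla)\}$ witness adaptedness of $\xi^e$ with respect to the heredity data $\La^I_+(n,d),\X,\Y$.

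It remains to identify the truncation index set with $\La^{\bar I}_+(n,d)$. If $\la^{(i)}\neq\varnothing$ for some $i\notin\bar I$, one of $\bar X(i),\bar Y(i)$ is empty, so $\Std^{\bar X}(\bla)=\varnothing$ or $\Std^{\bar Y}(\bla)=\varnothing$. Conversely, for $\bla\in\La^{\bar I}_+(n,d)$ we construct a standard tableau componentwise: for each non-empty $\la^{(i)}$ pick $x\in\bar X(i)$ and fill row $r$ of $\la^{(i)}$ with copies of $r^x$ if $\bar x=\bar 0$, or with consecutive distinct letters $1^x,2^x,\dots$ if $\bar x=\bar 1$. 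Both fillings are valid standard tableaux (since $\la^{(i)}$ has at most $n$ rows and $\la^{(i)}_1\le d\le n$), and a symmetric construction produces an element of $\Std^{\bar Y}(\bla)$; this proves (i). For (ii), strong adaptedness of $e$ gives $ee_i=e_ie=e_i$ for all $i\in\bar I$. For $\bla\in\La^{\bar I}_+(n,d)$ with $\|\bla\|=(d_0,\dots,d_\ell)$, every $e_i$ appearing in $e_\bla=\xi^{e_0^{d_0}\cdots e_\ell^{d_\ell}}_{\bl^\bla,\bl^\bla}$ has $i\in\bar I$ (as $d_i=0$ otherwise), so the formulas of the first paragraph yield $\xi^e e_\bla=e_\bla=e_\bla\xi^e$, proving strong adaptedness. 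The main technical point is establishing the initial identification $\xi^e=\eta^{\otimes d}$ together with its multiplication consequences; once these are in hand, the remaining steps are direct verifications of the adaptedness axioms.
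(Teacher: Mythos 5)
Your proof is correct and follows essentially the same route as the paper: compute $\xi^e\X_\Stab$ and $\Y_\T\xi^e$ entrywise using adaptedness of $e$, then identify the truncation index set with $\La^{\bar I}_+(n,d)$ via existence of standard $\bar X$- and $\bar Y$-tableaux (using $n\geq d$). You actually supply more detail than the paper, which simply asserts the multiplication formula $\xi^e\xi^{\bx}_{\br,\bs}=\xi^{(ex_1)\cdots(ex_d)}_{\br,\bs}$ and the existence of the tableaux, whereas you justify the former via the identity $\xi^e=\eta^{\otimes d}$ and give the explicit tableau construction.
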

\begin{proof}
(i) 
Let \(\bx  = x_1 \cdots x_d \in X^d\). For \(k \in [1,d]\), we have \(ex_k = \de_{\{x_k\in \overline{X}\}}x_k\), whence 
$
\xi^e \xi^{\bx}_{\br, \bs} = 
\xi_{\br, \bs}^{(ex_1) \cdots (ex_d)} = 
\de_{\{\bx\in \overline{X}^d\}} \xi^{\bx}_{\br, \bs}.
$
In particular, for $\Stab \in \Std^{X}(\bla)$, we have 
$\xi^e \X_\Stab = \de_{\{\Stab \in \Std^{\overline{X}}(\bla)\}} \X_\Stab
$. 
Similarly for \(\T \in \Std^Y(\bla)\), we have 
$
\Y_\T \xi^e = \de_{\{\T\in \Std^{\overline{Y}}(\bla)\}}\Y_\T$. Thus \(\xi^e\) is adapted with $\bar\X,\bar \Y$ as in the statement of the lemma. 

It remains to  show that $\overline{\La^I_+(n,d)}=\La^{\bar I}(n,d)$.  Note that $\overline{\La^I_+(n,d)}\subseteq \La^{\bar I}(n,d)$ since if $\bla\in \La^{I}(n,d)\setminus \La^{\bar I}(n,d)$ and $\Stab\in\Std^X(\bla)$, then $\bx^\Stab\not\in \bar X^d$. To prove the converse inclusion we just need to observe, using the fact that $n\geq d$, that for every $\bla\in \La^{\bar I}(n,d)$ there exist $\Stab\in\Std^{\bar X}(\bla)$ and $\T\in\Std^{\bar Y}(\bla)$. 

(ii) If $e$ is strongly adapted then for any $\bla\in\La^{\bar I}_+(n,d)$, we have $e e_\bla=e_\bla=e_\bla e$, which implies the claim.  
\end{proof}

In the following result we consider $T^{\bar A}_{\bar\a}(n,d)$ as the subalgebra of $\xi^eT^A_\a(n,d)\xi^e\subseteq T^A_\a(n,d)$ using Lemma~\ref{LTrunc}. Recall the anti-involution $\tau_{n,d}$ from Proposition~\ref{PTau}. 

\begin{Proposition}\label{SchCell}
Let $\tau$ be a standard anti-involution on $A$ and $e\in\a$ be a  adapted $\tau$-invariant idempotent. Then $T^{\bar A}_{\bar\a}(n,d)$ is a cellular algebra with cellular basis \begin{align*}
\{C^{\bla}_{\Stab, \T} \mid \bla \in \La^{\bar I}_+(n,d), \Stab,\T \in \Std^{\bar X}(\bla)\},
\end{align*}
where we have set $C^\bla_{\Stab, \T}:=\B^\bla_{\Stab,\T^\tau}$. 
\end{Proposition}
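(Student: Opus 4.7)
The plan is to apply the general cellularity result Lemma~\ref{L200517_4}(i) to the based quasi-hereditary algebra $T^A_\a(n,d)$ together with the idempotent $\xi^e$, and then re-index the resulting basis. By Theorem~\ref{T290517_2}, $T^A_\a(n,d)$ is based quasi-hereditary with heredity data $\La^I_+(n,d),\X,\Y$; by Proposition~\ref{PTau}, $\tau_{n,d}$ is a standard anti-involution on this algebra; and by Lemma~\ref{L210218_1}(i), $\xi^e$ is adapted, with $\xi^e$-truncation $\La^{\bar I}_+(n,d),\bar\X,\bar\Y$.

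The first thing I would check is that $\xi^e$ is $\tau_{n,d}$-invariant. Since $\xi(\la,e)=\xi^{e^d}_{\bl^\la,\bl^\la}$ and $\tau(e)=e$, formula (\ref{TauND}) immediately gives $\tau_{n,d}(\xi(\la,e))=\xi(\la,e)$, hence $\tau_{n,d}(\xi^e)=\xi^e$. Lemma~\ref{L200517_4}(i) then yields that
\begin{align*}
\{\X_\Stab\Y_\T\mid \bla\in\La^{\bar I}_+(n,d),\,\Stab\in\Std^{\bar X}(\bla),\,\T\in\Std^{\bar Y}(\bla)\}
\end{align*}
is a cellular basis of $\xi^eT^A_\a(n,d)\xi^e$, which by Lemma~\ref{LTrunc}(ii) coincides with $T^{\bar A}_{\bar\a}(n,d)$, with respect to the restriction of $\tau_{n,d}$.

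To re-express this basis in the form stated in the proposition, I use the bijection $\Stab\mapsto \Stab^\tau$ from $\Std^X(\bla)$ to $\Std^Y(\bla)$ introduced just before Proposition~\ref{PTau}. The key compatibility to verify is that this bijection restricts to a bijection $\Std^{\bar X}(\bla)\bijection\Std^{\bar Y}(\bla)$, which in turn follows from the fact that $x\mapsto y(x)=\tau(x)$ restricts to a bijection $\bar X\bijection \bar Y$: since $e$ is even and $\tau(e)=e$, we have $ex=x\iff \tau(x)e=\tau(x)\iff y(x)\in\bar Y$, with inverse $y\mapsto \tau(y)$. Substituting $\T=(\T')^\tau$ with $\T'\in\Std^{\bar X}(\bla)$ then re-indexes each basis element as $\X_\Stab\Y_{(\T')^\tau}=\B^\bla_{\Stab,(\T')^\tau}=C^\bla_{\Stab,\T'}$, which yields precisely the claimed cellular basis.

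The hardest part is really no obstacle at all: all of the nontrivial content has already been placed into Theorem~\ref{T290517_2}, Proposition~\ref{PTau}, Lemma~\ref{L210218_1} and Lemma~\ref{L200517_4}, and the proof amounts to combining them and checking two straightforward compatibilities, namely the $\tau_{n,d}$-invariance of $\xi^e$ and the $\tau$-equivariance of the bar-truncation on tableaux.
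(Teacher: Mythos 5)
Your proposal is correct and takes essentially the same route as the paper: both apply the general cellularity result for an adapted $\tau$-invariant idempotent (your Lemma~\ref{L200517_4}(i), cited in the paper as \cite[Lemma 4.4(ii)]{greenOne}) to the based quasi-hereditary algebra $T^A_\a(n,d)$ with the standard anti-involution $\tau_{n,d}$ of Proposition~\ref{PTau} and the adapted idempotent $\xi^e$ of Lemma~\ref{L210218_1}(i), after observing that $\xi^e$ is $\tau_{n,d}$-invariant. You spell out two small points the paper leaves implicit — the computation showing $\tau_{n,d}(\xi^e)=\xi^e$ via formula (\ref{TauND}), and the fact that $\Stab\mapsto\Stab^\tau$ restricts to a bijection $\Std^{\bar X}(\bla)\bijection\Std^{\bar Y}(\bla)$ so the basis can be re-indexed entirely by $\Std^{\bar X}$-pairs — but these are the same argument, just made more explicit.
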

\begin{proof}
We use Lemma~\ref{L210218_1}(i), which shows that $\xi^e$ is an adapted idempotent with $\xi^e$-truncation $\La^{\bar I}_+(n,d),\bar\X,\bar \Y$. By Proposition~\ref{PTau}, $\tau_{n,d}$ is a standard anti-involution on $T^A_\a(n,d)$ with $\tau(\B^\bla_{\Stab,\T^\tau})=\B^\bla_{\T,\Stab^\tau}$. Since $\xi^e$ is obviously $\tau_{n,d}$-invariant, \cite[Lemma 4.4(ii)]{greenOne} implies the result.
\end{proof}

\begin{Remark} 
{\rm 
Let $\bar A$ be a cellular algebra with cellular basis $\bar B$ and a subalgebra $\bar \a\subseteq \bar A_\0$. The following question was raised in \cite[Remark 4.6]{greenOne}: is there a based quasi-hereditary algebra $A$ with heredity basis $B$, a standard anti-involution $\tau$ and $\tau$-invariant adapted idempotent $e$ such that $\bar A=eAe$, $\bar\a=e\a e$, and $\bar B$ is the $e$-truncation of $B$? If such $A$ exists, which at least happens in many examples, it follows from the previous proposition that $T^{\bar A}_{\bar \a}(n,d)$ is cellular. 
}
\end{Remark}

Note that in the following lemma we do not require that $n\geq d$. While it is not in general true that $T^A_\a(n,d)$ is quasi-hereditary, the lemma shows that at least it is cellular under some natural assumptions. 

\begin{Lemma} \label{LCellular} 
Suppose that $(A,\a)$ is a unital pair. If $A$ possesses a standard anti-involution, then the algebra $T^A_\a(n,d)$ is cellular.\end{Lemma}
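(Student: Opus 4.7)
The plan is to reduce to the case $n \geq d$, which is already handled by Theorem~\ref{T290517_2} and Proposition~\ref{PTau}, via an idempotent truncation. Setting $N := \max(n,d)$, we have that $T^A_\a(N,d)$ is based quasi-hereditary with standard anti-involution $\tau_{N,d}$, hence cellular by the general theory recalled in \S\ref{SQHAReg}. The natural inclusion $M_n(A) \hookrightarrow M_N(A)$ as the top-left block induces an injective superalgebra homomorphism $T^A_\a(n,d) \hookrightarrow T^A_\a(N,d)$ whose image is spanned by $\{\eta^\bb_{\br,\bs} \mid (\bb,\br,\bs) \in \Seq^B(N,d), \ \br, \bs \in [1,n]^d\}$.

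To realize this image as an idempotent truncation, I would introduce
$$\xi := \sum_{\la \in \La(n,d)} \xi(\la) \ \in \ T^A_\a(N,d),$$
where $\La(n,d)$ is embedded into $\La(N,d)$ by padding with zeros, and each $\xi(\la) = \xi^{1_A^d}_{\bl^\la,\bl^\la}$ lies in $T^A_\a(N,d)$ by unitality of $(A,\a)$. Since the $\xi(\la)$ are pairwise orthogonal summands of $1_{T^A_\a(N,d)}$, $\xi$ is an idempotent. A short calculation via Proposition~\ref{CPR} shows that $\xi(\la)\,\eta^\bb_{\br,\bs}$ is nonzero precisely when $\br$ has column-content $\la$, and analogously on the right; summing, one concludes that $\xi T^A_\a(N,d) \xi$ is spanned exactly by the $\eta^\bb_{\br,\bs}$ with $\br,\bs \in [1,n]^d$, so it coincides with the image of $T^A_\a(n,d)$ under the embedding above.

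It then remains to verify that $\xi$ is a $\tau_{N,d}$-invariant adapted idempotent with respect to the heredity data $(\La_+^I(N,d),\, \X,\, \Y)$. Invariance is immediate from $\tau(1_A) = 1_A$ together with (\ref{TauND}). For adaptedness, one checks that $\xi\,\X_\Stab = \X_\Stab$ if every letter appearing in $\Stab$ uses some row $r \leq n$ and vanishes otherwise, and analogously $\Y_\T\,\xi$ is controlled by the letters of $\T$. Setting
$$\bar X(\bla) := \{\Stab \in \Std^X(\bla) \mid \bl^\Stab \in [1,n]^d\}$$
and $\bar Y(\bla)$ analogously provides the truncated data required by \S\ref{SQHAReg}. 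Lemma~\ref{L200517_4}(i) applied to $T^A_\a(N,d)$ then yields a cellular basis of $\xi T^A_\a(N,d) \xi \cong T^A_\a(n,d)$, consisting of appropriately truncated standard codeterminants. The only step requiring care is the orbit-level bookkeeping identifying $\xi T^A_\a(N,d)\xi$ with the image of $T^A_\a(n,d)$; the rest follows mechanically from results already established in the paper.
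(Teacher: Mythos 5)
Your proof is correct and takes essentially the same route as the paper. The paper handles the case $n<d$ by citing \cite[Lemma~5.13(ii)]{greenTwo} to realize $T^A_\a(n,d)$ as a $\tau_{d,d}$-invariant idempotent truncation $\xi^d_n T^A_\a(d,d)\xi^d_n$ and then invoking Lemma~\ref{L200517_4}(i); you unpack exactly that truncation by taking $\xi = \sum_{\la\in\La(n,d)}\xi(\la)$ inside $T^A_\a(d,d)$ (unitality guaranteeing $\xi$ is an idempotent in the generalized Schur algebra), checking $\tau$-invariance and adaptedness, and applying the same cellularity-of-truncation lemma. The only cosmetic slip is writing $\bar X(\bla)$ as a set of tableaux rather than of the corresponding basis elements $\X_\Stab$; the argument is otherwise complete.
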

\begin{proof}
We may assume that $n<d$, in which case, by \cite[Lemma 5.13(ii)]{greenTwo}, we can realize $T^A_\a(n,d)$ as the idempotent truncation $\xi^d_n T^A_\a(d,d) \xi^d_n$, where $\xi^d_n$ is a $\tau_{n,d}$-invariant idempotent. Now we apply  \cite[Lemma 4.4(ii)]{greenOne}. 
\end{proof}

If $e\in A$ is an adapted idempotent then by Lemma~\ref{L210218}, there is a subset $\bar I'\subseteq \bar I$ with $eL(i)\neq 0$ if and only if $i\in\bar I'$. 

\begin{Lemma}\label{adaptSchur}
Let \(n \geq d\). Suppose that \(\k\) is a field, \(A_{\bar 1} \subseteq J(A)\), and 
 \(e \in \a\) is an idempotent adapted with respect to the heredity data $I,X,Y$ of $A$. Then $\xi^eL(\bla)\neq 0$ if and only if $\bla \in \La^{\bar I'}(n,d)$. In particular, $\{\xi^eL(\bla) \mid \bla \in \La^{\bar I'}(n,d)\}$ is a complete and irredundant set of irreducible \( \xi^e{}T^A_\a(n,d)\xi^e\)-modules up to isomorphism.
\end{Lemma}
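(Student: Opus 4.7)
The plan is to combine two reductions: first, $\xi^e$ is itself adapted in $T^A_\a(n,d)$ with truncation data given by Lemma~\ref{L210218_1}, so Lemma~\ref{L210218}(iv) immediately forces $\xi^e L(\bla) = 0$ for every $\bla \notin \La^{\bar I}_+(n,d)$; second, for $\bla \in \La^{\bar I}_+(n,d)$ the tensor factorization of Theorem~\ref{T080717_2} splits the question across colors and reduces it to the single-color statement $\xi^e L(\text{\boldmath$\lambda^{(i)}$}) \neq 0 \Leftrightarrow i \in \bar I'$. That single-color case is then the technical heart of the argument.

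For the tensor reduction, adaptivity together with Lemma~\ref{L210218_1}(i) yields $\xi^e v_\Stab = v_\Stab$ for $\Stab \in \Std^{\bar X}(\bla)$ and $\xi^e v_\Stab = 0$ otherwise, so $\xi^e L(\bla) = 0$ iff $(v_\Stab, w_\T)_\bla = 0$ for all $\Stab \in \Std^{\bar X}(\bla),\, \T \in \Std^Y(\bla)$. By Theorem~\ref{T080717_2}(ii) this pairing factors as $\prod_{i\in I}(v_{S^{(i)}}, w_{T^{(i)}})_{\text{\boldmath$\lambda^{(i)}$}}$, and since $\Std^{\bar X}(\bla) = \prod_i \Std^{\bar X(i)}(\la^{(i)})$ and $\Std^Y(\bla) = \prod_i \Std^{Y(i)}(\la^{(i)})$, the full product vanishes identically iff some factor does, i.e.\ iff $\xi^e L(\text{\boldmath$\lambda^{(i)}$}) = 0$ for some $i \in I$. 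Hence $\xi^e L(\bla) \neq 0$ iff $\xi^e L(\text{\boldmath$\lambda^{(i)}$}) \neq 0$ for every $i$ with $\la^{(i)} \neq \varnothing$.

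For the single-color statement, I would argue both directions directly. Assume first $eL(i) = 0$. By Lemma~\ref{L210218}(iii), every product $y'x'$ with $x' \in \bar X(i),\, y' \in \bar Y(i)$ lies in $A^{>i}$; expanding $\Y_\T \X_\Stab$ via Proposition~\ref{CPR} for $\Stab \in \Std^{\bar X(i)}(\la^{(i)}),\, \T \in \Std^{\bar Y(i)}(\la^{(i)})$, every resulting term $\xi^\bb_{\br,\bs}$ has $\bb$ drawn from $\bigsqcup_{j>i} B(j)$, so $\|\bb\| \rhd_I \|\text{\boldmath$\lambda^{(i)}$}\|$ strictly. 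Proposition~\ref{P260117} and Theorem~\ref{TStr} then place $\xi^\bb_{\br,\bs}$ in $T^A_\a(n,d_i)^{>\text{\boldmath$\lambda^{(i)}$}}$, and Lemma~\ref{L210218}(iii) yields $\xi^e L(\text{\boldmath$\lambda^{(i)}$}) = 0$. Conversely, assume $eL(i) \neq 0$ and choose $x \in \bar X(i),\, y \in \bar Y(i)$ with $f_i(y,x) \neq 0$. The hypothesis $A_{\bar 1} \subseteq J(A)$ forces $v_x = x v_{e_i} \in \rad\,\De(i)$ whenever $x$ is odd (and symmetrically for $y$), so such $x,y$ must both be even. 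Form the constant tableaux $\Stab_x,\T_y$ filled with $r^x$ (resp.\ $r^y$) throughout row $r$; both are standard in the truncated sets by the chosen orderings on $\Alph_{X(i)}, \Alph_{Y(i)}$. Applying Lemmas~\ref{LSingleSep} and~\ref{LSep} to $*$-factorize across rows, then expanding $yx \equiv f_i(y,x) e_i \pmod{A^{>i}}$ inside each $(\xi^y_{r,r})^{\otimes \mu_r}(\xi^x_{r,r})^{\otimes \mu_r}$ with $\mu = \la^{(i)}$, one obtains
\[
\Y_{\T_y}\,\X_{\Stab_x} \;\equiv\; f_i(y,x)^{d_i}\, e_{\text{\boldmath$\lambda^{(i)}$}} \pmod{T^A_\a(n,d_i)^{>\text{\boldmath$\lambda^{(i)}$}}},
\]
giving $(v_{\Stab_x}, w_{\T_y})_{\text{\boldmath$\lambda^{(i)}$}} = f_i(y,x)^{d_i} \neq 0$ since $\k$ is a field.

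The main technical hurdle is exactly this leading-term identity: each cross term in the binomial expansion of $(\xi^{yx}_{r,r})^{\otimes \mu_r}$ contains at least one tensor factor $\xi^b_{r,r}$ with $b \in B^{>i}$, and the weight argument from the $eL(i)=0$ case must be invoked once more to absorb every such cross term into $T^A_\a(n,d_i)^{>\text{\boldmath$\lambda^{(i)}$}}$, leaving only the clean leading term $f_i(y,x)^{d_i} e_{\text{\boldmath$\lambda^{(i)}$}}$. The ``In particular'' assertion of the lemma then follows from the characterization of the nonvanishing loci of $\xi^e L(\bla)$ together with Lemma~\ref{L210218}(iv).
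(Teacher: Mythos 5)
Your proposal is correct and follows essentially the same approach as the paper's proof: both rest on Lemma~\ref{L210218}, the factorization of the pairing from Theorem~\ref{T080717_2}(ii), the constant tableaux with entries $r^{x_i}$ and $r^{y_i}$, and the observation that $A_{\bar 1}\subseteq J(A)$ forces the chosen $x_i,y_i$ to be even. The only organizational difference is that you explicitly reduce to a single-color statement and spell out the leading-term identity for $\Y_{\T_y}\X_{\Stab_x}$, whereas the paper constructs the multi-color tableaux directly and cites Theorem~\ref{T080717_2}(ii) to conclude $\Y_\T\X_\Stab\equiv\kappa_0^{d_0}\cdots\kappa_\ell^{d_\ell}e_\bla\pmod{T^A_\a(n,d)^{>\bla}}$ in one stroke.
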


\begin{proof}
By Lemma~\ref{L210218_1}, $\xi^e$ is an adapted idempotent, which will be used repeatedly without further reference. 

If  $\bla\in\La^I_+(n,d)\setminus \La^{\bar I'}_+(n,d)$ then there exists \(i \in \bar I \backslash \bar I'\) such that \(\bla^{(i)} \neq \varnothing\). By Lemma~\ref{L210218},  the condition $i\not\in \bar I'$ implies that 
$ye x \in {A^{>i}}$ for all $x\in X(i)$ and $y \in Y(i)$. So \(\Y_\T \xi^e \X_\Stab \in T^{A}_\a(n,d)^{> \bla}\) for all \(\Stab \in \Std^X(\bla)\) and \(\T \in \Std^Y(\bla)\). Hence $\xi^eL(\bla)=0$ by  Lemma~\ref{L210218} again. 

In the other direction, assume that \(\bla \in \La^{\bar I'}(n,d)\) with $d_i:=|\la^{(i)}|$. 
By Lemmas~\ref{L210218} and \ref{idemactionNew}(iii), for every \(i \in \bar{I}'\) there exists \(x_i \in \bar X(i)\) and \(y _i\in \bar Y(i)\) such that \(y_ix_i \equiv \kappa_i e_i \pmod{A^{>i}}\), for some \(\kappa_i \neq 0 \in \k\). Then \(y_i L(i) \neq 0\), and since \(A_{\bar 1} \subseteq J(A)\), we have that \(x_i, y_i \in A_{\bar 0}\). Then there exists \(\Stab \in \Std^{\bar X}(\bla)\), \(\T \in \Std^{\bar Y}(\bla)\) such that
$\letter(S^{(i)}_k) =\letter(T^{(i)}_k)=\letter(T^{\la^{(i)}}_k)$, 
$\col(S^{(i)}_k) =x_i\), and \(\col(T^{(i)}_k) =y_i\), for all \(i \in \bar I'\) and \(k \in [1,d_i]\). Applying Theorem~\ref{T080717_2}(ii), we have that \(\Y_\T \X_\Stab  \equiv \kappa_0^{d_0}  \cdots \kappa_\ell^{d_\ell} e_\bla \not \equiv 0 \pmod{T^A_\a(n,d)^{>\bla}}\), so \(\xi^eL(\bla) \neq 0\) by Lemma~\ref{L210218}.
\end{proof}

\section{Decomposition numbers}\label{SecDecomp}
Let again $\k$ be a commutative integral domain of characteristic zero, and suppose that we are given a ring homomorphism $\k\to \F$, where $\F$ is a field of characteristic $p\geq 0$. An important example is when $({\mathbb L},\k,{\mathbb K})$ is a modular system and $\F={\mathbb K}$ or ${\mathbb L}$, i.e. $\k$ is a complete discrete valuation ring of characteristic $0$, and either $\F$ is the field of fractions of $\k$ or $\F$ is the residue class field of $\k$. 

Throughout the section we assume that $d\leq n$. 
Recall from Theorem~\ref{T290517_2} that, starting with a based quasi-hereditary $\k$-superalgebra $A$ with $\a$-conforming heredity data, we have constructed a based quasi-hereditary $\k$-superalgebra $T^A_\a(n,d)$. 
 
\subsection{Set-up}
Define 
$$A_\F:=A\otimes_\k \F\quad \text{and}\quad T^A_\a(n,d)_\F:=T^A_\a(n,d)\otimes_\k \F.$$
As $T^A_\a(n,d)$ is a based quasi-hereditary $\k$-superalgebra with heredity data $\La^I_+(n,d),\X,\Y$, it is immediate that $T^A_\a(n,d)_\F$ is a based quasi-hereditary $\F$-superalgebra with heredity data $\La^I_+(n,d),\X_\F,\Y_\F$, where $\X_\F=\{x\otimes 1\mid x\in\X\}, \Y_\F=\{y\otimes 1\mid x\in\X\}\subseteq T^A_\a(n,d)_\F$. Normally we will drop indices and for example write $\X$ for $\X_\F$, etc.

\begin{Remark} 
{\rm 
If $\F$ has characteristic $p=0$, then  $T^A_\a(n,d)_\F\cong T^{A_\F}_{\a_\F}(n,d)$. However, if $p>0$, then in general $T^{A_\F}_{\a_\F}(n,d)$ is a `wrong object'; in particular, it does not have to be quasi-hereditary, and we might have $\dim T^{A_\F}_{\a_\F}(n,d)<\dim T^A_\a(n,d)_\F$ due to the presence of factorials in (\ref{E080717}). 
}
\end{Remark}

Let $\bla\in\La^I_+(n,d)$. The standard objects $\De_\F(\bla)$ and $\De^\op_\F(\bla)$ over $T^A_\a(n,d)_\F$ are obtained by extending scalars from $\k$ to $\F$ from the standard objects $\De(\bla)$ and $\De^\op(\bla)$ over $T^A_\a(n,d)$, and the pairing $(\cdot,\cdot)_{\bla,\F}:\De(\bla)_\F\times\De^\op(\bla)_\F\to \F$ is obtained from the pairing $(\cdot,\cdot)_\bla$ also by extending scalars. So for the irreducible $T^A_\a(n,d)_\F$-modules $L(\bla)_\F:=\De(\bla)_\F/\rad (\cdot,\cdot)_{\bla,\F}$ we have from Theorem~\ref{T080717_2}:

\begin{Lemma} \label{LIrrTens} 
If $\bla=(\la^{(0)},\dots,\la^{(\ell)})\in \La^I_+(n,d)$, then $L(\bla)_\F\cong \bigotimes_{i\in I}L(\text{\boldmath$\lambda^{(i)}$})_\F$. 
\end{Lemma}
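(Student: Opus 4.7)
The plan is to bootstrap the lemma directly from Theorem~\ref{T080717_2}. By part~(i) of that theorem, extension of scalars from $\k$ to $\F$ yields isomorphisms of $T^A_\a(n,d)_\F$-modules
\[
\De(\bla)_\F \;\cong\; \bigotimes_{i\in I}\De(\text{\boldmath$\lambda^{(i)}$})_\F,
\qquad
\De^\op(\bla)_\F \;\cong\; \bigotimes_{i\in I}\De^\op(\text{\boldmath$\lambda^{(i)}$})_\F,
\]
where the left hand sides are viewed as $T^A_\a(n,d)_\F$-modules directly, and the tensor products on the right carry their $T^A_\a(n,d)_\F$-module structures coming from the coproduct $\nabla_\de$ as in \S6.2. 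By part~(ii), the bilinear pairing $(\cdot,\cdot)_{\bla,\F}$ is transported under this identification to the tensor product pairing $\bigotimes_{i\in I}(\cdot,\cdot)_{\text{\boldmath$\lambda^{(i)}$},\F}$.

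The next step is a purely linear-algebra statement, valid over the field $\F$: given finite-dimensional vector spaces $V_i,W_i$ with bilinear forms $(\cdot,\cdot)_i\colon V_i\times W_i\to\F$, the left radical of the tensor product form on $\bigotimes_i V_i\times\bigotimes_iW_i$ equals
\[
\sum_{i\in I}V_0\otimes\cdots\otimes\rad_L(V_i)\otimes\cdots\otimes V_\ell,
\]
and in particular
\[
\Bigl(\bigotimes_iV_i\Bigr)\Big/\rad_L\Bigl(\bigotimes_i(\cdot,\cdot)_i\Bigr)\;\cong\;\bigotimes_i \bigl(V_i/\rad_L(\cdot,\cdot)_i\bigr).
\]
This is standard: one chooses for each $i$ a basis of $V_i$ adapted to the filtration $\rad_L(V_i)\subseteq V_i$ and a dual-type basis of $W_i$ witnessing nondegeneracy on the quotient, and then reads off the radical of the tensor form from the resulting block structure. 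I would spell this out briefly, as it is the only non-formal step.

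Combining these two inputs, we get
\[
L(\bla)_\F \;=\; \De(\bla)_\F/\rad_L(\cdot,\cdot)_{\bla,\F} \;\cong\; \bigotimes_{i\in I}\bigl(\De(\text{\boldmath$\lambda^{(i)}$})_\F/\rad_L(\cdot,\cdot)_{\text{\boldmath$\lambda^{(i)}$},\F}\bigr) \;=\; \bigotimes_{i\in I}L(\text{\boldmath$\lambda^{(i)}$})_\F,
\]
where the middle isomorphism is an isomorphism of $T^A_\a(n,d)_\F$-modules because both the module structure and the pairing are compatible tensor products under $\nabla_\de$.

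The only real obstacle is making sure the module structure on the right hand side — defined via $\nabla_\de$ — is genuinely compatible with taking radicals factor by factor; but this is automatic once one observes that the radical decomposition above is an $\F$-linear statement that respects any additional structure on the $V_i$ and $W_i$, since the summands $V_0\otimes\cdots\otimes\rad_L(V_i)\otimes\cdots\otimes V_\ell$ are sub-$T^A_\a(n,d)_\F$-modules of $\bigotimes_iV_i$ when each $\rad_L(V_i)$ is a submodule of $V_i$ (which it is, being the radical of the $T^A_\a(n,d_i)_\F$-invariant pairing on the $i$-th factor, cf.\ \S\ref{SSBQHA}).
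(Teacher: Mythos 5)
Your proposal is correct and follows exactly the route the paper intends: the paper derives the lemma directly from Theorem~\ref{T080717_2}(i),(ii) after base change to $\F$, leaving implicit the standard fact that the left radical of a tensor product of bilinear forms over a field is $\sum_{i}V_0\otimes\cdots\otimes\rad_L(V_i)\otimes\cdots\otimes V_\ell$, which you correctly spell out together with the observation that this subspace is a $T^A_\a(n,d)_\F$-submodule since each $\rad_L(V_i)$ is a submodule. No gaps.
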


We would like to describe the decomposition numbers 
$$d_{\bla,\bmu}^\F:=[\De(\bla)_\F:L(\bmu)_\F]
$$ 
of the quasi-hereditary algebra $T^A_\a(n,d)_\F$ in terms of the decomposition numbers $d_{i,j}^\F$ of $A_\F$ and the decomposition numbers $d_{\la,\mu}^{\operatorname{cl},\F}$ of the classical Schur algebra $S_\F(n,d)$. 

From now on, until the end of the section, since $\F$ is fixed, we drop `$\F$' from all the indices and write $T^A_\a(n,d)=T^A_\a(n,d)_\F$, $\De(\bla)=\De(\bla)_\F$, $d_{\la,\mu}^{\operatorname{cl}}=d_{\la,\mu}^{\operatorname{cl},\F}$, etc.

\subsection{Classical characters}\label{SSClChar}
Let us write the operation $(\mu,\nu)\mapsto \mu+\nu$ on the monoid $\La(n)$ multiplicatively, and let $\Z\La(n)$ be the corresponding monoid algebra with coefficients in $\Z$. 
So the product ${\tt s}{\tt t}$ is defined for any ${\tt s},{\tt t}\in\Z\La(n)$. 
Moreover, if ${\tt s}\in\Z\La(n,d)$ and ${\tt t}\in\Z\La(n,e)$ then ${\tt s}{\tt t}\in\Z\La(n,d+e)$. We identify 
\begin{equation}\label{EIdentify}
\Z(\La(n)^{\times m})=(\Z\La(n))^{\otimes m}\qquad(m\in\Z_{>0}). 
\end{equation}
with $(\mu^1,\dots,\mu^m)$ on the left corresponding to $\mu^1\otimes\dots\otimes\mu^m$ on the right.

Let $\la\in\La_+(n,d)$ and $S$ be a classical standard $\la$-tableau as defined in \S\ref{SSColLet}. The weight of $T$ is $\omega^T\in\La(n,d)$ defined from
$$
\omega^T_r:=\sharp\{N\in[\la]\mid T(N)=r\}\qquad(1\leq r\leq n).
$$
For 
$\mu\in\La(n,d)$, the {\em Kostka number} $k_{\la,\mu}$ is then the number of the classical standard $\la$-tableaux of weight $\mu$. 

For $\la\in\La_+(n,d)$, let $\De^{\operatorname{cl}}(\la)$ and $L^{\operatorname{cl}}(\la)$ be the standard and the irreducible module with high weight $\la$ over the classical Schur algebra $S(n,d)$, see \cite{Green}. We have the classical decomposition numbers
\begin{equation}\label{EClDecN}
d_{\la,\mu}^{\operatorname{cl}}:=[\De^{\operatorname{cl}}(\la):L^{\operatorname{cl}}(\mu)]\qquad (\la,\mu\in\La_+(n)),
\end{equation}
which is interpreted as zero if $|\la|\neq|\mu|$. 
For $\bla,\bmu\in\La^I_+(n)$, we will also need the products of the classical decomposition numbers: 
\begin{equation}\label{EClDecNBold}
d_{\bla,\bmu}^{\,\operatorname{cl}}:=\prod_{i\in I}d^{\,\operatorname{cl}}_{\la^{(i)},\mu^{(i)}}.
\end{equation}

For the classical  formal characters we have 
\begin{align}\label{E100717_15}
{\tt s}_\la:=\ch \De^{\operatorname{cl}}(\la)&=\sum_{\mu\in \La(n,d)}k_{\la,\mu }\cdot \mu\in\Z\La(n,d),
\\
\label{E100717_2}
\bar{\tt s}_\la:=\ch L^{\operatorname{cl}}(\la)&=\sum_{\mu\in \La(n,d)}\bar k_{\la,\mu }\cdot \mu\in\Z\La(n,d),
\end{align}
where $k_{\la,\mu}$ are the Kostka numbers and the weight multiplicities $\bar k_{\la,\mu}:=\dim L(\la)_\mu$ are not known in general if $p>0$. 
Note that  
\begin{equation}\label{EMatrix}
{\tt s}_\la=\sum_{\mu\in\La_+(n)}d_{\la,\mu}^{\operatorname{cl}}
\bar{\tt s}_\mu. 
\end{equation}

For $\mu^1\in\La_+(n,d_1),\dots,\mu^m\in\La_+(n,d_m)$, we can write
\begin{align}\label{E100717_6}
{\tt s}_{\mu^1}\cdots {\tt s}_{\mu^m}&=\sum_{\la\in \La_+(n)}c^{\,\la}_{\mu^1,\dots,\mu^m} {\tt s}_\la,
\end{align}
where $c^{\,\la}_{\mu^1,\dots,\mu^m}$ are the classical {\em Littlewood-Richardson coefficients}. 
We have $c^{\,\la}_{\mu^1,\dots,\mu^m}=0$ unless $d=d_1+\dots+d_m$. 
Denoting by $\umu=(\mu^k)_{k\in[1,m]}$ the unordered tuple of the partitions $\mu^1,\dots, \mu^m$, we define  
\begin{equation}\label{E170717_2}
c^{\la}_\umu:=c^{\,\la}_{\mu_1,\dots,\mu_m},
\end{equation}
which makes sense by the commutativity of tensor product.
The following follows easily from the definitions:

\begin{Lemma} \label{LLRTrans} 
Let $\la\in\La_+(n)$, and $u_1,\dots,u_m\in\Z_{>0}$ for some $m\geq 1$. Suppose that we are given a tuple $\umu=(\mu^{r}_s\mid 1\leq r\leq m,\ 1\leq s\leq u_r)$ of partitions in $\La_+(n)$. 
Then
$$
c^\la_\umu=\sum_{\unu=(\nu^1,\dots,\nu^m)\in\La_+(n)^m}c^\la_{\unu}\,\prod_{r=1}^m
c^{\,\nu^r}_{\mu^r_1,\dots,\mu^r_{u_r}}.
$$
\end{Lemma}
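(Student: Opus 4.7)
The plan is to prove the identity by computing a single product of Schur functions in two different ways, using the associativity and commutativity of the product in the monoid algebra $\Z\La(n)$ (under the identification (\ref{EIdentify})).

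First, consider the element
\[
P := \prod_{r=1}^m \prod_{s=1}^{u_r} {\tt s}_{\mu^r_s} \;\in\; \Z\La(n).
\]
Applying the definition (\ref{E100717_6}) directly to the whole tuple $\umu$ (which has $u_1+\cdots+u_m$ entries) expands $P$ as
\[
P = \sum_{\la\in\La_+(n)} c^{\,\la}_{\umu}\, {\tt s}_\la .
\]

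For the second computation, group the factors by $r$: for each $r$, the inner product $\prod_{s=1}^{u_r}{\tt s}_{\mu^r_s}$ expands by (\ref{E100717_6}) as $\sum_{\nu^r\in\La_+(n)} c^{\,\nu^r}_{\mu^r_1,\dots,\mu^r_{u_r}}\,{\tt s}_{\nu^r}$. Multiplying these $m$ expansions and then applying (\ref{E100717_6}) once more to the resulting tuple $\unu=(\nu^1,\dots,\nu^m)$ yields
\[
P = \sum_{\unu\in\La_+(n)^m}\left(\prod_{r=1}^m c^{\,\nu^r}_{\mu^r_1,\dots,\mu^r_{u_r}}\right)\sum_{\la\in\La_+(n)} c^{\,\la}_{\unu}\,{\tt s}_\la .
\]

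Equating the two expressions for $P$ and using the linear independence of the Schur functions $\{{\tt s}_\la\mid \la\in\La_+(n)\}$ in $\Z\La(n)$ (which follows from the unitriangularity of (\ref{E100717_15}) with respect to the dominance order on the monomial basis) extracts, for each $\la$, the identity
\[
c^{\,\la}_{\umu} = \sum_{\unu\in\La_+(n)^m} c^{\,\la}_{\unu}\,\prod_{r=1}^m c^{\,\nu^r}_{\mu^r_1,\dots,\mu^r_{u_r}},
\]
as required. The only subtlety is making sure we interpret (\ref{E100717_6}) as applying to a tuple of arbitrary length, which is legitimate because commutativity of the product makes the LR coefficient depend only on the unordered tuple (cf.\ (\ref{E170717_2})); no genuine obstacle arises.
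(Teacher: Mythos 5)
Your proof is correct, and it is precisely the verification the paper has in mind: the paper simply asserts that the lemma ``follows easily from the definitions,'' and the intended argument is exactly your two-way expansion of the product $\prod_{r,s}{\tt s}_{\mu^r_s}$ using (\ref{E100717_6}) together with associativity/commutativity and the linear independence of the ${\tt s}_\la$. Nothing further is needed.
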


Let $\mu\in\La_+(n,e)$ and $\la\in\La_+(n,e+d)$ for some $d,e\in\Z_{\geq 0}$ be such that $[\mu]\subseteq [\la]$. 
Denote 
$$[\la/\mu]:=[\la]\setminus[\mu].
$$
The {\em even}  (resp. {\em odd}) {\em standard $\la/\mu$-tableau} is a function $T:[\la/\mu]\to[1,n]$ such that  whenever $M< N$ are nodes in the same row of $[\la/\mu]$, then $T(M) \leq T(N)$ (resp. $T(M) < T(N)$), and whenever $M< N$ are nodes in the same column of $[\la/\mu]$, then $T(M) < T(N)$ (resp. $T(M) \leq T(N)$). Let $\operatorname{St}_\0(\la/\mu)$ (resp. $\operatorname{St}_\1(\la/\mu)$) denote the set of all even (resp. odd) standard $\la/\mu$-tableaux. 

Let $\eps\in\Z/2$ and $T\in \operatorname{St}_\eps(\la/\mu)$.  
The weight of $T$  is the composition  
$$
\omega^T=(\omega^T_1,\dots,\omega^T_n)\in \La(n,d)
$$
with
$$
\omega^T_r:=\sharp\{N\in [\la/\mu]\mid T(N)=r\}\qquad(r=1,\dots,n). 
$$
Define
$$
{\tt s}^\eps_{\la/\mu}:=\sum_{T\in\operatorname{St}_\eps(\la/\mu)} \om^T \in\Z\La(n,d).
$$
For $\nu\in\La_+(n,d)$ we denote 
$$
\nu^\eps:=
\left\{
\begin{array}{ll}
\nu &\hbox{if $\eps=\0$,}\\
\nu^{\operatorname{transpose}} &\hbox{if $\eps=\1$.}
\end{array}
\right.
$$
As $d\leq n$, we can interpret $\nu^\eps$ as an element of $\La_+(n,d)$ again. 

\begin{Lemma} \label{LMac} 
Let 
$\mu\in\La_+(n,e)$ and $\la\in\La_+(n,e+d)$ for some $d,e\in\Z_{\geq 0}$ be such that $[\mu]\subseteq [\la]$. Then
$$
{\tt s}^\eps_{\la/\mu}=\sum_{\nu\in \La_+(n,d)} c^{\la}_{\mu,\nu^\eps} {\tt s}_\nu.
$$
\end{Lemma}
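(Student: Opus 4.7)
The plan is to reduce both cases to the classical identity for skew Schur functions, via the transpose bijection when $\eps=\1$.

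First consider $\eps=\0$. The set $\operatorname{St}_\0(\la/\mu)$ is, by definition, exactly the set of semistandard skew tableaux of shape $\la/\mu$ with entries in $[1,n]$. Under the identification of $\Z\La(n)$ with the ring of polynomials in $x_1,\dots,x_n$ via $\nu\mapsto x^\nu$, the element ${\tt s}^\0_{\la/\mu}$ is the skew Schur polynomial $s_{\la/\mu}(x_1,\dots,x_n)$ in $n$ variables, while ${\tt s}_\nu$ is $s_\nu(x_1,\dots,x_n)$. The classical Littlewood-Richardson rule (see e.g.\ Macdonald, \emph{Symmetric functions and Hall polynomials}, I.5) then yields
$$s_{\la/\mu}=\sum_{\nu\in\La_+(n,d)}c^\la_{\mu,\nu}\,s_\nu,$$
which, rewritten in $\Z\La(n)$, is precisely the $\eps=\0$ case of the lemma.

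Next consider $\eps=\1$. I would show that the transpose map on Young diagrams, $T\mapsto T^t$ with $T^t(r,s):=T(s,r)$, induces a weight-preserving bijection between $\operatorname{St}_\1(\la/\mu)$ and $\operatorname{St}_\0(\la^t/\mu^t)$: the conditions for $T$ to be odd standard on $[\la/\mu]$ (rows strictly increasing, columns weakly increasing) transform exactly into the conditions for $T^t$ to be even standard on $[\la^t/\mu^t]$, and the weight is clearly unchanged. Consequently
$${\tt s}^\1_{\la/\mu}={\tt s}^\0_{\la^t/\mu^t}.$$
Applying the already-established $\eps=\0$ case with the pair $(\la^t,\mu^t)$ in place of $(\la,\mu)$ yields
$${\tt s}^\1_{\la/\mu}=\sum_{\rho\in\La_+(n,d)}c^{\la^t}_{\mu^t,\rho}\,{\tt s}_\rho.$$
Finally, the classical symmetry of Littlewood-Richardson coefficients, $c^{\la^t}_{\mu^t,\rho}=c^\la_{\mu,\rho^t}$, together with the substitution $\nu:=\rho^t$ (a bijection of $\La_+(n,d)$ to itself, using $d\leq n$), converts this to the desired sum
$${\tt s}^\1_{\la/\mu}=\sum_{\nu\in\La_+(n,d)}c^\la_{\mu,\nu^t}\,{\tt s}_\nu=\sum_{\nu\in\La_+(n,d)}c^\la_{\mu,\nu^\1}\,{\tt s}_\nu.$$

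There is no serious obstacle here: the entire argument is classical symmetric function theory, and the only care needed is in verifying that the definitions of ``even/odd standard $\la/\mu$-tableau'' in \S\ref{SSClChar} match the semistandard and conjugate-semistandard conventions so that the transpose bijection and the Littlewood-Richardson rule apply verbatim. The assumption $d\leq n$ ensures $\nu^t\in\La_+(n,d)$ whenever $\nu\in\La_+(n,d)$, so the conjugation substitution is well defined.
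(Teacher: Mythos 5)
Your proof is correct and takes essentially the same route as the paper: the paper's proof is nothing but a citation of Macdonald I.(3.8), (5.2), (5.3), (5.6), (5.13), which are exactly the skew Schur expansion and the conjugation symmetry you invoke, so you have simply spelled out what the citation implicitly packages. One small infelicity in the $\eps=\1$ case: after applying $c^{\la^t}_{\mu^t,\rho}=c^\la_{\mu,\rho^t}$ you already have $\sum_\rho c^\la_{\mu,\rho^t}{\tt s}_\rho$, which is the desired identity up to \emph{renaming} the dummy variable $\rho$ as $\nu$; the phrase ``together with the substitution $\nu:=\rho^t$'' is misleading, since a genuine change of summation variable $\nu=\rho^t$ would yield $\sum_\nu c^\la_{\mu,\nu}{\tt s}_{\nu^t}$, which is not the target. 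Since the displayed final formula you write is the correct one, this is only a slip of exposition, not a gap in the argument.
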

\begin{proof}
Follows from \cite[(3.8),(5.2),(5.3),(5.6),(5.13)]{MacD}.
\end{proof}

We need the following generalization of the above. Let $\la\in \La_+(n,d)$ and $\ud=(d_1,\dots,d_m)\in\La(m,d)$ for some $m\geq 1$. We denote by $\La_+^{\la,\ud}$ the set of all tuples $\tilde\mu=(\mu^1,\dots,\mu^m)$ such that 
$$\mu^t\in \La_+(n,d_1+\dots+d_t)\ \text{for $t=1,\dots,m$,\quad  and\quad }  
[\mu^1]\subseteq \dots\subseteq [\mu^m]=[\la].
$$
We will usually write $\tilde\mu=\mu^m/\cdots/\mu^1$ instead of  $\tilde\mu=(\mu^1,\dots,\mu^m)$, and  interpret $[\mu^0]$ as $\emptyset$. Denote 
\begin{equation}\label{E130717}
\La_+^{\la,m}:=\bigsqcup_{\ud\in\La(m,d)}\La_+^{\la,\ud}.
\end{equation}

Let $\tilde\mu=\mu^m/\cdots/\mu^1\in\La_+^{\la,m}$ and $\ueps=(\eps_1,\dots,\eps_m)\in (\Z/2)^m$. 
A {\em standard $\tilde\mu$-tableau of parity $\ueps$} is a function $\tilde T:[\la]\to[1,n]$ such that $\tilde T|_{\Brackets{\mu^t/\mu^{t-1}}}\in \operatorname{St}_{\eps_t}(\mu^t/\mu^{t-1})$ for all $t=1,\dots,m$. Let $\operatorname{St}_{\ueps}(\tilde\mu)$ denote the set of all standard $\tilde\mu$-tableaux of parity $\ueps$. 
The weight of a tableau $\tilde T\in \operatorname{St}_{\ueps}(\tilde\mu)$ is 
\begin{equation}\label{EUOm}
\uom^{\tilde\mu,\tilde T}:=(\omega^{\tilde T|_{\Brackets{\mu^1/\mu^{0}}}},\dots,\omega^{\tilde T|_{\Brackets{\mu^m/\mu^{m-1}}}})\in \La(n)^{\times m}.
\end{equation}
Now, define
\begin{equation}\label{150717_10}
{\tt s}^\ueps_{\la,m}:=
\sum_{\tilde\mu\in \La_+^{\la,m},\, \tilde T\in \operatorname{St}_\ueps(\tilde\mu)} \uom^{\tilde\mu,\tilde T}
 \in\Z(\La(n)^{\times m}).
\end{equation}

If $\unu=(\nu^1,\dots,\nu^m)\in\La(n)^n$ with $|\nu^k|<n$ for all $k=1,\dots,m$, and $\ueps=(\eps_1,\dots,\eps_m)\in (\Z/2)^m$, we denote
$$
\unu^{\ueps}:=\big((\nu^1)^{\eps_1},\dots,(\nu^m)^{\eps_m}\big)\in\La(n)^m
$$
Recall (\ref{EIdentify}). Using Lemma~\ref{LMac}, we have:

\begin{Corollary} \label{CMac} 
Let $\la\in \La_+(n,d)$ and $\ueps=(\eps_1,\dots,\eps_m)\in (\Z/2)^m$. Then
$$
{\tt s}^\ueps_{\la,m}=\sum_{\unu=(\nu^1,\dots,\nu^m)\in \La_+(n)^m} c^{\la}_{\unu^{\ueps}}\, {\tt s}_{\nu^1}\otimes\dots\otimes {\tt s}_{\nu^m}.
$$
\end{Corollary}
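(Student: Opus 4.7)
My plan is to prove Corollary~\ref{CMac} by a direct unfolding of the definition of ${\tt s}^\ueps_{\la,m}$ combined with iterated application of Lemma~\ref{LMac} and Lemma~\ref{LLRTrans}.

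The first step is to observe that, for a fixed $\tilde\mu = \mu^m/\cdots/\mu^1 \in \La_+^{\la,m}$, a standard $\tilde\mu$-tableau of parity $\ueps$ is by definition the same as a tuple $(\tilde T_1,\dots,\tilde T_m)$ where $\tilde T_t \in \operatorname{St}_{\eps_t}(\mu^t/\mu^{t-1})$, and the weight $\uom^{\tilde\mu,\tilde T}$ of the combined tableau is the tuple of weights $(\omega^{\tilde T_1},\dots,\omega^{\tilde T_m})$. Under the identification (\ref{EIdentify}), this means
$$
{\tt s}^\ueps_{\la,m} \;=\; \sum_{\tilde\mu \in \La_+^{\la,m}} {\tt s}^{\eps_1}_{\mu^1/\mu^0} \otimes \cdots \otimes {\tt s}^{\eps_m}_{\mu^m/\mu^{m-1}},
$$
where $\mu^0 := \emptyset$ and $\mu^m = \la$.

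Next, I apply Lemma~\ref{LMac} to each tensor factor to replace ${\tt s}^{\eps_t}_{\mu^t/\mu^{t-1}}$ by $\sum_{\nu^t \in \La_+(n)} c^{\mu^t}_{\mu^{t-1},(\nu^t)^{\eps_t}} {\tt s}_{\nu^t}$. Interchanging the sums (over $\tilde\mu$ and over $\unu$) gives
$$
{\tt s}^\ueps_{\la,m} \;=\; \sum_{\unu \in \La_+(n)^m} \left( \sum_{\tilde\mu} \prod_{t=1}^m c^{\mu^t}_{\mu^{t-1},(\nu^t)^{\eps_t}} \right) {\tt s}_{\nu^1} \otimes \cdots \otimes {\tt s}_{\nu^m},
$$
where the inner sum ranges over all chains $\emptyset = \mu^0 \subseteq \mu^1 \subseteq \cdots \subseteq \mu^m = \la$ of partitions in $\La_+(n)$.

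The final step is to recognize the inner sum as $c^\la_{\unu^\ueps}$. This is a straightforward iteration of the Littlewood--Richardson coefficient formula (\ref{E100717_6}): expanding the product ${\tt s}_{(\nu^1)^{\eps_1}} {\tt s}_{(\nu^2)^{\eps_2}} \cdots {\tt s}_{(\nu^m)^{\eps_m}}$ from left to right by successive applications of the two-factor rule produces exactly $\sum_{\mu^1 \subseteq \cdots \subseteq \mu^m} \prod_t c^{\mu^t}_{\mu^{t-1},(\nu^t)^{\eps_t}} {\tt s}_{\mu^m}$, and extracting the coefficient of ${\tt s}_\la$ gives the required identity. (Alternatively, this is the $m$-fold generalization of Lemma~\ref{LLRTrans}, proved by the same induction.) Substituting back yields the stated formula. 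The argument contains no real obstacle; the only point requiring care is bookkeeping between the iterated chain of partitions and the unordered/ordered conventions for the LR coefficients, but since $c^\la_{\unu^\ueps}$ here uses the ordered tuple $\unu^\ueps$, the iteration matches the definition precisely.
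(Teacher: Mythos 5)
Your proof is correct and follows exactly the route the paper intends: the paper gives no argument beyond ``Using Lemma~\ref{LMac}, we have,'' and your write-up (factoring ${\tt s}^\ueps_{\la,m}$ over chains into tensor products of skew characters, applying Lemma~\ref{LMac} factorwise, and identifying the resulting sum over chains of products $\prod_t c^{\mu^t}_{\mu^{t-1},(\nu^t)^{\eps_t}}$ with $c^\la_{\unu^\ueps}$ by iterating (\ref{E100717_6})) is precisely the intended deduction. The bookkeeping points you flag (the $t=0$ layer giving $\de_{\mu^1,(\nu^1)^{\eps_1}}$, and the ordered convention for $c^\la_{\unu^\ueps}$) are handled correctly.
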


\subsection{Characters}
Let $V\in\mod{A}$ and $W\in\mod{T^A_\a(n,d)}$. 
If $e_i V$  is 
free of finite rank as a graded $\k$-supermodule for all $i\in I$, we say that $V$ has {\em free weight spaces}. Similarly, if $e_\bla W$  is free of finite rank as a graded $\k$-supermodule for all $\bla\in\La^I(n,d)$, we say that $W$ has {\em free weight spaces}.  
Suppose that $V$ and $W$ have free weight spaces. 
Although in general $\bigoplus_{i\in I} e_i V\subsetneq V$ and $\bigoplus_{\bmu\in \La^I(n,d)} e_\bmu W\subsetneq W$, we define the {\em (bigraded) characters}: 
\begin{align*}
\ch^q_\pi  V&:=\sum_{i\in I}(\dim^q_\pi e_iV)\, i\in R I,
\\
\ch^q_\pi  W&:=\sum_{\bmu\in \La^I(n,d)}(\dim^q_\pi e_\bmu W)\, \bmu \in R \La^I(n,d).
\end{align*}

\begin{Lemma}\label{L070318_4} \cite[Lemma 5.10]{greenTwo}
Suppose that $W_1\in\mod{T^A_\a(n,d_1)}$ and $W_2\in \mod{T^A_\a(n,d_2)}$ have free weight spaces. We consider $W_1\otimes W_2$ as a $T^A_\a(n,d_1+d_2)$-supermodule via the coproduct $\nabla$. Then $W_1\otimes W_2$ has free weight spaces, and 
$
\ch_\pi(W_1 \otimes W_2) = \ch_\pi(W_1) \, \ch_\pi(W_2).
$
\end{Lemma}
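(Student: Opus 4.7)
The plan is to compute $\nabla(e_\bla)$ explicitly for each $\bla \in \La^I(n, d)$ with $d := d_1 + d_2$, deduce a decomposition of $e_\bla(W_1 \otimes W_2)$, and then read off the character formula. By the definition of the $T^A_\a(n, d)$-supermodule structure on $W_1 \otimes W_2$ via $\nabla$, it suffices to determine the $(d_1, d_2)$-summand of $\nabla(e_\bla)$ inside $T^A_\a(n, d_1) \otimes T^A_\a(n, d_2)$.

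By (\ref{E150218_1}) we have $e_\bla = \eta_{\Triple_\bla}$ for the triple $\Triple_\bla = (e_0^{d_0} \cdots e_\ell^{d_\ell},\, \bl^\bla,\, \bl^\bla) \in \Seq_0^B(n, d)$, so Lemma~\ref{coprodxi} applies. Two simplifications are essential: since every entry of $\bb$ is even, all signs $(-1)^{\langle \si; \bb \rangle}$ equal $+1$; and since none of $\Triple_\bla$, $\Triple^1$, $\Triple^2$ contain entries of $B_\c$, the factor $[\Triple_\bla]^!_\c / ([\Triple^1]^!_\c [\Triple^2]^!_\c)$ equals $1$. Because $\Seq_0^B$ picks unique orbit representatives, the splittings in $\textup{Spl}_{d_1}(\Triple_\bla)$ are indexed bijectively by pairs $(\bla^1, \bla^2) \in \La^I(n, d_1) \times \La^I(n, d_2)$ with $\bla^1 + \bla^2 = \bla$ (componentwise addition of multicompositions), the corresponding term being $e_{\bla^1} \otimes e_{\bla^2}$. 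Thus
\begin{align*}
\nabla(e_\bla)\big|_{T^A_\a(n, d_1) \otimes T^A_\a(n, d_2)} \;=\; \sum_{\substack{\bla^1 \in \La^I(n, d_1),\, \bla^2 \in \La^I(n, d_2) \\ \bla^1 + \bla^2 = \bla}} e_{\bla^1} \otimes e_{\bla^2}.
\end{align*}

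Since the $e_{\bla^j}$ are mutually orthogonal idempotents within each $T^A_\a(n, d_j)$, so are the tensor summands $e_{\bla^1} \otimes e_{\bla^2}$, giving a direct sum decomposition of graded $\k$-supermodules
\begin{align*}
e_\bla(W_1 \otimes W_2) \;=\; \bigoplus_{\bla^1 + \bla^2 = \bla} e_{\bla^1} W_1 \otimes e_{\bla^2} W_2.
\end{align*}
By hypothesis each summand is free of finite rank, so $W_1 \otimes W_2$ has free weight spaces. Taking graded superdimensions, multiplying by $\bla$ and summing, and identifying $\bla = \bla^1 + \bla^2$ with the monoid product in $R\La^I(n)$ yields the factorization $\ch^q_\pi(W_1 \otimes W_2) = \ch^q_\pi(W_1) \cdot \ch^q_\pi(W_2)$.

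The main obstacle is justifying the explicit coproduct formula for $e_\bla$: one has to check that orbit representatives of splittings of $\Triple_\bla$ correspond bijectively, with multiplicity one, to pairs of weight multicompositions summing to $\bla$. Everything else follows formally from this identity together with the vanishing of signs and the triviality of the $\c$-factorials in this setting.
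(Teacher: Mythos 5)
Your proof is correct, and it takes the natural route given the tools on the table: apply Lemma~\ref{coprodxi} to $\eta_{\Triple_\bla}=e_\bla$, observe that all signs $(-1)^{\langle\si;\bb\rangle}$ and $\c$-factorials trivialize because every entry of $\bb=e_0^{d_0}\cdots e_\ell^{d_\ell}$ lies in $B_\fa$, and recognize that the splittings $(\Triple^1,\Triple^2)\in\textup{Spl}_{d_1}(\Triple_\bla)$ biject with decompositions $\bla=\bla^1+\bla^2$. The resulting identity $\nabla_{(d_1,d_2)}(e_\bla)=\sum_{\bla^1+\bla^2=\bla}e_{\bla^1}\otimes e_{\bla^2}$, combined with orthogonality of the $e_{\bmu}$, gives both freeness of weight spaces and multiplicativity of characters. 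Two technical points you glossed over but that are easily patched: (a) $\Triple_\bla$ need not literally be the chosen orbit representative in $\Seq_0^B(n,d)$, and the sorted $\Triple^1$ need not literally equal $\Triple_{\bla^1}$ -- but by Lemma~\ref{LXiZero} the relevant $\xi$'s agree up to a sign which is $+1$ here since all entries are even, so $\eta_{\Triple^1}=e_{\bla^1}$ regardless; (b) the action of $e_{\bla^1}\otimes e_{\bla^2}$ on $w_1\otimes w_2$ carries no Koszul sign since these idempotents are even, so $(e_{\bla^1}\otimes e_{\bla^2})(W_1\otimes W_2)=e_{\bla^1}W_1\otimes e_{\bla^2}W_2$ as stated. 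With those remarks inserted, the argument is airtight.
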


As in (\ref{EIdentify}), we always identify 
\begin{equation}\label{EIdentifyR}
R\La^I(n)=(R\La(n))^{\otimes I},
\end{equation}
with $\bmu=(\mu^{(0)},\dots,\mu^{(\ell)})$ on the left corresponding to $\mu^{(0)}\otimes\dots\otimes\mu^{(\ell)}$ on the right.

Recall that $e_\bla\De(\bla)=\F v_\bla$ and $e_\bmu\De(\bla)\neq 0$ implies $\bmu\leq \bla$. Similar properties hold for $L(\bla)$. Therefore:

\begin{Lemma} \label{LIndep} 
The subsets  $\{\ch^q_\pi \De(\bla)\mid \bla\in \La^I_+(n,d)\}$ and $\{\ch^q_\pi L(\bla)\mid \bla\in \La^I_+(n,d)\}$ are $R$-linearly independent in $R \La^I(n,d)$.
\end{Lemma}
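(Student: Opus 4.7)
The plan is to argue by unitriangularity with respect to the partial order $\leq$ on $\La^I(n,d)$. First I would observe that both statements are proved in exactly the same way, since the modules $\De(\bla)$ and $L(\bla)$ share the two key properties that drive the argument: namely, $e_\bla \De(\bla) = \F v_\bla$ is free of rank $1$ concentrated in bidegree $(0,\0)$, and $e_\bmu \De(\bla) \neq 0$ forces $\bmu \leq \bla$; and the same holds for $L(\bla)$ as a quotient of $\De(\bla)$.

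Next I would fix any linear extension $\leq_{\mathrm{lin}}$ of the partial order $\leq$ on the finite set $\La^I_+(n,d)$, giving a total order on a subset of the $R$-basis $\La^I(n,d)$ of $R\La^I(n,d)$. Unpacking the definition,
\[
\ch^q_\pi \De(\bla) = \sum_{\bmu \in \La^I(n,d)} (\dim^q_\pi e_\bmu \De(\bla))\, \bmu,
\]
and by the two properties recalled above, the coefficient of $\bla$ equals $1$ while every other nonzero term is indexed by some $\bmu < \bla$ in $\La^I_+(n,d)$ (in particular, $\bmu \in \La^I_+(n,d)$). Thus the transition matrix from $\{\ch^q_\pi \De(\bla)\}_{\bla \in \La^I_+(n,d)}$ to the part $\{\bla\}_{\bla\in\La^I_+(n,d)}$ of the basis of $R\La^I(n,d)$ is unitriangular with respect to $\leq_{\mathrm{lin}}$, hence invertible over $R$. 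The same argument applies verbatim to $L(\bla)$.

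Since an $R$-unitriangular transition matrix has determinant $1$ and $R$ is a commutative ring with $1$, the families are $R$-linearly independent in $R\La^I(n,d)$. The only step that requires any care is verifying that the two properties of $\De(\bla)$ and $L(\bla)$ cited above really follow from the general theory of based quasi-hereditary algebras from \S\ref{SSBQHA} applied to $T^A_\a(n,d)$ with heredity data $\La^I_+(n,d),\X,\Y$ (Theorem~\ref{T290517_2}); this is immediate from the description of standard modules in \S\ref{SSBQHA} together with Lemma~\ref{L130517}, which shows in particular that $e_\bla v_\bla = v_\bla$ and that $e_\bmu \X_\Stab \neq 0$ forces $\bmu = \bal^\Stab \leq \bla$. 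There is no real obstacle here; the lemma is a standard triangularity argument.
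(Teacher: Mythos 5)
Your proof is correct and takes the same approach as the paper: the paper gives only a one-line remark before the statement, recalling that $e_\bla\De(\bla)=\F v_\bla$ and $e_\bmu\De(\bla)\neq 0$ implies $\bmu\leq\bla$, from which the triangularity argument follows, and you spell this out carefully. One small factual overreach: your parenthetical claim that every $\bmu$ contributing to $\ch^q_\pi\De(\bla)$ lies in $\La^I_+(n,d)$ is not true in general (the weight $\bal^\Stab$ of a standard $X$-colored tableau need not be a multipartition), but this does not affect the argument, since the projection of the characters onto the $R$-span of the partition basis vectors is still unitriangular, and linear independence of those projections implies linear independence of the characters themselves.
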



Let $A$ be basic, i.e. all irreducible $A$-modules $L(i)$ are $1$-dimensional. In this case we have $1_A=\sum_{i\in I}e_i$ 
and $1_{T^A_\a(n,d)}=\sum_{\bmu\in\La^I(n,d)}e_\bmu$. 
Moreover, for all $i\in I$,  we have 
\begin{equation}\label{E110717}
\ch^q_\pi  L(i)=i\quad\text{and}\quad\ch^q_\pi \De(i)=\sum_{j\leq i}d_{i,j}(q,\pi)\cdot j.
\end{equation}
For $i,j\in I$, we set 
\begin{equation}
\label{E170717_9}
{}_jX(i):=\{x\in X(i)\mid e_jx=x\}\quad\text{and}\quad 
{}_jX:=\bigsqcup_{i\in I}{}_jX(i).
\end{equation}
We have ${}_iX(i)=\{e_i\}$ and ${}_jX(i)=\emptyset$ unless $j\leq i$. Each ${}_jX(i)$ splits as unions 
\begin{equation}\label{E180717}
{}_jX(i)={}_jX(i)_\0\sqcup {}_jX(i)_\1=\bigsqcup_{\eps\in\Z/2,\,n\in\Z}{}_jX(i)_\eps^n,
\end{equation}
see (\ref{E210218_2}). 
In view of (\ref{E110717}), we have 
$$|{}_jX(i)_\eps|=d_{i,j}^{\,\eps}:=\sum_{n\in \Z}d_{i,j}^{n,\eps}\quad\text{and}\quad 
|{}_jX(i)_\eps^n|=d_{i,j}^{n,\eps}.$$ 

\begin{Lemma} \label{L140717_5} 
Let $A$ be basic and $i\in I$. Then $\rad\De(i) =\spa\{v_x\mid x\in X(i)\setminus\{e_i\}\}$. 
\end{Lemma}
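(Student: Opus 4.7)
The plan is to identify $\rad\De(i)$ as the kernel of the canonical surjection $\De(i)\twoheadrightarrow L(i)$ and compute that kernel directly against the basis $\{v_x\mid x\in X(i)\}$. Since $A$ is basic, $L(i)$ is one-dimensional over $\F$; as $\De(i)$ is cyclic with generator $v_{e_i}=v_i$, the module $L(i)$ is spanned by the image $\bar v_{e_i}$ of $v_{e_i}$. Consequently, for each $x\in X(i)$ there is a unique scalar $\lambda_x\in\F$ with $\bar v_x=\lambda_x\bar v_{e_i}$, and $\lambda_{e_i}=1$. The problem reduces to showing $\lambda_x=0$ whenever $x\neq e_i$.

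To pin down the $\lambda_x$ I would use the bilinear pairing $(\cdot,\cdot)_i\colon\De(i)\times\De^\op(i)\to\F$, whose left radical is precisely $\rad\De(i)$. Applying $(\cdot,w_y)_i$ to the congruence $v_x\equiv\lambda_x v_{e_i}\pmod{\rad\De(i)}$ gives $f_i(y,x)=\lambda_x f_i(y,e_i)$ for every $y\in Y(i)$. Specializing to $y=e_i$, Definition~\ref{DCC}(c) gives $e_ix=\delta_{x,e_i}x$, which together with the defining congruence $yx\equiv f_i(y,x)e_i\pmod{A^{>i}}$ yields $f_i(e_i,x)e_i\equiv\delta_{x,e_i}x\pmod{A^{>i}}$. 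Since $e_i=e_i\cdot e_i$ is itself a heredity basis element and hence not in $A^{>i}$, I can read off $f_i(e_i,x)=\delta_{x,e_i}$. Combined with $f_i(e_i,e_i)=1$, this gives $\lambda_x=\delta_{x,e_i}$, as required.

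With $\lambda_x=\delta_{x,e_i}$ established, the image of a general element $\sum_x c_xv_x$ in $L(i)$ is simply $c_{e_i}\bar v_{e_i}$, so membership in $\rad\De(i)$ is equivalent to $c_{e_i}=0$; this is exactly the claimed description. No serious obstacle is anticipated: the entire argument rests on the easy identification $f_i(e_i,x)=\delta_{x,e_i}$, which is built directly into the heredity axioms, so the proof should be only a few lines.
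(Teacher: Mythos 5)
Your proof is correct and rests on the same key observation as the paper's one-line argument: basicness forces $\rad\De(i)$ to have codimension $1$, so it only remains to see that the span of $\{v_x\mid x\neq e_i\}$ sits inside the radical. The paper leaves that inclusion implicit, whereas you verify it explicitly via the pairing, computing $f_i(e_i,x)=\delta_{x,e_i}$ from Definition~\ref{DCC}(c); this is sound (an equivalent shortcut is to note $e_iv_x=\delta_{x,e_i}v_x$ while $e_i$ acts as the identity on the one-dimensional $L(i)$, so every $v_x$ with $x\neq e_i$ dies in $L(i)$). No gaps.
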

\begin{proof}
As the heredity data is basic, it follows that the codimension of $\rad\De(i)$ in $\De(i)$ is $1$, which implies the lemma since $v_i\not\in \rad\De(i)$.  
\end{proof}

\subsection{Characters of standard and irreducible modules for  basic  $A$}\label{SSIrrChar}
Throughout the subsection, we assume that $A$ is basic.
We first study the characters of standard $T^A_\a(n,d)$-modules. 
Let $\bla\in\La^I_+(n,d).$ 
For $\Stab\in \Std^X(\bla)$, recall $\bal^\Stab\in \La^I(n,d)$ from (\ref{E210218_7}) so that $\bal^\Stab=(\al^{(0)},\dots,\al^{(\ell)})$ with
\begin{equation}\label{E150717_1}
\al^{(i)}_r:=\sharp\{k\in[1,d]\mid \letter(\Stab_k)=r\ \text{and}\ \col(\Stab_k)\in{}_iX\}.
\end{equation}
Recalling (\ref{EDeg}), the {\em bi-degree} of $\Stab$ is defined to be 
\begin{equation}\label{E150717_2}
\deg(\Stab):=\deg(x_1)\dots\deg(x_d)\in R. 
\end{equation}
Using Lemma~\ref{L130517}(iii), we deduce: for any $\bmu\in\La^I(n,d)$:
$$
\dim^q_\pi  e(\bmu)\De(\bla)=\sum_{\Stab\in \Std^X(\bla),\,\bal^\Stab=\bmu}\deg(\Stab).
$$
Therefore:

\begin{Lemma} \label{L130717_3} 
We have 
$$
\ch^q_\pi \De(\bla)=\sum_{\Stab\in \Std^X(\bla)}\deg(\Stab)\cdot\bal^\Stab.
$$
\end{Lemma}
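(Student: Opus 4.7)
The plan is to read off the weight-space decomposition of $\De(\bla)$ directly from the heredity basis, using the machinery already developed earlier in the section.

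First, I would recall that $\De(\bla)$ has $\k$-basis $\{v_\Stab\mid \Stab\in\Std^X(\bla)\}$, where $v_\Stab=\X_\Stab v_\bla$ for the distinguished generator $v_\bla=v_{\T^\bla}$. Since $\X_\Stab=\xi^{\bx^\Stab}_{\bl^\Stab,\bl^\bla}$ with $\bx^\Stab=x_1\cdots x_d$, and the $\xi$-element has bi-degree equal to the product of the bi-degrees of the entries of $\bx^\Stab$, the vector $v_\Stab$ is homogeneous of bi-degree $\deg(\X_\Stab)=\deg(x_1)\cdots\deg(x_d)=\deg(\Stab)$, with the last equality being the definition \eqref{E150717_2}. (Here $v_\bla=e_\bla v_\bla$ carries trivial bi-degree since $e_\bla$ is built out of the even initial idempotents $e_i$.)

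Next, I would apply Lemma~\ref{L130517}(iii) to the action of a weight idempotent on $v_\Stab$: for any $\bmu\in\La^I(n,d)$,
\[
e_\bmu v_\Stab=e_\bmu\X_\Stab v_\bla=\de_{\bmu,\bal^\Stab}\X_\Stab v_\bla=\de_{\bmu,\bal^\Stab}v_\Stab.
\]
Hence the weight space $e_\bmu\De(\bla)$ is a free graded $\k$-supermodule with homogeneous basis $\{v_\Stab\mid \Stab\in\Std^X(\bla),\ \bal^\Stab=\bmu\}$, so
\[
\dim^q_\pi e_\bmu\De(\bla)=\sum_{\Stab\in\Std^X(\bla),\,\bal^\Stab=\bmu}\deg(\Stab).
\]

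Finally, I would sum over $\bmu\in\La^I(n,d)$ in the definition of $\ch^q_\pi\De(\bla)$ and reindex by $\Stab$: each $\Stab\in\Std^X(\bla)$ contributes $\deg(\Stab)$ to exactly the weight $\bmu=\bal^\Stab$, giving
\[
\ch^q_\pi\De(\bla)=\sum_{\bmu\in\La^I(n,d)}\Big(\sum_{\Stab\in\Std^X(\bla),\,\bal^\Stab=\bmu}\deg(\Stab)\Big)\bmu=\sum_{\Stab\in\Std^X(\bla)}\deg(\Stab)\cdot\bal^\Stab.
\]
There is no real obstacle: the only thing to check carefully is that $\bal(\bx^\Stab,\bl^\Stab)$ defined via \eqref{E150218_5} agrees with the combinatorial formula \eqref{E150717_1} (which is immediate from the definitions of $\bx^\Stab$, $\bl^\Stab$ and ${}_iX$), so that Lemma~\ref{L130517}(iii) really yields the claimed weight.
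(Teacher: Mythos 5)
Your proof is correct and follows the paper's argument exactly: the paper likewise invokes Lemma~\ref{L130517}(iii) to identify the $e_\bmu$-weight space of $\De(\bla)$ as spanned by $\{v_\Stab\mid\bal^\Stab=\bmu\}$, reads off $\dim^q_\pi e_\bmu\De(\bla)=\sum_{\bal^\Stab=\bmu}\deg(\Stab)$, and then sums over $\bmu$. No discrepancy to report.
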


Now we turn to characters of irreducible $T^A_\a(n,d)$-modules. 
Let $\Std^X_0(\bla)\subseteq \Std^{X}(\bla)$ be the set of all standard $\bla$-tableaux $\Stab=(S^{(0)},\dots,S^{(\ell)})$ such that for all $i\in I$, the entries of $S^{(i)}$ are of the form $r^{e_i}$. Since $e_i$'s are  even, replacing every entry $r^{e_i}$ with $r$ yields a bijection between $\Std^X_{0}(\bla)$ and 
$\operatorname{St}_\0(\la^{(0)})\times\dots\times \operatorname{St}_\0(\la^{(\ell)})$. 
Recalling (\ref{EIdentifyR}) and (\ref{E100717_2}), we 
define
\begin{eqnarray}
\label{E210218_8}
{\tt s}_\bla&:=& {\tt s}_{\la^{(0)}}\otimes\dots\otimes  {\tt s}_{\la^{(\ell)}}\in R\La^I(n),
\\
\label{E210218_9}
\bar {\tt s}_\bla&:=&\bar {\tt s}_{\la^{(0)}}\otimes\dots\otimes \bar {\tt s}_{\la^{(\ell)}}\in R\La^I(n).
\end{eqnarray}

\begin{Lemma} \label{L210218_5} 
We have 
$\ch^q_{\pi}L(\bla)=\bar {\tt s}_\bla.$
\end{Lemma}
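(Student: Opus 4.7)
The first step is to reduce to the single-color case. By Lemma~\ref{LIrrTens}, $L(\bla)\cong\bigotimes_{i\in I}L(\text{\boldmath$\lambda^{(i)}$})$ as $T^A_\a(n,d)$-modules via the coproduct $\nabla_\de$ with $\de=\|\bla\|$, and Lemma~\ref{L070318_4} tells us that graded characters multiply across this tensor factorisation. Under the identification~(\ref{EIdentifyR}), the target $\bar{\tt s}_\bla$ of~(\ref{E210218_9}) is likewise a product across $i\in I$. Hence the lemma reduces to the single-color identity
\[
\ch^q_\pi L(\text{\boldmath$\lambda^{(i)}$})=\bar{\tt s}_{\la^{(i)}}\qquad(i\in I),
\]
where the right-hand side is viewed inside $R\La^I(n)$ as concentrated in color $i$.

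To handle the single-color identity I would exploit the basic hypothesis on $A$. Since $A$ is basic, the semisimple quotient $A/J(A)\cong\bigoplus_{j\in I}\k e_j$ sits in bidegree $1\in R$, so projection onto the $i$th component defines a surjective $\k$-superalgebra map $\phi:A\to\k$ with $\phi(e_i)=1$ and $\phi(e_j)=0$ for $j\ne i$. Because $e_i\in X(i)_\0\subseteq\a$ by $\a$-conformity, $\phi$ sends $\a$ onto $\k$, and by functoriality of the generalized Schur construction $\phi$ induces a surjective homomorphism $\Phi:T^A_\a(n,d_i)\twoheadrightarrow T^\k_\k(n,d_i)=S(n,d_i)$. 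Under $\Phi$, the highest weight idempotent satisfies $\Phi(e_{\text{\boldmath$\lambda^{(i)}$}})=\xi(\la^{(i)})$, the classical weight idempotent. Pulling back $L^{\operatorname{cl}}(\la^{(i)})$ through $\Phi$ and placing it in bidegree $1\in R$ (legitimate since $\phi$ factors through the degree-$(0,\0)$ semisimple quotient of $A$) yields an irreducible $T^A_\a(n,d_i)$-supermodule with highest weight $\text{\boldmath$\lambda^{(i)}$}$, which must therefore be $L(\text{\boldmath$\lambda^{(i)}$})$.

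The character computation is then immediate. Fix $\bmu=(\mu^{(0)},\dots,\mu^{(\ell)})\in\La^I(n,d_i)$ and recall $e_\bmu=\xi^{e_0^{d_0}\cdots e_\ell^{d_\ell}}_{\bl^\bmu,\bl^\bmu}$ from~(\ref{E150218_1}). Then $\Phi(e_\bmu)=0$ unless $\mu^{(j)}=0$ for all $j\ne i$, i.e.\ $\bmu=\text{\boldmath$\mu^{(i)}$}$ for some $\mu=\mu^{(i)}\in\La(n,d_i)$, in which case $\Phi(e_\bmu)=\xi(\mu)$. Hence
\[
\dim^q_\pi\!\big(e_\bmu L(\text{\boldmath$\lambda^{(i)}$})\big)=
\begin{cases}
\bar k_{\la^{(i)},\mu} & \text{if }\bmu=\text{\boldmath$\mu^{(i)}$},\\
0 & \text{otherwise,}
\end{cases}
\]
and so $\ch^q_\pi L(\text{\boldmath$\lambda^{(i)}$})=\sum_{\mu\in\La(n,d_i)}\bar k_{\la^{(i)},\mu}\cdot\text{\boldmath$\mu^{(i)}$}=\bar{\tt s}_{\la^{(i)}}$ under~(\ref{EIdentifyR}), completing the argument.

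The principal technical point is the construction and well-definedness of $\Phi$: one needs to check that the functoriality built into Section~\ref{SecGenSch} is compatible with the sub-lattice structure defining $T^A_\a(n,d_i)\subseteq S^A(n,d_i)$---in particular, that the factorials $[\bb,\br,\bs]^!_\c$ behave correctly under $\phi$ (they do, because all $B_\c$-terms and $B_\1$-terms are annihilated by $\phi$, while surviving terms come only from $B_\a$, where $\c$-factorials are trivial)---and that the image of $\Phi$ genuinely exhausts $S(n,d_i)$. Both follow routinely by tracking the action of $\phi$ on the generators $\xi^\bb_{\br,\bs}$, but this is the only place where the precise lattice-level definitions of the generalized Schur algebra need to be inspected.
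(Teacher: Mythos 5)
Your reduction to the single-color case via Lemma~\ref{LIrrTens} and Lemma~\ref{L070318_4} matches the paper exactly, but from there you take a genuinely different route. The paper stays inside the standard module: it writes $\De(\bla)$ as a subspace of $T^A_\a(n,d)/T^A_\a(n,d)^{>\bla}$, uses Lemma~\ref{L140717_5} to show that $v_\Stab$ lies in the radical of the pairing whenever some entry of $\bx^\Stab$ differs from $e_i$, and then, for $\Stab,\T\in\Std_0^X(\bla)$, invokes Proposition~\ref{CPR} together with $e_i^2=e_i$ to see that the Gram matrix entries coincide with structure constants of the classical Schur algebra $S(n,d)$; the character of the radical quotient is then $\bar{\tt s}_{\la^{(i)}}$. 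You instead externalize this by building a surjection $\Phi:T^A_\a(n,d_i)\onto S(n,d_i)$ and pulling back $L^{\operatorname{cl}}(\la^{(i)})$, identifying the pullback by its highest weight. Both arguments ultimately rest on the same computation (products of $\xi^{e_i^{d}}_{\br,\bs}$'s reproduce classical structure constants, via Proposition~\ref{CPR} and $\kappa^b_{e_i,e_i}=\de_{b,e_i}$); your packaging is arguably cleaner conceptually and immediately gives the module identification, whereas the paper's avoids constructing any homomorphism and so needs no functoriality claims.

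The one step you should tighten is the construction of $\Phi$. First, "basic" is a hypothesis on $A_\F$, so the character $\phi$ of the one-dimensional module $L(i)$ is a priori only an algebra map $A_\F\to\F$, not $A\to\k$: the structure constants $\kappa^{e_i}_{b,b'}$ for $(b,b')\neq(e_i,e_i)$ vanish in $\F$ but need not vanish in $\k$ (e.g.\ they could lie in the maximal ideal of a DVR). Second, the paper never establishes functoriality of $A\mapsto T^A_\a(n,d)$, and since $T^A_\a(n,d)_\F\neq T^{A_\F}_{\a_\F}(n,d)$ in general, you cannot simply apply a functor to $\phi$ after base change. The correct fix is to define $\Phi$ directly on the $\eta$-basis of $T^A_\a(n,d)_\F$ by $\eta^\bb_{\br,\bs}\mapsto\de_{\bb,e_i^d}\,\xi_{\br,\bs}$ and verify multiplicativity from Proposition~\ref{CPR}: the offending terms are multiples of images in $\F$ of $\kappa^{e_i}_{b,b'}$ with $(b,b')\neq(e_i,e_i)$, which do vanish in $\F$ by multiplicativity of $\phi_\F$, while the surviving terms give exactly the classical structure constants (the $\c$-factorials are $1$ on $e_i^d$ since $e_i\in B_\a$). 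With that verification written out, your argument is complete.
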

\begin{proof}
In view of Lemma~\ref{LIrrTens}, we may assume that 
$\bla=(0,\dots,0,\la^{(i)},0,\dots,0)$ with $\la^{(i)}\in\La(n,d)$ in the $i$th component for some $i\in I$. 
Recalling the elements $\X_\Stab=\xi^{\bx^\Stab}_{\bl^\Stab,\bl^\bla}$ from \S\ref{SSNN}. 
By definition, the module $\De(\bla)$ is an $\F$-subspace of $T^A_\a(n,d)/T^A_\a(n,d)^{>\bla}$ with basis
$
\{\bar\xi^{\bx^\Stab}_{\bl^\Stab,\bl^\bla}\mid \Stab\in\Std^X(\bla)\},
$
where we write $\bar \xi:=\xi+T^A_\a(n,d)^{>\bla}$ for $\xi\in T^A_\a(n,d)$. 

If $\bx^\Stab=x_1\cdots x_d$ and $x_k\neq e_i$ for some $k\in[1,d]$, then $x_k\in\rad \De(i)$ by Lemma~\ref{L140717_5}, and it follows from the definition of the pairing  $(\cdot,\cdot)_\bla$ that $\bar \xi^{\bx^\Stab}_{\bl^\Stab,\bl^\bla}\in\rad\De(\bla)$. Let now $\bx^\Stab=e_i^d$, i.e. $\Stab\in\Std_0^X(\bla)$. Since 
$
(v_\Stab,w_\T)
$
is the coefficient of $e_\bla=\bar\xi^{\,e_i^d}_{\bl^\bla,\bl^\bla}$ in
$
\bar\Y_\T\bar\X_\Stab=\bar\xi^{\,\by^\T}_{\bl^\bla,\bl^\T}
\bar\xi^{\,\bx^\Stab}_{\bl^\Stab,\bl^\bla},
$
we have that $(v_\Stab,w_\T)=0$ unless $\by^T=e_i^d$. But then by Proposition~\ref{CPR}, using $e_i^2=e_i$, we obtain 
$$
\xi^{e_i^d}_{\bl^\bla,\bl^\T}
\xi^{e_i^d}_{\bl^\Stab,\bl^\bla}=\sum_{\br,\bs}c_{\br,\bs}\xi_{\br,\bs}^{e_i^d},
$$
where the coefficients $c_{\br,\bs}$ are determined from
$$
\xi_{\bl^\bla,\bl^\T}
\xi_{\bl^\Stab,\bl^\bla}=\sum_{\br,\bs}c_{\br,\bs}\xi_{\br,\bs}
$$
in the classical Schur algebra $S(n,d)$. The result follows. 
\end{proof}

\begin{Corollary} \label{C160717}
We have  
$
{\tt s}_\bla=\sum_{\bmu\in\La^I_+(n)}d_{\bla,\bmu}^{\,\operatorname{cl}}\ch^q_\pi  L(\bmu). 
$
\end{Corollary}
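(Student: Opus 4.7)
The plan is to observe that the claim is a direct componentwise expansion, obtained by combining the identification of the character of $L(\bmu)$ with $\bar{\tt s}_\bmu$, the definition of ${\tt s}_\bla$ as a tensor product, the classical character identity (\ref{EMatrix}), and the definition (\ref{EClDecNBold}) of $d_{\bla,\bmu}^{\,\operatorname{cl}}$ as a product of classical decomposition numbers.

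More concretely, I would start from the definition (\ref{E210218_8}), writing
\[
{\tt s}_\bla = {\tt s}_{\la^{(0)}}\otimes\dots\otimes {\tt s}_{\la^{(\ell)}} \in R\La^I(n),
\]
and then substitute into each tensor factor the classical expansion from (\ref{EMatrix}):
\[
{\tt s}_{\la^{(i)}} = \sum_{\mu^{(i)}\in\La_+(n)} d^{\,\operatorname{cl}}_{\la^{(i)},\mu^{(i)}}\, \bar{\tt s}_{\mu^{(i)}}.
\]
Expanding the tensor product and collecting terms by the multipartition $\bmu=(\mu^{(0)},\dots,\mu^{(\ell)})$ gives
\[
{\tt s}_\bla = \sum_{\bmu\in\La^I_+(n)} \Big(\prod_{i\in I} d^{\,\operatorname{cl}}_{\la^{(i)},\mu^{(i)}}\Big)\, \bar{\tt s}_{\mu^{(0)}}\otimes\dots\otimes \bar{\tt s}_{\mu^{(\ell)}}.
\]
By definition (\ref{EClDecNBold}) the product in brackets equals $d^{\,\operatorname{cl}}_{\bla,\bmu}$, and by (\ref{E210218_9}) the tensor product on the right is $\bar{\tt s}_\bmu$.

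Finally, I would invoke Lemma~\ref{L210218_5}, which identifies $\bar{\tt s}_\bmu$ with $\ch^q_\pi L(\bmu)$, to rewrite the right-hand side and conclude. There is no real obstacle here, this is a purely formal manipulation of tensor products of characters once Lemma~\ref{L210218_5} and the product structure of $d^{\,\operatorname{cl}}_{\bla,\bmu}$ are in hand.
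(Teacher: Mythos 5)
Your proof is correct and follows the same route as the paper, which simply cites Lemma~\ref{L210218_5} and (\ref{EMatrix}); you have just spelled out the intermediate tensor-product bookkeeping explicitly.
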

\begin{proof}
This follows immediately from Lemma~\ref{L210218_5} and (\ref{EMatrix}). 
\end{proof}

\subsection{One-colored standard characters}\label{SSOneColStdChar}
We continue with the set-up of \S\ref{SSIrrChar}, in particular we assume that $A$ is basic. 
Throughout the subsection, we fix $i\in I$ and $\bla\in\La^I(n,d)$ of the form $\bla=\text{\boldmath$\lambda^{(i)}$}$, see (\ref{E100717}). 
We identify the Young diagram of $\la^{(i)}$ and the Young diagram of $\text{\boldmath$\lambda^{(i)}$}$ via the map $(r,s)\mapsto (i,r,s)$ on the nodes. In the same way we also identify the sets $\Std^{X(i)}(\la^{(i)})$ and $\Std^X(\text{\boldmath$\lambda^{(i)}$})$ so that we have a function $\deg:\Std^{X(i)}(\la^{(i)})\to R$ defined in (\ref{E150717_2}). 

Recalling (\ref{E170717_9}) and (\ref{E180717}), observe that we have 
$$
X(i)={}_iX(i)_\0\sqcup{}_iX(i)_\1\sqcup {}_{i+1}X(i)_\0\sqcup {}_{i+1}X(i)_\1\sqcup\dots\sqcup {}_\ell X(i)_\0\sqcup {}_\ell X(i)_\1.
$$
List the elements of $X(i)$ 
\begin{equation}\label{E170717_10}
X(i)=\{x_{i,1}=e_i,x_{i,2},\dots,x_{i,t_i}\}
\end{equation}
so that the elements of ${}_iX(i)_\0$ precede the  elements of ${}_iX(i)_\1$ precede the elements of ${}_{i+1}X(i)_\0$, \dots, precede  the elements of ${}_\ell X(i)_\1$. 
Define the segments 
$$
{}_j\Om^{(i)}:=\{u\in [1,t_i]\mid x_{i,u}\in {}_jX(i)\},
$$
so that $[1,t_i]={}_i\Om^{(i)}\sqcup {}_{i+1}\Om^{(i)}\sqcup \dots\sqcup {}_\ell\Om^{(i)}$. 
We order $X(i)$ so that $x_{i,1}< x_{i,2}<\dots<x_{i,t_i}$. 
Now pick an order on $\Alph^{X(i)}$ with $r^{x}<s^{x'}$ if and only if $x<x'$ or $x=x'$ and $r<s$. 

Let $S\in\Std^{X(i)}(\la^{(i)})$. For $ t\in[1, t_i]$, let $\la_{S,\leq t}\in\La_+(n)$ be the partition such that 
$$[\la_{S,\leq t}]=\{N\in[\la^{(i)}]\mid  \col(S(N))=x_{i,u}\text{ for some $u\leq t$}\}.
$$ 
Recalling (\ref{E130717}), 
$$
\tilde\la_{S}:=\big(\la_{S,\leq t_i}/\la_{S,\leq t_i-1}/\cdots/\la_{S,\leq 1}\big)\in\La_+^{\la^{(i)},t_i}.
$$
Denote
\begin{equation}\label{E130717_2}
\ueps_i:=(\bar x_{i,1},\bar x_{i,2},\dots,\bar x_{i,t_i})\in(\Z/2)^{t_i}.
\end{equation}
Then the map 
$$
\letter\circ S:[\la^{(i)}]\to[1,n],\ N\mapsto \letter(S(N))
$$
is a standard $\tilde\la_{S}$-tableau of parity $\ueps_i$, see \S\ref{SSClChar}. The map 
$$
f:S\mapsto (\tilde\la_{S},\letter\circ S)
$$
is easily seen to be a bijection between $\Std^{X(i)}(\lambda^{(i)})$ and the set 
\begin{equation}\label{EP}
P:=\{(\tilde\la,\tilde T)\mid \tilde\la\in \La_+^{\la^{(i)},t_i},\ \tilde T\in\operatorname{St}_{\ueps_i}(\tilde\la)\}.
\end{equation}

For every $(\tilde\la=\la^{t_i}/\cdots/\la^1,\tilde T)\in P$, we define
\begin{align}
\deg_i(\tilde\la)&:=\prod_{u=1}^{t_i}\deg(x_{i,u})^{|\la^u|-|\la^{u-1}|}
\label{E150717_9}
\\
\label{E220218}
\bal^{(\tilde\la,\tilde T)}&:=(\al^{(0)},\dots,\al^{(\ell)})\in\La^I(n,d),
\end{align}
where
$$
\al^{(j)}_r:=\sharp\{N\in[\la^{(i)}]\mid \tilde T(N)=r\ \text{and}\ N\in [\la^u]\setminus [\la^{u-1}]\ \text{for}\ u\in {}_j\Om(i)\}.
$$
Note that $\al^{(0)}=\dots=\al^{(i-1)}=0$. It follows from the definitions that $\deg(S)=\deg_i(\tilde\la_{S})\ \text{and}\ 
\bal^S=\bal^{f(S)}$. Thus, we have:

\begin{Lemma} \label{L150717_6} 
The map
$$
f:
\Std^{X(i)}(\lambda^{(i)})\to P,\ 
S\mapsto (\tilde\la_{S},\letter\circ S)
$$
is a bijection such that $\deg(S)=\deg_i(\tilde\la_{S})\ \text{and}\ 
\bal^S=\bal^{f(S)}$. 
\end{Lemma}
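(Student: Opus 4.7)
The plan is to establish in turn: (1) that $f$ is well-defined, (2) that it admits an inverse of the expected form, and (3) that the two numerical identities hold almost by inspection.

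For well-definedness, the essential point is that the subsets
\[
N_{\leq t}(S):=\{N\in[\la^{(i)}]\mid \col(S(N))=x_{i,u}\text{ for some }u\leq t\}
\]
are genuinely Young subdiagrams of $[\la^{(i)}]$, so that $\la_{S,\leq t}$ is a partition and the chain $\la_{S,\leq 1}\subseteq\la_{S,\leq 2}\subseteq\cdots\subseteq\la_{S,\leq t_i}=\la^{(i)}$ lies in $\La_+^{\la^{(i)},t_i}$. This is where the ordering on $\Alph^{X(i)}$ chosen just before the statement is crucial: since it sorts first by $\col$, row- and column-standardness of $S$ force the $\col$-values to weakly increase along rows and columns, so $N_{\leq t}(S)$ is closed under moving left or up in $[\la^{(i)}]$. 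It remains to check that $\letter\circ S$ restricted to $[\la_{S,\leq u}/\la_{S,\leq u-1}]$ is even- or odd-standard according to $\bar x_{i,u}$. This is a short case-check using the two standardness axioms from \S\ref{SSColLet}: on a skew shape all of whose entries have the same color $x_{i,u}$, row standardness gives weak (resp.\ strict) increase along rows when $x_{i,u}$ is even (resp.\ odd), and column standardness gives the dual statement along columns, which matches the definition of $\operatorname{St}_{\ueps_i}$.

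For bijectivity I would construct the inverse directly: given $(\tilde\la=\la^{t_i}/\cdots/\la^1,\tilde T)\in P$, define $g(\tilde\la,\tilde T):[\la^{(i)}]\to\Alph_{X(i)}$ by $N\mapsto \tilde T(N)^{x_{i,u}}$ whenever $N\in[\la^u]\setminus[\la^{u-1}]$. The same case-check as above, run in reverse, shows $g(\tilde\la,\tilde T)\in\Std^{X(i)}(\la^{(i)})$, and it is immediate from the definitions that $f\circ g$ and $g\circ f$ are identity maps.

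The two numerical identities are bookkeeping. For the degree equality, $\deg(S)=\prod_{k=1}^{|\la^{(i)}|}\deg(\col(S_k))$, and the number of $k$'s with $\col(S_k)=x_{i,u}$ is precisely $|\la^u|-|\la^{u-1}|$ by the definition of $\la_{S,\leq u}$, yielding $\deg(S)=\deg_i(\tilde\la_S)$ as in (\ref{E150717_9}). For the weight equality, compare (\ref{E150717_1}) with (\ref{E220218}): in both expressions, $\al^{(j)}_r$ counts nodes $N\in[\la^{(i)}]$ with $\letter$-value $r$ and $\col$-value in ${}_jX(i)$, which in the language of $f(S)$ translates to $N\in[\la^u]\setminus[\la^{u-1}]$ for some $u\in{}_j\Om^{(i)}$. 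No step should present a serious obstacle; the only place to be careful is the first one, where the interaction of the chosen total order on $\Alph^{X(i)}$ with the parity-sensitive standardness axioms has to be verified cleanly.
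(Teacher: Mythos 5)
Your proposal is correct and supplies exactly the verification the paper leaves implicit: the paper asserts the bijection is ``easily seen'' and that the two numerical identities ``follow from the definitions,'' so there is no proof in the paper to compare against beyond this. Your argument — using the chosen total order on $\Alph_{X(i)}$ (sorted first by color) to show the level sets $\la_{S,\leq t}$ form a nested chain of partitions, the parity case-check matching row/column standardness with even/odd skew standardness, the explicit inverse $g$, and the bookkeeping for $\deg$ and $\bal$ — is the expected and correct filling-in of those details.
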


For $\unu=(\nu^1,\dots,\nu^{t_i})\in\La(n)^{t_i}$, we define \begin{align}
\deg^{(i)}(\unu)&:=\prod_{u=1}^{t_i}\deg(x_{i,u})^{|\nu^u|}\in R,
\\
\label{E170717}
{}_{j}\underline{\nu}&:=(\nu^t)_{t\in{}_j\Om(i)}\in\La(n)^{|{}_j\Om(i)|} \qquad(j\in I),
\\
\label{E170717_1}
\bchi(\unu)&:=(\chi^{(0)},\dots,\chi^{(\ell)})\in\La^I(n),
\end{align}
where we have set 
$
\chi^{(j)}:=\sum_{t\in{}_j\Om(i)}\nu^t
$ 
for all $j\in I$. 
We extend $\bchi$ by linearity to a function  
$$\bchi: \Z(\La(n)^{t_i})=(\Z\La(n))^{\otimes t_i} \to \Z\La^I(n).$$ 

From the Littlewood-Richardson rule (\ref{E100717_6}), we get:


\begin{Lemma} \label{L160717} 
Let $\unu=(\nu^1,\dots,\nu^{t_i})\in\La_+(n)^{t_i}$. 
 Then
$$
\bchi({\tt s}_{\nu^1}\otimes\dots\otimes {\tt s}_{\nu^{t_i}})=\sum_{\bga=(\ga^{(0)},\dots,\ga^{(\ell)})\in\La^I_+(n)}\left(\prod_{j\in I}c^{\ga^{(j)}}_{\,{}_{j}\unu}\right)
\tts_{\bga}.
$$ 
\end{Lemma}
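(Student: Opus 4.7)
The plan is to unpack the definition of $\bchi$ on pure tensors and recognize the result as a product of Littlewood--Richardson expansions, one for each color $j \in I$. By (\ref{E170717_1}) together with the identifications (\ref{EIdentify})--(\ref{EIdentifyR}), for any pure tensor $\mu^1 \otimes \cdots \otimes \mu^{t_i}$ in $\Z(\La(n)^{\times t_i}) = (\Z\La(n))^{\otimes t_i}$ we have
\[
\bchi(\mu^1 \otimes \cdots \otimes \mu^{t_i}) \;=\; \bigotimes_{j \in I} \Big(\prod_{t \in {}_j\Om^{(i)}} \mu^t\Big),
\]
where the inner products are taken in the monoid $\La(n)$ written multiplicatively, and we use the fact that $[1,t_i] = \bigsqcup_{j \in I} {}_j\Om^{(i)}$. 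Extending by linearity, and noting that the inner product in $\Z\La(n)$ distributes over sums coming from the expansion ${\tt s}_{\nu^t} = \sum_{\mu^t} k_{\nu^t, \mu^t} \mu^t$, we obtain
\[
\bchi({\tt s}_{\nu^1} \otimes \cdots \otimes {\tt s}_{\nu^{t_i}}) \;=\; \bigotimes_{j \in I} \Big(\prod_{t \in {}_j\Om^{(i)}} {\tt s}_{\nu^t}\Big).
\]

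Next, I would apply the Littlewood--Richardson rule (\ref{E100717_6}) within each color block separately. Using the shorthand (\ref{E170717_2}) for the unordered tuple and the notation (\ref{E170717}) for $\,{}_j\unu$, we have
\[
\prod_{t \in {}_j\Om^{(i)}} {\tt s}_{\nu^t} \;=\; \sum_{\ga^{(j)} \in \La_+(n)} c^{\ga^{(j)}}_{\,{}_j\unu}\, {\tt s}_{\ga^{(j)}}.
\]
Substituting this into the formula above, interchanging the outer tensor with the sums, and recognizing $\bigotimes_{j \in I} {\tt s}_{\ga^{(j)}} = {\tt s}_{\bga}$ via the identification (\ref{EIdentifyR}) together with the definition (\ref{E210218_8}), yields the desired identity
\[
\bchi({\tt s}_{\nu^1} \otimes \cdots \otimes {\tt s}_{\nu^{t_i}}) \;=\; \sum_{\bga \in \La^I_+(n)} \Big(\prod_{j \in I} c^{\ga^{(j)}}_{\,{}_j\unu}\Big) {\tt s}_{\bga}.
\]

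There is no genuine obstacle here: the argument amounts to formal bookkeeping across the identifications between $\Z\La^I(n)$ and $(\Z\La(n))^{\otimes I}$ and between $\Z(\La(n)^{\times t_i})$ and $(\Z\La(n))^{\otimes t_i}$. The only point requiring care is that the index $\bga$ in the final sum ranges precisely over $\La^I_+(n)$, which is built into the way $\bchi$ distributes the factors indexed by $t \in [1,t_i]$ into the color slots $j \in I$ via the decomposition $[1,t_i] = \bigsqcup_{j \in I} {}_j\Om^{(i)}$.
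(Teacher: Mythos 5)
Your proof is correct and matches the paper's approach; the paper gives only the one-line justification ``From the Littlewood--Richardson rule (\ref{E100717_6}), we get:'' and your argument is the natural fleshing out of exactly that computation, organized by the block decomposition $[1,t_i]=\bigsqcup_{j\in I}\,{}_j\Om^{(i)}$ and the definition (\ref{E170717_1}) of $\bchi$.
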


Let $(\tilde\la=\la^{t_i}/\dots/\la^{1},\tilde T)\in P$. If $\uom^{\tilde\la,\tilde T}=(\om^1,\dots,\om^{t_i})$ then for all $u\in[1,t_i]$ we have 
$|\la^u|-|\la^{u-1}|=|\om^u|.$ So, comparing with (\ref{E150717_9}), we deduce
\begin{equation}\label{E150717_8}
\deg_i(\tilde\la)=\deg^{(i)}(\uom^{\tilde\la,\tilde T}).
\end{equation}
Taking into account (\ref{E220218}), we also get:

\begin{Lemma} \label{LUomBom} 
For $(\tilde\la,\tilde T)\in P$, we have $\bal^{(\tilde\la,\tilde T)}=\bchi(\uom^{\tilde\la,\tilde T})$. 
\end{Lemma}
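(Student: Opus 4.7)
The plan is to verify the claimed equality of tuples in $\La^I(n)$ by a direct, component-by-component unpacking of the definitions. Both $\bal^{(\tilde\la,\tilde T)}$ and $\bchi(\uom^{\tilde\la,\tilde T})$ are elements of $\La^I(n)$, so it suffices to check for every $j \in I$ and every $r \in [1,n]$ that the $r$-th part of the $j$-th component agrees on the two sides.

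First, writing $\tilde\la=\la^{t_i}/\cdots/\la^{1}$ and $\uom^{\tilde\la,\tilde T}=(\om^1,\dots,\om^{t_i})$, I recall from (\ref{EUOm}) that
\[
\om^u_r=\sharp\{N\in[\la^u]\setminus[\la^{u-1}]\mid \tilde T(N)=r\}
\qquad (u\in[1,t_i],\ r\in[1,n]).
\]
Then by the definition of $\bchi$ in (\ref{E170717_1}), the $r$-th part of the $j$-th component of $\bchi(\uom^{\tilde\la,\tilde T})$ equals
\[
\sum_{u\in{}_j\Om(i)}\om^u_r
=\sum_{u\in{}_j\Om(i)}\sharp\{N\in[\la^u]\setminus[\la^{u-1}]\mid \tilde T(N)=r\},
\]
and since the skew shapes $[\la^u]\setminus[\la^{u-1}]$ for distinct $u$ are disjoint, this sum equals
\[
\sharp\{N\in[\la^{(i)}]\mid \tilde T(N)=r\text{ and }N\in[\la^u]\setminus[\la^{u-1}]\text{ for some }u\in{}_j\Om(i)\},
\]
which is exactly $\al^{(j)}_r$ as defined in (\ref{E220218}).

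No real obstacle is expected: the statement is a bookkeeping identity, reflecting that the enumeration of $X(i)$ in (\ref{E170717_10}) is organized precisely so that each $u\in[1,t_i]$ lies in a unique segment ${}_j\Om(i)$ according to which ${}_jX(i)$ contains $x_{i,u}$. The only mild point to note is the convention for $j\in I$ with $j\not\ge i$: here ${}_jX(i)=\emptyset$ and hence ${}_j\Om(i)=\emptyset$, so $\chi^{(j)}=0$, in agreement with $\al^{(0)}=\cdots=\al^{(i-1)}=0$ remarked just after (\ref{E220218}).
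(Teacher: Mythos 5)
Your verification is correct and matches the paper, which states this lemma as an immediate consequence of the definitions (\ref{EUOm}), (\ref{E170717_1}) and (\ref{E220218}) without writing out the component-by-component check. Your unpacking — using that the skew shapes $[\la^u]\setminus[\la^{u-1}]$ partition $[\la^{(i)}]$ so that the sum over $u\in{}_j\Om(i)$ of the weights $\om^u_r$ recovers the count defining $\al^{(j)}_r$ — is exactly the intended bookkeeping.
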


\begin{Proposition} \label{P170717} 
We have 
$$
\ch^q_\pi \De(\text{\boldmath$\lambda^{(i)}$})=
\sum_{\bga\in\La^I_+(n)}\, \sum_{\unu\in\La_+(n)^{t_i}}
c^{\la^{(i)}}_{\unu^{\ueps_i}}
\deg^{(i)}(\unu)
\left(\prod_{j\in I}c^{\ga^{(j)}}_{\,{}_{j}\unu}\right)
\tts_{\bga}.
$$
\end{Proposition}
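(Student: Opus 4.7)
The plan is to assemble the formula by chaining four preceding results: Lemma~\ref{L130717_3} to express the character as a sum over $X$-standard tableaux, the bijection $f$ of Lemma~\ref{L150717_6} to reindex the sum over pairs $(\tilde\la,\tilde T)\in P$, Corollary~\ref{CMac} to convert the skew-tableau sum into a sum of ordinary Schur characters weighted by Littlewood--Richardson coefficients, and finally Lemma~\ref{L160717} to rewrite $\bchi$ of a product of Schur characters.

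More concretely, I would start from
\[
\ch^q_\pi \De(\text{\boldmath$\lambda^{(i)}$})=\sum_{\Stab\in \Std^X(\text{\boldmath$\lambda^{(i)}$})}\deg(\Stab)\cdot\bal^\Stab
\]
and, identifying $\Std^X(\text{\boldmath$\lambda^{(i)}$})$ with $\Std^{X(i)}(\la^{(i)})$, apply the bijection $S\mapsto(\tilde\la_S,\letter\circ S)$ of Lemma~\ref{L150717_6}, together with the equality $\deg(S)=\deg_i(\tilde\la_S)$ from (\ref{E150717_8}) and $\bal^S=\bchi(\uom^{\tilde\la_S,\letter\circ S})$ from Lemma~\ref{LUomBom}, to obtain
\[
\ch^q_\pi \De(\text{\boldmath$\lambda^{(i)}$})=\sum_{\tilde\la\in \La_+^{\la^{(i)},t_i}}\sum_{\tilde T\in\operatorname{St}_{\ueps_i}(\tilde\la)}\deg^{(i)}(\uom^{\tilde\la,\tilde T})\cdot\bchi(\uom^{\tilde\la,\tilde T}).
\]

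The key observation in the next step is that $\deg^{(i)}(\uom)=\prod_{u=1}^{t_i}\deg(x_{i,u})^{|\om^u|}$ depends on $\uom=(\om^1,\dots,\om^{t_i})$ only through the sizes $|\om^u|$. Hence on the homogeneous component of $R(\La(n)^{\times t_i})$ where each $\om^u$ has size $d_u$, the assignment $\uom\mapsto\deg^{(i)}(\uom)\bchi(\uom)$ is $R$-linear, and extends $R$-linearly to the whole space once we keep track of the grading. Since every monomial in ${\tt s}_{\nu^u}$ is a composition of size $|\nu^u|$, the operator factors through the Schur expansion. Using Corollary~\ref{CMac} (with $\la:=\la^{(i)}$, $m:=t_i$, $\ueps:=\ueps_i$), we may therefore write
\[
\sum_{\tilde\la,\tilde T}\deg^{(i)}(\uom^{\tilde\la,\tilde T})\bchi(\uom^{\tilde\la,\tilde T})=\sum_{\unu\in\La_+(n)^{t_i}}c^{\la^{(i)}}_{\unu^{\ueps_i}}\,\deg^{(i)}(\unu)\,\bchi({\tt s}_{\nu^1}\otimes\cdots\otimes{\tt s}_{\nu^{t_i}}).
\]

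Finally, invoking Lemma~\ref{L160717} inside the sum converts $\bchi({\tt s}_{\nu^1}\otimes\cdots\otimes{\tt s}_{\nu^{t_i}})$ into $\sum_{\bga\in\La^I_+(n)}\bigl(\prod_{j\in I}c^{\ga^{(j)}}_{\,{}_j\unu}\bigr){\tt s}_\bga$, which after interchanging summation yields the claimed identity. The main obstacle, if there is one, is the careful bookkeeping in the step where $\deg^{(i)}$ is pushed through the Schur expansion; this requires explicitly noting that $\deg^{(i)}$ factors through the size-grading on $\bigoplus_{\ud}R(\La(n,d_1)\times\cdots\times\La(n,d_{t_i}))$ so that Corollary~\ref{CMac} can be applied termwise without disturbing the $\deg^{(i)}$-weight.
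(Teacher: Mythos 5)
Your proposal is correct and follows essentially the same chain of lemmas as the paper's own proof: Lemma~\ref{L130717_3}, then the bijection from Lemma~\ref{L150717_6} together with (\ref{E150717_8}) and Lemma~\ref{LUomBom}, then Corollary~\ref{CMac}, then Lemma~\ref{L160717}. The only stylistic difference is that you spell out more explicitly the bookkeeping observation that $\deg^{(i)}$ factors through the size-grading, which the paper leaves implicit when regrouping the sum before applying Corollary~\ref{CMac}.
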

\begin{proof}
We have 
\begin{align*}
\ch^q_\pi \De(\text{\boldmath$\lambda^{(i)}$})
=&\sum_{S\in\Std^{X(i)}(\lambda^{(i)})}
\deg(S)\,\bal^S
\\
=&\sum_{(\tilde\la,\tilde T)\in P}
\deg_i(\tilde\la)\,\bal^{(\tilde\la,\tilde T)}
\\
=&\sum_{\uom\in\La(n)^{t_i}} \left(\sum_{(\tilde\la,\tilde T)\in P,\,\uom^{\tilde\la,T}=\uom}
\deg^{(i)}(\uom)\right)\bchi(\uom)
\\
=&\sum_{\unu=(\nu^1,\dots,\nu^{t_i})\in\La_+(n)^{t_i}}c^{\la^{(i)}}_{\unu^{\ueps_i}}
\deg^{(i)}(\unu)\bchi(\tts_{\nu^1}\otimes\dots\otimes \tts_{\nu^t} )
\\
=&
\sum_{\unu\in\La_+(n)^{t_i}}c^{\la^{(i)}}_{\unu^{\ueps_i}}
\deg^{(i)}(\unu)
\sum_{\bga\in\La^I_+(n)}\left(\prod_{j\in I}c^{\ga^{(j)}}_{\,{}_{j}\unu}\right)
\tts_{\bga},
\end{align*}
where we have used Lemma~\ref{L130717_3} for the first equality, 
Lemma~\ref{L150717_6} for the second equality, Lemma~\ref{LUomBom} and (\ref{E150717_8}) for the third equality, 
(\ref{EP}), (\ref{150717_10}) and Corollary~\ref{CMac} for the fourth equality, and Lemma~\ref{L160717} for the fifth equality.
\end{proof}

\subsection{Standard characters and decomposition numbers}
We continue with the assumption that $A$ is basic and use the notation of \S\ref{SSOneColStdChar}. 
In addition, we denote 
$$
\La^{I\times I}_+(n):=\{\underline{\bga}=(\bga_0,\dots,\bga_\ell)\mid \bga_0,\dots,\bga_\ell\in\La^I_+(n)\}.
$$ 
Thus for $\underline{\bga}=(\bga_0,\dots,\bga_\ell)\in \La^{I\times I}_+(n)$, each $\bga_i$ looks like $\bga_i=(\ga_i^{(0)},\dots,\ga_i^{(\ell)})$ with $\ga_i^{(j)}\in\La_+(n)$. 

Given a multipartition $\bnu=(\nu_x)_{x\in X}\in \La^{X}_+(n)$ and recalling (\ref{E170717_10}), we associate to $\bnu$ the tuple
$$
\underline{\bnu}=(\unu_0,\dots,\unu_\ell)\in \La_+(n)^{t_0}\times \La_+(n)^{t_1}\times\cdots\times \La_+(n)^{t_\ell}
$$
of multipartitions $\unu_i:=(\nu_{x_{i,1}},\dots,\nu_{x_{i,t_i}})\in\La_+(n)^{t_i}$. 
We denote
\begin{equation}
\label{E170717_6}
\deg(\bnu):=\prod_{x\in X}\deg(x)^{|\nu_x|}=\prod_{i\in I}\deg^{(i)}(\unu_i).
\end{equation}

For $i,j\in I$, $\bla=(\la^{(0)},\dots, \la^{(\ell)})\in\La^I_+(n)$ and $\underline{\bga}=(\bga_0,\dots,\bga_\ell)\in\La^{I\times I}_+(n)$ we observe: 
\begin{equation}
\label{E170717_7}
c^{\la^{(i)}}_{\unu_i^{\ueps_i}}
=
c^{\la^{(i)}}_{(\nu_x^{|x|})_{x\in X(i)}}
\qquad\text{and}\qquad
c^{\ga^{(j)}_i}_{\,{}_j\unu_i}
=
c^{\ga^{(j)}_i}_{(\nu_x)_{x\in {}_jX(i)}}.
\end{equation}

\begin{Lemma} \label{L170717_14} 
For 
$\bnu\in \La^X_+(n)$, we have 
$$
\sum_{\underline{\bga}=(\bga_0,\dots,\bga_\ell)\in\La^{I\times I}_+(n)} \prod_{i\in I}\left(\left[\prod_{j\in I}c^{\ga^{(j)}_i}_{\,{}_j\unu_i}\right]\tts_{\bga_i}\right)=
\sum_{\bmu\in\La^I_+(n)}\left(\prod_{j\in I}c^{\,\mu^{(j)}}_{(\nu_x)_{x\in {}_jX}}\right)\tts_\bmu.
$$
\end{Lemma}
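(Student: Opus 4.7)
The plan is to work inside the ring $\Z\La^I(n)=(\Z\La(n))^{\otimes I}$ (see (\ref{EIdentifyR})), exploiting the fact that both sides of the claimed identity are products of tensors, and then to recognise the inner sums as applications of the classical Littlewood--Richardson rule. The key observation is that, for each $i\in I$, the element $\tts_{\bga_i}\in \Z\La^I(n)$ factors as a tensor $\bigotimes_{j\in I}\tts_{\ga_i^{(j)}}$, and so the product $\prod_{i\in I}\tts_{\bga_i}$ is computed componentwise in the tensor slots: the $j$-th slot contains $\prod_{i\in I}\tts_{\ga_i^{(j)}}$.

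First I would use this to rewrite the LHS as
\[
\sum_{(\ga_i^{(j)})_{i,j}}\Big(\prod_{i,j}c^{\ga_i^{(j)}}_{\,{}_j\unu_i}\Big)\,\bigotimes_{j\in I}\Big(\prod_{i\in I}\tts_{\ga_i^{(j)}}\Big),
\]
and then interchange the tensor product with the summation over the independent data $(\ga_i^{(j)})_i$ (for each fixed $j$), to arrive at
\[
\bigotimes_{j\in I}\bigg[\sum_{(\ga_0,\ldots,\ga_\ell)\in\La_+(n)^{I}}\Big(\prod_{i\in I}c^{\ga_i}_{\,{}_j\unu_i}\Big)\prod_{i\in I}\tts_{\ga_i}\bigg].
\]
This is the crucial decoupling: the different values of $j$ contribute independently.

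Next I would evaluate the bracket in tensor slot $j$. Recalling (\ref{E170717}) and the definition of ${}_jX$, the tuple ${}_j\unu_i=(\nu_x)_{x\in {}_jX(i)}$. By the very definition of the Littlewood--Richardson coefficients,
\[
\sum_{\ga\in\La_+(n)}c^{\ga}_{\,{}_j\unu_i}\tts_{\ga}=\prod_{x\in {}_jX(i)}\tts_{\nu_x},
\]
so multiplying over $i\in I$ gives $\prod_{i\in I}\prod_{x\in {}_jX(i)}\tts_{\nu_x}=\prod_{x\in {}_jX}\tts_{\nu_x}$, where I used ${}_jX=\bigsqcup_{i\in I}{}_jX(i)$. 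Re-expanding via Littlewood--Richardson one more time yields
\[
\prod_{x\in {}_jX}\tts_{\nu_x}=\sum_{\mu^{(j)}\in\La_+(n)}c^{\,\mu^{(j)}}_{(\nu_x)_{x\in {}_jX}}\,\tts_{\mu^{(j)}}.
\]

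Finally I would assemble the tensor product over $j\in I$ to obtain exactly $\sum_{\bmu\in\La^I_+(n)}\big(\prod_{j}c^{\,\mu^{(j)}}_{(\nu_x)_{x\in {}_jX}}\big)\tts_\bmu$, as required. The only real content is the bookkeeping of swapping $\sum$ and $\bigotimes$ in the first step; no obstacle beyond notation is expected, since the tensor product structure on $(\Z\La(n))^{\otimes I}$ makes all the rearrangements formal and the two applications of Littlewood--Richardson are direct. (One could alternatively derive the statement in one shot from Lemma~\ref{LLRTrans} applied separately in each tensor slot, but the two-step ``contract and re-expand'' approach above is the most transparent.)
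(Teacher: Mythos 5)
Your argument is correct and is essentially the computation in the paper: both proofs are purely formal manipulations in $(\Z\La(n))^{\otimes I}$ resting on the Littlewood--Richardson rule (\ref{E100717_6}) and the identification (\ref{E170717_7}). The only difference is one of ordering --- the paper first expands $\prod_{i}\tts_{\bga_i}$ into the $\tts_\bmu$ and then invokes Lemma~\ref{LLRTrans}, whereas your slotwise ``contract and re-expand'' step amounts to an inline proof of that lemma.
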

\begin{proof}
In view of (\ref{E170717_7}) and  (\ref{E100717_6}), the left hand side equals 
\begin{align*}
&\sum_{\underline{\bga}\in\La^{I\times I}_+(n)} \left(\prod_{i,j\in I}c^{\ga^{(j)}_i}_{\,{}_j\unu_i}\right)\left(\prod_{i\in I}\tts_{\bga_i}\right)
\\
=&
\sum_{\underline{\bga}\in\La^{I\times I}_+(n)} \left(\prod_{i,j\in I}c^{\ga^{(j)}_i}_{(\nu_x)_{x\in {}_jX(i)}}\right)\left(\sum_{\bmu\in\La^I_+(n)} \left[\prod_{i\in I}c^{\mu^{(i)}}_{\ga_0^{(i)},\dots,\ga_\ell^{(i)}}\right] \tts_{\bmu}\right)
\\
=&
\sum_{\bmu\in\La^I_+(n)}\sum_{\underline{\bga}\in\La^{I\times I}_+(n)} \left(\prod_{i,j\in I}c^{\ga^{(j)}_i}_{(\nu_x)_{x\in {}_jX(i)}}\right)\left( \prod_{j\in I}c^{\mu^{(j)}}_{\ga_0^{(j)},\dots,\ga_\ell^{(j)}}\right) \tts_{\bmu},
\end{align*}
which equals the right hand side thanks to Lemma~\ref{LLRTrans}. 
\end{proof}

Now we can get a general formula for decomposition numbers:

\begin{Proposition} \label{C160717_3}
For $\bla\in\La^I_+(n,d)$, we have 
$$
\ch^q_\pi \De(\bla)=
\sum_{\bmu\in\La^I_+(n)}
\sum_{\bnu\in\La^{X}_+(n)}
\deg(\bnu)\left(\prod_{i\in I}
c^{\,\la^{(i)}}_{(\nu_x^{|x|})_{x\in X(i)}}c^{\,\mu^{(i)}}_{(\nu_x)_{x\in {}_iX}}\right)\tts_\bmu.
$$
\end{Proposition}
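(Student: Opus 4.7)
The plan is to reduce to the one-color computation already carried out in Proposition~\ref{P170717}, and then to repackage the data coming from all the colors $i\in I$ into a single multipartition $\bnu\in\La^X_+(n)$.

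First I would invoke Theorem~\ref{T080717_2}(i), which gives $\De(\bla)\cong\bigotimes_{i\in I}\De(\text{\boldmath$\lambda^{(i)}$})$ as $T^A_\a(n,d)$-modules under the coproduct action. Together with the multiplicativity of characters under $\nabla$ recorded in Lemma~\ref{L070318_4}, this yields
$$\ch^q_\pi\De(\bla)\;=\;\prod_{i\in I}\ch^q_\pi\De(\text{\boldmath$\lambda^{(i)}$}).$$
Into each factor I would substitute the formula from Proposition~\ref{P170717}, which expresses $\ch^q_\pi\De(\text{\boldmath$\lambda^{(i)}$})$ as a double sum over $\bga_i\in\La^I_+(n)$ and $\unu_i\in\La_+(n)^{t_i}$.

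Next I would expand the product over $i\in I$. Since $X=\bigsqcup_{i\in I}X(i)$, the tuple of summation variables $(\unu_0,\dots,\unu_\ell)$ is in bijection with a single multipartition $\bnu=(\nu_x)_{x\in X}\in\La^X_+(n)$, with $\unu_i=(\nu_{x_{i,1}},\dots,\nu_{x_{i,t_i}})$. Under this identification, the degree factors multiply up to $\deg(\bnu)$ by (\ref{E170717_6}), and the Littlewood--Richardson coefficient $c^{\la^{(i)}}_{\unu_i^{\ueps_i}}$ equals $c^{\la^{(i)}}_{(\nu_x^{|x|})_{x\in X(i)}}$ by (\ref{E170717_7}).

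The remaining combinatorial step is to carry out the summation over $\underline{\bga}=(\bga_0,\dots,\bga_\ell)\in\La^{I\times I}_+(n)$ that arises from the product $\prod_{i\in I}\tts_{\bga_i}$. This is exactly the content of Lemma~\ref{L170717_14}: for fixed $\bnu$, it rewrites
$$\sum_{\underline{\bga}\in\La^{I\times I}_+(n)}\prod_{i\in I}\Big(\Big[\prod_{j\in I}c^{\ga^{(j)}_i}_{\,{}_j\unu_i}\Big]\tts_{\bga_i}\Big)\;=\;\sum_{\bmu\in\La^I_+(n)}\Big(\prod_{j\in I}c^{\,\mu^{(j)}}_{(\nu_x)_{x\in{}_jX}}\Big)\tts_{\bmu}.$$
Assembling these ingredients delivers the claimed expression for $\ch^q_\pi\De(\bla)$.

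There is no genuine obstacle beyond careful multi-index bookkeeping. The main step to execute cleanly is the reindexing of $(\unu_0,\dots,\unu_\ell)$ as $\bnu\in\La^X_+(n)$, ensuring that the two products of Littlewood--Richardson coefficients split along colors precisely as in the statement: the coefficients $c^{\la^{(i)}}_{(\nu_x^{|x|})_{x\in X(i)}}$ come directly from Proposition~\ref{P170717}, while the coefficients $c^{\mu^{(i)}}_{(\nu_x)_{x\in{}_iX}}$ are produced by the transitivity lemma (Lemma~\ref{LLRTrans}) that underlies Lemma~\ref{L170717_14}.
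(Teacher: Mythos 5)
Your proposal matches the paper's proof step for step: it uses Theorem~\ref{T080717_2}(i) and Lemma~\ref{L070318_4} to factor the character, substitutes Proposition~\ref{P170717} for each one-colour factor, reindexes $(\unu_0,\dots,\unu_\ell)$ as $\bnu\in\La^X_+(n)$ and applies (\ref{E170717_6}), (\ref{E170717_7}), and Lemma~\ref{L170717_14} exactly as the paper does. This is the same argument; no gap.
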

\begin{proof}
The result follows from the following computation
\begin{align*}
\ch^q_\pi \De(\bla)
=&\ch^q_\pi \bigg(\bigotimes_{i\in I}\De(\text{\boldmath$\lambda^{(i)}$})\bigg)
\\
=&
\prod_{i\in I}\,\ch^q_\pi  \De(\text{\boldmath$\lambda^{(i)}$})
\\
=&
\prod_{i\in I}\left(\sum_{\bga_i\in\La^I_+(n)} \sum_{\unu_i\in\La_+(n)^{t_i}}
c^{\la^{(i)}}_{\unu_i^{\ueps_i}}
\deg^{(i)}(\unu_i)
\left[\prod_{j\in I}c^{\ga^{(j)}_i}_{\,{}_j\unu_i}\right]
\tts_{\bga_i}\right)
\\
=&
\sum_{\underline{\bga}\in\La^{I\times I}_+(n)}
\sum_{\bnu\in\La^{X}_+(n)}
\prod_{i\in I}\left(
c^{\la^{(i)}}_{\unu_i^{\ueps_i}}
\deg^{(i)}(\unu_i)
\left[\prod_{j\in I}c^{\ga^{(j)}_i}_{\,{}_j\unu_i}\right]
\tts_{\bga_i}\right)
\\
=&
\sum_{\bnu\in\La^{X}_+(n)}
\deg(\bnu)\left(\prod_{i\in I}
c^{\la^{(i)}}_{\unu_i^{\ueps_i}}\right)
\sum_{\underline{\bga}\in\La^{I\times I}_+(n)}
\prod_{i\in I}\left(
\left[\prod_{j\in I}c^{\ga^{(j)}_i}_{\,{}_j\unu_i}\right]
\tts_{\bga_i}\right)
\\
=&
\sum_{\bnu\in\La^{X}_+(n)}
\deg(\bnu)\left(\prod_{i\in I}
c^{\,\la^{(i)}}_{(\nu_x^{|x|})_{x\in X(i)}}\right)
\sum_{\bmu\in\La^I_+(n)}\left(\prod_{j\in I}c^{\,\mu^{(j)}}_{(\nu_x)_{x\in {}_jX}}\right)\tts_\bmu,
\end{align*}
where we have used Theorem~\ref{T080717_2} for the first equality, Lemma~\ref{L070318_4} for the second equality, 
Proposition~\ref{P170717} for the third equality, (\ref{E170717_6}) for the fifth equality, (\ref{E170717_7}) and Lemma~\ref{L170717_14} for the last equality.  
\end{proof}

\begin{Corollary} \label{C160717_23}
Let $\bla,\bmu\in\La^I_+(n,d)$. Then 
$$d_{\bla,\bmu}
=\sum_{\bga\in\La^I_+(n)}
\sum_{\bnu\in\La^{X}_+(n)}
d_{\bga,\bmu}^{\operatorname{cl}}
\deg(\bnu)\left(\prod_{i\in I}
c^{\la^{(i)}}_{(\nu_x^{|x|})_{x\in X(i)}}c^{\ga^{(i)}}_{(\nu_x)_{x\in {}_iX}}\right).
$$
\end{Corollary}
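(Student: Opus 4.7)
The plan is to combine Proposition~\ref{C160717_3} with Corollary~\ref{C160717}, then appeal to linear independence of irreducible characters (Lemma~\ref{LIndep}) to extract the decomposition numbers.

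First, I would rewrite the formula of Proposition~\ref{C160717_3} with the summation variable $\bmu$ renamed to $\bga$, giving
\begin{equation*}
\ch^q_\pi \De(\bla)=
\sum_{\bga\in\La^I_+(n)}\,\sum_{\bnu\in\La^{X}_+(n)}
\deg(\bnu)\left(\prod_{i\in I}
c^{\la^{(i)}}_{(\nu_x^{|x|})_{x\in X(i)}}\,c^{\ga^{(i)}}_{(\nu_x)_{x\in {}_iX}}\right)\tts_\bga.
\end{equation*}
Next, I would substitute the expansion $\tts_\bga=\sum_{\bmu\in\La^I_+(n)} d^{\,\operatorname{cl}}_{\bga,\bmu}\,\ch^q_\pi L(\bmu)$ provided by Corollary~\ref{C160717}, and interchange the order of summation, to obtain
\begin{equation*}
\ch^q_\pi \De(\bla)=
\sum_{\bmu\in\La^I_+(n)}
\Biggl[\,\sum_{\bga\in\La^I_+(n)}\sum_{\bnu\in\La^{X}_+(n)}
d^{\,\operatorname{cl}}_{\bga,\bmu}\,\deg(\bnu)\prod_{i\in I}
c^{\la^{(i)}}_{(\nu_x^{|x|})_{x\in X(i)}}\,c^{\ga^{(i)}}_{(\nu_x)_{x\in {}_iX}}\Biggr]\ch^q_\pi L(\bmu).
\end{equation*}

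On the other hand, the composition series of $\De(\bla)$ gives directly
\begin{equation*}
\ch^q_\pi \De(\bla)=\sum_{\bmu\in\La^I_+(n,d)} d_{\bla,\bmu}\,\ch^q_\pi L(\bmu).
\end{equation*}
Since the family $\{\ch^q_\pi L(\bmu)\mid \bmu\in\La^I_+(n,d)\}$ is $R$-linearly independent by Lemma~\ref{LIndep}, comparing coefficients yields the claimed identity. Note that terms with $\bmu\notin\La^I_+(n,d)$ contribute zero: the factor $d^{\,\operatorname{cl}}_{\bga,\bmu}$ vanishes unless $|\bga|=|\bmu|$, and unless $|\bmu|=d$ the coefficient in Proposition~\ref{C160717_3} also vanishes (this is why we may harmlessly restrict the outer sum to $\La^I_+(n,d)$).

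The main obstacle is essentially nonexistent here: the substantive work has already been carried out in Proposition~\ref{C160717_3} (which in turn depends on Proposition~\ref{P170717} and Lemma~\ref{L170717_14}). All that remains is one substitution and an application of linear independence. The only mild subtlety is bookkeeping of the various indexing sets $\La^I_+(n)$ vs.\ $\La^I_+(n,d)$, which is dealt with transparently by the vanishing statements recorded above.
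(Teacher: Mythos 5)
Your proposal is correct and follows exactly the paper's own route: substitute the expansion of $\tts_\bga$ from Corollary~\ref{C160717} into the character formula of Proposition~\ref{C160717_3}, then compare coefficients using the $R$-linear independence of irreducible characters from Lemma~\ref{LIndep}. The bookkeeping remark about which $\bmu$ contribute is a harmless and correct addition.
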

\begin{proof}
This follows from Proposition~\ref{C160717_3}, Corollary~\ref{C160717} and Lemma~\ref{LIndep}. 
\end{proof}


\subsection{Decomposition numbers for non-basic algebras}\label{nonbasicdecomp}
Recall from (\ref{E080318}) the decomposition numbers $d_{i,j}^{n,\eps}$ for the algebra $A$. 
Define
\begin{align*}
\La_+^D(n) = \prod_{i,j \in I,\, m \in \Z,\, \varepsilon \in \Z_2} \La_+(n)^{d^{m,\varepsilon}_{i,j}}.
\end{align*}
We consider elements \(\bnu \in \La_+^D(n)\) as multipartitions $(\nu^{(i,j,m,\varepsilon,t)})$ with components \(\nu^{(i,j,m,\varepsilon,t)}\in \La_+(n)\) indexed by \(i,j \in I\), \(m \in \Z\), \(\varepsilon \in \Z_2\), and \(t \in [1,d_{i,j}^{m, \varepsilon}]\). 

Let $\bnu\in\La_+^D(n)$. We define $\bar\bnu=(\bar\nu^{(i,j,m,\varepsilon,t)})\in\La_+^D(n)$ via $\bar\nu^{(i,j,m,\varepsilon,t)}:=(\nu^{(i,j,m,\varepsilon,t)})^\varepsilon$; i.e. the conjugate partition if \(\varepsilon = \overline 1\), or the unchanged partition if \(\varepsilon = \overline 0\).
For \(i \in I\), we define the multipartitions
\begin{align*}
\bnu_i&:= (\nu^{(j,i,m,\varepsilon,t)})_{j \in I, m \in \Z, \varepsilon \in \Z_2, t \in [1,d_{j,i}^{m,\varepsilon}]},
\\
{}_i\bar \bnu&:= (\bar \nu^{(i,j,m,\varepsilon,t)})_{j \in I, m \in \Z, \varepsilon \in \Z_2, t \in [1,d_{i,j}^{m,\varepsilon}]}.
\end{align*}
Finally, define
\begin{align*}
\deg(\bnu) = \prod_{i,j \in I,\, m \in \Z,\, \varepsilon \in \Z_2,\, t \in [1,d_{i,j}^{m, \varepsilon}]} (q^m \pi^\varepsilon)^{|\nu^{(i,j,m,\varepsilon,t)}|}.
\end{align*}
Then we have the following.

\begin{Theorem}\label{NonBasicDecomp}
Suppose that $(A,\a)$ is a unital pair and assume that \(A_{\overline 1} \subset J(A)\). Then for \(\bla, \bmu \in \La_+^I(n,d)\), the corresponding decomposition number is given by
\begin{align*}
d_{\bla, \bmu}= \sum_{\bga \in \La_+^I(n)} \sum_{\bnu \in \La_+^D(n)} d^{\,\textup{cl}}_{\bga,\bmu} \deg(\bnu)\left( \prod_{i \in I } c^{\la^{(i)}}_{{}_i\overline{\bnu}} c^{\ga^{(i)}}_{\bnu_i}\right).
\end{align*}
\end{Theorem}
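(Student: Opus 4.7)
The strategy is to reduce to the basic case treated by Corollary~\ref{C160717_23} via the Morita equivalence provided by Theorem~\ref{MorBasic}. Working over the local ring $\F$, Theorem~\ref{MorBasic} produces a strongly adapted $\bar\a$-conforming idempotent $f\in\a$ such that $\bar A:=fAf$ is basic (using the hypothesis $A_{\bar 1}\subset J(A)$), and so that the functor $V\mapsto fV$ is an equivalence $\mod{A}\iso\mod{\bar A}$ preserving both standard and irreducible modules. In particular the local decomposition numbers $d_{i,j}^{n,\eps}$ of $\bar A$ agree with those of $A$, and the basic-case character formula (\ref{E110717}) then gives $|{}_j\bar X(i)_\eps^n|=d_{i,j}^{n,\eps}$.

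I would then transport the truncation to the Schurified algebra. By Lemma~\ref{LTrunc} one has $T^{\bar A}_{\bar\a}(n,d)=\xi^f T^A_\a(n,d)\xi^f$, and Lemma~\ref{L210218_1} shows $\xi^f$ is strongly adapted for the heredity data $(\La^I_+(n,d),\X,\Y)$, with $\xi^f$-truncation $(\La^I_+(n,d),\bar\X,\bar\Y)$. Since the Morita property of Theorem~\ref{MorBasic}(iv) gives $fL_A(i)\cong L_{\bar A}(i)\neq 0$ for every $i\in I$, one has $\bar I=\bar I'=I$, so Lemma~\ref{adaptSchur} yields $\xi^f L(\bla)\neq 0$ for all $\bla\in\La^I_+(n,d)$. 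Standard idempotent-truncation theory then gives
$$d_{\bla,\bmu}=[\De(\bla):L(\bmu)]=[\xi^f\De(\bla):\xi^f L(\bmu)],$$
and identifying $\xi^f\De(\bla)$ with the standard module $\De_{\bar A}(\bla)$ of the basic algebra $T^{\bar A}_{\bar\a}(n,d)$ reduces the computation to the basic case.

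Applying Corollary~\ref{C160717_23} to $\bar A$ expresses $d_{\bla,\bmu}$ as a sum over $\bga\in\La^I_+(n)$ and $\bnu\in\La^{\bar X}_+(n)$. To match the statement of the theorem I would then identify $\La^{\bar X}_+(n)$ with $\La_+^D(n)$: partition $\bar X=\bigsqcup_{i,j,m,\eps}{}_j\bar X(i)_\eps^m$ and choose within each piece an ordering $t\in[1,d_{i,j}^{m,\eps}]$, giving a bijection $(\nu_x)_{x\in\bar X}\leftrightarrow(\nu^{(i,j,m,\eps,t)})$. Under this bijection, direct inspection of the definitions shows $\deg(\bnu)$ transports to $\deg(\bnu)$ of the theorem, $(\nu_x^{|x|})_{x\in\bar X(i)}$ transports to ${}_i\bar\bnu$ (since $|x|=\eps$ for $x\in\bar X(i)_\eps$), and $(\nu_x)_{x\in{}_i\bar X}$ transports to $\bnu_i$. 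Substituting these into the formula from Corollary~\ref{C160717_23} yields the expression in the theorem.

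The main obstacle I expect is the clean identification $\xi^f\De(\bla)\cong\De_{\bar A}(\bla)$ in the second paragraph: strong adaptation guarantees $\xi^f e_\bla=e_\bla$, so $\xi^f\De(\bla)$ is generated by $v_\bla$ as a $T^{\bar A}_{\bar\a}(n,d)$-module, but one must verify that $\xi^f$-truncation commutes with passage to the quasi-hereditary quotient $T^A_\a(n,d)/T^A_\a(n,d)^{>\bla}$ so that $\xi^f\De(\bla)$ inherits exactly the standard-module structure for the truncated heredity basis, with dimension counted by $\Std^{\bar X}(\bla)$.
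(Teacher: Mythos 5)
Your proposal is correct and follows essentially the same route as the paper: reduce to the basic case via the truncation $f=\sum_{i\in I}e_i'$ furnished by Theorem~\ref{MorBasic} and its Schurification $\xi^f$, then apply Corollary~\ref{C160717_23} and reindex $\La^{\bar X}_+(n)$ as $\La^D_+(n)$. The one detail you elide is that Theorem~\ref{MorBasic} replaces the heredity data $I,X,Y$ by new data $I,X',Y'$ (with respect to which $f$ is strongly adapted), so one must first note, as the paper does via Proposition~\ref{AaInd} and the invariance of the ideals $A(\Om)$, that $T^A_\a(n,d)$ and its standard modules are unchanged by this replacement before assuming the $e_i$ are primitive in $\a$.
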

\begin{proof}
By Theorem~\ref{MorBasic}, there exists an $\a$-conforming heredity data $I,X',Y'$ with the same ideals $A(\Om)$ and such that the new initial elements $\{e'_i \mid i \in I\}$ are primitive idempotents in \(\mathfrak{a}\). Note that $T^A_\a(n,d)$ depends only on $\a$, see Proposition~\ref{AaInd}. Moreover, it is easy to see that the standard modules defined using the new heredity data are the same as the ones defined using the original heredity data since the ideals $A(\Om)$ have not changed. We we may and will assume that the idempotents \(\{e_i \mid i \in I\}\) are primitive in \(\a\). 

We now set $f:=\sum_{i\in I} e_i$, $\bar A:=fAf$ and $\bar\a:=f\a f$. Then by Theorem~\ref{MorBasic}(i), $f$ is strongly adapted so that $\bar A$ is based quasihereditary with $\bar \a$-conforming heredity data $I,\bar X,\bar Y$ which is the $f$-truncation of $I,X,Y$. Moreover, by Theorem~\ref{MorBasic}(iii), $\bar A:=fAf$ and $\bar\a:=f\a f$ are basic. By Theorem~\ref{MorBasic}(iv), $\funF:\mod{A}\to\mod{\bar A}, \ V\mapsto fV$ is an equivalence of categories with $\funF(L_A(i))\cong L_{\bar A}(i)$ and $\funF(\De_A(i))\cong \De_{\bar A}(i)$. In particular we have the equality of decomposition numbers $[\De_A(i):q^m\pi^\eps L_A(j)]=[\De_{\bar A}(i):q^m\pi^\eps L_{\bar A}(j)]$ for all $i,j,m,\eps$. 

By Lemma~\ref{L210218_1}(ii), we have a strongly adapted idempotent $\xi^f\in T^A_\a(n,d)$, and taking into account  Lemma~\ref{LTrunc}(ii), we have that $\xi^f T^A_\a(n,d)\xi^f= T^{\bar A}_{\bar \a}(n,d)$ is a quasihereditary algebra with heredity data $\La^{I}_+(n,d),\bar \X,\bar\Y$ for $$\bar\X
=\bigsqcup_{\bla\in \La^{\bar I}_+(n,d)}\{\X_\Stab\mid \Stab\in \Std^{\overline{X}}(\bla)\}\ \text{and}\ \bar\Y
=\bigsqcup_{\bla\in \La^{\bar I}_+(n,d)}\{\Y_\T\mid \T\in \Std^{\overline{Y}}(\bla)\}.$$ 
It follows that the functor $\funG: \mod{T^A_\a(n,d)}\to \mod{T^{\bar A}_{\bar \a}(n,d)},\ W\mapsto \xi^f W$ is an equivalence of categories. It is clear that 
$$\funG(\De_{T^A_\a(n,d)}(\bla))=\De_{T^{\bar A}_{\bar \a}(n,d)}(\bla)\ \text{and}\ 
\funG(L_{T^A_\a(n,d)}(\bla))=L_{T^{\bar A}_{\bar \a}(n,d)}(\bla).
$$
for all $\bla\in\La^I_+(n,d)$, and so we have the equality of decomposition numbers
\begin{equation}\label{E080318_3}
[\De_{T^A_\a(n,d)}(\bla):q^m\pi^\eps L_{T^A_\a(n,d)}(\bmu)]
=[\De_{T^{\bar A}_{\bar \a}(n,d)}(\bla):q^m\pi^\eps L_{T^{\bar A}_{\bar \a}(n,d)}(\bmu)]
\end{equation}
for all $\bla,\bmu,m,\eps$. 

From Corollary~\ref{C160717_23}, we can compute the decomposition numbers in the right hand of (\ref{E080318_3}) since $\bar A$ is basic. Moreover, it is easy to see that for basic algebras, the formula claimed in the theorem is equivalent to the formula of Corollary~\ref{C160717_23}. Now  the theorem follows from (\ref{E080318_3}). 
\end{proof}

\subsection{Blocks of Schurifications}\label{SchurBlocks}
The following result, proved in \cite[Lemma 4.8]{greenTwo}, shows that any algebra decomposition of \(A\) yields an associated decomposition 
 of $T^A_\a(n,d)$.
 
\begin{Lemma}
\label{BlockDecomp}
Let \(m \in \ZZ_{>0}\). For \(t \in [1,m]\) assume that \((A_t, \fa_t)\) is a good pair. Write \(A:= \bigoplus_{t=1}^m A_t\) and \(\fa := \bigoplus_{t=1}^m \fa_t\). Then we have
\begin{align*}
S^A(n,d) \cong \bigoplus_{\nu \in \La(m,d)} \bigotimes_{t=1}^m S^{A_t}(n,\nu_t)
\qquad
\textup{and}
\qquad
T^A_\fa(n,d) \cong \bigoplus_{\nu \in \La(m,d)} \bigotimes_{t=1}^m T^{A_t}_{\fa_t}(n,\nu_t)
\end{align*}
as \(\k\)-superalgebras.
\end{Lemma}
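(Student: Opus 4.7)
\medskip

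\noindent\textbf{Proof proposal.}
The plan is to trace through the direct-sum decomposition at the level of $M_n(A)^{\otimes d}$, take $\Si_d$-invariants, and then check that the integral $T^A_\fa$-lattice is preserved. First I would note that $A = \bigoplus_{t=1}^m A_t$ induces $M_n(A) = \bigoplus_{t=1}^m M_n(A_t)$ as superalgebras, and hence
\begin{align*}
M_n(A)^{\otimes d} \;=\; \bigoplus_{f\colon [1,d]\to[1,m]}\; \bigotimes_{k=1}^d M_n(A_{f(k)}).
\end{align*}
The right $\Si_d$-action from \S\ref{SSA} permutes the tensor factors (up to signs determined by $\langle \si;\ba\rangle$), so it permutes the direct summands via $f \mapsto f \circ \si$. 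The orbits of this $\Si_d$-action on the index set are naturally labelled by compositions $\nu \in \La(m,d)$, where $\nu_t := |f^{-1}(t)|$, and the stabilizer of any $f$ of type $\nu$ is conjugate to $\Si_\nu := \Si_{\nu_1}\times\cdots\times \Si_{\nu_m}$.

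For each $\nu$, let $W_\nu \subseteq M_n(A)^{\otimes d}$ denote the sum of the summands over functions of type $\nu$. Then $W_\nu$ is $\Si_d$-stable, and $M_n(A)^{\otimes d} = \bigoplus_\nu W_\nu$. Choosing the representative $f_\nu$ with $f_\nu^{-1}(t) = [\nu_1+\cdots+\nu_{t-1}+1,\, \nu_1+\cdots+\nu_t]$, the $\Si_d$-action identifies
\begin{align*}
W_\nu^{\Si_d} \;\cong\; \Bigl(\bigotimes_{t=1}^m M_n(A_t)^{\otimes \nu_t}\Bigr)^{\Si_\nu} \;=\; \bigotimes_{t=1}^m S^{A_t}(n,\nu_t),
\end{align*}
giving the first isomorphism after summing over $\nu \in \La(m,d)$. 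The fact that this is an isomorphism of superalgebras follows because the product of two invariants from distinct $W_\nu$, $W_{\nu'}$ (with $\nu\neq \nu'$) already vanishes inside $M_n(A)^{\otimes d}$: a simple tensor in $W_\nu$ and one in $W_{\nu'}$ have incompatible block-diagonal patterns, so their product in $M_n(A)^{\otimes d}$ is zero.

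For the $T^A_\fa$ statement, I would work with the basis $\{\eta^\bb_{\br,\bs}\}$ of \S\ref{SSA}. Combining $B = \bigsqcup_t B^{(t)}$ (where $B^{(t)}$ is a heredity basis of $A_t$) with $B_\c = \bigsqcup_t B^{(t)}_\c$, the pre-factor $[\bb,\br,\bs]^!_\c$ factors as $\prod_t [\bb^{(t)},\br^{(t)},\bs^{(t)}]^!_\c$, where $\bb^{(t)}$ etc. denote the subwords of indices $k$ with $b_k \in B^{(t)}$. Under the identification of $W_\nu^{\Si_d}$ with $\bigotimes_t S^{A_t}(n,\nu_t)$ above, this exactly matches the $\eta$-scaling on the right-hand side, so the $T^A_\fa(n,d)$-lattice restricted to $W_\nu^{\Si_d}$ is $\bigotimes_t T^{A_t}_{\fa_t}(n,\nu_t)$. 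The main obstacle I anticipate is bookkeeping around the sign conventions and making sure the $\Si_d$-orbit representatives chosen to define $\xi^\bb_{\br,\bs}$ are compatible with the chosen representatives $f_\nu$, so that Lemma~\ref{LXiZero} and the $\c$-factorial identity combine cleanly rather than introducing extra signs; once these bookkeeping issues are in place, the second isomorphism follows immediately from the first.
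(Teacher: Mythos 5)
The paper itself does not prove Lemma~\ref{BlockDecomp}; it is quoted from the companion paper \cite[Lemma 4.8]{greenTwo}, so there is no in-paper argument to compare against. Your proposal, however, is correct and is the natural argument one would expect that reference to use: $M_n(A)^{\otimes d}$ decomposes over functions $f:[1,d]\to[1,m]$, the $\Si_d$-action permutes summands by $f\mapsto f\circ\si$, and for $\nu\neq\nu'$ the summands $W_\nu,W_{\nu'}$ multiply to zero because at any position $k$ with $f(k)\neq f'(k)$ one has $M_n(A_{f(k)})M_n(A_{f'(k)})=0$, so each $W_\nu$ is a two-sided ideal and the projection to the base component indexed by the ordered $f_\nu$ is an algebra isomorphism onto $\bigl(\bigotimes_t M_n(A_t)^{\otimes\nu_t}\bigr)^{\Si_\nu}$. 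The only points left as ``bookkeeping'' that genuinely need checking are (a) that $(\bigotimes_t V_t)^{\prod_t\Si_{\nu_t}}=\bigotimes_t V_t^{\Si_{\nu_t}}$ for these free $\k$-modules, which follows immediately once you know each $S^{A_t}(n,\nu_t)$ is a full sublattice with the $\xi$-basis matching the projection of the $\xi$-basis of $S^A(n,d)$; and (b) that for $B=\bigsqcup_t B^{(t)}$ and $B_\c=\bigsqcup_t B_\c^{(t)}$ one has $[\bb,\br,\bs]^!_\c=\prod_t[\bb^{(t)},\br^{(t)},\bs^{(t)}]^!_\c$ since $(b_k,r_k,s_k)=(b,r,s)$ with $b\in B_\c^{(t)}$ forces $b_k\in B^{(t)}$, so the $\eta$-lattice also factors. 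Both are routine, and the signs cause no trouble because Lemma~\ref{LXiZero} lets you normalize $\bb$ so that its entries are grouped by the block $t$ to match the ordered $f_\nu$. No gap.
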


We now examine conditions under which \(T^{A}_{\fa}(n,d)\) is known to be indecomposable, thus showing that Lemma~\ref{BlockDecomp} describes a {\em block} decomposition of \(T^{A}_{\fa}(n,d)\) in terms of the blocks of \(A\) in many important cases.

Throughout this subsection, let \(e\) be an adapted idempotent for \(A\) (\(e=1\) is allowed). Recall that \(\{\bar L(i) := eL(i) \mid i \in \bar I\}\) is a complete set of simple modules for \(\bar A:= eAe\). We also have that \(\bar P(i) := eP(i)\) is the projective cover of \(\bar L(i)\) for all \(i \in \bar I\). 

For \(i, j \in \bar I\), we will write \(i \sim j\) if there exists a sequence \(i=i_0, i_1, \ldots, i_m = j \in \bar I\) such that \(\bar P(i_{t-1})\) and \(\bar P(i_t)\) share a common composition factor, for every \(t \in [1,m]\). Then \(i \sim j\) if and only if \(\bar L(i)\) and \(\bar L(j)\) belong to the same block of \(\bar A\), and thus \(\bar A\) is indecomposable if and only if \(i \sim j\) for all \(i,j \in \bar I\).

For all \(i,j \in \bar I\), we have, using BGG reciprocity:
\begin{align*}
[\bar P(i): \bar L(j)] \neq 0 & \iff 
[P(i) : L(j)] \neq 0 \\
&\iff (P(i): \Delta(k))\cdot [\Delta(k):L(j)] \neq 0 \textup{ for some }k \in I\\
&\iff [\nabla(k): L(i)] \cdot [\Delta(k):L(j)] \neq 0 \textup{ for some }k \in I \\
&\iff [\Delta^\op(k): L^\op (i)] \cdot [\Delta(k):L(j)] \neq 0 \textup{ for some }k \in I \\
&\iff d^{\,\op}_{k,i} d_{k,j} \neq 0 \textup{ for some }k \in I.
\end{align*}
Then, for \(i,j \in \bar I\), we have that \(\bar P(i)\), \(\bar P(j)\) share a common composition factor if and only if there exist \(k,k' \in I\), \(l \in \bar I\) such that
\(
d_{k,i}^{\, \op} d_{k,l} d_{k',j}^{\, \op} d_{k',l} \neq 0.
\)

Then \(i \sim j\) if and only if there exist sequences
\begin{align*}
i=i_0, \ldots, i_m = j \in \bar I, \qquad l_1, \ldots, l_m \in \bar I
\qquad
k_1, \ldots, k_m \in I, \qquad k'_1, \ldots, k'_m \in I
\end{align*}
such that 
\(
d_{k_t,i_{t-1}}^{\, \op} d_{k_t,l_t} d_{k'_t,i_t}^{\, \op} d_{k'_t,l_t} \neq 0
\)
for all \(t \in [1,m]\).

\begin{Theorem}\label{indec}
Suppose that $(A,\a)$ is a unital pair and \(A_{\overline 1} \subset J(A)\). Moreover, suppose that \(e\) is an adapted idempotent such that \(\bar A = eAe\) is indecomposable, and \(|\bar I| >1\). Then \(T^{\bar A}_{\bar \fa}(n,d)\) is indecomposable.
\end{Theorem}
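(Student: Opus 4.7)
The strategy is to show that the block graph of $T^{\bar A}_{\bar \fa}(n,d)$ is connected: its vertices are the isomorphism classes of simple modules (indexed by $\La^{\bar I'}_+(n,d)$ via Lemma~\ref{adaptSchur}), and two vertices are joined whenever the corresponding simples share a standard module as a common source of composition factors.

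First I would reduce to the case where $\bar A$ is basic. Applying Theorem~\ref{MorBasic} to the unital pair $(\bar A, \bar \fa)$ yields an idempotent $f \in \bar \fa$ such that $f\bar Af$ is basic and Morita equivalent to $\bar A$. Combining Lemma~\ref{L210218_1}(ii) (which lifts $f$ to a strongly adapted idempotent $\xi^f \in T^{\bar A}_{\bar \fa}(n,d)$) with Lemma~\ref{LTrunc}(ii) identifies $\xi^f T^{\bar A}_{\bar \fa}(n,d)\xi^f$ with $T^{f\bar Af}_{f\bar \fa f}(n,d)$, and Lemma~\ref{adaptSchur} shows that the truncation functor $V \mapsto \xi^f V$ meets every simple module. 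Hence $T^{\bar A}_{\bar \fa}(n,d)$ and $T^{f\bar Af}_{f\bar \fa f}(n,d)$ share the same block structure, so I may assume $\bar A$ itself is basic; then $\bar I' = \bar I = I$ and the explicit formula of Corollary~\ref{C160717_23} is available.

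Next, I would use Corollary~\ref{C160717_23} as the engine for constructing edges in the block graph. The key observation is that $d_{i,j} \neq 0$ in $A$ forces ${}_jX(i)$ to contain some $x \neq e_i$, and a choice of $\bnu \in \La^X_+(n)$ with mass placed on such an $x$ produces a nonzero term of $d_{\bla, \bmu}$ in which mass initially sitting in the color-$i$ component of $\bla$ is redistributed to the color-$j$ component of $\bga$. Iterating this along a block chain for $\bar A$ (which exists because $A$ is indecomposable) produces, for any starting $\bla$, a path in the block graph of $T^A_\fa(n,d)$ connecting it to multipartitions with any chosen $\|\cdot\|$-type in $\La(I,d)$. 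Within a fixed $\|\cdot\|$-type, the formula reduces to the classical decomposition number $d^{\,\textup{cl}}_{\bga,\bmu}$, which further connects multipartitions lying in a common classical block of $S(n, |\la^{(i)}|)_\F$ in each color.

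The main obstacle I anticipate is bridging different classical Schur-algebra blocks (distinguished by $p$-cores in positive characteristic): for fixed $\|\cdot\|$-type, Corollary~\ref{C160717_23} alone cannot equate multipartitions whose color components have distinct $p$-cores. Overcoming this requires using the color-mixing construction above to detour through multipartitions of a different $\|\cdot\|$-type and return via a different $p$-core in the target color. It is the combination of $|I|>1$ with the indecomposability of $A$ that guarantees enough such color swaps exist: each swap can alter the classical shapes freely through the Littlewood--Richardson coefficients $c^{\la^{(i)}}_{(\nu_x^{|x|})_{x\in X(i)}}$ and $c^{\ga^{(i)}}_{(\nu_x)_{x\in{}_iX}}$ in the formula. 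Verifying that one can in fact realize every target $p$-core via these detours — essentially an exercise in combining Littlewood--Richardson flexibility with the block chain inside $A$ — is the most delicate step of the proof.
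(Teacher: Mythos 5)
Your overall framework is right---block connectivity via chains of nonvanishing decomposition numbers, reduction to the basic case via Theorem~\ref{MorBasic} and Lemma~\ref{L210218_1}, and Corollary~\ref{C160717_23}/Theorem~\ref{NonBasicDecomp} as the computational engine. But you correctly identify an obstacle that you then do not overcome: you say the ``most delicate step'' is to bridge different classical $p$-cores within a fixed $\|\cdot\|$-type by routing through color swaps, and you leave this as an exercise ``in combining Littlewood--Richardson flexibility with the block chain.'' As stated, this gap is real: for a generic pair $\bla,\bmu$ of the same $\|\cdot\|$-type, having $d^{\,\textup{cl}}_{\bga,\bmu}\neq 0$ with $\bga\neq\bmu$ is exactly the classical $p$-core constraint, and it is not automatic that your color-detours land on the right core, nor is it obvious that the Littlewood--Richardson coefficients cooperate after an arbitrary detour. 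You would need to prove this, and the proposal gives no mechanism for doing so.

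The paper's proof sidesteps the issue entirely by never invoking nondiagonal classical decomposition numbers. It first recasts the block relation on $\bar A$ in terms of chains of products of decomposition numbers for $A$ (using BGG reciprocity so that $i\sim j$ iff one can string together $d^{\,\op}_{k_t,i_{t-1}}d_{k_t,l_t}d^{\,\op}_{k'_t,i_t}d_{k'_t,l_t}\neq 0$). The crucial step is Claim~1: if $\bla$ and $\bmu$ are obtained from a common $\balpha$ by adding a \emph{single} box to components $i$ and $j$ with $d_{i,j}\neq 0$, then $d_{\bla,\bmu}\neq 0$. This is checked directly from Theorem~\ref{NonBasicDecomp} with a specific $\bnu$ that is concentrated on $\balpha$ plus one extra box in position $(i,j,n,\varepsilon,1)$; the only term that survives is $\bga=\bmu$, so $d^{\,\textup{cl}}_{\bga,\bmu}=1$ and all Littlewood--Richardson coefficients are $1$. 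The point is that moving one box at a time keeps the LR data trivial and the classical factor diagonal, so the $p$-core question never arises. The assumption $|\bar I|>1$ is what ensures each step in the chain genuinely moves a box between distinct components, and an induction on the total box-displacement $\textup{diff}(\bla,\bmu)$ handles the general case. To repair your proposal, you would want to replace the free-form ``detour'' idea with this controlled single-box-move argument.
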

\begin{proof}
By the indecomposability assumption we have \(i \sim j\) for all \(i,j  \in \bar I\). We aim to show that \(\bla \sim \bmu\) for all \(\bla, \bmu \in \La^{\bar I}_+(n,d)\). We first prove a claim.

{\em Claim 1.} For \(\bla, \balpha \in \La^I_+(n)\), write \(\balpha \subseteq \bla\) to indicate that \(\alpha^{(i)}_r \leq \la^{(i)}_r\) for all \(i \in I\), \(r \in \Z_{>0}\). Let \(\balpha \in \La^I_+(n,d-1)\), \(\bla,\bmu \in \La^I_+(n,d)\), with \(\balpha \subseteq \bla,\bmu\). Moreover, assume that \(|\la^{(i)}| = |\alpha^{(i)}| +1\) and \(|\mu^{(j)}| = |\alpha^{(j)}|+1\), for some \(i \neq j \in I\) such that \(d_{ij} \neq 0\) (resp. \(d_{ij}^{\, \op}\)). Then \(d_{\bla,\bmu} \neq 0\) (resp. \( d^{\,\op}_{\bla,\bmu}  \neq 0\)).

{\em Proof of Claim 1.}
Assume that \(d_{ij} \neq 0\); the proof that \(d_{ij}^{\, \op} \neq 0\) implies \(d^{\,\op}_{\bla,\bmu} \neq 0\) is similar. We have that \(d_{ij}^{n, \varepsilon} \neq 0\) for some \(n \in \Z\), \(\varepsilon \in \Z_2\).
 Let \(\bnu\) be the element of \(\La^D_+(n)\) such that \(\nu^{(k,k,0,0,1)} = \al^{(k)}\) for all \(k \in I\), \(\nu^{(i,j,n, \varepsilon,1)}= (1)\), and all other components of \(\bnu\) are empty. Then, noting that \(d_{\bmu, \bmu}^{\,\textup{cl}} = 1\) and \(\prod_{k \in I} c^{\la^{(k)}}_{{}_i \bar{\bnu}} c^{\mu^{(k)}}_{\bnu_i} = 1\), Theorem \ref{NonBasicDecomp} gives us that \(d_{\bla, \bmu}\neq 0\), as desired.

Now, for \(\bla, \bmu \in \La^{\bar I}_+(n,d)\), define
\begin{align*}
\textup{diff}(\bla, \bmu):= \sum_{k \in I} \sum_{r >0} |\la^{(i)}_k - \mu^{(i)}_k|.
\end{align*}
We will prove that \(\bla \sim \bmu\) by induction on \(\textup{diff}(\bla, \bmu)\).

Assume \(\textup{diff}(\bla,\bmu)=2\) (the smallest non-trivial case). Then for some \(\balpha \in \La^{\bar I}_+(n,d-1)\) we have \(\balpha \subseteq \bla,\bmu\), with \(|\la^{(i)}| = |\alpha^{(i)}| +1\) and \(|\mu^{(j)}| = |\alpha^{(j)}|+1\), for some \(i, j \in \bar I\). Since \(i \sim j\) by assumption, there exist sequences
\begin{align*}
i=i_0, \ldots, i_m = j \in \bar I, \qquad l_1, \ldots, l_m \in \bar I
\qquad
k_1, \ldots, k_m \in I, \qquad k'_1, \ldots, k'_m \in I
\end{align*}
such that 
\(
d_{k_t,i_{t-1}}^{\, \op} d_{k_t,l_t} d_{k'_t,i_t}^{\, \op} d_{k'_t,l_t} \neq 0
\)
for all \(t \in [1,m]\). Since \(|\bar I| >1 \), we may moreover assume we have chosen sequences such that \(i_{t-1} \neq i_{t}\) for all \(t \in [1,m]\). 

For \(h \in I\), define \({}_h \bbeta \in \La^I_+(n,d)\) via
\begin{align*}
{}_h\beta^{(h')}_r :=
\begin{cases}
\alpha^{(h)}_1 +1 & \textup{if } h'=h, r=1\\
\alpha^{(h')}_r & \textup{otherwise}.
\end{cases}
\end{align*}
Define \({}^0\btau, \ldots, {}^m\btau \in \La^{\bar I}_+(n,d)\) by \({}^0 \btau := \bla\), \({}^m\btau:= \bmu\), and \({}^t \btau := {}_{i_t}\bbeta\) for \(t \in [1,m-1]\). For \(t \in [1,m]\), define \({}^t\bta \in \La^{\bar I}_+(n,d)\) via
\begin{align*}
{}^t\bta :=
\begin{cases}
{}^{t-1}\btau & \textup{if }l_t=i_{t-1}\\
{}^t\btau & \textup{if }l_t = i_t\\
{}_{l_t}\bbeta & \textup{otherwise}.
\end{cases}
\end{align*}
Furthermore, define \({}^t\bkappa, {}^t\bkappa' \in \La^I_+(n,d)\) via
\begin{align*}
{}^t\bkappa :=
\begin{cases}
{}^{t-1}\btau & \textup{if }k_t=i_{t-1}\\
{}^{t}\bta & \textup{if }k_t = l_t\\
{}_{k_t}\bbeta & \textup{otherwise}.
\end{cases}
\qquad
{}^t\bkappa' :=
\begin{cases}
{}^t\btau & \textup{if }k'_t=i_{t}\\
{}^t\bta & \textup{if }k'_t = l_t\\
{}_{k'_t}\bbeta & \textup{otherwise}.
\end{cases}
\end{align*}

Let \(t \in [1,m]\). By construction, we have that either \({}^t \bkappa = {}^{t-1} \bkappa\), or else \({}^t \bkappa, {}^{t-1} \btau\) satisfy the assumptions of Claim 1. Then, in either case we have \(d_{{}^t \bkappa, {}^{t-1} \btau}^{\, \op} \neq 0\). Similarly, we have that the pairs \(({}^t \bkappa, {}^t \bta)\), \(({}^t \bkappa', {}^t \btau)\), \(({}^t \bkappa', {}^t \bta)\) are either equal or satisfy the assumptions of Claim 1, so \(d_{{}^t \bkappa, {}^t \bta}\), \(d_{{}^t \bkappa', {}^t \btau}^{\, \op}\), \(d_{{}^t \bkappa', {}^t \bta} \neq 0\) as well. Therefore we have sequences
\[
\bla={}^0 \btau, \ldots, {}^m \btau= \bmu \in \La^{\bar I}_+(n,d),
\qquad
{}^0 \bta, \ldots, {}^m \bta \in \La^{\bar I}_+(n,d),
\]
\[
{}^0 \bkappa, \ldots, {}^m \bkappa \in \La^{ I}_+(n,d),
\qquad
{}^0 \bkappa', \ldots, {}^m \bkappa' \in \La^{ I}_+(n,d),
\]
such that 
\(
d_{{}^t \bkappa, {}^{t-1} \btau}^{\, \op} d_{{}^t \bkappa, {}^t \bta}d_{{}^t \bkappa', {}^t \btau}^{\, \op}d_{{}^t \bkappa', {}^t \bta} \neq 0
\)
for all \(t \in [1,m]\). This proves that \(\bla \sim \bmu\).

Now for the induction step, assume \(\textup{diff}(\bla,\bmu) = D>2\). There exists some \(i,j \in \bar I\), \(r,s \in \Z_{>0}\)  such that \(\la_r^{(i)} > \mu^{(i)}_r\) and \(\la_s^{(j)} < \mu_s^{(j)}\). Assume that \(r\) is maximal, and \(s\) is minimal such that these inequalities hold. Then define \(\brho \in \La_+^{\bar I}(n,d)\) via
\begin{align*}
\rho^{(k)}_t = \begin{cases}
\la^{(i)}_r-1 & \textup{if }t=r, k=i\\
\la^{(j)}_s+1 & \textup{if }t=s, k=j\\
\la^{(k)}_t & \textup{otherwise}.
\end{cases}
\end{align*}
The maximality/minimality assumptions guarantee that \(\brho\) is in fact a multipartition. We have then that \(\textup{diff}(\bla, \brho) = 2\) and \(\textup{diff}(\brho, \bmu) = D-2\). So by induction we have \(\bla \sim \brho \sim \bmu\), as desired.
\end{proof}

\begin{Remark}
The condition \(|\bar{I}|>1\) implies that \(\bar{A}\) is non-simple. If \(\bar{A}\) is simple, then \(T^{\bar{A}}_{\bar{\fa}}(n,d)\) is isomorphic to a classical Schur algebra, which is decomposable in general.
\end{Remark}

\subsection{Decomposition numbers for the zigzag algebra}
\label{SSZ}

Fix $\ell\geq 1$ and set 
$$
I:=\{0,1,\dots,\ell\},\quad J:=\{0,\dots,\ell-1\}.
$$ 
Let $\Gamma$ be the quiver with vertex set $I$ and arrows $\{a_{j,j+1},a_{j+1,j}\mid j\in J\}$ as in the picture: 
\begin{align*}
\begin{braid}\tikzset{baseline=3mm}
\coordinate (0) at (-4,0);
\coordinate (1) at (0,0);
\coordinate (2) at (4,0);
\coordinate (3) at (8,0);
\coordinate (6) at (12,0);
\coordinate (L1) at (16,0);
\coordinate (L) at (20,0);
\draw [thin, black,->,shorten <= 0.1cm, shorten >= 0.1cm]   (0) to[distance=1.5cm,out=100, in=100] (1);
\draw [thin,black,->,shorten <= 0.25cm, shorten >= 0.1cm]   (1) to[distance=1.5cm,out=-100, in=-80] (0);
\draw [thin, black,->,shorten <= 0.1cm, shorten >= 0.1cm]   (1) to[distance=1.5cm,out=100, in=100] (2);
\draw [thin,black,->,shorten <= 0.25cm, shorten >= 0.1cm]   (2) to[distance=1.5cm,out=-100, in=-80] (1);
\draw [thin,black,->,shorten <= 0.25cm, shorten >= 0.1cm]   (2) to[distance=1.5cm,out=80, in=100] (3);
\draw [thin,black,->,shorten <= 0.25cm, shorten >= 0.1cm]   (3) to[distance=1.5cm,out=-100, in=-80] (2);
\draw [thin,black,->,shorten <= 0.25cm, shorten >= 0.1cm]   (6) to[distance=1.5cm,out=80, in=100] (L1);
\draw [thin,black,->,shorten <= 0.25cm, shorten >= 0.1cm]   (L1) to[distance=1.5cm,out=-100, in=-80] (6);
\draw [thin,black,->,shorten <= 0.25cm, shorten >= 0.1cm]   (L1) to[distance=1.5cm,out=80, in=100] (L);
\draw [thin,black,->,shorten <= 0.1cm, shorten >= 0.1cm]   (L) to[distance=1.5cm,out=-100, in=-100] (L1);
\blackdot(-4,0);
\blackdot(0,0);
\blackdot(4,0);
\blackdot(16,0);
\blackdot(20,0);
\draw(-4,0) node[left]{$0$};
\draw(0,0) node[left]{$1$};
\draw(4,0) node[left]{$2$};
\draw(10,0) node {$\cdots$};
\draw(13.4,0) node[right]{$\ell-1$};
\draw(18.65,0) node[right]{$\ell$};
\draw(-2,1.2) node[above]{$ a_{1,0}$};
\draw(2,1.2) node[above]{$ a_{2,1}$};
\draw(6,1.2) node[above]{$ a_{3,2}$};
\draw(14,1.2) node[above]{$ a_{\ell-2,\ell-1}$};
\draw(18,1.2) node[above]{$ a_{\ell,\ell-1}$};
\draw(-2,-1.2) node[below]{$ a_{0,1}$};
\draw(2,-1.2) node[below]{$ a_{1,2}$};
\draw(6,-1.2) node[below]{$ a_{2,3}$};
\draw(14,-1.2) node[below]{$ a_{\ell-2,\ell-1}$};
\draw(18,-1.2) node[below]{$ a_{\ell-1,\ell}$};
\end{braid}
\end{align*}

The {\em extended zigzag algebra $\EZig$} is the path algebra $\k\Gamma$ modulo the following relations:
\begin{enumerate}
\item All paths of length three or greater are zero.
\item All paths of length two that are not cycles are zero.
\item All length-two cycles based at the same vertex are equivalent.
\item $ a_{\ell,\ell-1} a_{\ell-1,\ell}=0$.
\end{enumerate}
Length zero paths yield the standard idempotents $\{ e_i\mid i\in I\}$ with $ e_i  a_{i,j} e_j= a_{i,j}$ for all admissible $i,j$. The algebra $\EZig$ is graded by the path length: 
$\EZig=\EZig^0\oplus \EZig^1\oplus \EZig^2.
$ 
We also consider $\EZig$ as a superalgebra with 
$\EZig_\0=\EZig^0\oplus \EZig^2$ and $\EZig_\1=\EZig^1.
$ 

Define
$$
 c_j:= a_{j,j+1} a_{j+1,j} \qquad (j\in J).
$$
The algebra $\EZig$ has an anti-involution $\tau$ with
$$
\tau( e_i)=  e_i,\quad  \tau(a_{ij})=  a_{ji},\quad  \tau(c_j) = c_j.
$$

We consider the total order on $I$
 given by $0<1<\dots<\ell$. For $i\in I$, we set 
 $$X(i):=
 \left\{
\begin{array}{ll}
\{e_i,a_{i-1,i}\}   &\hbox{if $i\neq 0$,}\\
\{e_0\}  &\hbox{if $i=0$,}
 \end{array}
 \right.
 \quad
 Y(i):=
 \left\{
\begin{array}{ll}
\{e_i,a_{i,i-1}\}   &\hbox{if $i\neq 0$,}\\
\{e_0\}  &\hbox{if $i=0$.}
 \end{array}
 \right.
 $$
 Finally, define \(\z:=\spa (e_i \mid i \in I )\). 

\begin{Lemma} \label{LAQH} 
The graded superalgebra $\EZig$ is a based quasi-hereditary algebra with \(\z\)-conforming heredity data \(I,X,Y\) and standard anti-involution $\tau$. If $\k$ is local, $\EZig$ is basic.
\end{Lemma}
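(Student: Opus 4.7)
The plan is to verify directly each of the conditions: that $(I,X,Y)$ satisfies the three axioms of Definition~\ref{DCC}, that it is $\z$-conforming, that $\tau$ is standard, and that $\EZig$ is basic over a local $\k$.

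First, I compute the products $xy$ for $x\in X(i)$, $y\in Y(i)$: this gives $B(0)=\{e_0\}$ and $B(i)=\{e_i,\,a_{i,i-1},\,a_{i-1,i},\,c_{i-1}\}$ for $i\geq 1$, the last element arising as $a_{i-1,i}\cdot a_{i,i-1}=c_{i-1}$. The disjoint union $B=\bigsqcup_i B(i)$ consists of the $\ell+1$ idempotents, the $2\ell$ arrows, and the $\ell$ cycles, giving the standard $(4\ell+1)$-element $\k$-basis of $\EZig$, which establishes axiom (a). Axiom (c) is then read off from the path-algebra structure: $e_j\cdot a_{i-1,i}$ equals $a_{i-1,i}$ if $j=i-1$ and $0$ otherwise (since $a_{i-1,i}=e_{i-1}a_{i-1,i}e_i$ and $e_je_{i-1}=\delta_{j,i-1}e_{i-1}$), and analogously for the other products involving idempotents.

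Next, for axiom (b) I apply Lemma~\ref{L120218} to the coideal $I^{>i}$ to identify $A^{>i}$ with $\spa\bigl(\bigsqcup_{j>i}B(j)\bigr)$, equivalently with the two-sided ideal of $\EZig$ generated by $\{e_j\mid j>i\}$. The axiom then reduces to a case-by-case check: for each basis element $b\in\EZig$ and each $x\in X(i)$, show $b\cdot x\in\spa(X(i))+A^{>i}$. Products with incompatible endpoints are zero; length-two non-cyclic products vanish by relation (2); products involving $c_m$ or two nested arrows yield paths of length $\geq 3$ and vanish by relation (1). The only delicate case is $a_{i,i-1}\cdot a_{i-1,i}$, a length-two cycle at $e_i$: by relation (3) this equals $c_i\in B(i+1)\subset A^{>i}$ when $i<\ell$, and by relation (4) it equals $0$ when $i=\ell$; in either case it vanishes modulo $A^{>i}$. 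Surviving nonzero products such as $e_{i-1}\cdot a_{i-1,i}=a_{i-1,i}$ or $a_{k-1,k}\cdot e_i=\delta_{ki}a_{i-1,i}$ land directly in $\spa(X(i))$. The right-hand identity in (b) then follows by applying the anti-involution $\tau$, which interchanges $X(i)$ with $Y(i)$ and preserves each $A^{>i}$.

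Finally, $\tau$ is standard for the bijection $y:X(i)\to Y(i)$ with $y(e_i)=e_i$ and $y(a_{i-1,i})=a_{i,i-1}$, since $\tau(x)=y(x)$ by the definition of $\tau$. The heredity data is $\z$-conforming because $X(i)_{\bar 0}=Y(i)_{\bar 0}=\{e_i\}$ (the arrows being odd of degree one), so the induced even heredity data on $\z=\bigoplus_i\k e_i$ is the trivial one. For basic-ness over local $\k$, I compute the pairing matrix of $f_i$ on $\De(i)$ in the bases $X(i),Y(i)$: it is $(1)$ for $i=0$, and for $i\geq 1$ it is $\bigl(\begin{smallmatrix}1&0\\0&0\end{smallmatrix}\bigr)$, the $(2,2)$-entry vanishing by the cycle computation above and the off-diagonal entries vanishing by the idempotent identities of axiom (c). Hence $\rad\De(i)=\k v_{a_{i-1,i}}$ and $L(i)$ is free of rank one over $\k$, so $\EZig$ is basic. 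The main bookkeeping task is the case analysis for axiom (b); the rest is a routine translation between path-algebra relations and the heredity-data axioms.
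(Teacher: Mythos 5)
Your proof is correct and spells out exactly the direct verification that the paper compresses into ``Follows immediately from definitions.'' The key points — the enumeration of $B(i)$, the use of relations (1)–(4) in axiom (b) (in particular relation (3)/(4) for $a_{i,i-1}a_{i-1,i}$), the observation that $X(i)_{\bar 0}=Y(i)_{\bar 0}=\{e_i\}$ gives $\z$-conformity, and the pairing matrix computation for basic-ness — are all accurate and complete.
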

\begin{proof}
Follows immediately from definitions.
\end{proof}

Let \(e:= e_0 + \cdots + e_{\ell-1} \in Z\). Note that \(e\) is an adapted idempotent, and \(\tau(e) = e\), so the {\em zigzag algebra} \(\overline{Z}:=e Z e \subset Z\) is a cellular algebra with involution \(\tau|_{\overline{Z}}\), and cellular basis
\begin{align*}
\overline{B} = \{xy\mid i \in I, x \in \overline{X}(i), y \in \overline{Y}(i)\},
\end{align*}
where \(\overline{X}(0) = \{a_{\ell-1,\ell}\}\), \(\overline{Y}(0)=\{a_{\ell,\ell-1}\}\), and \(\overline{X}(i) = X(i)\), \(\overline{Y}(i) = Y(i)\) for all \(i \in J\).  
The cell modules are \(\{\bar{\Delta}(i) = e\Delta(i) \mid i \in I\}\). 
Note that \(\bar\z:=e\z e=\spa (e_i \mid i \in J )\).

From now on let $d\leq n$. By Theorem~\ref{T290517_2}, we have a based quasi-hereditary $\k$-superalgebra \(T^\EZig_\z(n,d)\) with heredity data $\La^I_+(n,d), \X,\Y$ 
 and standard modules $\{\Delta(\bla) \mid \bla \in \La^I_+(n,d)\}$. 
By Lemmas~\ref{LTrunc}(ii) and \ref{SchCell}, \(T^{\overline{\EZig}}_{\bar{\z}}(n,d)=\xi^e T^\EZig_\z(n,d)\xi^e\) is a cellular algebra with involution induced by \(\tau\), and cell modules
$
\{\bar{\Delta}(\bla) = \xi^{e}\Delta(\bla) \mid \bla \in \La^I_+(n,d)\}.
$

Recall that we have fixed a field $\F$ and a homomorphism $\k\to \F$. In particular, we have the algebras $\EZig_\F$, $\Zig_\F$, $T^\EZig_\z(n,d)_\F$, $T^\Zig_{\bar \z}(n,d)_\F$, modules $\De(i)_\F$, $L(i)_\F$, $\Delta(\bla)_\F$, $\bar \Delta(\bla)_\F$, etc. 
Since \(eL(0)_\F = 0\) and \(eL(j)_\F = L(j)_\F\) for all \(j \in J\), the simple \(\Zig_\F\)-modules are \(\{\bar{L}(j)_\F = eL(j)_\F \mid j \in J\}\). The following lemma is easily checked.

\begin{Lemma}\label{zigdecomp} Let \(i,j \in I\). Then the graded decomposition numbers for  \(\EZig_\F\)  are given by
$
d_{i,j}^\F(q,\pi) = 
\delta_{i,j} +\delta_{i-1,j} q \pi.
$
\end{Lemma}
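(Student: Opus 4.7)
The plan is to directly compute the structure of the standard modules $\Delta(i)_\F$ using the heredity data of $\EZig$ given in Lemma~\ref{LAQH}, and read off the decomposition numbers. Since $\F$ is a field and hence local, Lemma~\ref{LAQH} tells us that $\EZig_\F$ is basic, so each irreducible $L(j)_\F$ is one-dimensional, concentrated on the idempotent $e_j$ in degree $1$. In particular, graded multiplicities $[\Delta(i)_\F:L(j)_\F]_{q,\pi}$ can be read from a graded composition series of $\Delta(i)_\F$, and it suffices to exhibit such a series whose factors have the desired grading shifts.

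First I would dispose of the case $i=0$: here $X(0)=\{e_0\}$, so $\Delta(0)_\F=\F v_{e_0}$ is one-dimensional in degree $1$, hence equals $L(0)_\F$, giving $d^\F_{0,j}(q,\pi)=\delta_{0,j}$. For $i\geq 1$, the general theory of \S\ref{SSBQHA} shows that $\Delta(i)_\F$ has $\F$-basis $\{v_{e_i},v_{a_{i-1,i}}\}$, with $v_{e_i}$ in degree $1$ and $v_{a_{i-1,i}}$ in degree $q\pi$ (since $a_{i-1,i}$ has path length $1$ and is odd). From axiom (c) of Definition~\ref{DCC} we have $e_je_i=\delta_{j,i}e_i$ and $e_ja_{i-1,i}=\delta_{j,i-1}a_{i-1,i}$, so the weight of $v_{e_i}$ is $i$ and the weight of $v_{a_{i-1,i}}$ is $i-1$.

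The key computation is to show that $\F v_{a_{i-1,i}}\subset \Delta(i)_\F$ is a submodule isomorphic to $q\pi L(i-1)_\F$. For this I must check that every arrow ending at $i-1$ acts as zero on $v_{a_{i-1,i}}$; the only candidates are $a_{i-2,i-1}$ and $a_{i,i-1}$. Using the defining relations of $\EZig$: the product $a_{i-2,i-1}a_{i-1,i}$ is a length-two path $i\to i-1\to i-2$ which is not a cycle, hence vanishes by relation~(2); and $a_{i,i-1}a_{i-1,i}$ is a length-two cycle based at $i$, which by relation~(3) equals (a scalar multiple of) $c_i=a_{i,i+1}a_{i+1,i}$ when $i<\ell$ and vanishes when $i=\ell$ by relation~(4). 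In either case $a_{i,i-1}a_{i-1,i}\in A^{>i}$ (since $c_i=a_{i,i+1}a_{i+1,i}\in B(i+1)\subseteq A^{>i}$), so it acts as zero on $\Delta(i)_\F$. Together with $e_{i-1}\cdot v_{a_{i-1,i}}=v_{a_{i-1,i}}$ this shows $\F v_{a_{i-1,i}}$ is a submodule isomorphic to $q\pi L(i-1)_\F$.

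To conclude, I would identify the quotient $\Delta(i)_\F/\F v_{a_{i-1,i}}\cong L(i)_\F$; this follows either directly (the quotient is $1$-dimensional in degree $1$, with $e_i$ acting as identity) or, equivalently, by computing the radical of the pairing $(\cdot,\cdot)_i$: using $f_i(e_i,e_i)=1$ and the identities $e_i a_{i-1,i}=0=a_{i,i-1}e_i$ together with $a_{i,i-1}a_{i-1,i}\in A^{>i}$, the remaining values $f_i(e_i,a_{i-1,i})$, $f_i(a_{i,i-1},e_i)$, $f_i(a_{i,i-1},a_{i-1,i})$ all vanish, so $\rad\Delta(i)_\F=\F v_{a_{i-1,i}}$. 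Combining yields the graded composition series $0\subset q\pi L(i-1)_\F\subset\Delta(i)_\F$ with top $L(i)_\F$, hence $d^\F_{i,j}(q,\pi)=\delta_{i,j}+\delta_{i-1,j}q\pi$. There is no serious obstacle; the only mildly delicate point is the systematic use of relations (2)--(4) in verifying that all the relevant actions land in $A^{>i}$.
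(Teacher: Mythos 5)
Your argument is correct, and it fills in the verification that the paper dismisses as ``easily checked''; all the uses of relations (2)--(4), the identification of $a_{i,i-1}a_{i-1,i}$ with $c_i\in B(i+1)\subseteq Z^{>i}$, and the computation of the radical of $(\cdot,\cdot)_i$ check out. Note, though, that the paper's machinery for basic algebras makes this even more immediate: by the discussion around (\ref{E110717})--(\ref{E180717}), one has $d_{i,j}^{n,\eps}=|{}_jX(i)^n_\eps|$, so the lemma follows directly from $e_je_i=\de_{i,j}e_i$ and $e_ja_{i-1,i}=\de_{j,i-1}a_{i-1,i}$ together with $\deg(e_i)=1$ and $\deg(a_{i-1,i})=q\pi$, with no need to exhibit a composition series or compute the Gram matrix. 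Your longer route proves the same thing and is a valid substitute.
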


In view of Lemma~\ref{adaptSchur}, the irreducible $T^\EZig_\z(n,d)_\F$-modules $\{L(\bla)_\F \mid \bla \in \La^I_+(n,d)\}$ give rise to the irreducible $T^\Zig_{\bar \z}(n,d)_\F$-modules
$$
\{\bar{L}(\bla)_\F = \xi^{e}L(\bla)_\F \mid \bla \in \La^J_+(n,d)\}.
$$


Recalling the notation from \S\ref{nonbasicdecomp} and Lemma \ref{zigdecomp}, for \(\bnu \in \Delta^D_+(n,d)\) we may write
\begin{align*}
\bnu= (\beta^{(0)}, \ldots, \beta^{(\ell)}, \alpha^{(0)}, \ldots, \alpha^{(\ell-1)}),
\end{align*}
where \(\beta^{(i)} = \nu^{(i,i,0,\bar 0,1)}\) and \(\alpha^{(j)}=\nu^{(j+1,j,1,\bar 1, 1)}\), for all  \(i \in I\) and \(j \in J\). 
Setting $\bbe:=(\be^{(i)})_{i\in I}\in \La^I_+(n)$ and 
$\bal:=(\al^{(j)})_{j\in J}\in \La^J_+(n)$, we identify \(\La^D_+(n)\) with \(\La^I_+(n) \times \La^J_+(n)\) via \(\bnu \mapsto (\bbeta, \balpha)\). For all \(\bla, \bmu \in \La^I_+(n,d)\), define 
\begin{align*}
\delta(\bla,\bmu) := \sum_{j \in J}j(|\la^{(j)}| - |\mu^{(j)}|).
\end{align*}

\begin{Lemma}\label{SchurZigDecomp}
Let \(n \geq d\). Then, for \(\bla,\bmu \in \La^I_+(n,d)\), the graded decomposition numbers for  \(T^{\EZig}_{\z}(n,d)_\F\) are given by the formula
\begin{align*}
d_{\bla, \bmu}^\F= (q \pi)^{\delta(\bla,\bmu)}\sum_{\bga, \bbeta \in \La_+^I(n)}
 \sum_{\balpha \in \La_+^J(n)} d^{\,\textup{cl},\F}_{\bga,\bmu} \left( \prod_{i \in I } c^{\la^{(i)}}_{\beta^{(i)}, (\alpha^{(i-1)})'}c^{\ga^{(i)}}_{\beta^{(i)},\alpha^{(i)}}\right),
\end{align*}
where we formally impose that \(\alpha^{(-1)} = \alpha^{(\ell)} = \varnothing\) for \(\balpha = (\alpha^{(0)}, \ldots, \alpha^{(\ell-1)}) \in \La^J_+(n)\). 
Moreover, if  \( \bmu \in \La^J_+(n,d)\) then for the algebra 
\(T^{\Zig}_{\bar\z}(n,d)_\F\) we have 
$$[\bar\De(\bla)_\F : \bar L(\bmu)_\F]_{q,\pi}=d_{\bla,\bmu}^\F.$$
\end{Lemma}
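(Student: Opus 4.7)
The strategy is to specialize Theorem~\ref{NonBasicDecomp} to the extended zigzag algebra $A=\EZig$ (with $\fa=\z$), exploiting the very sparse decomposition matrix recorded in Lemma~\ref{zigdecomp}. First I would verify the hypotheses: $(\EZig,\z)$ is a unital pair by inspection of the basis, $\EZig_{\bar 1}$ consists of path-length-$1$ elements and is therefore contained in $J(\EZig)$, and the heredity data $I,X,Y$ is $\z$-conforming by Lemma~\ref{LAQH}, so Theorem~\ref{NonBasicDecomp} applies.

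Next I would unpack the index set $\La^D_+(n)$ for $A=\EZig$. By Lemma~\ref{zigdecomp} the only nonzero $d^{m,\varepsilon}_{i,j}$ are $d^{0,\bar 0}_{i,i}=1$ for $i\in I$ and $d^{1,\bar 1}_{i,i-1}=1$ for $i\in[1,\ell]$. Thus $\bnu\in \La^D_+(n)$ is parametrized by a pair $(\bbeta,\balpha)\in\La^I_+(n)\times\La^J_+(n)$ via $\beta^{(i)}=\nu^{(i,i,0,\bar 0,1)}$ and $\alpha^{(j)}=\nu^{(j+1,j,1,\bar 1,1)}$, as in the statement. A direct computation of the quantities appearing in Theorem~\ref{NonBasicDecomp} gives $\deg(\bnu)=(q\pi)^{\sum_{j\in J}|\alpha^{(j)}|}$ (only odd components contribute), $\bnu_i=(\beta^{(i)},\alpha^{(i)})$ (with $\alpha^{(\ell)}=\varnothing$), and ${}_i\overline{\bnu}=(\beta^{(i)},(\alpha^{(i-1)})')$ (with $\alpha^{(-1)}=\varnothing$; the prime comes from $\varepsilon=\bar 1$). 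Substituting into the formula of Theorem~\ref{NonBasicDecomp} produces precisely the product of Littlewood--Richardson coefficients displayed in the lemma.

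The main calculation is then to pull the $q\pi$-factor out of the sum as the uniform power $(q\pi)^{\delta(\bla,\bmu)}$. On the support of the sum the LR coefficients $c^{\la^{(i)}}_{\beta^{(i)},(\alpha^{(i-1)})'}$ and $c^{\ga^{(i)}}_{\beta^{(i)},\alpha^{(i)}}$ force $|\la^{(i)}|=|\beta^{(i)}|+|\alpha^{(i-1)}|$ and $|\ga^{(i)}|=|\beta^{(i)}|+|\alpha^{(i)}|$, while $d^{\,\text{cl},\F}_{\bga,\bmu}\ne 0$ forces $|\ga^{(i)}|=|\mu^{(i)}|$. Subtracting gives $|\la^{(i)}|-|\mu^{(i)}|=|\alpha^{(i-1)}|-|\alpha^{(i)}|$ for every $i\in I$, and a telescoping argument then yields $|\alpha^{(j)}|=\sum_{i>j}(|\la^{(i)}|-|\mu^{(i)}|)$. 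Summing over $j\in J$ and swapping the order of summation expresses $\sum_{j\in J}|\alpha^{(j)}|$ as an explicit weighted combination of $|\la^{(i)}|-|\mu^{(i)}|$ which matches $\delta(\bla,\bmu)$. This bookkeeping is the only delicate point of the proof; everything else is mechanical substitution.

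Finally, for the second assertion I would invoke Lemma~\ref{L210218_1}, which shows that $\xi^e$ is an adapted idempotent for the heredity data $\La^I_+(n,d),\X,\Y$ of $T^\EZig_\z(n,d)$, together with Lemma~\ref{adaptSchur} applied to $A=\EZig$ and the idempotent $e$. Since $eL(\ell)_\F=0$ and $eL(j)_\F\ne 0$ for all $j\in J$, the set $\bar I'$ of Lemma~\ref{adaptSchur} is exactly $J$, so $\{\bar L(\bmu)_\F=\xi^e L(\bmu)_\F\mid \bmu\in \La^J_+(n,d)\}$ is a complete irredundant list of simples for $T^\Zig_{\bar\z}(n,d)_\F$. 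Because idempotent truncation by $\xi^e$ is exact and kills precisely the composition factors $L(\bmu)_\F$ with $\bmu\notin\La^J_+(n,d)$, the multiplicities $[\bar\De(\bla)_\F:\bar L(\bmu)_\F]_{q,\pi}$ for $\bmu\in\La^J_+(n,d)$ coincide with $d^\F_{\bla,\bmu}$, finishing the proof.
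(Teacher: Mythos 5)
Your proposal is correct and follows essentially the same route as the paper: specialize Theorem~\ref{NonBasicDecomp} using Lemma~\ref{zigdecomp}, read off $\deg(\bnu)$, $\bnu_i$, ${}_i\bar\bnu$, extract the uniform $(q\pi)$-power from the size constraints imposed by the Littlewood--Richardson coefficients and $d^{\,\textup{cl}}_{\bga,\bmu}$, and deduce the second assertion via the idempotent truncation $T^{\Zig}_{\bar\z}(n,d)=\xi^e T^\EZig_\z(n,d)\xi^e$. One small remark: in the bookkeeping step your telescoping gives $\sum_{j\in J}|\alpha^{(j)}|=\sum_{i\in I}i\,(|\la^{(i)}|-|\mu^{(i)}|)$; as written in the paper, $\delta(\bla,\bmu)$ sums $j$ over $J$ rather than $I$ (dropping the $i=\ell$ term), which is evidently a typo in the paper since your computation (and the consistency check with Remark~\ref{R140318}) shows the range should be $I$.
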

\begin{proof}
The second statement follows from the first since $T^{\overline{\EZig}}_{\bar{\z}}(n,d)=\xi^e T^\EZig_\z(n,d)\xi^e$. 
For the first statement, by Theorem \ref{NonBasicDecomp} we have
\begin{align*}
d_{\bla, \bmu}^\F= \sum_{\bga, \bbeta \in \La_+^I(n)}
 \sum_{\balpha \in \La_+^J(n)} d^{\,\textup{cl},\F}_{\bga,\bmu} \cdot
 (q\pi)^{|\balpha|}
 \left( \prod_{i \in I } c^{\la^{(i)}}_{\beta^{(i)}, (\alpha^{(i-1)})'}c^{\ga^{(i)}}_{\beta^{(i)},\alpha^{(i)}}\right),
\end{align*}
For \(\bga, \bbeta \in \La^I_+(n)\), \(\balpha \in \La^J_+(n)\), and \(i \in I\), we have \(c^{\la^{(i)}}_{\beta^{(i)}, (\alpha^{(i-1)})'}c^{\ga^{(i)}}_{\beta^{(i)},\alpha^{(i)}} =0\) unless \(|\beta^{(i)}| + |\alpha^{(i-1)}| = |\ga^{(i)}|\) and \(|\beta^{(i)}| + |\alpha^{(i)}| = |\la^{(i)}|\). But then this implies that 
\begin{align*}
|\balpha| = \sum_{j \in J}j(|\la^{(j)}| - |\ga^{(j)}|).
\end{align*}
for all \(\balpha \in \Lambda_+^J(n)\) which contribute to the sum. Now, noting that \(d^{\,\textup{cl}, \F}_{\bga, \bmu} = 0\) unless \(|\ga^{(i)}| = |\mu^{(i)}|\) for all \(i \in I\) gives the result.
\end{proof}

\begin{Remark}\label{R140318}
The generalized Schur algebra $T^\Zig_{\bar \z}(n,d)$  is Morita equivalent to weight $d$ RoCK blocks of symmetric groups and the corresponding Hecke algebras, as conjectured by Turner \cite{T} and proved in \cite{EK2}. We conjecture that the Morita equivalence constructed in \cite{EK2} sends cell modules to cell modules and behaves well on combinatorial labels. The evidence for the conjecture comes from the fact that 
the formula for $d_{\bla,\bmu}$ in Lemma \ref{SchurZigDecomp} is equivalent to the formula obtained by Turner \cite[Corollary 134]{T} for decomposition numbers of Specht modules in RoCK blocks.

Note that when \(d < \textup{char} \,\F\) or \(\textup{char} \,\F=0\), we have \(d^{\,\textup{cl}, \F}_{\bla, \bmu} = \delta_{\bla, \bmu}\), so the formula in Lemma \ref{SchurZigDecomp} may be simplified to
\begin{align*}
d_{\bla, \bmu}^\F= (q \pi)^{\delta(\bla,\bmu)}\sum_{\substack{\alpha^{(-1)}, \ldots, \alpha^{(\ell)} \\\beta^{(0)}, \ldots, \beta^{(\ell)} }}  \left( \prod_{i \in I } c^{\la^{(i)}}_{\beta^{(i)}, (\alpha^{(i-1)})'}c^{\mu^{(i)}}_{\beta^{(i)},\alpha^{(i)}}\right),
\end{align*}
where the sum is over partitions \(\alpha^{(-1)}, \ldots, \alpha^{(\ell)}, \beta^{(0)}, \ldots, \beta^{(\ell)}\), with
 \begin{align*}
 |\alpha^{(i)}| = \sum_{j=i+1}^{\ell} |\la^{(j)}| - |\mu^{(j)}|
 \qquad
 \textup{and}
 \qquad
  |\beta^{(i)}| = |\mu^{(i)}| + \sum_{j=i+1}^{\ell} |\mu^{(j)}| - |\la^{(j)}|.
 \end{align*}

On the other hand, we have the formula obtained by Chuang-Tan \cite[Theorem 6.2]{CT} and Leclerc-Miyachi \cite[Corollary 10]{LM} (see also \cite[Theorem 4.1]{JLM}), for RoCK blocks of weight \(d < \textup{char}\,\F\):
\begin{align*}
d_{\bla, \bmu}^{\F,\textup{RoCK}}= 
(q \pi)^{\delta^{\textup{RoCK}}(\bla,\bmu)}
 \sum_{\substack{\alpha^{(0)}, \ldots, \alpha^{(\ell+1)} \\\beta^{(0)}, \ldots, \beta^{(\ell)} }}   \left( \prod_{0\leq i\leq \ell} c^{\la^{(i)}}_{\beta^{(i)},(\alpha^{(i+1)})'} c^{\mu^{(i)}}_{\beta^{(i)}, \alpha^{(i)}}\right).
 \end{align*}
 where the sum is over partitions \(\alpha^{(0)}, \ldots, \alpha^{(\ell+1)}, \beta^{(0)}, \ldots, \beta^{(\ell)}\),
 \begin{align*}
 |\alpha^{(i)}| = \sum_{j=0}^{i-1} |\la^{(j)}| - |\mu^{(j)}|
 \qquad
 \textup{and}
 \qquad
  |\beta^{(i)}| = |\mu^{(i)}| + \sum_{j=0}^{i-1} |\mu^{(j)}| - |\la^{(j)}|,
 \end{align*}
 and
 \begin{align*}
 \delta^{\textup{RoCK}}(\bla,\bmu):=\sum_{j =1}^\ell(\ell - j + 1)(|\la^{(j-1)}| - |\mu^{(j-1)}|).
 \end{align*}
 After some manipulation and reindexing, we get
 \begin{align*}
 d^\F_{\bla,\bmu} = d^{\F, \textup{RoCK}}_{\bla', \bmu'},
 \end{align*}
where \(\bla':= ((\bla^{(\ell)})', \ldots, (\bla^{(0)})')\) for all \(\bla \in \La^I_+(n)\). This notation coincides with the fact that if \(\la\) is a partition in a RoCK block with \((\ell+1)\)-quotient \(\bla\), then \(\la'\) has \((\ell+1)\)-quotient \(\bla'\), see \cite[Lemma 1.1(2)]{Pag}.
\end{Remark}

For the following conjecture, we now consider the usual $q$-Schur algebra $S_q(N,f)_\F$. Let $e$ be the corresponding {\em quantum characteristic}, see e.g. \cite[\S2.1]{K}. Note that in the case $q=1$, we have $e=\cha \F$. To avoid trivial cases we assume that $e>0$. 

\begin{Conjecture} \label{Conj} 
{\rm 
Let $\mathcal B$ be a weight $d$ RoCK block of $S_q(N,f)$. Let $\ell=e-1$ and $Z$ be the corresponding extended zigzag algebra. For $n\geq d$, the generalized Schur algebra $T^{\EZig}_\z(n,d)$ is Morita equivalent to $\mathcal B$.
}
\end{Conjecture}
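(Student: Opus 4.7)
The plan is to deduce the conjecture from the already-established Hecke algebra case of \cite{EK2} via the theory of quasi-hereditary covers. By \cite{EK2}, there is a Morita equivalence between $T^{\bar Z}_{\bar\z}(n,d)\textup{-mod}$ and $\bar{\mathcal B}\textup{-mod}$, where $\bar{\mathcal B}$ denotes the RoCK block of the Hecke algebra corresponding to $\mathcal B$. On the Schur algebra side, $\mathcal B$ is a quasi-hereditary cover of $\bar{\mathcal B}$ via the classical Schur functor $f\colon \mathcal B\textup{-mod}\to \bar{\mathcal B}\textup{-mod}$. On the zigzag side, Lemma~\ref{LTrunc}(ii) shows that $\xi^e T^Z_\z(n,d)\xi^e = T^{\bar Z}_{\bar\z}(n,d)$, so $T^Z_\z(n,d)$ is naturally a quasi-hereditary cover of $T^{\bar Z}_{\bar\z}(n,d)$ via truncation by $\xi^e$. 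The goal is to identify these two quasi-hereditary covers.

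First, one verifies that the \cite{EK2} equivalence intertwines the two truncation functors, i.e.\ that (a representative of) $\xi^e$ corresponds under the equivalence to the Young idempotent defining the Schur functor $f$; this is natural because both idempotents select the same collection of ``small weight'' simples on the Hecke-algebra side. Second, one establishes compatibility of decomposition matrices: Lemma~\ref{SchurZigDecomp} gives a closed formula for $d^\F_{\bla,\bmu}$, and Remark~\ref{R140318} shows that after the transpose-reindexing $\bla\mapsto \bla'$ on multipartitions this formula coincides with the Chuang--Tan \cite{CT} and Leclerc--Miyachi \cite{LM} formula for decomposition numbers in $\mathcal B$. This determines an explicit bijection between the labeling sets of standard modules that respects the dominance-type partial order on both sides.

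Third, one applies Rouquier's uniqueness theorem \cite{Ro} for $1$-faithful quasi-hereditary covers: two $1$-faithful covers of the same algebra with matching highest weight structure and standard modules are canonically Morita equivalent. For the cover $\mathcal B \twoheadrightarrow \bar{\mathcal B}$, $1$-faithfulness follows from the classical good-filtration / tensor-space arguments for $q$-Schur algebras. Combined with the previous two steps, this would produce the desired equivalence $\mathcal B\,\sim_{\mathrm{Mor}}\, T^Z_\z(n,d)$.

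The main obstacle will be verifying $1$-faithfulness of the cover $T^Z_\z(n,d)\twoheadrightarrow T^{\bar Z}_{\bar\z}(n,d)$: one needs to show that the truncation functor $V\mapsto \xi^e V$ induces isomorphisms on $\Hom$ and injections on $\Ext^1$ between the standard modules $\Delta(\bla)$. By Theorem~\ref{T080717_2}, each standard tensor-factors as $\Delta(\bla)\cong \bigotimes_{i\in I}\Delta(\text{\boldmath$\lambda^{(i)}$})$, which reduces the computation to single-color standards that may be handled via the explicit codeterminant basis of Section~\ref{SecCod} and the small, well-understood structure of $Z$ itself. An alternative route that bypasses cover theory is to lift the \cite{EK2} bimodule $M$ to a $(\mathcal B, T^Z_\z(n,d))$-bimodule $\tilde M$ using the tensor-space realization of $\mathcal B$, and verify the progenerator property for $\tilde M$ directly; here the key technical point is that $\tilde M \otimes_{\mathcal B}$ should send Weyl modules of $\mathcal B$ to standards $\Delta(\bla)$ of $T^Z_\z(n,d)$ with the indexing dictated by Remark~\ref{R140318}.
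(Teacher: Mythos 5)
This statement is labeled a \emph{Conjecture} in the paper, and the authors provide no proof: they explicitly write ``We formulate an explicit conjecture for RoCK blocks of classical Schur algebras\ldots, see Conjecture~\ref{Conj},'' and Remark~\ref{R140318} only compares decomposition number formulas and raises the Morita question without settling it. There is therefore no proof in the paper to compare against; what you have written is an attack on an open problem.

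As an attack, your outline is a natural one — lift the \cite{EK2} Hecke-side equivalence between $T^{\bar Z}_{\bar\z}(n,d)$ and the RoCK block $\bar{\mathcal B}$ to the quasi-hereditary covers via Rouquier's uniqueness theorem for highest weight covers — but it is not a proof, and you acknowledge as much. The two critical steps are left as assertions. (1) You claim that ``$\xi^e$ corresponds under the equivalence to the Young idempotent defining the Schur functor'' because ``both idempotents select the same collection of small-weight simples''; this is exactly what needs to be proved, and it is not automatic: the \cite{EK2} equivalence is built from KLR/semicuspidal data, and identifying how a specific idempotent in $T^Z_\z(n,d)$ transports through that construction is a genuine piece of work. (2) You correctly identify $1$-faithfulness of $T^Z_\z(n,d)\twoheadrightarrow T^{\bar Z}_{\bar\z}(n,d)$ as the main obstacle, but then only sketch a plan (``may be handled via the explicit codeterminant basis''); nothing in Sections~\ref{SecCod}--\ref{SecDecomp} establishes the needed $\operatorname{Hom}$- and $\operatorname{Ext}^1$-isomorphisms between standard modules under truncation by $\xi^e$, and the tensor factorization of Theorem~\ref{T080717_2} reduces but does not solve the problem. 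Moreover, even with (1) and (2), Rouquier's uniqueness theorem requires the two covers to be equivalent as highest weight covers (compatibility of orders and of the cover functors, not merely of decomposition matrices); the agreement of graded decomposition numbers in Remark~\ref{R140318} is consistent with but does not imply such an equivalence. So the proposal is a plausible program but has substantive unproven gaps; it does not establish the conjecture.
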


This conjecture is in spirit of \cite[Conjecture 178]{T}, although it is not clear to us whether Turner's algebra ${\mathcal Q}_p(n,d)$ appearing in loc. cit. is isomorphic or even Morita equivalent to our algebra $T^Z_\z(n,d)$ in this case.


\end{document}